\def\ph{\varphi}
\newcommand{\Dscr}{\mathcal{D}}
\newcommand{\Nscr}{\mathcal{N}}
\newcommand{\Oscr}{\mathcal{O}}
\newcommand{\Pscr}{\mathcal{P}}
\newcommand{\Rscr}{\mathcal{R}}
\newcommand{\Xscr}{\mathcal{X}}
\def\RR{\mathbb{R}}
\def\ZZ{\mathbb{Z}}
\def\NN{\mathbb{N}}
\def\Id{\mathbb I}
\def\lie#1{L_{#1}}
\def\chiph{\chi^{\phantom s}}
\def\gt{>}
\def\lt{<}
\def\le{\leq}
\def\ge{\geq}
\def\Chi{\Xscr}
\def\scal{\diamond}
\def\reali{\mathbb{R}}
\def\complessi{\mathbb{C}}
\def\nucleo{\Nscr^{(s)}}
\def\range{\Rscr^{(s)}}
\def\pspazio{\Pscr^{(s)}}
\newcommand{\modf}[1]{{\left\lfloor #1 \right\rfloor}}
\newcommand{\tond}[1]{{\left(#1\right)}}
\newcommand{\bigtond}[1]{{\bigl(#1\bigr)}}
\newcommand{\quadr}[1]{{\left[#1\right]}}
\newcommand{\inter}[1]{{\left\langle#1\right\rangle}}
\newcommand{\biginter}[1]{{\bigl\langle#1\bigr\rangle}}
\newcommand{\dinter}[1]
{{\left\langle\hskip-2pt\left\langle#1\right\rangle\hskip-2pt\right\rangle}}
\newcommand{\qed}{{\vskip-18pt\null\hfill$\square$\vskip6pt}}
\newcommand{\Poi}[2]{\{#1,#2\}}
\newcommand{\ncamp}[1]{\|#1\|^\oplus}
\newcommand{\norm}[1]{\left\|#1\right\|}
\newtheorem{theorem}{Theorem}[section]
\newtheorem{lemma}{Lemma}[section]
\newtheorem{corollary}{Corollary}[section]
\newtheorem{proposition}{Proposition}[section]
\newtheorem{definition}{Definition}[section]
\newtheorem{remark}{Remark}[section]
\newenvironment{proof}[0]{\noindent{\bf proof:\enspace}}{\hfill$\square$\par\medskip}
\title{An extensive adiabatic invariant\\ for the Klein-Gordon
  model\\ in the thermodynamic limit}
\author{Antonio Giorgilli, Simone Paleari and Tiziano Penati}
\begin{document}
\allowdisplaybreaks

\maketitle

\begin{abstract}
We construct an extensive adiabatic invariant for a Klein-Gordon chain
in the thermodynamic limit.  In particular, given a fixed and
sufficiently small value of the coupling constant $a$, the evolution
of the adiabatic invariant is controlled up to times scaling as
$\beta^{1/\sqrt{a}}$ for any large enough value of the inverse
temperature $\beta$. The time scale becomes a stretched exponential if
the coupling constant is allowed to vanish jointly with the specific
energy.  The adiabatic invariance is exhibited by showing that the
variance along the dynamics, i.e. calculated with respect to time
averages, is much smaller than the corresponding variance over the
whole phase space, i.e. calculated with the Gibbs measure, for a set
of initial data of large measure.  All the perturbative constructions
and the subsequent estimates are consistent with the extensive nature
of the system.
\end{abstract}


\section{Introduction and statement of results}
\label{s:1}

In the quest for a mathematically rigorous foundation of Statistical
Physics in general, and Statistical Mechanics in particular, despite
many efforts and recent successes, a lot of work is still to be done.
More specifically, if one considers an Hamiltonian system, instead of
some ad hoc model, for the microscopic description of large systems, the
behaviour over different long time scales is often still a challenge.
One of the possible, and natural strategies, is to apply the
techniques and results of Hamiltonian perturbation theory to large
systems, with particular attention to the thermodynamic limit, i.e. when
the number of degrees of freedom grows very large, at fixed, non
vanishing, specific energy.  The present paper is concerned with the
existence of an adiabatic invariant for an arbitrarily large one
dimensional Klein-Gordon chain, with estimates uniform in the size of
the system.

It is well known that results like the KAM and the Nekhoroshev
theorems stated for finite dimensional systems (see
e.g.~\cite{Kol54i,Mos61,Mos62,Arn63,Nek77,Nek79}) appear to be
somewhat useless as the number $N$ of degrees of freedom of the system
system grows, for the estimated dependence on $N$ of the constants
involved is usually very bad, and in particular the (specific) energy
thresholds do vanish in the limit $N\to\infty$. It is worth to mention
however that a first theoretical result at finite specific energy,
hence with estimates uniform in $N$, and on an average time scale can
be found in \cite{Car07}.  Extensions in the infinite dimensional case
have been made (see,
e.g.,~\cite{Bou97b,Bou98,Cra96,Kuk92,Kuk00,BN,PBC,Poe99b} for the case
of partial differential equations, or~\cite{BamG93,BFG,FSW} for the
case of lattices), but always for finite energy, i.e., for zero
specific energy.  Our aim is precisely to remove such a drawback,
producing a long time estimate for finite {\it specific} energy.

We consider the Hamiltonian
\begin{equation}
\label{e.H}
H(x,y) = \frac12\sum_{j=1}^N \quadr{ \tond{y^2_j + x^2_j} +
a(x_j-x_{j-1})^2 + \frac12x_j^4}  \ ,
\end{equation}
of a Klein-Gordon chain with $N$ degrees of freedom, periodic boundary
conditions $x_0=x_N$, and coupling constant $a$.

A previous investigation of a similar model has been made
in~\cite{GioPP12}.  In that paper a first order (in the sense of
perturbation theory) adiabatic invariant has been analytically
constructed.  Moreover, by numerical investigation it has been shown
that the adiabatic invariance persists for times much longer than
those predicted by the first order theoretical analysis.  Thus, the
model appeared to be worth of further theoretical investigation.  A
very recent breakthrough in this direction is represented by the
paper~\cite{CarM12}, which exploits the idea of complementing the
perturbation estimates with probabilistic techniques, thus producing a
control of the long time evolution in the thermodynamic limit.  We
will come back later to the relation between that paper and the
present work.

Let us give a brief sketch of our procedure.  The basic idea of both the
quoted works is to avoid the usual procedure of introducing normal modes
for the quadratic part of the Hamiltonian~(\ref{e.H}), thus considering
the model as a set of identical harmonic oscillators with a coupling
which includes a small quadratic term describing a nearest neighbours
interaction controlled by the small parameter $a$.  We construct an
extensive adiabatic invariant as follows.  First, as in~\cite{GioPP12},
we exploit a transformation of the quadratic part of the Hamiltonian
into the sum of two terms in involution, one of them including all
resonant coupling terms.  The relevant fact is that the transformation
preserves the extensive nature of the system and produces new
coordinates which are each exponentially localized around the
corresponding original ones.  As a subsequent step, the perturbation
process is performed here at higher order.  Thus we produce an adiabatic
invariant which still preserves both the extensive nature of the system
and the exponential decay of the interaction with the distance.
Furthermore we produce estimates which are uniform in the number $N$ of
degrees of freedom.

We stress that our model contains two independent perturbation
parameters, namely: (i)~the coupling parameter $a$, and (ii)~the
specific energy $\epsilon$.  This is a point that deserves particular
consideration.  We pay special attention in keeping these two
parameters separated, so that we can deal with the physically sound
hypothesis that the coupling parameter $a$ is fixed, and the inverse
temperature $\beta$ grows arbitrarily large.  Actually, the main
theorem~\ref{t.Adiab0} is formulated so that one is allowed to play
independently with both parameters in suitable ranges.

A second relevant point is concerned with the question how to assess the
adiabatic invariance of our quantity. The delicate point is again
related to the thermodynamic limit, which was indeed a major obstacle in
tackling the problem with perturbation methods, but can be dealt with
using a statistical approach. In a simplified description (see,
e.g.,~\cite{Lan73}) we can say that, as the number of degrees of freedom
grows, all the extensive functions appear as essentially constant over
the energy surface, in the sense that for increasing $N$ their densities
approach a delta function centered around their average value. Clearly
an almost constant function is also approximately constant along an
orbit, which seems not to give a meaningful information.  The idea is
thus to compare the dynamical fluctuation with the statistical deviation
of the function over the phase space, using the Gibbs measure. A
function defined on the phase space will be considered reasonably
conserved if its fluctuation along the orbit is significantly smaller
than its Gibbs variance, for a large set of initial data.

In the present paper we are able to show that, in the physically sound
assumption of fixed coupling constant $a$, as the specific energy
$\epsilon$ goes to zero, for a large (asymptotically full) Gibbs measure
of initial data, and for time scaling as inverse powers of $\epsilon$,
the time variance of our quantity is smaller than the corresponding
Gibbs variance, their ratio vanishing as a power of $\epsilon$.  The
estimates are uniform in the number $N$ of degrees of freedom.

We come now to a formal presentation of the results in a somehow
simplified form.  A general formulation is given in the main
theorem~\ref{t.Adiab0}, where some parameters appear that may be
subjected to a fine tuning ($a$ and $\beta$ among them).  In the
statement below we reduce the number of free parameters by making
appropriate choices, so as to give more readable, but still physically
meaningful results.  Whenever it will be useful we shall denote by $z$
all the coordinates and momenta $(x,y)$, and by $H(z,a)$ the Hamiltonian
so as to bring into evidence the dependence on the coupling constant
$a$.

We denote here by $dz$ the $2N$ dimensional Lebesgue measure in the
phase space ${\mathcal M}:=\RR^{2N}$, by $dm$ the Gibbs measure and by $Z$
the corresponding partition function, namely
\begin{equation*}
 dm(\beta,a) := \frac{e^{-\beta H(z,a)}}{Z(\beta,a)}dz \ ,
\qquad
 Z(\beta,a) := \int_{\mathcal M} e^{-\beta H(z,a)}dz \ ;
\end{equation*}
for every function $X:{\mathcal M}\to\RR$ we denote its phase average and
its variance\footnote{We do not use the standard notation $\sigma^2_X$
because we reserve the subscript, in particular with a $t$, for the
variance along the dynamics.} respectively by
\begin{equation*}
 \inter{X} := \int_{\mathcal M} X dm(\beta,a) \ ,
\qquad
 \sigma^2\quadr{X} := \inter{X^2} - \inter{X}^2 \ . 
\end{equation*}
For every measurable set $A\in{\mathcal M}$, we will denote $m(A):=\int_A
dm(\beta,a)$.

We recall that, for $\beta$ large and $a$ small, $\beta$ is roughly the
inverse of the average specific energy
\begin{equation*}
\frac1\beta \sim \frac{\inter{H}}{N} \ .
\end{equation*}

We also need to define the time average and the time variance, evaluated
along the time evolution.  Denoting by $\phi^t$ the Hamiltonian flow,
these quantities are naturally defined as
\begin{equation*}
 \overline{X}(z,t) := \frac1{t} \int_0^t \tond{X\circ\phi^s}(z) ds \ ,
\qquad
 \sigma^2_t\quadr{X} := \overline{X^2} - \overline{X}^2 \ . 
\end{equation*}
We remark that $\sigma^2_t\quadr{X}$ is a function of $(z,t)$, and that
all the previously defined averages and variances are clearly functions
of $\beta$ and $a$, even though we do not write these dependences
explicitly.

We state here a particular version of the main result of the paper
giving, for fixed coupling constant, and small specific energies, a
control for time scales growing as a power of $\beta$.  In the
statements of the present Section the symbols $C_1,\, C_2,\ldots$
denote constants that may have different values in different contexts.

\begin{theorem}
\label{t.main.power}
There exist positive constants $a^*$, $\beta_0$, $\beta_1$, $C_1$ and
$C_2$ such that, for all $0<a<a^*$, given the integer
$r:=\left\lfloor{C_1\sqrt\frac{1+2a}{a}} \right\rfloor$, there exists
an extensive polynomial $\Phi:{\mathcal M}\to\RR$ of degree $2r+2$,
such that, for all $\beta > \max\{\beta_0,\beta_1 r^6\}$ one has
\begin{equation*}
 m\left( z\in\RR^{2N}\ \colon\  \sigma^2_t[\Phi] \geq
  \frac{\sigma^2[\Phi]}{\sqrt\beta} \right)
\leq  \frac{C_2}{\beta}\tond{\frac{t}{\overline t}}^{2} \ ,
 \qquad\qquad  \overline t={\beta^{r/2}}
\end{equation*}
\end{theorem}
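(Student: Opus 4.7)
The plan is to combine a perturbative construction of $\Phi$ with Markov's inequality applied to the Gibbs average of the time variance. If we produce an extensive polynomial $\Phi$ of degree $2r+2$ whose Poisson bracket $R:=\Poi{\Phi}{H}$ is perturbatively small in a mean-square sense uniformly in $N$, then the desired probability estimate follows from a short deterministic calculation plus two Gibbs-moment estimates.

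The construction of $\Phi$ would proceed by the extensive Lie-series procedure sketched in the introduction: first transform the quadratic part of~(\ref{e.H}) so as to isolate the small coupling and the resonant block, and then eliminate the remaining non-resonant part order by order. After $r-1$ steps one obtains the extensive polynomial of degree $2r+2$ with an extensive remainder $R$; the choice $r\sim\sqrt{(1+2a)/a}$ is the standard Nekhoroshev optimum that balances the factorial growth of the generating functions against the perturbative smallness of $R$. For the analytic core, set $g(s):=\Phi(\phi^s z)$ and note $g(s)-g(u)=\int_u^s R(\phi^\tau z)\,d\tau$, so that $|g(s)-g(u)|\leq\int_0^t|R(\phi^\tau z)|\,d\tau$ for every $s,u\in[0,t]$. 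Hence pointwise
\begin{equation*}
\sigma^2_t[\Phi](z) = \frac{1}{2t^2}\int_0^t\!\!\int_0^t (g(s)-g(u))^2\,ds\,du
\leq \biggtond{\int_0^t|R(\phi^\tau z)|\,d\tau}^2,
\end{equation*}
and applying Cauchy--Schwarz to the right-hand side together with the invariance of $dm$ under $\phi^\tau$ gives $\inter{\sigma^2_t[\Phi]}\leq t^2\inter{R^2}$.

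Markov's inequality then yields
\begin{equation*}
m\biggtond{\sigma^2_t[\Phi]\geq\frac{\sigma^2[\Phi]}{\sqrt\beta}}
\leq \frac{\sqrt\beta\,\inter{\sigma^2_t[\Phi]}}{\sigma^2[\Phi]}
\leq \frac{\sqrt\beta\,t^2\inter{R^2}}{\sigma^2[\Phi]},
\end{equation*}
which matches the target $C_2 t^2\beta^{-r-1}$ as soon as $\inter{R^2}/\sigma^2[\Phi]\leq C\beta^{-r-3/2}$. The two Gibbs-moment estimates to be proved are therefore a lower bound $\sigma^2[\Phi]\geq CN/\beta^2$, reflecting the near-independence of the local contributions to the extensive $\Phi$ at large $\beta$, and an upper bound of the form $\inter{R^2}\leq C'N/\beta^{r+7/2}$, encoding both the perturbative smallness of $R$ and its extensive structure.

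This last step is, in my view, the main obstacle: one has to control the Gibbs moments of extensive polynomials of degree $O(r)$ against a non-quadratic measure (the quartic on-site term in~(\ref{e.H}) does not factorize) with constants uniform in $N$. The combinatorial proliferation of monomials as $r$ grows must be tamed by the exponential spatial localization of the transformed coordinates, and this is presumably where the hypothesis $\beta>\beta_1 r^6$ enters the picture, absorbing the polynomial-in-$r$ prefactors and leaving the clean $\beta^{-r}$ decay that underlies the time scale $\overline t=\beta^{r/2}$ in the final bound.
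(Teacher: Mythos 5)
Your reduction coincides with the paper's: the third estimate of Theorem~\ref{t.Adiab0} is exactly a Markov estimate based on $\inter{\sigma^2_t[\Phi]}\le\inter{\overline{(\Delta_t\Phi)^2}}\le \tfrac{t^2}{3}\inter{R^2}$ with $R=\Poi{\Phi_r}{H_1}$, and Theorem~\ref{t.main.power} is obtained from it with $\delta=\beta^{-1/2}$, $\nu=1/2$ and $r=\lfloor\mu_*/(2\mu)\rfloor$; your two targets, $\sigma^2[\Phi]\ge C N/\beta^2$ and $\inter{R^2}\le C' N/\beta^{r+7/2}$, are precisely Propositions~\ref{p.lower.sig} and~\ref{p.rho2.up} (case~1 with $\nu=1/2$). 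The genuine gap — which you yourself flag — is that these two Gibbs-moment estimates are the actual content of the theorem and nothing in your proposal proves them. Obtaining $\inter{R^2}=\mathcal{O}(N)$ rather than $\mathcal{O}(N^2)$ requires (i) that the perturbative construction preserve extensivity \emph{and} the exponential decay of the interaction range, i.e.\ the propagation of the classes $\Dscr(C,\sigma)$ through the Lie transform (Lemma~\ref{lem.poisson}, Lemma~\ref{l.yal}, Proposition~\ref{prop.gen}), and (ii) the decay of correlations of the non-product Gibbs measure, Proposition~\ref{p.aver.2}, whose proof rests on the cancellation mechanism of Appendix~\ref{app.canc} together with the chain-cutting and partition-function estimates of Appendix~\ref{app.gibbs}; without these, the $N^2$ cross terms in $R^2=\sum_{i,j}\rho_i\rho_j$ are uncontrolled. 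Similarly, the lower bound on $\sigma^2[\Phi]$ is delicate because $\Phi$ is not homogeneous: one must show that the variance of the quadratic seed (bounded below by $\Omega^2/(8\beta^2)$ up to $\mathcal{O}(a)$ corrections) dominates both the covariances between translates and the contributions of the higher-degree terms $\varphi_1,\dots,\varphi_r$, and it is there (Lemma~\ref{l.un-nono}, requiring $\beta\gtrsim r^3$, which with $\nu=1/2$ becomes the $\beta>\beta_1 r^6$ of the statement) that the lower bound on $\beta$ is really consumed, not merely in absorbing polynomial prefactors in the $\inter{R^2}$ bound.

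A smaller but substantive point: your attribution of $r=\lfloor C_1\sqrt{(1+2a)/a}\rfloor$ to a ``standard Nekhoroshev optimum'' is not how this $r$ arises. In the power-law regime $r$ is not optimized against $\beta$; it is the largest order compatible with the small-divisor condition $2r\mu<\mu_*$ required to invert $\lie{H_0}=\lie{\Omega}\tond{\Id+K}$ by a Neumann series (Proposition~\ref{p.2.1} and Lemma~\ref{l.yal}), and since $\mu_*\sim\sqrt{2\mu}$ (because $\sigma_1=\sigma_0/2$) this gives $r^*\sim\sqrt{(1+2a)/a}$. The factorial-versus-smallness optimization you invoke is performed only in the stretched-exponential regime (case {\bf HD2}, $r\sim\beta^{1/3}$, Theorem~\ref{t.main.exp}).
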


\begin{remark}
According to the result stated above, given a system with
Hamiltonian~\eqref{e.H} with a sufficiently small, and fixed, coupling
constant $a$, there exists a quantity whose time variance is smaller
than its phase variance for a set of initial data of large Gibbs
measure; this holds over long times scaling with
$\epsilon^{-C/\sqrt{a}}$, for small enough average specific energy
$\epsilon$. Actually, given the relation between $\beta$, $r$ and $a$,
the minimal time scale (corresponding to the maximal specific energy
allowed) is of order $r^r$, i.e. $(1/\sqrt{a})^{1/\sqrt{a}}$.
\end{remark}

We may state another result, where the time scale is a stretched
exponential in $\beta$.  The price to be paid, again in the hypothesis
of a fixed coupling constant, is that the specific energy must be
bounded both from below and from above; otherwise it is necessary to let
the coupling vanish as the specific energy goes to zero.

\begin{theorem}
\label{t.main.exp}
There exist positive constants $a^*$, $\beta_*$, $C^*$, $C_1$, $C_2$
and $C_3$ such that, for all $\beta \geq \beta_*$ and $0<a<a^*$
satisfying
\begin{equation*}
 \sqrt{a}\sqrt[3]\beta \leq C^* \ ,
\end{equation*}
given the integer $r:=\left\lfloor {C_1\sqrt[3]\beta} \right\rfloor$,
there exist an extensive polynomial $\Phi:{\mathcal M}\to\RR$ of degree
$2r+2$, such that one has
\begin{equation*}
 m\left( z\in\RR^{2N}\ \colon\  \sigma^2_t[\Phi] \geq
  \frac{\sigma^2[\Phi]}{\sqrt\beta} \right)
\leq  \frac{C_2}{\beta}\tond{\frac{t}{\overline t}}^{2} \ ,
 \qquad\qquad  \overline t={e^{C_3\sqrt[3]\beta}}
\end{equation*}
\end{theorem}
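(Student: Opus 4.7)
The plan is to derive Theorem~\ref{t.main.exp} from the master Theorem~\ref{t.Adiab0} by choosing its free parameters in a different regime from the one used in Theorem~\ref{t.main.power}. There, the normal-form order $r$ is fixed as a function of $a$ and $\beta$ is then forced above the demanding threshold $\beta_1 r^6$, yielding power-law times $\beta^{r/2}$. Here, instead, I would let $r$ grow with $\beta$, as $r := \lfloor C_1\sqrt[3]\beta\rfloor$, and retune the analyticity losses of the Lie-series normalization so that the single surviving compatibility condition between the coupling $a$ and the normal-form order is the one displayed in the statement, $\sqrt{a}\sqrt[3]\beta \leq C^*$.

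Concretely, the argument proceeds in three steps. First, one applies Theorem~\ref{t.Adiab0} with $r$ as above and checks its hypotheses: the condition tying $a$ to the order, which in Theorem~\ref{t.main.power} appeared through $r \sim \sqrt{(1+2a)/a}$, here becomes $\sqrt{a}\,r \leq C$, i.e.\ exactly $\sqrt{a}\sqrt[3]\beta \leq C^*$; the smallness conditions on $1/\beta$ versus $r$ are saturated by construction and collapse to the single lower bound $\beta \geq \beta_*$. Second, one reads off the extensive polynomial $\Phi$ of degree $2r+2$ and the associated Poisson-bracket estimate on its dynamical drift, which is super-polynomially small in $r$. Third, one converts this into the time scale $\overline t$: the natural output of the master theorem is $\overline t \sim \beta^{r/2} = \exp\bigl((r/2)\log\beta\bigr)$, and with $r \sim \sqrt[3]\beta$ this exceeds $e^{C_3\sqrt[3]\beta}$ for every $C_3 > 0$ once $\beta$ is large enough, so the weaker statement $\overline t = e^{C_3\sqrt[3]\beta}$ is automatic. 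The Chebyshev-type measure estimate is inherited verbatim from the master theorem, being nothing but a ratio between the dynamical drift and the Gibbs variance of $\Phi$, both of which were already controlled uniformly in $N$.

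The main obstacle is the very first step: one must verify that the estimates of Theorem~\ref{t.Adiab0} remain extensive (uniform in $N$) when $r$ is allowed to depend on $\beta$ instead of being held fixed. Concretely, the combinatorial factors $r!$, the geometric loss of analyticity accumulated along the Lie-series scheme, and the $N$-independent constants coming from the Cauchy estimates on the vector fields must all be tracked explicitly and reabsorbed against powers of $1/\beta$. This is precisely where the cube root emerges: the condition $\beta > \beta_1 r^6$ of Theorem~\ref{t.main.power} is the trace, in the fixed-$r$ regime, of a master inequality of the form $\beta \geq C r^{\alpha}$ which, after a joint optimization over $r$ and the step-by-step analyticity loss, relaxes to the far weaker $\beta \geq C r^3$ used here. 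That balance is what makes $r \sim \sqrt[3]\beta$ the maximal growth compatible with all the perturbative estimates remaining valid, and what turns the power-law time scale of Theorem~\ref{t.main.power} into the stretched exponential claimed here.
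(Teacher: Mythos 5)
Your high-level plan—reduce to the master Theorem~\ref{t.Adiab0} with $r \sim \sqrt[3]\beta$—is the same as the paper's, and you correctly trace the compatibility condition $\sqrt{a}\,r \leq C$ back to the homological-equation threshold $2r\mu < \mu_*$ together with the hidden $\sqrt{\mu}$ inside $\mu_*$ (through the factor $e^{-\sigma_1}=\sqrt{2\mu}$), which is indeed what turns $\mu\sqrt[3]\beta\lesssim 1$ into $\sqrt{a}\sqrt[3]\beta\leq C^*$.

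The gap is in your Step~3. You claim ``the natural output of the master theorem is $\overline t\sim\beta^{r/2}$'' and then substitute $r\sim\sqrt[3]\beta$ to conclude. But $\overline t=\beta^\lambda$ with $\lambda\propto r$ is the \textbf{HD1} time scale, and its validity rests on $\beta\geq(\beta^* r^3)^{1/\nu}$ for some fixed $\nu<1$; with $r\propto\sqrt[3]\beta$ this reads $\beta\gtrsim\beta^{1/\nu}$, which fails for $\beta$ large. The only admissible $\nu$ when $r\propto\sqrt[3]\beta$ is $\nu=1$, but then $\lambda=1/2$ and $\overline t=\sqrt\beta$, not exponential. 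So the HD1 branch simply does not accommodate $r$ growing that fast, and $\beta^{r/2}$ with $r\sim\sqrt[3]\beta$ is not a bound the master theorem actually delivers (it would be $e^{\frac12\sqrt[3]\beta\log\beta}$, strictly stronger than what the paper proves). The genuine mechanism is the separate \textbf{HD2} analysis: the remainder bound $\inter{R^2}$ contains the factor $a_r=(e\beta^* r^2/\beta)^{r+2}\,r!/r^3$, which is optimized over $r$ by a Stirling-type balance between $\beta^{-r}$ and $r!$; the optimal $r_{\rm opt}\sim\sqrt[3]\beta$ yields $a_{r_{\rm opt}}\sim e^{-\kappa}$ with $\kappa=\tfrac32\sqrt[3]{\beta/(e\beta^*)}$, and the time scale $\overline t=\kappa^{9/2}e^{\kappa/2}/\beta$ is read off from there. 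The paper's proof of the present statement is precisely the two-line reduction: apply Theorem~\ref{t.Adiab0} with \textbf{HD2}, third estimate, $\delta=\beta^{-1/2}$, and absorb the algebraic prefactors $\kappa^{9/2}/\beta$ into a slightly smaller constant $C_3<1/2$. Your proposal needs to replace the $\beta^{r/2}$ heuristic by this optimization-over-$r$ argument; otherwise the chain of inequalities does not close.
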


\noindent
The proofs of the theorems stated here are given in Section~\ref{s:5} as
corollaries of the main Theorem~\ref{t.Adiab0}.

The paper actually consists of two separate parts, namely: (i)~the
construction of an approximate conserved quantity with perturbation
methods, and (ii)~the control of the dynamical fluctuation using
statistical tools.

The first part makes use of the formal perturbation expansion method
introduced in~\cite{GioG78} and used in subsequent works, but implements
a quantitative scheme of estimates that exploits the characteristics of
the present system, namely the complete resonance, the extensivity of
the model and the exponential decay of interactions with the distance,
in order to produce estimates uniform in $N$.  The role of complete
resonance in removing the critical dependencies on $N$ goes back
to~\cite{BFG,BGG89} and has been used later on,
e.g. in~\cite{BamG93,GioPP12}.  The extensivity property has been dealt
with in our previous paper~\cite{GioPP12} exploiting the \emph{cyclic
symmetry}; some results are restated here in a more terse way, using the
formalism of circulant matrices.  The method of control of the
exponential decay introduced here is new, up to our knowledge.  The
quantitative perturbation scheme developed in the present work
significantly improves the one in~\cite{CarM12}.  In this respect we put
emphasis on the different method of solving the homological equation.
The problem is to invert a linear operator that depends on the coupling
parameter $a$.  We are able to formulate a direct inversion lemma, thus
replacing the truncation method used in~\cite{CarM12} with a more
effective one; the price we pay is that, at variance with their
approach, we actually have to control small divisors, thus introducing
an upper bound on the number of perturbation steps allowed.  The crucial
positive outcome of our choice, in this technical point, is the
possibility of preserving the independence of the parameters $a$ and
$\epsilon$, while they are collectively controlled in~\cite{CarM12} as
$a+\epsilon$.

In the second part, the statistical control of the fluctuation is
reminiscent of the scheme used in~\cite{CarM12}.  However, we are able
to produce improved estimates of the adiabatic invariance, and to prove
almost everything independently and in a different way. First of all we
do not rely on probabilistic techniques (like marginal probabilities)
but we use a more direct approach. In particular we exploit a mechanism
of cancellations of unwanted interaction terms, which allows us to bring
into evidence the decay properties of spatial correlations.  A second
point concerns the fundamental use, besides the decay properties of
spatial correlations, of the short range interaction properties of the
system which are preserved by our pertubative construction, the latter
being another outcome of the improvements of the first part.

Let us add a comment on the possible extensions of the present work.
Most of the ideas and techniques used here are not restricted to the one
dimensional case. E.g., properties like complete resonance, extensivity
and exponential decay of interaction range may be handled essentially in
the same way even for a multimensional lattice, perhaps at the price of
more complicated estimates. A trickier formalization may be required
in the part concerning the statistical estimates, in particular for the
cancellations.  

A further comment is devoted to the fact that in both the present result
and in~\cite{CarM12} the coupling parameter must be small enough: thus
the applicability to the Fermi-Pasta-Ulam model still remains open.
This is particularly relevant since the question of the relaxation
properties of the latter model at the thermodynamic limit still remains
not completely understood (see, e.g.,~\cite{FPU50}), despite some recent
advancement in the investigation of the integrability origin of the long
time stability exhibited both with long wave initial data and with
generic initial data (see, e.g.,
\cite{BPon05,palo,BenLP09,BenP11,BenCP13,BeGP,CGGP,GenGPP12}).

We close this review of the literature with a very recent\footnote{we
became aware of it actually during submission} and interesting result:
paper~\cite{DeRH13}. Although they consider different models and deal
with a different question, i.e. the problem of heat conduction, it
appears as a relevant work since they are able to perform a normal form
at the thermodynamic limit.

The paper is organized as follows. In Section~\ref{s:extensivity} the
general setting is introduced, with the formalization of the extensivity
of the system, the interaction range and its relation with the
perturbation tools. In Section~\ref{s:quadratic} we recall the normal
form transformation of the quadratic part of the Hamiltonian, which
produces the zeroth order approximation of the adiabatic invariant; the
formal construction is then carried on at higher perturbative orders in
Section~\ref{s:construction}. The control on the time evolution of the
adiabatic invariant, and the estimates on the measure of the set of
initial data for which they hold, are given in Section~\ref{s:5}, where
we actually give the complete and detailed version of the main result of
paper. An Appendix with several technical lemmas closes the paper.


\section{General setting}
\label{s:extensivity}

One of the guiding ideas of this work is to exploit some general
characteristics of a many particles mechanical system:
\begin{enumerate}[label=(\roman{*}), ref=(\roman{*})]
\item particles interacting with a two-body potential;

\item the potential is invariant with respect to rotations and
  translations;

\item the potential is assumed to be a smooth function; actually we
  consider the stronger condition of being analytic in the
  coordinates.
\end{enumerate}

\noindent
With these conditions the Hamiltonian may be given the generic form
\begin{displaymath}
H(q,p) = \sum_j \frac12 p_j^2 + \frac12\sum_{i\not=j} V(q_i,q_j)
\end{displaymath}
where the potential $V$ possesses the symmetry and short range
properties above.

These properties are quite general ones.  E.g., besides the realm of
Statistical Mechanics, they also apply to the Solar System, and have
actually been used by Lagrange in his theory of secular motions.

Here we restrict our attention to a system of identical particles on a
$d$--dimensional lattice, with a short or even finite range interaction.
In this case one needs just to know the local interaction of a particle
with its neighbors, or with the whole chain, and the complete
Hamiltonian is the sum of the contribution of every particle to both the
kinetic and the potential energy.  This is usually expressed by saying
that the Hamiltonian is {\sl extensive}.  Functions possessing the same
extensivity property of the Hamiltonian are particularly relevant.

\subsection{Formalization}
\label{ss:form}

We restrict our attention to the simplified model of a finite one
dimensional lattice with periodic boundary conditions and short range
interactions. We denote by $x_j,\,y_j$ the position and the momentum
of a particle, with $x_{j+N} = x_j$ and $y_{j+N}=y_j$ for any $j$.

\paragraph{Cyclic symmetry. }
\label{p:cyclic}

We give a formal implementation of extensivity by introducing the
concept of {\emph cyclic symmetry}. The \emph{cyclic permutation}
operator $\tau$ is defined as
\begin{equation}
\label{e.perm}
\tau(x_1,\ldots,x_N) = (x_2,\ldots,x_N,x_1)\ ,\quad
\tau(y_1,\ldots,y_N) = (y_2,\ldots,y_N,y_1)\ .
\end{equation}
We shall denote $\bigl(\tau f\bigr)(x,y)=f(\tau x,\tau y)$.

\begin{definition}
We say that a function $F$ is \emph{cyclically symmetric} if
$\tau F = F$.
\end{definition}

Cyclically symmetric functions may be constructed as follows.  Let $f$
be given.  A new function $F= f^{\oplus}$ is constructed as
\begin{equation}
F(x,y)= f^{\oplus}(x,y) = \sum_{l=1}^{N} \tau^l f(x,y)\ .
\label{e.cycl-fun}
\end{equation}
The upper index $\oplus$ should be considered as an operator defining
the new function.  We shall say that $f^{\oplus}(x,y)$ is generated by
the \emph{seed} $f(x,y)$.  Generally speaking the decomposition of a
cyclically symmetric function in the form~\eqref{e.cycl-fun} need not
be unique. We shall often use the convention of denoting extensive
functions with capital letters and their seeds with the corresponding
lower case letter.

The following properties will be useful:

\begin{enumerate}[label=(\roman{*}), ref=(\roman{*})]
\item if $f=f'+f''$ is a seed of a function $F$ then $\tau^{s'} f'
  +\tau^{s''}f''$ is also a seed of the same function, for any
  integers $s',\,s''$;
\item  the Poisson bracket $h^{\oplus} =
\Poi{f^\oplus}{g^\oplus}$ between two cyclically symmetric functions
is also cyclically symmetric.  A seed is easily constructed as
$h = \Poi{f}{g^\oplus}$, but other choices are allowed using the
property~(i) above.
\end{enumerate}

\paragraph{Norm of an extensive function. }
\label{p:norm-ext}

Assume now that we are equipped with a norm for our functions
$\norm{\cdot}$, e.g. the supremum norm over a suitable domain. We
introduce a norm $\ncamp{\cdot}$ for an extensive function
$F=f^\oplus$ by defining
\begin{equation*}
\bigl\|F\bigr\|^{\oplus} = \|f\|\ ,
\end{equation*}
i.e. we actually measure the norm of the seed. An obvious remark is
that the norm so defined depends on the choice of the seed, but this
will be harmless for the following reason. All the perturbation
procedure and the quantitative estimates on the norm, in the rest of
the paper, are based on the fact that all algebraic operations, in
particular Poisson brackets, induce a natural choice of the seed for
the resulting function. Thus the relevant estimates will be made
directly on the seed so that the initial choice is propagated through
the whole procedure. For these reasons, all the quantitative estimates
in the rest of the paper, could be restated as: the function we are
considering possesses a seed whose norm satisfies the stated
inequality. We do not explicitly mention this fact in every statement.
Moreover, we also have the following relevant facts:

\begin{enumerate}[label=(\roman{*}), ref=(\roman{*})]
\item for any $s$ one has $\norm{\tau^s f} = \norm{f}$; 
\item the inequality
$\|F\| \le N \|f\|$
holds true for any choice of the seed. 
\end{enumerate}

\noindent
This is particularly useful if we are able to produce norms of the
seed which are independent of $N$, since this fully exploits the
property of the system of being extensive. This is what we plan to do,
indeed.

\paragraph{Polynomial norms.}
\label{p:polinorms}

Let $f(x,y)=\sum_{jk}f_{j,k} x^jy^k$ be a homogeneous polynomial of
degree $s$ in $x,\,y$.  We define its polynomial norm as
\begin{equation*}
\|f\| := \sum_{j,k} |f_{j,k}|\ .
\end{equation*}

\paragraph{Short range interaction.}
\label{p:shortrange}
The short range interaction is characterized by writing the seed $f$
of a function as a sum $f=\sum_{m}f^{(m)}$, where the decomposition
$f^{(m)}$ is explained in Section~\ref{ss.int.range},
formula~\eqref{dcdm.5}.  We consider in particular the case of
exponential decay of interactions using two positive parameters: we
say that a function $f$ expanded as above is of class
$\Dscr(C_f,\sigma)$ in case one has $\norm{f^{(m)}}\le C_f
e^{-m\sigma}$.  Such a characterization of function is particularly
useful in statistical calculation.  The known quantitative
perturbation schemes will be adapted in order to deal with these
classes of functions.

\paragraph{Circulant matrices. }
\label{p:circulant}

Let us restrict our attention to the harmonic approximation around a
stable equilibrium. The Hamiltonian is a quadratic form represented by
a matrix $A$
\begin{displaymath}
H_0(x,y) = \frac12 y\cdot y + \frac12 Ax\cdot x.
\end{displaymath}
If the Hamiltonian $H$ is extensive, then the same holds also
for its quadratic part $H_0=h_0^\oplus$.  This implies that $A$
commutes with the matrix $\tau$ representing the cyclic
permutation~\eqref{e.perm}
\begin{equation*}
\tau_{ij}=
\begin{cases}
1\quad {\rm if}\ i=j+1\>({\rm mod}\,N)\>,\\
0\quad \rm{otherwise}.
\end{cases}
\end{equation*}
We remark that the matrix $\tau$ is orthogonal and generates a cyclic
group of order $N$ with respect to the matrix product.

We recall the following
\begin{definition}
\label{d.circulant}
A matrix $A\in {\rm Mat}_{\RR}(N,N)$ is said to be \emph{circulant} if
\begin{equation*}
A_{j,k} = a_{(k-j)\> ({\rm mod}\,N)}\ .
\end{equation*}
\end{definition}
Actually, the set of circulant matrices is a subset of Toepliz
matrices, i.e those which are constant on each diagonal. For a
comprehensive treatment of circulant matrices, see, e.g.,
\cite{Dav79}.
We just recall some properties that will be useful later.

\begin{enumerate}
\item The set of $N\times N$ circulant matrices is a real vector space
  of dimension $N$, and a basis is given by the cyclic group generated
  by $\tau$ (see 3.1 of \cite{Dav79}).
\item The set of matrices which commute with $\tau$, i.e. those $A$ such
      that $A\tau=\tau A$, coincides with the set of circulant matrices
      (see 3.1 of \cite{Dav79}).
\item The set of eigenvalues of a circulant matrix is the Discrete
  Fourier Transform of the first row of the matrix and vice-versa. This
  allows to construct the circulant matrix from its spectrum.
\item Let $M^2=A$, where $A$ is circulant; then $M$ is circulant,
  too. Moreover, from the definition of $M:=\sqrt{A}$, it follows that
  if $A$ is symmetric, then $M$ is also symmetric.
\end{enumerate}

In our problem the cyclic symmetry of the Hamiltonian implies that the
matrix $A$ of the quadratic form is circulant. Obviously it is also
symmetric, so that the space of matrices of interest to us has
dimension~$\modf{\frac{N}2}+1$. Indeed, a circulant and
symmetric matrix is completely determined by $\modf{\frac{N}2}+1$
elements of its first line.

\def\supp{\mathop{\rm supp}}
\def\diam{\mathop{\rm diam}}
\def\corsivo#1{{\sl #1}}

\subsection{Interaction range}
\label{ss.int.range}

We give here a formal characterization of finite range interaction,
pointing out some properties that will be useful in the rest of the
paper.  We first consider the case of an infinite chain, which is
easier to deal with.  Then we shall point out the differences with the
periodic case. 

\paragraph{The infinite chain.}
We start with some definitions. Let us label the variables as
$x_l,y_l$ with $l\in\ZZ$. Let us consider a monomial $x^jy^k$ (in
multi-index notation). We define the \emph{support} $S(x^jy^k)$ of the
monomial and the \emph{interaction distance} $\ell(x^jy^k)$ 
as follows: considering the exponents $(j,k)$ we set
\begin{equation*}
S(x^jy^k) = \{l\>:\>j_l\neq0 {\rm\ or\ } k_l\neq0 \}\ ,\quad
\ell(x^jy^k) = \diam\bigl(S(x^jy^k)\bigr) \ .
\end{equation*}
We say that the monomial is \emph{left aligned} in case
$S(x^jy^k)\subset \{0,\ldots,\ell(x^jy^k)-1\}$.

The definitions above is extended to a homogeneous polynomial $f$ by
saying that $S(f)$ is the union of the supports of all the monomials
in $f$, and that $f$ is left aligned if all its monomials are left
aligned.  The relevant property is that if $\tilde f$ is a seed of a
cyclically symmetric function $F$, then there exists also a left
aligned seed $f$ of the same function $F$: just left align all the
monomials in $\tilde f$.

For the seed $f$ of a function (using $z$ to collectively denote the
$x$ and $y$ variables, and $k$ the corresponding mulit-index) consider
the decomposition
\begin{equation}
\label{e.decomp}
f(z) =  \sum_{m\ge 0} f^{(m)}(z)\ ,\quad
 f^{(m)}(z) = \sum_{\ell(k)\le m} f_k z^k\ ,
\end{equation}
assuming that every $f^{(m)}$ is left aligned.  It would be
interesting to replace the inequality $\ell(k)\le m$ with equality,
but this is not compatible with the fact that the Poisson bracket can
possibly reduce the interaction range.  However, for our purposes it
is enough to assure two properties, namely: (i)~in $f^{(m)}$ there are
no terms with interaction range longer than $m$ (upper bound);
(ii)~the size of $f^{(m)}$, estimated with a norm, is of order
$\mu^m$, with some positive $\mu$.  This is what we are going to do.

For the Poisson bracket between two cyclically symmetric functions we
have
\begin{align*}
 \{f^{\oplus},g^{\oplus}\} &= \sum_{s,s'}\{\tau^{s}f,\tau^{s'}g\}
 =  \sum_{m,m'} \sum_{s,s'}\{\tau^{s}f^{(m)},\tau^{s'}g^{(m')}\}
\cr
 &=  \sum_{s} \tau^{s} 
     \Bigl(\sum_{m,m'} \sum_{s'}\{f^{(m)},\tau^{s'}g^{(m')}\}\Bigr)
  \ .
\end{align*}
The last expression immediately suggests to construct a seed by just
removing the translation $\tau^s$ and the sum over $s$.  However, we
remark 
 that the obvious equality
$$
\{\tau^{s+j}f^{(m)},\tau^{s'+j}g^{(m')}\} = 
 \tau^j\{\tau^s f^{(m)},\tau^{s'} g^{(m')}\}
$$
holds true.  Thus, we may replace any term
$\{f^{(m)},\tau^{s'}g^{(m')}\}$ for $s\in\ZZ$ with a translated one.

Let us exploit these facts.  Given $s\in\ZZ$, we concentrate our
attention on the expression $\{f^{(m)},\tau^{s'}g^{(m')}\}$.  The
following properties hold true.

\begin{enumerate}
\item If $s' < -m'$ or $s' >  m$ then one has
$\Poi{f^{(m)}}{\tau^{s'}g^{(m')}}=0$, for the two functions depend on
independent sets of variables.
\item If $s' < 0$ we may replace the seed $\Poi{f^{(m)}}{\tau^{s'}g^{(m')}}$
with $\Poi{\tau^{-s'}f^{(m)}}{g^{(m')}}$.
\item A seed for $\{f^{\oplus},g^{\oplus}\}$ is given by the $m+m'+1$ expressions
\begin{equation}
\label{dcdm.1}
\vcenter{\openup1\jot\halign{
\hfil$\displaystyle{#}$
&\hfil$\displaystyle{#}\hfil$
&\hfil$\displaystyle{#}\hfil$
&$\displaystyle{#}$\hfil
\cr
\{f^{(m)},g^{(m')}\}\,, & \{\tau f^{(m)},g^{(m')}\}\,, 
 & \ldots\,, & \{\tau^{m'}f^{(m)},g^{(m')}\} 
\cr
& \{f^{(m)},\tau g^{(m')}\}\,, 
 & \ldots\,, & \{f^{(m)},\tau^{m}g^{(m')}\} 
\cr
}}
\end{equation}
letting $m,m'\geq 0$.
\item Between the expressions in \eqref{dcdm.1} there are
\begin{equation}
\label{e.nutable}
\vcenter{\openup1\jot\halign{
\hfil$\displaystyle{#}$\hfil\quad
&$\displaystyle {#}$\hfil
&\quad{#}\hfil
\cr
 |m-m'|+1 & {\rm  with\quad}  \ell(\cdot) \le \max(m,m')\ , &plus \cr
 2 \hfil  & {\rm  with\quad}  \ell(\cdot) \le \max(m,m')+1\ , &plus \cr
 2 \hfil  & {\rm  with\quad}  \ell(\cdot) \le \max(m,m')+2\ , &plus \cr
      &\vdots \cr
 2 \hfil  & {\rm  with\quad}  \ell(\cdot) \le m+m'\ .\cr
}}
\end{equation}
\end{enumerate}
The third property follows from the first two, which are obvious. The
seed \eqref{dcdm.1} so found is left aligned. It may contain
duplicated monomials in some expression, but this is harmless because
we are only interested in bounding the interaction range. The last
property is just matter of counting.

\begin{figure}
\begin{center}
\includegraphics*[width=.9 \textwidth]{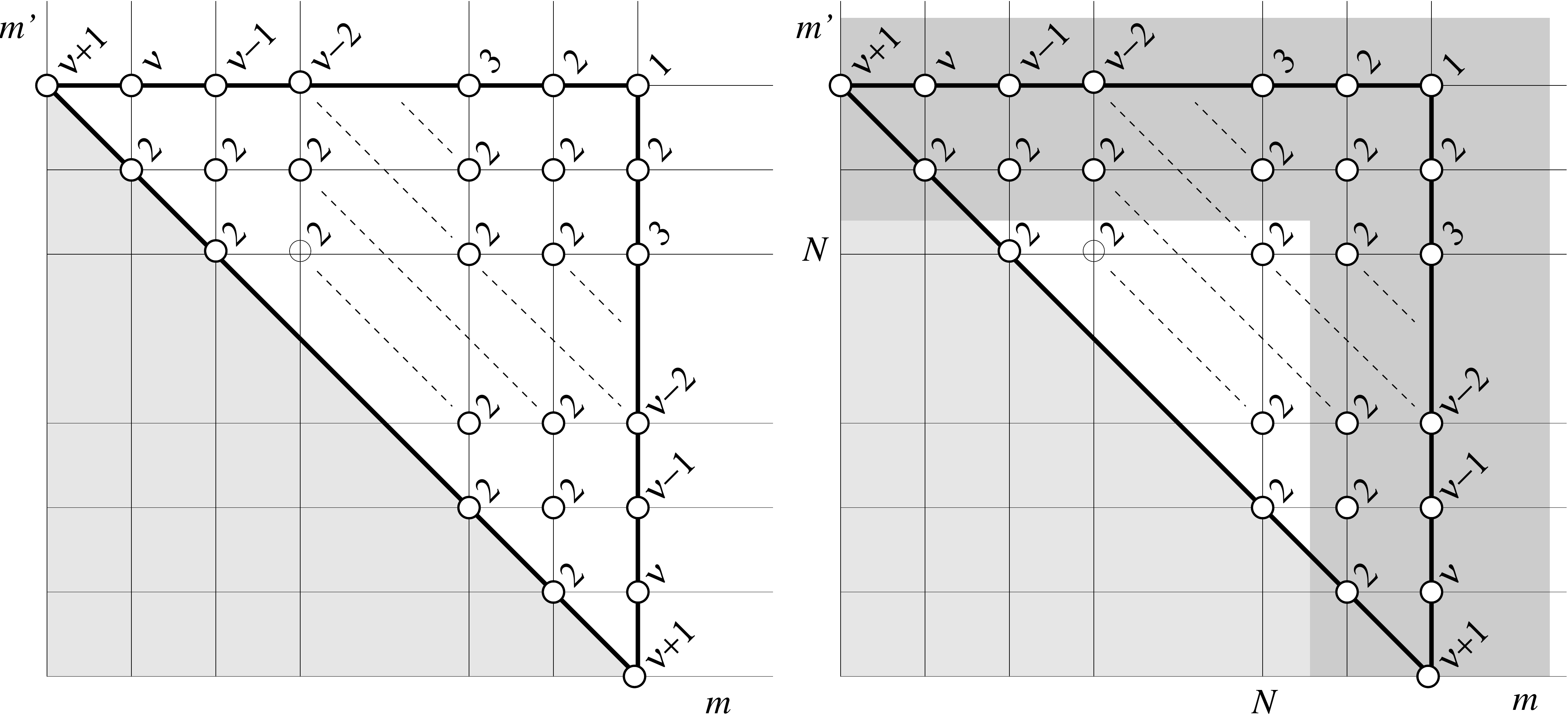}\\
\end{center}
\caption{ \label{fig:1}Graphical representation of the values of $m,\,
m'$ which concur in forming the seed $h^{(\nu)}$.  The white triangle
contains all nodes that must be selected according to
table~\ref{e.nutable}.  The left figure refers to the case of an
infinite chain.  The right figure shows which nodes are removed in the
case of a finite chain.  (See text for more details).}
\end{figure}

Denote now $h^{\oplus} =\{f^{\oplus},g^{\oplus}\}$. Letting $m$ and $m'$
to vary, we reorder the seed \eqref{dcdm.1} so that we can write
$h=\sum_{\nu\geq 0}h^{(\nu)}$.  To this end we collect together in
$h^{(\nu)}$ all expressions which according to~\eqref{e.nutable} have an
estimated upper bound of the interaction range equal to $\nu$.  This
assures on the one hand that the interaction range of $h^{(\nu)}$ does
not exceed $\nu$ and, on the other hand, that terms with interaction
range certainly less than $\nu$ are placed in some $h^{(\nu')}$ with
$\nu'<\nu$.  It is convenient to represent graphically the
table~\eqref{e.nutable} as a tridimensional diagram on $\NN^3$ by
putting $(m,m')$ on the horizontal plane and the admitted upper bounds
for $\ell(\cdot)$ on the vertical axis,
i.e. $\max(m,m'),\,\max(m,m')+1,\ldots,\, m+m'$.  To each non empty node
so identified we attach a weight given by the number of terms in the
first column of~\eqref{e.nutable}.  Then we make a section with the
horizontal plane of height $\nu$, thus obtaining the left diagram of
figure~\ref{fig:1} which represents schematically
all terms in~\eqref{e.nutable} that go into $h^{(\nu)}$.  The non empty
nodes on the selected plane satisfy $\max(m,m')\leq \nu\leq m+m'$,
namely they belong to the white triangle in the diagram.  The nodes of
the diagram together with their weight contain all the information we
need in order to estimate the norms.  The nodes inside the grey triangle
have $\ell(\cdot)$ certainly less than $\nu$, so we need not to include
them in $h^{(\nu)}$.  This rearrangement of seeds assures that the norm
of every term in $\eta^{(\nu)}$ has a factor $\mu^{\nu}$, as we shall
see later.

We conclude that 
\begin{equation}
\label{dcdm.4}
h^{(\nu)} = \sum_{m,m',s,s'}\{\tau^s f^{(m)},\tau^{s'}g^{(m')}\}
\end{equation}
the sum being extended to the nodes $m,m'$ in the diagram with the
translations $s,s'$ allowed for them according to the property 3.

\paragraph{The periodic chain.}
In view of the periodicity, the labels of the variables may be taken
to be $0,\ldots,N-1$, and the definitions of support, interaction
range and left alignment are easily adapted.  In particular, the
infinite sum on~\eqref{e.decomp} is truncated at $m=N$. Taking into
account the finite limits in the sums, we have
$$
h^{\oplus}=\{f^{\oplus},g^{\oplus}\} = 
  \sum_{s=0}^{N-1} \tau^{s} 
     \Bigl(\sum_{m,m'=0}^{N-1} \sum_{s'=0}^{N-1}\{f^{(m)},\tau^{s'}g^{(m')}\}\Bigr)
  \ .
$$
The seed's components $h^{(\nu)}$ are constructed in much the same way
with a minor change. Precisely in~\eqref{dcdm.1} we must
distinguish two different case. For $m+m' < N-1$ we get exactly the
same formula. For $m+m' \geq N-1$ we only have a subset of $N$
elements, namely
$$
\{f^{(m)},g^{(m')}\}\,,  \ldots\,,  \{f^{(m)},\tau^{N-1}g^{(m')}\} \ .
$$
This is represented in the right part of the diagram of
fig.~\ref{fig:1}, where the part to be omitted is covered in dark grey.

\paragraph{Exponential decay of interactions.}
We recall the definition given in Section~\ref{ss:form}.  The seed $f$
of a function $f$ is said to be of class $\Dscr(C_f,\sigma)$ in case
\begin{equation}
\label{dcdm.5}
\norm{f^{(m)}} \le C_f e^{-\sigma m}\ ,\quad C_f\gt 0\,,\> \sigma\gt 0\ ,
\end{equation}
where $f=\sum_{m}f^{(m)}$ is the expansion of $f$ in terms of
increasing interaction range, as in~(\ref{e.decomp}).

The following Lemma produces a general estimate of the Poisson bracket
specially adapted to the case of cyclically symmetric polynomials. It
is crucial for the control of the dependence on $N$ of the norms of
extensive functions generated by our perturbation scheme.

\begin{lemma}
\label{l.2.2}
Let $f(x,y)$ and $g(x,y)$ be homogeneous polynomials of degree $r$ and
$s$ respectively.  Then $\{f,g\}$ is a homogeneous polynomial of
degree $r+s-2$, and one has
\begin{displaymath}
\|\{f,g\}\| \le rs \|f\|\, \|g\|\ .
\end{displaymath}
Moreover, the seed $\{f,g^{\oplus}\}$ of $\{f^{\oplus},g^{\oplus}\}$
satisfies
\begin{equation}
\label{e.poisson.1}
\bigl\|\{f^{\oplus},g^{\oplus}\}\bigr\|^{\oplus} \le 
 rs \|f\|\, \|g\|.
\end{equation}
\end{lemma}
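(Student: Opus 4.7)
The plan is to prove both inequalities via the same basic chain of facts. The polynomial norm is submultiplicative, $\|pq\|\le\|p\|\,\|q\|$, which follows immediately from $(pq)_m=\sum_{j+k=m}p_j q_k$ and the triangle inequality; and Euler's identity, applied to the absolute-value majorant of any homogeneous polynomial $p$ of degree $d$ and evaluated at $(1,\dots,1)$, gives $\sum_l(\|\partial_{x_l}p\|+\|\partial_{y_l}p\|)=d\,\|p\|$. Writing $A(p):=\sum_l\|\partial_{x_l}p\|$ and $B(p):=\sum_l\|\partial_{y_l}p\|$, this sum rule reads $A(p)+B(p)=d\,\|p\|$.

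For the first inequality I would expand
\[
\{f,g\}=\sum_l\bigl(\partial_{x_l}f\cdot\partial_{y_l}g-\partial_{y_l}f\cdot\partial_{x_l}g\bigr),
\]
pass to polynomial norms using subadditivity and submultiplicativity, and then use $\sum_l a_l b_l\le(\sum_l a_l)(\sum_l b_l)$ for nonnegative reals to bound
\[
\|\{f,g\}\|\le A(f)B(g)+B(f)A(g).
\]
Since $A(f)B(g)+B(f)A(g)=(A(f)+B(f))(A(g)+B(g))-A(f)A(g)-B(f)B(g)$ and the subtracted terms are nonnegative, this is at most $(A(f)+B(f))(A(g)+B(g))=rs\|f\|\,\|g\|$, giving the first inequality.

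For the extensive inequality I would take as seed of $\{f^{\oplus},g^{\oplus}\}$ the natural expression $\{f,g^{\oplus}\}$ provided by the Poisson-bracket property of cyclically symmetric functions, so that $\|\{f^{\oplus},g^{\oplus}\}\|^{\oplus}=\|\{f,g^{\oplus}\}\|$. Rerunning the previous argument with $g$ replaced by $g^{\oplus}$ reduces everything to bounding $\|\partial_{x_l}g^{\oplus}\|$ and $\|\partial_{y_l}g^{\oplus}\|$ uniformly in $l$ and $N$. A direct computation gives $\partial_{y_l}(\tau^{s'}g)=\tau^{s'}(\partial_{y_{l-s'}}g)$; because the polynomial norm is invariant under the variable permutation $\tau$ and because $s'\mapsto l-s'$ is a bijection on $\ZZ/N\ZZ$, the triangle inequality yields
\[
\|\partial_{y_l}g^{\oplus}\|\le\sum_{s'=0}^{N-1}\|\partial_{y_{l-s'}}g\|=B(g),
\]
independently of $l$, and likewise $\|\partial_{x_l}g^{\oplus}\|\le A(g)$. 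Substituting these back reproduces the bound $A(f)B(g)+B(f)A(g)\le rs\|f\|\,\|g\|$ as before. The key conceptual point — and the only subtlety in the whole argument — is precisely this last one: although $g^{\oplus}$ is a sum of $N$ translates, each of its partial derivatives has polynomial norm controlled by $B(g)$ rather than by $NB(g)$, which is exactly what makes the estimate independent of $N$ and hence compatible with the extensive nature of the system.
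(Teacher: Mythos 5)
Your proof is correct and follows essentially the same route as the paper's: both arguments boil down to the same coefficient-level estimate, and your uniform bound $\|\partial_{y_l}g^{\oplus}\|\le B(g)$ (proved by translating $\partial_{y_l}(\tau^{s'}g)=\tau^{s'}\partial_{y_{l-s'}}g$ and reindexing) is precisely the paper's identity $\sum_m(\tau^{-m}k')_l=|k'|$ recast at the level of norms. The repackaging through $A(p)=\sum_l\|\partial_{x_l}p\|$, $B(p)=\sum_l\|\partial_{y_l}p\|$ and the Euler identity $A(p)+B(p)=d\,\|p\|$ is a clean way to organize the bookkeeping, but it is not a different proof.
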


\begin{proof}
In order to prove the first inequality write the Poisson bracket as
$$
\{f,g\} = \sum_{j,k,j',k'} f_{j,k} g_{j',k'} 
 \sum_{l=1}^{n} \frac{j_l k'_l-j'_l k_l}{x_l y_l} 
 \; x^{j+j'}y^{k+k'}\ ,
$$
In view of the definition of the norm we may estimate
$$
\|\{f,g\}\| \le \sum_{j,k,j',k'} 
  |f_{j,k}|  \, |g_{j',k'}| 
 \sum_{l=1}^{n}(j_l k'_l+j'_l k_l)\ .
$$
Since  $j'_l\le s$ and
$k'_l\le s$ one has
$\sum_{l=1}^{n}(j_l k'_l+j'_l k_l) \le
 s\sum_{l=1}^{n}(j_l+k_l) = rs$, which 
readily gives the first inequality.  Coming
to~\eqref{e.poisson.1}, remark that  we may write
$$
f^{\oplus}(x,y) =
 \sum_{j,k}  f_{j,k} \sum_{m=1}^{N} (\tau^{m}x)^j (\tau^m y)^k\ ,
$$ 
meaning that all monomials $(\tau^{m}x)^j (\tau^m y)^k$ have the
 same coefficient.  Differentiating with respect to $x_l$ yields
$$
\frac{\partial f^{\oplus}}{\partial x_l} 
= \sum_{j,k} \sum_{m=1}^{N} 
 f_{j,k} \frac{(\tau^{-m} j)_l}{x_l}  (\tau^{m}x)^j (\tau^m y)^k\ .
$$
Using the cyclic decomposition of the Poisson bracket
$\{f^{\oplus},g^{\oplus}\} = \bigl(\{f,g^{\oplus}\}\bigr)^{\oplus}$
one gets
\begin{align*}
\{f,g^{\oplus}\} = \sum_{j,k,j',k'} f_{j,k} g_{j',k'}
 \sum_{l=1}^{N} \frac{j_l}{x_l y_l} \Bigg(
  &x^j y^k\sum_{m=1}^{N} (\tau^{-m} k')_l (\tau^m x)^{j'}(\tau^m  y)^{k'} +
\cr
 - &x^{j'} y^{k'}\sum_{m=1}^{N} (\tau^{-m} j')_l (\tau^m x)^{j'}(\tau^m y)^{k'}
  \Bigg)\ .
\end{align*}
The norm is thus estimated as
\begin{equation*}
\|\{f,g^{\oplus}\}\| \le \! \sum_{j,k,j',k'}
 |f_{j,k}| \; |g_{j',k'}|
  \sum_{l=1}^{N}\left( j_l \sum_{m=1}^{N} (\tau^{-m} k')_l +
   k_l \sum_{m=1}^{N} (\tau^{-m} j')_l\right)\ .
\end{equation*}
Remarking that
$\sum_{m=1}^{N} (\tau^{-m} j')_l = |j'|$  and
$\sum_{m=1}^{N} (\tau^{-m} k')_l = |k'|$, one has
$$
\sum_{l=1}^{N} ( j_l |k'| + k_l |j'|) \le
 (|j'|+|k'|) \sum_{l=1}^{N} (j_l + k_l) = 
  (|j'|+|k'|) (|j|+|k|)\ .
$$
In view of the definition of the norm one gets
$$
\|\{f,g^{\oplus}\}\| \le rs
 \|f\|\, \|g\| \ ,
$$
from which \eqref{e.poisson.1} follows.
\end{proof}

The next statements provide the basic estimates for controlling the
exponential decay in the framework of perturbation theory.

\begin{lemma}
\label{lem.poisson}
Let $F,\,G$ be cyclically symmetric homogeneous polynomials of degree
$r',r''$ respectively.  Let the seeds $f,\,g$ be of class
$\Dscr(C_f,\sigma')$ and $\Dscr(C_g,\sigma'')$, respectively, and let
$\sigma\lt\min(\sigma',\sigma'')$.  Then there exists $C_h\ge 0$ such
that the seed $h$ of $H=\Poi{F}{G}$ is of class $\Dscr(C_h,\sigma)$.
An explicit estimate is
\begin{equation*}
C_h = \frac{r' r'' C_f
  C_g}{(1-e^{-\max(\sigma',\sigma'')})(1-e^{-\max(\sigma',\sigma'')+\sigma})}\ .
\end{equation*}
\end{lemma}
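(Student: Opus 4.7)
The plan is to apply the seed decomposition \eqref{dcdm.4} to the Poisson bracket $H=\{F,G\}$ and bound each homogeneous piece $h^{(\nu)}$ separately, aiming at an estimate of the form $\|h^{(\nu)}\|\le C_h\,e^{-\nu\sigma}$. By the construction summarized in table \eqref{e.nutable} and the left diagram of figure \ref{fig:1}, $h^{(\nu)}$ is a finite sum of Poisson brackets of the form $\{\tau^{s}f^{(m)},\tau^{s'}g^{(m')}\}$, where $(m,m')$ ranges over the white triangle $W_\nu:=\{(m,m'):\max(m,m')\le\nu\le m+m'\}$ and the attached multiplicity of translations $(s,s')$ is $|m-m'|+1$ when $\nu=\max(m,m')$ and exactly $2$ for $\max(m,m')<\nu\le m+m'$.

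The first step is to estimate each individual bracket via Lemma \ref{l.2.2} combined with property (i) of the extensive norm (translation invariance), yielding $\|\{\tau^{s}f^{(m)},\tau^{s'}g^{(m')}\}\|\le r'r''\|f^{(m)}\|\,\|g^{(m')}\|\le r'r''C_fC_g\,e^{-m\sigma'-m'\sigma''}$. Summing over $W_\nu$ with the multiplicities just recalled gives $\|h^{(\nu)}\|$ bounded by $r'r''C_fC_g$ times a weighted double sum over $W_\nu$ with integrand $e^{-m\sigma'-m'\sigma''}$.

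The second step is to extract the desired factor $e^{-\nu\sigma}$. Since $m+m'\ge\nu$ on $W_\nu$, I would rewrite $e^{-m\sigma'-m'\sigma''}\le e^{-\nu\sigma}\cdot e^{-m(\sigma'-\sigma)-m'(\sigma''-\sigma)}$ and then over-bound the remainder by extending the indices to $\NN^2$, which factors as a product of two one-dimensional geometric series. Assigning the two variables according to whether $\sigma'$ or $\sigma''$ is the larger of the two rates, and exploiting the constraint $\max(m,m')\le\nu$ on the appropriate variable to convert one of the factors to $\sigma_{\max}$, the product collapses to the asymmetric form $(1-e^{-\sigma_{\max}})^{-1}(1-e^{-\sigma_{\max}+\sigma})^{-1}$ of the statement, with the polynomial multiplicity $|m-m'|+1$ being absorbed into the geometric decay. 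The periodic chain only amputates the sum as shown on the right of figure \ref{fig:1}, which can only tighten the bound.

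The main obstacle is pure bookkeeping: tracking the multiplicities from \eqref{e.nutable} together with the two distinct exponential rates $\sigma',\sigma''$, so that the constant comes out in exactly the stated form with $\sigma_{\max}$ in both factors of the denominator rather than the more symmetric $(1-e^{-(\sigma'-\sigma)})(1-e^{-(\sigma''-\sigma)})$ produced by a naive factorization. Everything else --- translation invariance of the polynomial norm, the applicability of Lemma \ref{l.2.2}, and the finiteness of the sums in the periodic case --- is immediate from the material in Section \ref{ss.int.range}.
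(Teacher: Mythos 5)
Your overall strategy is the paper's: decompose $h^{(\nu)}$ via \eqref{dcdm.4}, estimate with Lemma~\ref{l.2.2}, and sum geometric series over the white triangle. But there is a concrete gap in how you handle the translation multiplicities, and it prevents you from reaching the \emph{stated} constant. You bound each single bracket $\{\tau^{s}f^{(m)},\tau^{s'}g^{(m')}\}$ by $r'r''\,\norm{f^{(m)}}\norm{g^{(m')}}$ (first inequality of Lemma~\ref{l.2.2} plus translation invariance) and then multiply by the number of admitted translation pairs, which on the boundary nodes $\max(m,m')=\nu$ is $|m-m'|+1$, i.e.\ up to $\nu+1$. This polynomial weight cannot be ``absorbed into the geometric decay'' while keeping $C_h$ as claimed: at the node $(m,m')=(0,\nu)$ your intermediate bound is $(\nu+1)\,r'r''C_fC_g\,e^{-\nu\sigma''}$, and if $\sigma''-\sigma$ is small while $\sigma'=\max(\sigma',\sigma'')$ is large, choosing $\nu\sim(\sigma''-\sigma)^{-1}$ makes this exceed $C_h\,e^{-\nu\sigma}$ by an arbitrarily large factor. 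So from your termwise estimate one can only conclude membership in $\Dscr(\cdot,\sigma)$ with a worse constant (depending on $\min(\sigma',\sigma'')-\sigma$), not the explicit $C_h$ of the statement, which is what gets propagated into Lemma~\ref{l.yal} and Proposition~\ref{prop.gen}. The missing ingredient is the \emph{second} inequality of Lemma~\ref{l.2.2}: for each node $(m,m')$ the whole restricted sum $\sum_{s,s'}\{\tau^{s}f^{(m)},\tau^{s'}g^{(m')}\}$ is a subsum of the seed of $\{(f^{(m)})^{\oplus},(g^{(m')})^{\oplus}\}$ and is therefore bounded at once by $r'r''\,\norm{f^{(m)}}\norm{g^{(m')}}$, with no multiplicity factor at all; this is exactly why the paper proved the $\{f,g^{\oplus}\}$ estimate and remarks that ``the sum over $s',s''$ is restricted to the values allowed by~\eqref{e.nutable}''.

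A second, related weakness is the final summation. After writing $e^{-m\sigma'-m'\sigma''}\le e^{-\nu\sigma}e^{-m(\sigma'-\sigma)-m'(\sigma''-\sigma)}$ you extend the indices to all of $\NN^2$, which yields the symmetric constant $\bigl[(1-e^{-(\sigma'-\sigma)})(1-e^{-(\sigma''-\sigma)})\bigr]^{-1}$; having discarded the constraint you can no longer ``use $\max(m,m')\le\nu$'' to convert it into the asymmetric form. The paper's route keeps the triangle, slices it along the diagonals $m+m'=l$ with $l=\nu,\dots,2\nu$, exhibits a spare factor $e^{-\nu(\sigma''-\sigma)}$ from the weaker rate, and uses $m\le\nu$ to upgrade the inner ratio $e^{-(\sigma'-\sigma'')}$ to $e^{-(\sigma'-\sigma)}$, which is precisely what produces $\bigl[(1-e^{-\sigma_{\max}})(1-e^{-\sigma_{\max}+\sigma})\bigr]^{-1}$. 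With these two corrections (per-node block estimate via the $\{f,g^{\oplus}\}$ inequality, and the diagonal summation) your argument becomes the paper's proof.
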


\begin{proof}
According to~(\ref{dcdm.4}) the seed of $H$ may be written as
$$
h^{(\nu)}
 = \sum_{m',m''} \sum_{s'.s''} \Poi{\tau^{s'}f^{(m')}}{\tau^{s''}g^{(m'')}}\ ,
$$
where the sum must be extended to all nodes of the triangle of the
diagram~\ref{fig:1}.  In view of the general estimate of the Poisson
bracket in Lemma~\ref{l.2.2} we have
$$
\biggl\|{\sum_{s',s''} \Poi{\tau^{s'}f^{(m')}}{\tau^{s''}g^{(m'')}}}\biggr\|
 \le r' r'' C_f C_g e^{-m'\sigma'} e^{-m''\sigma''}\ ;
$$ This uses the cyclic symmetry and the fact that the sum over
 $s',s''$ is restricted to the values allowed by~(\ref{e.nutable}).
 Thus, for all nodes of the diagram we get a common factor $r' r'' C_f
 C_g$, and we must deal only with the exponentials.  Possibly
 exchanging the functions we may suppose that
 $\sigma'\gt\sigma''(>\sigma)$. {To get the estimate of $h^{(\nu)}$ we
   have to sum up all the couples $(m',m'')$ in the white triangle of
   FIG~\ref{fig:1}: we perform the summation by fixing the each
   diagonal segment $m'+m''=l$ and increasing
   $l=\nu,\ldots,2\nu$. Hence we can write
\begin{align*}
\sum_{l=\nu}^{2\nu}\sum_{m'+m''=l}e^{-m'\sigma'} e^{-m''\sigma''} &=
\sum_{l=\nu}^{2\nu}\sum_{m'=l-\nu}^\nu e^{-l\sigma''}
e^{-m'(\sigma'-\sigma'')} =\\ &= \sum_{l=0}^{\nu}\sum_{m'=l}^\nu
e^{-(l+\nu)\sigma''} e^{-m'(\sigma'-\sigma'')} = \\ &=
\sum_{l=0}^{\nu}\sum_{m=0}^{\nu-l} e^{-(l+\nu)\sigma''}
e^{-(m+l)(\sigma'-\sigma'')} = \\&= \sum_{l=0}^{\nu}\sum_{m=0}^{\nu-l}
e^{-\nu\sigma''}e^{-l\sigma'} e^{-m(\sigma'-\sigma'')}\ ,
\end{align*}
where we have first replaced $m''=l-m'$, then we have shifted back the
interval of the running index $l$ (thus exhibiting
$e^{-\nu\sigma''}$), and finally we have shifted back the interval of
the running index $m(=m')$.} Then we estimate
$$
\vcenter{\openup1\jot\halign{
\hfil$\displaystyle{#}$
&$\displaystyle{#}$\hfil
\cr
\sum_{l=0}^{\nu}\sum_{m=0}^{\nu-l}
e^{-\nu\sigma''}e^{-l\sigma'} e^{-m(\sigma'-\sigma'')}
&= e^{-\nu\sigma} e^{-\nu(\sigma''-\sigma)}\sum_{l=0}^{\nu} e^{-l\sigma'} 
                  \sum_{m=0}^{\nu-l} e^{-m (\sigma'-\sigma'')}\leq
\cr
&\le e^{-\nu\sigma} \sum_{l=0}^{\nu} e^{-l\sigma'} 
                   \sum_{m=0}^{\nu-l} e^{-m (\sigma'-\sigma)}<
\cr
&\lt \frac{e^{-\nu\sigma}}{(1-e^{-\sigma'})(1-e^{-(\sigma'-\sigma)})}\ .
\cr
}}
$$ 
The claim follows by replacing $\sigma'$ with
$\max(\sigma',\sigma'')$.  This completes the proof.
\end{proof}

\begin{corollary}
\label{cor.poisson}
If in lemma~\ref{lem.poisson} we have $\sigma'\neq\sigma''$ then we
may set $\sigma=\min(\sigma',\sigma'')$ and
\begin{equation*}
C_h = \frac{r' r'' C_f
  C_g}{(1-e^{-\max(\sigma',\sigma'')})(1-e^{-|\sigma'-\sigma''|})}\ .
\end{equation*}
\end{corollary}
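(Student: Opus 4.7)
My plan is to revisit the chain of estimates in the proof of Lemma~\ref{lem.poisson} and observe that the auxiliary parameter $\sigma$, which was there forced to satisfy $\sigma < \min(\sigma',\sigma'')$ strictly, can in fact be pushed up to $\min(\sigma',\sigma'')$ provided $\sigma' \neq \sigma''$. The loss of one denominator factor is compensated by the strict inequality between the two decay rates.

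Without loss of generality assume $\sigma' > \sigma''$, so that $\min(\sigma',\sigma'')=\sigma''$ and $|\sigma'-\sigma''|=\sigma'-\sigma''$. Starting from the same formula
$$
h^{(\nu)} = \sum_{m',m''} \sum_{s',s''} \Poi{\tau^{s'}f^{(m')}}{\tau^{s''}g^{(m'')}}
$$
and applying Lemma~\ref{l.2.2} exactly as in the proof of Lemma~\ref{lem.poisson}, I would reduce matters to bounding
$$
\sum_{l=\nu}^{2\nu}\sum_{m'+m''=l,\, m',m''\le \nu} e^{-m'\sigma'} e^{-m''\sigma''}.
$$
Here the summation domain $(m',m'')$ is precisely the white triangle of Figure~\ref{fig:1}, and for each fixed $l=m'+m''\in[\nu,2\nu]$ one has $m'\in[l-\nu,\nu]$.

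Now, instead of splitting $e^{-l\sigma''}=e^{-\nu\sigma}e^{-(l-\nu)\sigma}e^{-(\ldots)}$ in order to introduce a spare $\sigma<\sigma''$, I would factor $e^{-l\sigma''}$ out directly and exploit the fact that $\sigma'-\sigma''>0$ to sum the geometric series in $m'$ over its full range:
\begin{align*}
\sum_{l=\nu}^{2\nu}\sum_{m'=l-\nu}^{\nu} e^{-l\sigma''} e^{-m'(\sigma'-\sigma'')}
&\le \sum_{l=\nu}^{2\nu} e^{-l\sigma''}\,\frac{e^{-(l-\nu)(\sigma'-\sigma'')}}{1-e^{-(\sigma'-\sigma'')}} \\
&= \frac{e^{\nu(\sigma'-\sigma'')}}{1-e^{-(\sigma'-\sigma'')}}\sum_{l=\nu}^{2\nu}e^{-l\sigma'} \\
&\le \frac{e^{-\nu\sigma''}}{(1-e^{-(\sigma'-\sigma'')})(1-e^{-\sigma'})}.
\end{align*}
With $\sigma''=\min(\sigma',\sigma'')$ and $\sigma'=\max(\sigma',\sigma'')$, multiplying by the common factor $r'r''C_f C_g$ coming from Lemma~\ref{l.2.2} yields the claimed expression for $C_h$ with the decay rate $\sigma=\sigma''=\min(\sigma',\sigma'')$.

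There is no real obstacle here; the only delicate point is to make sure the range $m'\in[l-\nu,\nu]$ is correct (it is, by the geometry of the triangle in Fig.~\ref{fig:1}) and that the strict inequality $\sigma'\neq\sigma''$ is really used, as it is precisely what makes the geometric series in $m'$ summable without needing a third parameter $\sigma$ as a buffer. The symmetric case $\sigma''>\sigma'$ is handled by exchanging the roles of $f$ and $g$.
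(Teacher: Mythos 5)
Your proposal is correct and follows essentially the same route as the paper: the paper's proof of the corollary simply re-runs the estimate of Lemma~\ref{lem.poisson} with $\sigma=\sigma''$, using the strict gap $\sigma'-\sigma''>0$ to sum the geometric series in place of the buffer parameter, which is exactly what you do. Your slightly different bookkeeping (summing the geometric series in $m'$ over $[l-\nu,\nu]$ directly instead of after the index shifts) is only cosmetic, and your final bound $e^{-\nu\sigma''}\big/\bigl[(1-e^{-\sigma'})(1-e^{-(\sigma'-\sigma'')})\bigr]$ matches the stated constant.
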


\begin{proof}
Just set $\sigma=\sigma''$ and at the end replace $\sigma'-\sigma$
with $|\sigma'-\sigma''|$.
\end{proof}

{
\begin{corollary}
\label{cor.pois.Z0.corr}
If in lemma~\ref{lem.poisson} we have $\sigma'\gt\sigma''$  and
$f^{(0)}=0$, i.e., $f=\sum_{m\ge 1}f^{(m)}=O(e^{-\sigma'})$ then we
may set $\sigma=\sigma''$ and
\begin{equation*}
 C_h = \frac{2e^{-(\sigma'-\sigma'')}r' r'' C_f C_g}
 {(1-e^{-\sigma'})(1-e^{-(\sigma'-\sigma'')})} \ .
\end{equation*}
\end{corollary}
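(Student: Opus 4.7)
The plan is to mimic the proof of Lemma~\ref{lem.poisson} closely, exploiting the extra hypothesis $f^{(0)}=0$ precisely where the standard argument loses room. Starting from the decomposition~\eqref{dcdm.4},
\begin{equation*}
 h^{(\nu)} = \sum_{(m',m'')} \sum_{s,s'} \Poi{\tau^{s}f^{(m')}}{\tau^{s'}g^{(m'')}}\ ,
\end{equation*}
with $(m',m'')$ ranging over the white triangle $\max(m',m'')\le \nu \le m'+m''$ of Figure~\ref{fig:1}, Lemma~\ref{l.2.2} together with the class $\Dscr$ bounds on the seeds yields
\begin{equation*}
 \norm{h^{(\nu)}} \le r'r'' C_f C_g \sum_{\substack{(m',m'')\\ m'\ge 1}} e^{-m'\sigma'}e^{-m''\sigma''}\ ,
\end{equation*}
the restriction $m'\ge 1$ being exactly the content of $f^{(0)}=0$.

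Next I would take $\sigma=\sigma''$ (which is excluded in Lemma~\ref{lem.poisson} and Corollary~\ref{cor.poisson}) and carry out the same geometric summation as in the proof of Lemma~\ref{lem.poisson}. The only modification comes from the lower bound $m'\ge 1$: writing $e^{-m'\sigma'}=e^{-\sigma'}e^{-(m'-1)\sigma'}$ and shifting $m'\mapsto m'+1$ reduces the inner sum to one over $m'\ge 0$. The identity
\begin{equation*}
 e^{-\sigma'} = e^{-\sigma''}e^{-(\sigma'-\sigma'')}
\end{equation*}
then produces exactly the factor $e^{-(\sigma'-\sigma'')}$ appearing in the statement, and at the same time contributes the $e^{-\sigma''}$ needed to assemble the prefactor $e^{-\nu\sigma''}$ required to conclude that $h\in\Dscr(C_h,\sigma'')$.

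The remaining sums, now on the shifted triangle, split naturally into a ``diagonal boundary'' contribution from the line $m'+m''=\nu$ and an ``interior'' contribution from $m'+m''>\nu$. Each is controlled by a double geometric series, producing the denominators $(1-e^{-\sigma'})$ and $(1-e^{-(\sigma'-\sigma'')})$. Combining the two via the elementary inequality
\begin{equation*}
 1+\frac{1}{1-e^{-\sigma'}} \le \frac{2}{1-e^{-\sigma'}}
\end{equation*}
yields the constant~$2$ and the denominator $(1-e^{-\sigma'})(1-e^{-(\sigma'-\sigma'')})$ of the announced bound.

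The main technical obstacle I expect is the bookkeeping of the summation ranges after the index shift $m'\mapsto m'+1$: the constraints $m'\le \nu$, $m''\le \nu$ and $m'+m''\ge \nu$ must be tracked so that the prefactor $e^{-\nu\sigma''}$ emerges cleanly and no spurious factors sneak in along the diagonal edge $m'+m''=\nu$. This is routine once one follows the telescoping already performed in the proof of Lemma~\ref{lem.poisson}, but it is the point where care is needed in order to obtain the sharp constant advertised in the statement.
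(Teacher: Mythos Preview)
Your approach is correct and will reach the stated constant, but it is more laborious than the paper's argument. The key observation you are slightly obscuring is that the constraint $m'\ge 1$ removes \emph{exactly one point} from the white triangle: since $\max(m',m'')\le\nu\le m'+m''$ forces $m''=\nu$ whenever $m'=0$, the only excluded node is the vertex $(0,\nu)$. The paper exploits this directly: it keeps the reparametrisation $\sum_{l=0}^\nu\sum_{m=0}^{\nu-l}e^{-\nu\sigma''}e^{-l\sigma'}e^{-m(\sigma'-\sigma'')}$ from the proof of Lemma~\ref{lem.poisson} verbatim (with $\sigma=\sigma''$), subtracts the single term $l=m=0$ (worth $e^{-\nu\sigma''}$), and then estimates
\[
\sum_{l=0}^\nu e^{-l\sigma'}\sum_{m=0}^{\nu-l}e^{-m(\sigma'-\sigma'')}-1
< \frac{e^{-\sigma'}+e^{-(\sigma'-\sigma'')}}{(1-e^{-\sigma'})(1-e^{-(\sigma'-\sigma'')})}
< \frac{2e^{-(\sigma'-\sigma'')}}{(1-e^{-\sigma'})(1-e^{-(\sigma'-\sigma'')})}\,,
\]
using $e^{-\sigma'}<e^{-(\sigma'-\sigma'')}$ in the last step. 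No index shift, no new triangle to track.

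Your route---extracting $e^{-\sigma'}=e^{-\sigma''}e^{-(\sigma'-\sigma'')}$ via the shift $m'\mapsto m'+1$ and then splitting the shifted region into a diagonal and an interior piece---also lands on $2e^{-(\sigma'-\sigma'')}/\bigl[(1-e^{-\sigma'})(1-e^{-(\sigma'-\sigma'')})\bigr]$, and your combining inequality $1+\tfrac{1}{1-e^{-\sigma'}}\le\tfrac{2}{1-e^{-\sigma'}}$ does appear naturally at the end of that computation. The price is precisely the bookkeeping you flag as the main obstacle: the shifted region is the asymmetric set $\{k\le\nu-1,\ m''\le\nu,\ k+m''\ge\nu-1\}$, which is not a clean Lemma~\ref{lem.poisson} triangle, so you must either enlarge it or split it before the geometric sums go through. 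The paper's ``subtract one term'' trick avoids this entirely and keeps the proof to three lines.
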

}
{
\begin{proof}
Set $\sigma=\sigma''$. Then hypothesis $f=\sum_{m'\geq 1}f^{(m')}$
implies that we must remove the element $(m',m'')=(0,\nu)$ from the
elements of the white triangle of FIG.~\ref{fig:1}: this element gives
a factor $e^{-\nu\sigma''}$. Hence the sum in Lemma \ref{lem.poisson}
becomes
\begin{align*}
\sum_{l=0}^{\nu} \sum_{m=0}^{\nu-l} e^{-(m+l)\sigma'}
e^{-(\nu-m)\sigma''} - 1 &= e^{-\sigma''\nu}\quadr{\sum_{l=0}^{\nu}
  e^{-l\sigma'} \sum_{m=0}^{\nu-l} e^{-m (\sigma'-\sigma'')}-1}<\\ 
&< e^{-\sigma''\nu}
\quadr{\frac{e^{-\sigma'}+e^{-(\sigma'-\sigma'')}}{(1-e^{-\sigma'})(1-e^{-(\sigma'-\sigma'')})}}<\\ 
&< e^{-\sigma''\nu} \frac{2e^{-(\sigma'-\sigma'')}}
    {(1-e^{-\sigma'})(1-e^{-(\sigma'-\sigma'')})} \ ,
\end{align*}
which readily gives the claim.
\end{proof}
}


\section{Normal form for the Quadratic Hamiltonian}
\label{s:quadratic}

Let us rewrite the Hamiltonian~\eqref{e.H} as a sum of its quadratic
and quartic parts $H=H_0+H_1$, where
\begin{equation}
\label{e.H.dec}
 H_0(x,y) := \frac12\sum_{j=1}^N \quadr{y^2_j + x^2_j +
   a(x_j-x_{j-1})^2} \ , \qquad H_1(x,y) := \frac14\sum_{j=1}^N x_j^4
 \ .
\end{equation}

The aim of this Section is to give the quadratic part a resonant
normal form so that it turns out to be written as $H_0 = H_{\Omega} +
Z_0$ with $Z_0$ an extensive function exhibiting an exponential decay
of the interaction among sites with their distance and
$\Poi{H_0}{Z_0}=0$.  That is, $Z_0$ is a first integral for $H_0$.
This result has been already stated in~\cite{GioPP12}; we give here a
different proof.  We will then apply the transformation also to the
quartic part of our Hamiltonian, showing that it still has an
exponential decay of the interactions.

\subsection{The normalizing transformation}
\label{ss:quadratic}
We introduce the positive parameters $\omega(a)>1$ and ${\mu}(a)<1/2$
\begin{equation*}
 \omega^2(a) := 1+2a\ ,\quad {\mu}:=\frac{a}{\omega^2}
\end{equation*}
ad rewrite the quadratic part of our Hamiltonian as
\begin{equation*}
H_0(x,y) = \frac12 y\cdot y +\frac12 x\cdot Ax,
\end{equation*}
where (recalling $\tau$ as the permutation matrix
generating~\eqref{e.perm})
\begin{equation}
\label{e.def-A}
A= \omega^2 \left(
\begin{matrix}
1    & -{\mu} & 0    & \ldots & 0    & -{\mu} \\
-{\mu} & 1    & -{\mu} & \ldots & 0    & 0    \\
0    & -{\mu} & 1    & \ddots & 0    & 0    \\
\vdots & \vdots & \ddots & \ddots & \ddots & \vdots\\ 
0    & 0    & 0    & \ddots & 1    & -{\mu} \\
-{\mu} & 0    & 0    & \ldots & -{\mu} & 1    \\
\end{matrix}
\right)
 = \omega^2\quadr{\Id - {\mu}(\tau + \tau^{\top})} \ ,
\end{equation}
which is clearly circulant and symmetric, and gives a finite range
interaction.  The latter form is particularly useful because it
exhibits the perturbation parameter ${\mu}$ that will be assumed to be
small. This particular form allows us to look at our model as a system
of identical harmonic oscillators with a small linear coupling. The
resulting complete resonance is one of the keys of our result.
Introduce the constant $\Omega$ as the average of the square roots of
the eigenvalues of $A$.

\begin{proposition}
\label{p.1}
For ${\mu}<1/2$ there exists a canonical linear transformation
which gives the Hamiltonian $H_0$ the particular resonant normal form
\begin{equation}
\label{e.dec.H0}
 H_0 = H_\Omega + Z_0 \ ,
\qquad
 \Poi{H_\Omega}{Z_0}=0
\end{equation}
with $H_\Omega$ and $Z_0$ cyclically symmetric with seeds
\begin{equation*}
 h_\Omega = \frac\Omega2(q_1^2+p_1^2) \ ,
\end{equation*}
\begin{equation}
\label{e.Z0}
\begin{aligned}
 \zeta_0 = \frac12 \!\sum_{j=1}^{\left\lfloor\frac{N}2\right\rfloor}
            &b_{j}(q_0q_j \!+\! p_0p_j \!+\! q_0q_{N-j} \!+\! p_0p_{N-j+1}) 
    +\delta b_{\frac{N}2+1}(q_0q_{\frac{N}2+1} \!+\! p_0p_{\frac{N}2+1})
\cr
 &|b_j({\mu})| = \mathcal{O}((2{\mu})^j) \ ,
 \qquad\qquad\qquad\qquad\ \ \ \delta = \begin{cases}
	   0 &N{\rm\ odd}
	   \cr
	   1 &N{\rm\ even}
	  \end{cases}
\end{aligned}
\end{equation}
The linear transformation is given by
\begin{equation}
\label{e.lin.trsf}
q=A^{1/4} x\ ,\qquad\qquad p=A^{-1/4}y\ ,
\end{equation}
where the circulant and symmetric matrix $A^{1/4}$ satisfies
\begin{equation}
\label{e.row1.A14}
 \bigl(A^{1/4}\bigr)_{1,j}= c_j({\mu}) (2{\mu})^{j-1} \ , \qquad 1\leq
 j\leq\left\lfloor{\frac{N}2}\right\rfloor+1 \ , \qquad |c_j({\mu})|\leq
 2\sqrt\omega \ .
\end{equation}
$H_1$ remains an extensive and cyclically symmetric function once
composed with the transformation~\eqref{e.lin.trsf}.
\end{proposition}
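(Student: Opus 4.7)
The plan is to construct the canonical transformation explicitly from the functional calculus of $A$, verify it diagonalises $H_0$, and extract the decay properties from the analyticity of the symbol $\omega^2[1-\mu(z+z^{-1})]$ associated to $A$ in the group algebra of the cyclic group.

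First, since $\|\mu(\tau+\tau^\top)\|\le 2\mu<1$ for $\mu<1/2$, the matrix $A=\omega^2[\Id-\mu(\tau+\tau^\top)]$ is positive definite and every real power $A^s$ is well defined. By property~4 of circulant matrices each $A^s$ is circulant and symmetric, and $A^{1/4}\cdot A^{-1/4}=\Id$. Hence the map $q=A^{1/4}x,\ p=A^{-1/4}y$ is canonical: the usual condition on the second factor, $M(M^\top)^{-1}$, reduces to $A^{1/4}A^{-1/4}=\Id$ because $A^{1/4}$ is symmetric. Substituting into $H_0$ yields $H_0=\tfrac12\,p\cdot A^{1/2}p+\tfrac12\,q\cdot A^{1/2}q$. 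Circulance of $A^{1/2}$ forces every diagonal entry to equal $\Omega=\tfrac1N\mathrm{tr}(A^{1/2})=\tfrac1N\sum_\ell\sqrt{\lambda_\ell}$. Setting $B:=A^{1/2}-\Omega\,\Id$, which is circulant, symmetric and has zero diagonal, I obtain the splitting $H_0=H_\Omega+Z_0$ with
\begin{equation*}
H_\Omega=\tfrac{\Omega}{2}\sum_j(q_j^2+p_j^2),\qquad Z_0=\tfrac12(q\cdot Bq+p\cdot Bp).
\end{equation*}
A natural seed for $H_\Omega$ is $h_\Omega=\tfrac{\Omega}{2}(q_1^2+p_1^2)$; a seed for $Z_0$ is obtained by labelling $b_k:=B_{1,k+1}$ and grouping the symmetric pairs $\{k,N-k\}$, which produces the form \eqref{e.Z0}. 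The commutativity $\{H_\Omega,Z_0\}=0$ is immediate from rotational invariance: the flow of $H_\Omega$ is the synchronous rotation $(q_j,p_j)\mapsto R_{\Omega t}(q_j,p_j)$ at every site, and the bilinear form $q\cdot Bq+p\cdot Bp$, having the same shape in the two slots, is manifestly invariant under this diagonal $\mathrm{SO}(2)$-action.

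For the exponential decay, I identify $N\times N$ circulant matrices with Laurent polynomials in a formal variable $z$ via $\tau\leftrightarrow z$; then $A$ corresponds to the symbol $\omega^2[1-\mu(z+z^{-1})]$ and, for any $s$, the matrix $A^s$ corresponds to $\omega^{2s}[1-\mu(z+z^{-1})]^s$, which is analytic in the annulus whose radii are the two roots $z_\pm=(1\pm\sqrt{1-4\mu^2})/(2\mu)$ of $1-\mu(z+z^{-1})$. An elementary computation gives $z_-=2\mu/(1+\sqrt{1-4\mu^2})\le 2\mu$ and $z_+=1/z_-\ge1/(2\mu)$. A Cauchy estimate on the circle $|z|=2\mu$ therefore yields
\begin{equation*}
\bigl|(A^s)_{1,j+1}\bigr|\le\omega^{2s}\max_{|z|=2\mu}\bigl|1-\mu(z+z^{-1})\bigr|^s\,(2\mu)^j.
\end{equation*}
Applied with $s=1/4$, a direct bound on the maximum modulus gives the constant $2\sqrt\omega$ displayed in \eqref{e.row1.A14}; applied with $s=1/2$ and restricted to $j\ge 1$ (so the $k=0$ coefficient, absorbed into $\Omega$, is discarded) it gives $|b_j(\mu)|=\Oscr((2\mu)^j)$ as required in \eqref{e.Z0}.

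Finally, substituting $x=A^{-1/4}q$ into $H_1=\tfrac14\sum_j x_j^4$ produces a homogeneous quartic in $q$; since $A^{-1/4}$ is circulant it commutes with $\tau$, so the sum is cyclically invariant with seed $\tfrac14\bigl(\sum_k(A^{-1/4})_{1,k}q_k\bigr)^4$, proving extensivity and cyclic symmetry of the transformed $H_1$. The main technical delicacy in the whole argument is in the Cauchy bound of Step~3: one has to show that $z_-\le 2\mu$ uniformly in $\mu\in(0,1/2)$ and track the maximum of the symbol on the contour $|z|=2\mu$ carefully enough to produce the explicit constant $2\sqrt\omega$; everything else is standard linear algebra for circulant matrices together with rotational invariance.
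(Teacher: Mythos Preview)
Your overall architecture matches the paper exactly: apply the symplectic map $q=A^{1/4}x,\ p=A^{-1/4}y$, get $H_0=\tfrac12(p\cdot A^{1/2}p+q\cdot A^{1/2}q)$, split $A^{1/2}=\Omega\Id+B$ with $\Omega$ the common diagonal entry, and check $\{H_\Omega,Z_0\}=0$ by the synchronous $\mathrm{SO}(2)$ rotation. The handling of $H_1$ is the same as well.

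Where you diverge is in the decay estimate \eqref{e.row1.A14}. The paper does not use the analytic symbol at all: it expands $A^{1/4}=\sqrt\omega\sum_{k\ge0}\binom{1/4}{k}(-\mu T)^k$ with $T=\tau+\tau^\top$, observes that $(T^k)_{1,j}=0$ for $k<j-1$, and then bounds the tail using the crude inequality $|T^k_{i,j}|\le 2^k$ together with explicit control of the binomial coefficients $\binom{1/4}{k}$. This is entirely a finite-matrix computation and delivers the constant $2\sqrt\omega$ directly. Your Cauchy-estimate route via the symbol $\omega^{2s}[1-\mu(z+z^{-1})]^s$ is more conceptual and immediately explains \emph{why} the rate is $(2\mu)^j$ (it is the inner radius of analyticity), but two points need tightening. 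First, for the $N\times N$ circulant, the entry $(A^{1/4})_{1,j+1}$ is not the single Laurent coefficient $a_j$ but the aliased sum $\sum_{m\in\ZZ}a_{j+mN}$; you should say this and bound the geometric tail, otherwise the passage from ``Laurent coefficient'' to ``matrix entry'' is unjustified. Second, once aliasing is included, the contour bound on $|z|=2\mu$ gives a constant of order $3\cdot 2^{1/4}\sqrt\omega$ rather than $2\sqrt\omega$; to recover the sharper constant you would need either a finer contour or to fall back on the paper's series argument. Neither issue affects the $\mathcal O((2\mu)^j)$ conclusion, only the explicit constant.
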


We remark that all the perturbative construction is performed after
the linear transformation~\eqref{e.lin.trsf}, but all the estimates
with the Gibbs measure of Section~\ref{s:5} are made in the original
variables.  We thus need some further properties of the transformation
itself, which are given in the following two results. Recalling that,
according to the notations of Section~\ref{ss.int.range}, we label the
coordinates with indices $0,\ldots,N-1$, {and introducing the decay
  rate $\sigma_0$
\begin{equation}
\label{e.mu.def}
\sigma_0:=-\ln(2\mu)\qquad\Rightarrow\qquad 2\mu=e^{-\sigma_0}\ ,
\end{equation}}
we have
{
\begin{proposition}
\label{p.chi0}
The linear canonical transformation \eqref{e.lin.trsf} is the flow at
time $t=1$ of the cyclically symmetric quadratic form $\Chi_0$
\begin{equation*}
 \Chi_0(x,y) := x\cdot B y \ ,
\qquad
 B := \frac14 \ln\tond{A}\ ,
\end{equation*}
where $B$ is a symmetric and circulant matrix characterized by
\begin{equation*}
 B_{1,j} = c_j({\mu}) (2{\mu})^{j} \ , \qquad
 |c_j({\mu})|\leq \frac12 C_0(a):=
 \frac14\Big|\ln\tond{\frac{\omega^2}{1-2{\mu}}}\Big| \ ,
\end{equation*}
for $1\leq j\leq\left\lfloor{\frac{N}2}\right\rfloor+1$; the seed of
$\Chi_0$ satisfies $\chi_0 \in \Dscr\bigl(C_0(a),\sigma_0\bigr)$ and
reads
\begin{equation}
\label{e.chi0}
 \chi_0 = \sum_{j=1}^{\left\lfloor\frac{N}2\right\rfloor}
          B_{1,j} (x_0y_j+y_0x_j) + \delta B_{1,N/2+1}x_0y_{N/2+1}
\qquad
 \delta = \begin{cases}
	   0 &N{\rm\ odd}
	   \cr
	   1 &N{\rm\ even}
	  \end{cases}
\end{equation}
\end{proposition}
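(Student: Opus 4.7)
The plan is to identify $B$ directly from the requirement that the Hamiltonian flow of $\chi_0=x\cdot By$ at time $t=1$ realise the linear transformation~\eqref{e.lin.trsf}, and then to read off the stated structural properties from the resulting $B$. Since $\chi_0$ is quadratic with symmetric kernel $B$, Hamilton's equations give $\dot x=\partial_y\chi_0=Bx$ and $\dot y=-\partial_x\chi_0=-By$, so the time-one map is $x\mapsto e^Bx$, $y\mapsto e^{-B}y$; matching this with $q=A^{1/4}x$, $p=A^{-1/4}y$ forces $e^B=A^{1/4}$, that is $B=\tfrac14\ln A$. The logarithm is well defined since the hypothesis $\mu<1/2$ from Proposition~\ref{p.1} makes the eigenvalues $\omega^2(1-2\mu\cos(2\pi k/N))$ of $A$ strictly positive.

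I would then observe that $B$ inherits circulance and symmetry from $A$. As recalled in Section~\ref{p:circulant}, a matrix is circulant if and only if it commutes with $\tau$, and this commutation is preserved by every absolutely convergent analytic function of $A$; symmetry passes through the functional calculus because $A^\top=A$. To expose the decay of the first-row entries I would use the factorisation
\begin{equation*}
B \;=\; \tfrac14\ln(\omega^2)\,\Id \;-\; \tfrac14\sum_{k\ge 1}\frac{\mu^k}{k}\bigl(\tau+\tau^\top\bigr)^k,
\end{equation*}
legitimate because $\|\mu(\tau+\tau^\top)\|=2\mu<1$, together with the binomial expansion $(\tau+\tau^\top)^k=\sum_{l=0}^k\binom{k}{l}\tau^{2l-k}$. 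The entry $B_{1,j}$ collects only those shifts $\tau^s$ with $s\equiv\pm(j-1)\pmod N$; the minimal contributing $k$ extracts a common geometric factor in $2\mu$, and the remaining series is majorised by tails of $-\sum_{k\ge1}(2\mu)^k/k=\ln(1-2\mu)$, which together with the $\tfrac14\ln\omega^2$ term yields precisely the closed-form constant $C_0(a)=\tfrac14\bigl|\ln\bigl(\omega^2/(1-2\mu)\bigr)\bigr|$ appearing in the statement.

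The seed formula~\eqref{e.chi0} then follows by a direct rewriting of $x\cdot By$ from the circulant-symmetric structure of $B$: grouping the terms $B_{ij}x_iy_j$ by the cyclic distance $d=|i-j|\bmod N$ and using the cyclic-seed equivalence (property~(i) of Section~\ref{p:cyclic}) to translate every negative-offset term into a $y_0x_d$ one, I am left with exactly the paired expressions $B_{1,j}(x_0y_j+y_0x_j)$ for $1\le j\le\lfloor N/2\rfloor$, plus a single unpaired middle term when $N$ is even (which accounts for the factor $\delta$). The membership $\chi_0\in\Dscr(C_0(a),\sigma_0)$ with $\sigma_0=-\ln(2\mu)$ is then a reformulation of the entrywise bounds in terms of the interaction-range decomposition~\eqref{e.decomp}, since $(2\mu)^j=e^{-j\sigma_0}$.

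The main technical obstacle is obtaining the explicit constant $C_0(a)$ rather than the qualitative one coming from a crude majorisation: estimating $\binom{k}{l}$ by $2^k$ immediately gives the correct geometric decay $(2\mu)^j$, but only with a prefactor of order one, whereas the stated sharpness requires summing the $\ln$ series in closed form and combining it carefully with the diagonal contribution $\tfrac14\ln\omega^2$. Finite-$N$ corrections from the wrap-around of $\tau^s$ contribute at order $\mu^N$ and are therefore harmless. Once these estimates are in place, the rest is routine bookkeeping with the extensive formalism of Section~\ref{s:extensivity}.
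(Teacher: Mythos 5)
Your argument is correct and follows essentially the same route as the paper: identify $B=\frac14\ln A$ from the time-one flow $x\mapsto e^{B}x$, $y\mapsto e^{-B}y$ of the quadratic form $x\cdot By$, expand $\ln\bigl(\Id-\mu(\tau+\tau^{\top})\bigr)$ as a power series to read off circulance, symmetry and the geometric decay of the first row via $T^{k}_{1,j}\le 2^{k}$ together with the vanishing of $T^{k}_{1,j}$ below the threshold order, and obtain the seed~\eqref{e.chi0} by translating/left-aligning the terms of $x\cdot By$. The paper's own proof is in fact terser (for the decay and the constant it simply refers back to the estimates in the proof of Proposition~\ref{p.1}, and for the seed to the argument used for $Z_0$), so your somewhat more explicit bookkeeping of the series and of $C_0(a)$ is entirely in line with it.
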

\begin{lemma}
\label{l.inv.g}
Let $\rho^\oplus$ an homogeneous polynomial of degree $2r+2$ in
$\Dscr\bigl(C_\rho,\sigma_*\bigr)$ and assume
$\chi_0\in\Dscr\bigl(C_0(a),\sigma'\bigr)$ with $\sigma_*<\sigma'\leq
\sigma_0$, then
\begin{displaymath}
T_{\Chi_0}\rho \in \Dscr\bigl(e^{(r+1)\tilde
  C}C_\rho,\sigma_*\bigr)\ ,\qquad\qquad \tilde C\leq \frac{2
  C_0(a)}{(1-e^{-\sigma'})(1-e^{-(\sigma'-\sigma_*)})}\ .
\end{displaymath}
\end{lemma}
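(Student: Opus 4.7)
The plan is to expand $T_{\Chi_0}\rho$ as the Lie series $\sum_{k\ge 0}\frac{1}{k!}L_{\Chi_0}^k\rho$, to bound each iterate in the class $\Dscr(\cdot,\sigma_*)$ using the Poisson-bracket estimates already at hand, and then to sum the resulting numerical series, relying on the $1/k!$ factors to convert a geometric accumulation of constants into a single exponential. The central structural remark is that $\chi_0$ is bilinear (see Proposition~\ref{p.chi0}), so $L_{\Chi_0}$ preserves the polynomial degree: every iterate $L_{\Chi_0}^k\rho$ remains homogeneous of degree $2r+2$, and the degree parameters entering the Poisson-bracket estimate stay unchanged at every step.

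Assume inductively $L_{\Chi_0}^{k-1}\rho\in\Dscr(C_{k-1},\sigma_*)$ with $C_0=C_\rho$. Since $\sigma_*<\sigma'\le\sigma_0$, I would apply Corollary~\ref{cor.poisson} to the pair $\chi_0\in\Dscr(C_0(a),\sigma')$ and $L_{\Chi_0}^{k-1}\rho\in\Dscr(C_{k-1},\sigma_*)$ to obtain $L_{\Chi_0}^k\rho\in\Dscr(C_k,\sigma_*)$ with a recursion of the form $C_k\le \alpha(r+1)\tilde C\cdot C_{k-1}$ for a combinatorial factor $\alpha$ of order one, where $\tilde C=2C_0(a)/[(1-e^{-\sigma'})(1-e^{-(\sigma'-\sigma_*)})]$ is precisely the constant in the statement. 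Iterating gives $C_k\le[\alpha(r+1)\tilde C]^k C_\rho$, and the $1/k!$ factors then sum the geometric accumulation to
\begin{displaymath}
\norm{(T_{\Chi_0}\rho)^{(m)}}\leq \sum_{k\ge 0}\frac{[\alpha(r+1)\tilde C]^k}{k!}\,C_\rho\, e^{-m\sigma_*} = e^{\alpha(r+1)\tilde C}\,C_\rho\,e^{-m\sigma_*}\,,
\end{displaymath}
which is the desired bound. Crucially, the decay rate $\sigma_*$ is preserved at every step precisely because $\chi_0$ carries the strictly better decay $\sigma'>\sigma_*$, so no margin of decay is ever consumed along the recursion; this is the only place in the argument where the hypothesis $\sigma_*<\sigma'$ is used in an essential way.

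The main obstacle is to pin down $\alpha=1$ so as to match the stated exponent $(r+1)\tilde C$. A direct plug-in of Corollary~\ref{cor.poisson} with $r'=2$ (degree of $\chi_0$) and $r''=2r+2$ (degree of the iterate) yields $\alpha=2$, i.e.\ an exponent $2(r+1)\tilde C$, which is off by a factor of two. Recovering $\alpha=1$ should rely on the sharper bilinear structure $\chi_0=x\cdot By$, linear separately in $x$ and in $y$: on a homogeneous polynomial of degree $d=2r+2$, $L_{\Chi_0}$ acts as a sum of first-order differential operators of the form $B_{ji}(x_j\partial_{x_i}-y_j\partial_{y_i})$, producing an effective combinatorial factor $d=2(r+1)$ in place of the pessimistic $r'r''=2\cdot(2r+2)$ implicit in Lemma~\ref{l.2.2}. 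Everything else is routine exponential-decay bookkeeping via the decomposition~\eqref{e.decomp} and the counting of translates already carried out in the proof of Lemma~\ref{lem.poisson}.
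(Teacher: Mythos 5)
Your proposal is correct and follows essentially the same route as the paper: the paper's own proof likewise applies Corollary~\ref{cor.poisson} once to get $\lie{\Chi_0}\rho\in\Dscr\bigl((r+1)\tilde C\,C_\rho,\sigma_*\bigr)$, iterates this bound (the degree, hence the constant, is unchanged at each step because $\chi_0$ is quadratic, and $\sigma_*$ survives because $\chi_0$ has the better decay $\sigma'$), and sums $\sum_{l\ge 0}\frac{1}{l!}\lie{\Chi_0}^l\rho$ to obtain $e^{(r+1)\tilde C}C_\rho$. Your factor-of-two remark is well taken: a literal use of Corollary~\ref{cor.poisson} with degrees $2$ and $2r+2$ would give $2(r+1)\tilde C$, and the sharper count you describe (every monomial of $\chi_0=x\cdot By$ has all exponents at most one, so the combinatorial factor in Lemma~\ref{l.2.2} is $2r+2$ rather than $2(2r+2)$) is precisely what is needed to justify the constant as stated, a point the paper's proof passes over silently.
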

\bigskip
A fundamental point is represented by the decay properties of the
seeds of $Z_0$ and $H_1$.
\begin{lemma}
\label{l.exp.Z0}
The seeds of the functions $Z_0$ and $H_1$ satisfy
\begin{equation*}
\begin{aligned}
 \zeta_0&\in\Dscr\bigl(C_0(a),\sigma_0\bigr) \ , \qquad\quad C_0(a) =
 \mathcal{O}(1) \cr h_1&\in\Dscr\bigl(C_1(a),\sigma_1\bigr) \ ,
 \qquad\quad C_1(a)= \mathcal{O}(1)
\end{aligned}
{\rm for\ } a\to 0 \ , \qquad \sigma_1 := \frac12\sigma_0\ .
\end{equation*}
\end{lemma}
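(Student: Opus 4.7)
The strategy is to prove the two estimates separately. For $\zeta_0$ the argument will be purely algebraic, reading the exponential decay directly off the explicit formula \eqref{e.Z0}. For $h_1$ I will combine Proposition~\ref{p.chi0}, which casts the linear canonical transformation \eqref{e.lin.trsf} as a Hamiltonian flow, with Lemma~\ref{l.inv.g}, which controls how the class $\Dscr$ transforms under such a flow.

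For $\zeta_0$, formula \eqref{e.Z0} displays the seed as a sum of quadratic monomials indexed by $j\in\{1,\ldots,\lfloor N/2\rfloor\}$, each scaled by $b_j({\mu})$. Using the translation freedom of seeds (property~(i) in the paragraph on cyclic symmetry), I will replace the wrap-around monomial $q_0 q_{N-j}$ by $\tau^{j}(q_0 q_{N-j}) = q_0 q_j$, whose interaction range is exactly $j$; the $p$-analogue and (for even $N$) the boundary term are handled the same way. Grouping into $\zeta_0^{(j)}$ all contributions with coefficient $b_j({\mu})$ will yield $\|\zeta_0^{(j)}\|\le C|b_j({\mu})|\le C'(2{\mu})^j = C' e^{-\sigma_0 j}$ thanks to the bound in \eqref{e.Z0} and the definition \eqref{e.mu.def} of $\sigma_0$, so that $\zeta_0\in\Dscr(C_0(a),\sigma_0)$ with $C_0(a)=\mathcal{O}(1)$ as $a\to 0$.

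For $h_1$, I will first observe that in the original variables $H_1=\frac14\sum_j x_j^4$ is extensive with seed $\frac14 x_0^4$, a homogeneous polynomial of degree~$4$ supported at a single site, hence trivially in $\Dscr(1/4,\sigma_0)$. By Proposition~\ref{p.chi0} the canonical transformation \eqref{e.lin.trsf} is the time-one flow of $\chi_0\in\Dscr(C_0(a),\sigma_0)$, so Lemma~\ref{l.inv.g} applies with $r=1$ (since the degree is $2r+2=4$), $\sigma'=\sigma_0$ and $\sigma_*=\sigma_0/2=\sigma_1$, placing the transformed $h_1$ in $\Dscr(e^{2\tilde C}/4,\sigma_1)$ with $\tilde C\le 2C_0(a)/[(1-e^{-\sigma_0})(1-e^{-\sigma_1})]$. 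Since $\sigma_0=-\ln(2{\mu})\to\infty$ and $C_0(a)\to 0$ as $a\to 0$, one gets $\tilde C = \mathcal{O}(1)$, whence $C_1(a)=e^{2\tilde C}/4=\mathcal{O}(1)$.

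The main obstacle I anticipate is the periodic-chain bookkeeping for $\zeta_0$: the cyclic-translation freedom of seeds has to be invoked to identify the wrap-around monomials $q_0 q_{N-j}$ with forward ones of interaction range $j$, rather than $N-j$, since otherwise the naive reading of \eqref{e.Z0} would spuriously assign them the wrong range. Once this is settled, both claims reduce either to a direct computation or to a direct invocation of Lemma~\ref{l.inv.g}.
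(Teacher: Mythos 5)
Your argument for $\zeta_0$ follows the paper's own line exactly: left-align the wrap-around monomials via the translation freedom of seeds, read the interaction range off the resulting $\zeta_0^{(m)}=2b_m(q_0q_m+p_0p_m)$, and use $|b_m|\le 2\omega(2\mu)^m$ together with $2\mu=e^{-\sigma_0}$ to land in $\Dscr(C_0(a),\sigma_0)$. Nothing to add there.

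For $h_1$ you take a genuinely different, and in fact cleaner, route. You observe that the pre-transformation seed $x_0^4/4$ is trivially in $\Dscr(1/4,\sigma)$ for any $\sigma$, and then invoke Lemma~\ref{l.inv.g} as a black box with $r=1$, $\sigma'=\sigma_0$, $\sigma_*=\sigma_1$, obtaining $T_{\Chi_0}h_1\in\Dscr\bigl(\tfrac14 e^{2\tilde C},\sigma_1\bigr)$ with $\tilde C\to 0$ as $a\to 0$, hence $C_1(a)=\mathcal{O}(1)$. The paper instead reruns the argument behind Lemma~\ref{l.inv.g} by hand, iterating the Lie derivatives $\lie{\Chi_0}^k h_1$ and invoking the sharper Corollary~\ref{cor.pois.Z0.corr} at each step; this exploits the two special facts that $\chi_0^{(0)}=0$ and that $h_1$ lives at a single site (so $h_1^{(m)}=0$ for $m\ge1$), and buys an extra factor $e^{-(\sigma_0-\sigma_1)}$ inside the exponent. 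That gain is real but not needed for the stated $\mathcal{O}(1)$ claim, and your direct invocation of Lemma~\ref{l.inv.g} is both sufficient and less error-prone (indeed the paper's bookkeeping produces a prefactor $C_0(a)$ in $C_1(a)$ that spuriously vanishes as $a\to 0$, even though the $m=0$ component $\|h_1^{(0)}\|=1/4$ survives the transformation; your constant $\tfrac14 e^{2\tilde C}\to\tfrac14$ is the more honest one). Both approaches are valid for the purposes of the lemma.

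One small point worth making explicit, which you only gesture at: the reason the decay rate must drop from $\sigma_0$ to some $\sigma_1<\sigma_0$ is that the denominator $1-e^{-(\sigma'-\sigma_*)}$ in $\tilde C$ blows up as $\sigma_*\to\sigma'$, so Lemma~\ref{l.inv.g} is vacuous at $\sigma_*=\sigma_0$; this is exactly the content of the Remark following the lemma. Your choice $\sigma_*=\sigma_1=\sigma_0/2$ correctly sidesteps it.
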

\begin{remark}
The seed $h_1$ cannot preserve the same exponential decay rate of the
linear transofrmation (see the corresponding proof in the Appendix);
however it is possible to show that
$h_1\in\Dscr\bigl(C_1(a),\sigma_1\bigr)$ for any
$\sigma_1<\sigma_0$. We make the choice $\sigma_1=\sigma_0/2$ in order
to explicitely relate $\sigma_1$ to the small natural parameter $a$ of
the model, since it will be useful in the estimates of the main
Theorems of the paper.
\end{remark}
}

The proofs of all the statements of this Section are deferred to
Appendix~\ref{app:quad}.


\section{Construction of an extensive first integral}
\label{s:construction}

We construct a formal first integral for the Hamiltonian \eqref{e.H} using the
Lie transform algorithm in the form introduced in \cite{GioG78}.  We include a
brief description, referring to the quoted paper for proofs.

Given a generating sequence $\{\Chi_s\}_{s\geq 1}$, we define the linear
operator $T_\Chi$ as
\begin{equation}
\label{e.TelChi}
T_\Chi=\sum_{s\geq 0}E_s,\qquad E_0=\Id,\qquad E_s= \sum_{j=1}^s
\lie{\Chi_j}E_{s-j}\ ,
\end{equation}
where $\lie{\Chi_j}\cdot=\Poi{\Chi_j}{\cdot}$ is the Lie derivative with
respect to the flow generated by $\Chi_j$.  The operator $T_\Chi$ turns out to
be invertible and to possess the relevant properties
\begin{equation}
\label{e.prop.Tchi}
T_\Chi(f\cdot g) = (T_\Chi f)\cdot (T_\Chi g)\ ,\qquad
T_\Chi\Poi{f}{g} = \Poi{T_\Chi f}{T_\Chi g}\ .
\end{equation}
Let now $Z$ satisfy the equation $T_\Chi Z=H$, and let $\Phi_0$ commute with
$Z$, i.e. $\Poi{\Phi_0}{Z}=0$.  Then in view of the second of
\eqref{e.prop.Tchi}, one immediately gets $\Poi{T_\Chi{\Phi_0}}{H}=0$, i.e.,
$\Phi= T_\Chi \Phi_0$ is a first integral for the Hamiltonian $H$.

The operator $T_\Chi$ is defined here at a formal level, and it is know
that using it in normal form theory usually produces non convergent
expansions. However we may well use it in formal sense, as explained by
Poincar\'e (Ch. VIII in~\cite{Poi93}). What we actually do is truncate
the all expansions at a given order, so that all the equalities above
are true up to terms of order larger than $r$ (i.e. of degree larger
that $2r+2$ in our polynomial expansions). E.g., the sentence above
``$\Phi= T_\Chi \Phi_0$ is a first integral for $H$'' should be
interpreted as ``having determined $\Chi_1,\dots,\Chi_r$, then truncate
$\Phi^{(r)}=\Phi_0+\Phi_1+\cdots+\Phi_r$, so that we have
$\{\Phi^{(r)},H\} = \mathcal{O} (r+1)$''. The statement of
Proposition~\ref{prop.gen} below must be interpreted in this sense.

In this Section we prove the following

\begin{proposition}
\label{prop.gen}
Consider the Hamiltonian $H=h^{\oplus}_{\Omega}+\zeta^{\oplus}_0 +
h^{\oplus}_1$ with seeds $h_{\Omega}=\frac{\Omega}{2}(x_0^2+y_0^2)$,
the quadratic term $\zeta_0$ of class $\Dscr(C_0,\sigma_0)$ with
$\zeta_0^{(0)}=0$, and the quartic term $h_1$ of class
$\Dscr(C_1,\sigma_1\,)$, {with $\sigma_0>\sigma_1>\ln(4)$}. Pick a
positive {$\sigma_*<\sigma_1$}. Then there exist positive $\gamma$,
$\mu_{*}$ and $C_*$
\begin{equation}
\label{const.prop.gen}
{\begin{aligned} \mu_* &=
    \frac{\Omega(1-e^{-\sigma_0})(1-e^{-(\sigma_0-\sigma_*)})}{8C_0e^{\sigma_1}}\ ,
\cr
 \gamma &= 2\Omega\Bigl(1-\frac{r\mu}{\mu_*}\Bigr) \ ,
\cr
 C_* &= \frac{C_1}{\gamma(1-e^{-\sigma_0})(1-e^{-(\sigma_0-\sigma_*)})} \ .
\end{aligned}}
\end{equation}
such that for any positive integer $r$ satisfying
\begin{equation}
\label{e.muperr}
2r\mu\lt\mu_*\ ,
\end{equation}
there exists a finite generating sequence $\Chi= \{\chi^{\oplus}_1,
\ldots, \chi^{\oplus}_r\}$ of a Lie transform such that $T_{\Chi}Z - H
= \mathcal{O} (r+1)$, i.e. the remainder $\mathcal{O} (r+1)$ starts
with terms of degree bigger than $2r+4$ and $Z$ is an extensive
function of the form
\begin{equation*}
 Z = h^{\oplus}_{\Omega}+\zeta^{\oplus}_0 
     +\ldots+ \zeta^{\oplus}_r
\end{equation*}
with $\lie{\Omega}Z_s=0$ for $s=0,\ldots,r$, $Z_s=\zeta_s^{\oplus}$ of
degree $2s+2$.

Moreover, defining
\begin{align}
\label{e.Cr}
C_r:&=64r^2C_* \ ;
\\
\label{succ.sigma}
 \sigma_s :&= \frac{s\sigma_* + (r-s)\sigma_1}{r}
 \quad{\rm for}\ s=2,\ldots,r\ ,
\end{align}
the following statements hold true:
\begin{enumerate}[label=(\roman{*}), ref=(\roman{*})]
\item the seed $\chiph_s$ of $\Chi_s$ is of class $\Dscr(C_r^{s-1}
      \frac{C_1}{\gamma s}, \sigma_s)$;
\item the seed $\zeta_s$ of $Z_s$ is of class $\Dscr (C_r^{s-1}C_1, \sigma_s)$;
\item if $\Phi=\ph^{\oplus}$ is a homogeneous polynomial of degree $2m$ and of
      class $\Dscr(C_{\ph},\sigma_0)$ then for $s=0,\ldots,r$ one has that
      $E_s\Phi$ is of class $\Dscr(F_r^{s} C_{\ph},\sigma_s)$ with $F_r =
      16(m+2)r^2C_*$;
\item setting $\Phi=H_{\Omega}$ in the previous point we have that $E_s
      H_{\Omega}$ is of class $\Dscr(F_r^{s-1}C_1,\sigma_s)$;
\item setting $\Phi_0=H_{\Omega}$ and considering the first $r+1$ terms in the
      expansion of $T_{\Chi}\Phi_0$, namely $\Phi^{(r)}=\Phi_0+\ldots+\Phi_r$
      with $\Phi_s=E_s\Phi_0$, we have
\begin{equation*}
 \dot\Phi^{(r)} = \Poi{H_1}{\Phi_r}
\end{equation*}
      which is a cyclically symmetric homogeneous polynomial of degree
      $2r+4$ and of class $\Dscr(C_{\rho},\sigma_*)$ with 
\begin{equation*}
 C_{\rho}= \frac{8(r+2)(16r^2C_*)^{r-1}C_1^2}
            {(1-e^{-\sigma_0})(1-e^{-(\sigma_0-\sigma_*)})} \ .
\end{equation*}
\end{enumerate}
\end{proposition}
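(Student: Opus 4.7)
The proof proceeds by induction on the perturbative order $s$, realizing a Lie-series normal form construction as in~\cite{GioG78} with estimates tailored to preserve both the cyclic symmetry (extensivity) and the exponential decay of interaction range of each seed that is produced. The plan has three main ingredients: a direct inversion lemma for the homological operator, a careful bookkeeping of how the decay parameter $\sigma$ shrinks from $\sigma_1$ down to $\sigma_*$ along the iteration, and a combinatorial control of the recursion~\eqref{e.TelChi} that reduces all estimates to repeated applications of Lemma~\ref{lem.poisson} and Corollary~\ref{cor.poisson}.

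Matching terms of degree $2s+2$ in $T_\Chi Z = H$ via~\eqref{e.TelChi}, at each order $s=1,\ldots,r$ one obtains a homological equation of the form
\begin{equation*}
 L_{H_0}\chi_s + Z_s = \Psi_s \ ,
\end{equation*}
where $L_{H_0}=\{H_0,\cdot\}$ and $\Psi_s=\psi_s^\oplus$ is an explicit cyclically symmetric polynomial of degree $2s+2$ built via nested Poisson brackets from $\chi_1^\oplus,\ldots,\chi_{s-1}^\oplus$, $Z_0,\ldots,Z_{s-1}$ and $H_1$. Splitting $\Psi_s$ along the decomposition induced by $\ker L_\Omega$ with $L_\Omega=\{H_\Omega,\cdot\}$, I set $Z_s:=\Pi_\Omega\Psi_s$ (which automatically gives $\{H_\Omega,Z_s\}=0$) and define $\chi_s$ by inverting $L_{H_0}$ on the complementary non-resonant subspace; at each step the cyclic symmetry is preserved, so that the resulting $\chi_s$ and $Z_s$ are seeds of extensive functions $\chi_s^\oplus, Z_s^\oplus$.

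The main obstacle is precisely this inversion, in a form that keeps the dependence on $a$ and $r$ explicit. I would write $L_{H_0}=L_\Omega+L_{Z_0}$: on $(I-\Pi_\Omega)$-range the unperturbed operator $L_\Omega$ has eigenvalues of modulus $\geq \Omega$ (harmonic oscillator in complete resonance), and applying Lemma~\ref{lem.poisson} to $L_{Z_0}$ acting on a seed of degree $2s+2$ and class $\Dscr(C,\sigma_*)$ bounds its operator norm by a quantity proportional to $(s+1)C_0/[(1-e^{-\sigma_0})(1-e^{-(\sigma_0-\sigma_*)})]$. The resulting Neumann series for $L_{H_0}^{-1}$ then converges under the hypothesis~\eqref{e.muperr}, with $\mu_*$ chosen as in~\eqref{const.prop.gen} to absorb the factor $r+1\geq s+1$ and the geometric constant, and yields the bound $\|L_{H_0}^{-1}\|\leq 1/\gamma$ with $\gamma=2\Omega(1-r\mu/\mu_*)$. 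This is the direct inversion lemma advertised in the introduction; its delicacy lies in the fact that, unlike the truncation approach of~\cite{CarM12}, one really has to control small divisors, which is what sets the upper bound on $r$.

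Granted this inversion lemma, items (i)--(v) follow by induction on $s$. The linear shrinking of the decay parameter in~\eqref{succ.sigma} allows Corollary~\ref{cor.poisson} to be applied at each Poisson bracket with $\sigma'-\sigma''=(\sigma_1-\sigma_*)/r$, and the bad dependence proportional to $r$ appearing in the resulting geometric denominator is absorbed into the constant $C_r=64r^2 C_*$ of~\eqref{e.Cr}, which is the incremental factor picked up at each order. Combining this with the bound of Lemma~\ref{l.2.2} one obtains (i) (with the extra $1/(\gamma s)$ coming from the inversion together with the Giorgilli--Galgani normalization of~\eqref{e.TelChi}) and (ii) as its counterpart for $Z_s=\Pi_\Omega\Psi_s$, which does not involve the inversion. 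The bound (iii) on $E_s\Phi$ follows from iterating~\eqref{e.TelChi}, the factor $m+2$ in $F_r$ accounting via Lemma~\ref{l.2.2} for the degree $2m$ of $\Phi$; (iv) is the refinement for $\Phi_0=H_\Omega$, where the identity $L_{\chi_1}H_\Omega=-L_{H_\Omega}\chi_1$ cancels one factor of $\gamma$ at the first step of the recursion and effectively replaces the $m=1$ prefactor with the plain $C_1$. Finally (v) is an exact algebraic identity: by construction all contributions to $\dot\Phi^{(r)}=\{H,\Phi^{(r)}\}$ of degree $\leq 2r+2$ cancel, leaving $\{H_1,\Phi_r\}$ modulo terms of higher degree, and the class-$\Dscr$ estimate on $C_\rho$ is a direct application of Lemma~\ref{lem.poisson} to $H_1\in\Dscr(C_1,\sigma_1)$ and $\Phi_r\in\Dscr(F_r^{r-1}C_1,\sigma_*)$ from~(iv).
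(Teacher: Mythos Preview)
Your overall architecture matches the paper's: the formal recursion for $\Psi_s$, the Neumann-series inversion of $L_{H_0}=L_\Omega+L_{Z_0}$ on the range of $L_\Omega$, the linear schedule~\eqref{succ.sigma} for $\sigma_s$, and the reduction of (iii)--(v) to repeated Poisson-bracket estimates are all exactly what the paper does. The sketch of (v) is also correct.

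There is, however, a genuine gap in your inversion step. You bound $L_{Z_0}$ via Lemma~\ref{lem.poisson} (or really Corollary~\ref{cor.poisson}) and obtain an operator bound proportional to
\[
\frac{(s+1)\,C_0}{(1-e^{-\sigma_0})(1-e^{-(\sigma_0-\sigma_*)})}\ .
\]
But this expression contains no factor of $\mu=e^{-\sigma_0}/2$, so the Neumann-series condition you derive is a bound on $rC_0/\Omega$, \emph{not} the stated condition $2r\mu<\mu_*$ with $\mu_*$ as in~\eqref{const.prop.gen}. The missing ingredient is precisely the hypothesis $\zeta_0^{(0)}=0$: one must use Corollary~\ref{cor.pois.Z0.corr} instead of Corollary~\ref{cor.poisson} when estimating $\lie{Z_0}g$, which produces the extra factor $e^{-(\sigma_0-\sigma_s)}\le e^{-(\sigma_0-\sigma_1)}$ in the numerator. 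It is this factor that turns the operator bound on $K=L_\Omega^{-1}L_{Z_0}$ into $rC_K$ with $C_K=\mu/\mu_*$, and hence makes~\eqref{e.muperr} the right smallness condition. Without it the two perturbation parameters $a$ and $1/\beta$ cannot be kept independent, which is one of the main points of the construction.

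A secondary remark: the paper's derivation of the constant $C_r=64r^2C_*$ in (i)--(ii) is not a one-line induction but goes through an auxiliary pair of sequences $\eta_s,\theta_s$ controlling $C_{\psi,s}$ and $C_{\zeta,s,l}$, together with a combinatorial bound $\eta_s\le 9^{s-1}s!$. Your sketch elides this; it is not conceptually deep, but the bookkeeping is what fixes the numerical constant $64$ and the $r^2$ scaling.
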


\noindent
The rest of this Section is devoted to the proof of the proposition.
We first include a formal part, where we illustrate in detail the
process of construction of the normal form and introduce an
appropriate framework which allows us to control how the interaction
range propagates.  Then we give quantitative estimates paying
particular attention to the exponential decay of interactions with the
distance.

\subsection{Formal algorithm and solution of the homological equation}
\label{ss.formal.tchi}
We now translate the equation $T_\Chi Z=H$ into a formal recursive
algorithm that allows us to construct both $Z$ and $\Chi$. We take
into account that our Hamiltonian has the particular form
$H=H_0+H_1\,$, where $H_1$ is a homogeneous polynomial of degree 4.

For  $s\geq 1$ the generating function $\Chi_{s}$ and the normalized
term $Z_s$ must satisfy the recursive set of {\sl homolo\-gical equations}
\begin{equation}
\label{e.om.2s}
 \lie{H_0}\Chi_{s} = Z_{s} + \Psi_{s} \ ;
\end{equation}
where 
\begin{equation}
\label{e.Psi.2s}
\begin{aligned}
 \Psi_{1} &= H_1,\\ \Psi_{s} &= \frac{s-1}{s}\lie{\Chi_{s-1}}H_1 +
 \sum_{j=1}^{s-1} \frac{j}{s} E_{s-j}Z_{j} \ ,\qquad s\geq2 \ .
\end{aligned}
\end{equation}
A justification of this algorithm is the following. Using the
definition~(\ref{e.TelChi}) of $T_{\chi}$ we expand the equation $T_\chi Z=H$
into the recursive set of equations
\begin{equation}
\label{formal.1}
\begin{aligned}
 &Z_0 = H_0 \ ,
\cr
 &Z_{1} + E_1 Z_0 =  H_1 \ ,
\cr
 &E_s Z_0 + \sum_{l=1}^{s} E_{s-l}Z_{l} + Z_{s} = 0
 \quad{\rm for}\ s\gt 1
\end{aligned}
\end{equation}
In view of $E_1Z_0 =\lie{\chiph_1} H_0$ the second equation is readily written
as $\lie{H_0}\chiph_1 = Z_1 - H_1$, which is the homological equation at order
$s=1$.  Then, using the definition of $E_s$, we replace $E_s Z_0 =
\sum_{l=1}^{s-1}\frac{l}{s}\lie{\chiph_l} E_{s-l}Z_0$ in the third
of~(\ref{formal.1}) and get the homological equation $\lie{H_0}\chiph_{s} =
Z_{s} + \Psi_{s}$, where
\begin{equation*}
\Psi_{s} = \sum_{l=1}^{s-1}\frac{l}{s}\lie{\chiph_{l}} E_{s-l}Z_0 +
          \sum_{l=1}^{s-1} E_{s-l}Z_{l}\ .
\end{equation*}
The expression for $\Psi_{s}$ may be simplified thanks to the equations of the
previous orders as follows.  Replacing $E_{s-l}Z_0$ as given
by~(\ref{formal.1}) in the first sum calculate
\begin{equation*}
\begin{aligned}
\sum_{l=1}^{s-1}\frac{l}{s}\lie{\chiph_{l}} E_{s-l}Z_0
 &=\frac{s-1}{s}\lie{\chiph_{s-1}} H_1
  -\sum_{l=1}^{s-1} \frac{l}{s} \lie{\chiph_{l}}
    \sum_{j=1}^{s-l} E_{s-l-j} Z_{j}
\cr
 &= \frac{s-1}{s}\lie{\chiph_{s-1}} H_1
   -\sum_{j=1}^{s-1} \frac{s-j}{s}
     \sum_{l=1}^{s-j} \frac{l}{s-j} \lie{\chiph_{l}} E_{s-j-l} Z_{j}
\cr
 &= \frac{s-1}{s}\lie{\chiph_{s-1}} H_1
   -\sum_{j=1}^{s-1} \frac{s-j}{s} E_{s-j} Z_{j}\ ,
\end{aligned}
\end{equation*}
where the definition of the operator $E_s$ has been used in the last equality.
Then replace the latter expression in the r.h.s.~of $\Psi_{s}$ above and get
the wanted expression~\eqref{e.Psi.2s}.

Our aim is to solve the homological equation \eqref{e.om.2s} with the
prescription that $\lie{\Omega}Z_s=0$ where $\lie{\Omega}\,\cdot\, :=
\{H_{\Omega},\cdot\}$ is the Lie derivative along the vector field
generated by $H_{\Omega}$ as defined in \eqref{e.dec.H0}.  Thus the next
step is to point out the properties of the operator $\lie{\Omega}$, and
discuss the solution of the homological equation.

\subsubsection{The linear operator $\lie{\Omega}$}
\label{ssect.2.1}

It is an easy matter to check that $\lie{\Omega}$ maps the space of
homogeneous polynomials into itself.  It is also well known that
$\lie{\Omega}$ may be diagonalized via the canonical transformation
\begin{equation}
 x_j = \frac{1}{\sqrt{2}}(\xi_j+i\eta_j)\ ,\quad
  y_j = \frac{i}{\sqrt{2}}(\xi_j-i\eta_j)\ ,\quad
  j=1,\ldots,N\ ,
\label{pertur.51}
\end{equation}
where $(\xi,\eta)\in\complessi^{2n}$ are complex variables.  A straightforward
calculation gives
\begin{equation*}
 \lie{\Omega}\xi^j\eta^k =
  i\Omega\,(|k|-|j|)\,\xi^j\eta^k\ ,
\end{equation*}
where $|j|=|j_1|+\ldots+|j_N|$ and similarly for $|k|\,$.

A relevant general property is that if $f(x,y)=\sum_{j,k} c_{j,k} x^jy^k$ (in
multi-index notation) is a real polynomial, then the
transformation~(\ref{pertur.51}) produces a polynomial $g(\xi,\eta)=\sum_{j,k}
b_{j,k}\xi^j\eta^k$ with complex coefficients $b_{j,k}$ satisfying
\begin{equation*}
 b_{j,k} = - b_{k,j}^*\ .
\end{equation*}
Conversely, this is the condition that the coefficients of $g(\xi,\eta)$ must
satisfy in order to assure that transforming it back to real variables $x,y$
we get a real polynomial.

Let us denote by $\Pscr^{(s)}$ the (finite) linear space of the
homogeneous polynomials of degree $s$ in the $2n$ canonical variables
$\xi_1,\ldots,\xi_n,\eta_1,\ldots,\eta_n\,$.  The kernel and the range
of $\lie{\Omega}$ are defined in the usual way, namely
\begin{equation*}
 \Nscr^{(s)} = \lie{\Omega}^{-1}(0)\ ,\quad
  \Rscr^{(s)} = \lie{\Omega}(\Pscr^{(s)})
\end{equation*}
The property of $\lie{\Omega}$ of being diagonal implies
\begin{equation*}
 \Nscr^{(s)}\cap\Rscr^{(s)} = \{0\}\ ,\quad
  \Nscr^{(s)}\oplus\Rscr^{(s)} = \Pscr^{(s)}\ .
\end{equation*}
Thus the inverse $\lie{\Omega}^{-1}:\range\rightarrow\range$ is uniquely
defined on the restriction $\range$ of $\pspazio$.  It will also be useful to
introduce the projectors on the range and on the kernel defined as
\begin{equation*}
 \Pi_{\range}=\lie{\Omega}^{-1}\lie{\Omega}\ ,\quad
  \Pi_{\nucleo}= \Id - \Pi_{\range},\qquad\qquad .
\end{equation*}
so that we have $\Pi_{\range}+\Pi_{\nucleo} =\Id$.

\begin{lemma}
\label{l.poitable}
Let $f\in \Pscr^{(s)}$ and $g\in \Pscr^{(r)}$.  Then the following composition
table applies:
\begin{equation}
\label{pertur.54a}
\vcenter{\tabskip=0pt
\def\tablerule{\noalign{\hrule}}
\halign{
 \hbox to 4 em{\hfil$\displaystyle{#}\hfil$}\vrule\vrule
&\hbox to 5 em{\hfil$\displaystyle{#}\hfil$}\bigg\vert
&\hbox to 5 em{\hfil$\displaystyle{#}\hfil$}\vrule\vrule
\cr
\{{\cdot},{\cdot}\} & \Nscr^{(r)} & \Rscr^{(r)}\cr
\tablerule
\tablerule
\nucleo &\Nscr^{(r+s-2)} &\Rscr^{(r+s-2)}\cr\tablerule
\range  &\Rscr^{(r+s-2)} &\Pscr^{(r+s-2)}\cr\tablerule\tablerule
\cr}}
\end{equation}
\end{lemma}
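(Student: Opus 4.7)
The approach is to exploit the fact that $\lie{\Omega}$ is a derivation of the Poisson algebra (this is just the Jacobi identity rewritten), together with the explicit diagonal form of $\lie{\Omega}$ noted just before the statement, which implies a refined eigenspace decomposition of $\Pscr^{(s)}$.

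First I would note the identity
\begin{equation*}
\lie{\Omega}\{f,g\} = \{\lie{\Omega} f,g\} + \{f,\lie{\Omega} g\}\ ,
\end{equation*}
valid for any smooth $f,g$, which follows from the Jacobi identity applied to $H_{\Omega},f,g$. Next I would refine the splitting $\Pscr^{(s)} = \Nscr^{(s)} \oplus \Rscr^{(s)}$ into the full eigenspace decomposition
\begin{equation*}
\Pscr^{(s)} = \bigoplus_{m\in\ZZ} \Pscr^{(s)}_m\ , \qquad \Pscr^{(s)}_m := \{f\in\Pscr^{(s)} : \lie{\Omega}f = i\Omega m f\}\ ,
\end{equation*}
which exists because $\lie{\Omega}$ is diagonal in the $(\xi,\eta)$ basis with eigenvalues $i\Omega(|k|-|j|)$. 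By definition $\Nscr^{(s)}=\Pscr^{(s)}_0$ and $\Rscr^{(s)}=\bigoplus_{m\neq 0}\Pscr^{(s)}_m$.

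The key step: if $f\in\Pscr^{(s)}_m$ and $g\in\Pscr^{(r)}_n$, the derivation identity gives
\begin{equation*}
\lie{\Omega}\{f,g\} = i\Omega m\{f,g\} + i\Omega n\{f,g\} = i\Omega(m+n)\{f,g\}\ ,
\end{equation*}
so $\{f,g\}\in\Pscr^{(r+s-2)}_{m+n}$ (the degree is standard for Poisson brackets of homogeneous polynomials). Now the four entries of the table follow by inspection: for $f\in\Nscr^{(s)}$ ($m=0$) and $g\in\Nscr^{(r)}$ ($n=0$) one gets $m+n=0$ so $\{f,g\}\in\Nscr^{(r+s-2)}$; for $f\in\Nscr^{(s)}$ and $g\in\Rscr^{(r)}$ one decomposes $g=\sum_{n\neq 0}g_n$ with $g_n\in\Pscr^{(r)}_n$, so each $\{f,g_n\}\in\Pscr^{(r+s-2)}_{n}$ with $n\neq 0$, giving $\{f,g\}\in\Rscr^{(r+s-2)}$ (and symmetrically for the $\{\Rscr,\Nscr\}$ entry); for $f,g$ both in the range, $m+n$ can vanish, so only $\{f,g\}\in\Pscr^{(r+s-2)}$ can be asserted.

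There is no serious obstacle here; the whole content is the Jacobi identity plus the observation that Poisson bracket is bilinear so it suffices to check the claim on eigenvectors. The only thing worth stating carefully is that the eigenspace decomposition is compatible with the $\Nscr/\Rscr$ splitting, which is immediate from the definition of $\Nscr^{(s)}$ and $\Rscr^{(s)}$ as kernel and range of the diagonal operator $\lie{\Omega}$.
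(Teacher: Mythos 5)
Your proof is correct and rests on the same two pillars as the paper's argument: the Jacobi identity (making $\lie{\Omega}$ a derivation of the Poisson bracket) and the diagonal structure of $\lie{\Omega}$ on $\Pscr^{(s)}$. The difference is organizational rather than substantive. The paper works directly with the coarse splitting $\Pscr = \Nscr\oplus\Rscr$: it verifies the $\Nscr\times\Nscr$ entry exactly as you do (both Lie derivatives vanish), and for the mixed entry it factors $g=\lie{\Omega}\lie{\Omega}^{-1}g$ and uses the derivation property together with $\lie{\Omega}f=0$ to pull the $\lie{\Omega}$ out front, obtaining $\{f,g\}=\lie{\Omega}\{f,\lie{\Omega}^{-1}g\}\in\Rscr$. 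You instead pass to the full eigenspace decomposition $\Pscr^{(s)}=\bigoplus_m\Pscr^{(s)}_m$ and read the additivity of eigenvalues off the derivation identity, which gives all four entries of the table from one observation. Your refinement is a little cleaner in that it handles the mixed cases uniformly and makes visible exactly why the $\Rscr\times\Rscr$ entry cannot be improved (cancellation $m+n=0$ can occur), but it requires the extra — harmless here, since $\lie{\Omega}$ is literally diagonal — step of justifying that the eigenspace decomposition refines the kernel/range splitting. Both proofs are sound and of comparable length; the paper's avoids introducing the eigenspaces explicitly at the cost of a slightly less symmetric argument in the mixed case.
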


\begin{proof}
For any pair of functions $f,\,g\,$, by Jacobi's identity for Poisson
brackets we have $\lie{\Omega}\{f,g\} = \{\lie{\Omega}f,g\} +
\{f,\lie{\Omega}g\}$.  If $f,g$ are in the respective kernels, then
$\lie{\Omega} \{f,g\} =0\,$, which proves that $\{f,g\} \in
\Nscr^{(r+s-2)}\,$.  If $g \in \Rscr^{(r)}$ then we may write $g=
\lie{\Omega} \lie{\Omega}^{-1}g\,$, and so if $f\in\Nscr^{(s)}$ we have,
still using Jacobi's identity, $\{f,\lie{\Omega} \lie{\Omega}^{-1}g\}
\!=\lie{\Omega}\{f,\lie{\Omega}^{-1}g\}$ in view of $\lie{\Omega}f=0\,$,
which proves that $\{f,g\}\in\Rscr^{(r+s-2)}\,$.  If $f,g$ are in the
respective ranges, then nothing can be said in general.  This gives the
table.
\end{proof}

\subsubsection{The linear operator $\lie{H_0}$}
\label{ssect.2.2}
We come now to the solution of the homological equation~(\ref{e.om.2s}).  In
view of~\eqref{e.dec.H0} we have $\lie{H_0} = \lie{\Omega} + \lie{Z_0}$, so
that we immediately get
\begin{displaymath}
 \lie{H_0} = \lie{\Omega}\tond{\Id + \lie{\Omega}^{-1}\lie{Z_0}} \ .
\end{displaymath}
Thus we have
\begin{equation}
\label{e.inv.lieH0}
 \lie{H_0}^{-1} = \tond{\Id + K}^{-1}\lie{\Omega}^{-1}\ ,
\qquad\qquad
 K := \lie{\Omega}^{-1}\lie{Z_0}\ ,
\end{equation}
and using the Neumann's series we can write
\begin{displaymath}
 \tond{\Id + K}^{-1} = \sum_{l\geq 0}(-1)^l K^l\ .
\end{displaymath} 

Let us consider $\lie{H_0}$ on the (finite dimensional) topological
space $\pspazio$; with the notation $\norm{\cdot}_{\rm op}$ we mean the
dual norm of a linear operator acting on $\pspazio$ (they are all
continuous). The following Proposition claims that, although we lack
informations about its Kernel and Range, we can invert $\lie{H_0}$ on
$\range$. This is one of the crucial tecnical points of the paper,
leading eventually to the independence of the two perturbative
parameters $a$ and $1/\beta$. See also the forthcoming
Remark~\ref{r.picc.div} on the control of small divisors.

\begin{proposition}
\label{p.2.1}
If the restriction of $K$ to $\range$ satisfies
\begin{equation}
\label{e.norm.K}
\norm{K}_{\rm op}<1 \ ,
\end{equation}
then for any $g\in\range$, there exists an element $f\in\range$ such that
\begin{equation*}
 \tond{\Id+K}f = g
  \qquad {\rm with} \qquad
  f = \sum_{l\geq 0}(-1)^l K^l g \ .
\end{equation*}
\end{proposition}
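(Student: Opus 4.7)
The plan is to apply the standard Neumann series argument, after first verifying the essential invariance property that $K$ maps $\Rscr$ into itself. Once this is secured, everything else reduces to routine manipulations in a finite-dimensional vector space.

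First, I would check that $K = \lie{\Omega}^{-1}\lie{Z_0}$ preserves $\Rscr$. Since $Z_0$ commutes with $H_\Omega$ by~\eqref{e.dec.H0}, the seed of $Z_0$ (hence $Z_0$ itself in each homogeneous component) belongs to $\nucleo$. The composition table of Lemma~\ref{l.poitable} then gives $\{\Nscr^{(\cdot)},\Rscr^{(\cdot)}\} \subseteq \Rscr^{(\cdot)}$, so for any $h \in \Rscr$ we have $\lie{Z_0} h = \{Z_0,h\} \in \Rscr$. Since $\lie{\Omega}^{-1}:\Rscr\to\Rscr$ is well-defined by the very construction at the end of Section~\ref{ssect.2.1}, we conclude $K:\Rscr\to\Rscr$. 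This is the key structural observation that allows the inversion to stay inside $\Rscr$ rather than leaking into $\Nscr$.

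Second, working now entirely within the finite-dimensional space $\Rscr$ equipped with the operator norm, the hypothesis $\norm{K}_{\rm op}<1$ makes the Neumann series $\sum_{l\geq0}(-1)^l K^l$ absolutely convergent, since $\norm{K^l g} \le \norm{K}_{\rm op}^l\,\norm{g}$ and the geometric series converges. Define $f := \sum_{l\geq 0}(-1)^l K^l g$; as a limit of elements of the closed (finite-dimensional) subspace $\Rscr$, it belongs to $\Rscr$.

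Third, apply $(\Id+K)$ term-by-term and telescope:
\begin{equation*}
(\Id+K)f = \sum_{l\geq 0}(-1)^l K^l g + \sum_{l\geq 0}(-1)^l K^{l+1}g
         = \sum_{l\geq 0}(-1)^l K^l g - \sum_{l\geq 1}(-1)^{l}K^{l}g = g \ ,
\end{equation*}
which establishes the claim. The only substantive step is the invariance of $\Rscr$ under $K$; the rest is the textbook Neumann argument applied in finite dimension. I expect no real obstacle here, since the composition table from Lemma~\ref{l.poitable} handles the one non-obvious algebraic input cleanly.
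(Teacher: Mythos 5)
Your proposal is correct and follows essentially the same route as the paper: you establish $K:\range\to\range$ via the composition table of Lemma~\ref{l.poitable} (using $\lie{\Omega}Z_0=0$), and then run the Neumann series argument, using closedness of $\range$ and $\norm{K}_{\rm op}<1$. The only cosmetic difference is that you telescope the full series directly while the paper passes through the partial sums $f_n$ and the limit $g_n=(\Id+K)f_n\to g$; both are justified by continuity of $K$ in finite dimension.
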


\begin{proof}
Let us take $g \in\range$, then from \eqref{pertur.54a} one has
$\lie{Z_0}g\in\range$, and also $\lie{\Omega}^{-1}\lie{Z_0}g =
Kg\in\range$; in other words
\begin{displaymath}
 K:\range\rightarrow\range \ .
\end{displaymath}
The sequence $\{K^l g\}$ is composed of elements of $\range$, and the same
holds for the finite sum
\begin{displaymath}
 f_n = \sum_{l=0}^n (-1)^l K^l g\in\range,\qquad\qquad n\geq 1 \ .
\end{displaymath}
Condition \eqref{e.norm.K} provides the convergence of the sequence
$f_n\to f$, with $f$ which belongs to $\range$, since it is a closed
subset of $\pspazio$. To prove that $f$ solves the required equation, we
consider the sequence
\begin{displaymath}
 g_n = \tond{\Id + K}f_n\in\range \ ;
\end{displaymath}
from the definition of $f_n$ we have $g_n = g + (-1)^n K^{n+1}g$, and
since $K^{n+1}g$ vanishes, the sequence $g_n$ converges to $g$. But the
continuity of $K$ implies also $g_n = \tond{\Id + K}f_n \to \tond{\Id +
K}f$, so the uniqueness of the limit gives the thesis.
\end{proof}

\subsection{Quantitative estimates and exponential decay of interactions}
Here we complete the formal setting of the previous sections by
producing all estimates of the norms of the relevant functions.  We
also prove the crucial property that the exponential decay of
interactions is preserved by our construction.

Recalling the definition~\eqref{succ.sigma} of $\sigma_s$, so that
$\sigma_1\gt\ldots\gt\sigma_r=\sigma_*\,$, our aim is to show that the
functions $\Chi_s$, $\Psi_s$ and $Z_s$ that are generated by the
formal construction are of class $\Dscr(\cdot,\sigma_s)$, with some
constant to be evaluated in place of the dot.

We shall repeatedly use the following elementary estimate. By the
general inequality
$$
 1-e^{-x} \ge x\frac{1-e^{-a}}{a} \quad{\rm for}\ 0\le x\le a\ . 
$$
we have
$$
\vcenter{\openup1\jot\halign{
\hfil$\displaystyle{#}$
&$\displaystyle{#}$\hfil
&\quad{\rm for}\ $\displaystyle{#}$\hfil 
\cr
1-e^{-\sigma_j} 
 &\ge \frac{\sigma_j(1-e^{-\sigma_0})}{\sigma_0}
  & 1\le j\le r\ ,
\cr
1 - e^{-(\sigma_j-\sigma_k)}
 &\ge \frac{(\sigma_j-\sigma_k)(1-e^{-(\sigma_0-\sigma_*)})}{\sigma_0-\sigma_*}
  & 1\le j\lt k\le r\ .
\cr
}}
$$
Moreover, in view of the definition~\eqref{succ.sigma} of
$\sigma_0,\ldots,\sigma_r$ for $0\le j\lt s\le r$ we get
\begin{equation}
\label{eq.denpois}
\vcenter{\openup1\jot\halign{
\hfil$\displaystyle{#}$
&$\displaystyle{#}$\hfil
\cr
1-e^{-\max(\sigma_j,\sigma_{s-j})} 
 &\ge \frac{1 -e^{-\sigma_0}}{\sigma_0}
   \max(\sigma_j,\sigma_{s-j})
 \gt \frac{(1 -e^{-\sigma_0})}{2}\ ,
\cr
1 - e^{-(\sigma_j-\sigma_k)}
 &\ge \frac{k-j}{r}(1-e^{-(\sigma_0-\sigma_*)})\ .
\cr
}}
\end{equation}

%
%
\paragraph{Estimate of the homological equation.}
\label{par:homeq}
We first consider the operator $\lie{\Omega}^{-1}$.

\begin{lemma}
\label{l.2.3}
Let $F=f^{\oplus}\in\Rscr^{(r)}$ be a cyclically symmetric homogeneous
polynomial of degree $r$ of class $\Dscr(C_f,\sigma)$.  Then there
exists a cyclically symmetric homogeneous polynomial $\Phi
= \ph^{\oplus}\in\range$ which solves $\lie{\Omega}\Phi = F$ and is
of class $\Dscr(C_{\ph},\sigma)$ with
\begin{equation*}
C_{\ph} \leq \frac{C_f}{2\Omega}\ .
\end{equation*}
\end{lemma}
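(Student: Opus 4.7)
The plan is to pass to a seed-level equation using cyclic symmetry, to exploit the locality of $\lie{\Omega}$ so as to preserve the interaction-range decomposition, and finally to invert block-by-block via a complex diagonalization.

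First, since $h_\Omega = \frac{\Omega}{2}(x_0^2+y_0^2)$ is cyclically symmetric, $\lie{\Omega}$ commutes with $\tau$, and hence $\lie{\Omega}(\varphi^\oplus) = (\lie{\Omega}\varphi)^\oplus$. It therefore suffices to exhibit a left-aligned seed $\varphi$ satisfying $\lie{\Omega}\varphi = f$; setting $\Phi = \varphi^\oplus$ then produces an extensive solution of $\lie{\Omega}\Phi = F$, and I single out the unique preimage lying in $\Rscr$ via the direct-sum decomposition $\Pscr^{(r)} = \Nscr^{(r)} \oplus \Rscr^{(r)}$ recalled in Section~\ref{ssect.2.1}.

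Second, the explicit form $\lie{\Omega} = \Omega \sum_l (x_l \partial_{y_l} - y_l \partial_{x_l})$ shows that $\lie{\Omega}$ acts site by site, sending every monomial to a polynomial with the same support. Consequently the interaction-range decomposition $f = \sum_m f^{(m)}$ is preserved under inversion: I solve $\lie{\Omega}\varphi^{(m)} = f^{(m)}$ on each block separately and set $\varphi = \sum_m \varphi^{(m)}$, obtaining automatically $\ell(\varphi^{(m)}) \le \ell(f^{(m)}) \le m$ and the same decay rate $\sigma$ as that of $f$.

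Third, on each block I diagonalize $\lie{\Omega}$ via the complex canonical transformation~(\ref{pertur.51}): the eigenvalues on $\xi^j \eta^k$ (with $|j|+|k| = r$) are $i\Omega(|k|-|j|)$. Since $f^{(m)} \in \Rscr^{(r)}$, only nonzero eigenvalues appear; and in the applications $r$ is even, so the parity of $|k|-|j|$ forces its absolute value to be at least $2$. Hence eigenvalue division contributes a factor at most $1/(2\Omega)$. A careful tracking of the coefficient norm through~(\ref{pertur.51}) and back, using the reality constraint $b_{j,k} = -b_{k,j}^*$ to pair up conjugate monomials, then yields $\|\varphi^{(m)}\| \le \|f^{(m)}\|/(2\Omega) \le C_f e^{-\sigma m}/(2\Omega)$, whence $\varphi \in \Dscr(C_f/(2\Omega),\sigma)$.

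The main obstacle is the last point: the operator-norm bound $1/(2\Omega)$ on the restriction of $\lie{\Omega}^{-1}$ to $\Rscr^{(r)}$ is immediate in the complex eigenbasis, but the polynomial $\ell^1$-coefficient norm is not unitarily compatible with that basis, so one must verify on a per-monomial basis that no extra combinatorial factors appear, using the pairing of conjugate real monomials to obtain the clean constant $1/(2\Omega)$ rather than $1/\Omega$ times a bloated factor.
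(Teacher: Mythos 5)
Your overall route is the same as the paper's (whose proof is the single remark that the claim follows from the diagonal form of $\lie{\Omega}$): pass to the seed, use that $\lie{\Omega}$ commutes with $\tau$ and preserves the support of every monomial (so the decomposition $f=\sum_m f^{(m)}$ and the decay rate $\sigma$ survive inversion untouched), diagonalize via~(\ref{pertur.51}), and note that on $\range$ the eigenvalues $i\Omega(|k|-|j|)$ have modulus at least $2\Omega$ because the degree is even; your remark that the parity argument needs even degree (true in all applications, degree $2s+2$) is correct and more careful than the paper.

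The genuine gap is exactly the point you flag and then wave through: the claim that a ``pairing of conjugate monomials'' lets you transport the bound $1/(2\Omega)$ from the complex eigenbasis to the $\ell^1$ coefficient norm in the real variables. That step cannot be repaired, because in the real-variable norm the constant $1/(2\Omega)$ is simply false. Take one site and the seed $f=x_0^4-y_0^4$, which lies in $\Rscr^{(4)}$; the unique solution in the range of $\lie{\Omega}\ph=f$ is $\ph=\frac1\Omega\,x_0y_0(x_0^2+y_0^2)=\frac1\Omega(x_0^3y_0+x_0y_0^3)$ (its expansion in $\xi,\eta$ contains no kernel term $\xi^2\eta^2$). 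Here $\norm{f}=2$ while $\norm{\ph}=2/\Omega$, so $\norm{\ph}=\norm{f}/\Omega>\norm{f}/(2\Omega)$: solving the homological equation can turn a ``real part'' type combination into an ``imaginary part'' one whose real-monomial $\ell^1$ norm is strictly larger, and no pairing of conjugate monomials removes this loss. The clean constant does hold — with equality in this very example — if the norm of the seed is measured in the complexified variables $\xi,\eta$ of~(\ref{pertur.51}), where $\lie{\Omega}$ is diagonal and the inversion is a coefficientwise division by $i\Omega(|k|-|j|)$ with $||k|-|j||\ge 2$. So to close your argument you must either declare that all the $\Dscr(C,\sigma)$ norms in the perturbative construction are taken in those diagonalizing variables (which is evidently what the paper's one-line proof presupposes, the conversion back to real variables being paid elsewhere), or accept a degraded constant (at least $C_f/\Omega$, as the example shows) and track how it propagates into $\gamma$, $C_*$ and the later estimates. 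As written, the last paragraph of your proposal asserts a bound that is not only unproven but untrue in the norm you chose.
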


\noindent
The proof is a straightforward consequence of the diagonal form of
$\lie{\Omega}$.

Coming to the inversion of $\lie{H_0}$, we state the following

\begin{lemma}
\label{l.yal}
Let $G=g^{\oplus}\in\Rscr^{(2s+2)}$ be a cyclically symmetric
homogeneous polynomial of degree $2s+2$ of class
$\Dscr(C_g,\sigma_s)$.  Let $K$ as defined in~\eqref{e.inv.lieH0} and
assume {
\begin{equation}
\label{CK}
C_K := \frac{4 C_{0} 
e^{-(\sigma_0-\sigma_1)}}{\Omega(1-e^{-\sigma_0})(1-e^{-(\sigma_0-\sigma_*)})}
\le \frac{1}{2r}\ .
\end{equation}}
Then there exists a cyclically symmetric homogeneous polynomial $\Xscr =
\chi^{\oplus}\in\Rscr^{(2s+2)}$ which solves $\lie{H_0}\Xscr = G$;
moreover $\chi$ is of class $\Dscr(C_g/\gamma,\sigma_s)$ with
\begin{equation}
\label{e.omol.1}
 \gamma = 2\Omega(1-rC_K)\ .
\end{equation}
\end{lemma}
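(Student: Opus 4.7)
The plan is to realize $\chi$ through the Neumann series provided by Proposition \ref{p.2.1}, applied to the factorization $\lie{H_0} = \lie{\Omega}(\Id + K)$ with $K = \lie{\Omega}^{-1}\lie{Z_0}$. Explicitly, I would set $\Phi^{(0)} := \lie{\Omega}^{-1}G$ and then define $\chi = \sum_{l \geq 0} (-1)^l K^l \phi^{(0)}$ at the level of seeds. The proof then reduces to verifying the operator norm hypothesis \eqref{e.norm.K} of Proposition \ref{p.2.1} within the class $\Dscr(\cdot,\sigma_s)$, and to estimating the sum term by term.

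For step one, Lemma \ref{l.2.3} applied to $G\in\Rscr^{(2s+2)}$ with seed of class $\Dscr(C_g,\sigma_s)$ yields $\phi^{(0)}\in\range$ with seed of class $\Dscr\bigl(C_g/(2\Omega),\sigma_s\bigr)$. For step two, I need the single--step estimate on $K$. Given any cyclically symmetric $\phi$ of degree $2s+2$ and class $\Dscr(C,\sigma_s)$, I would apply Corollary \ref{cor.pois.Z0.corr} with $\sigma'=\sigma_0$, $\sigma''=\sigma_s$ (this is exactly where the hypothesis $\zeta_0^{(0)}=0$ is crucial) to deduce $\lie{Z_0}\phi\in\Dscr(\tilde C,\sigma_s)$, where $\tilde C$ carries a numerator $r'r''\,C_0\, e^{-(\sigma_0-\sigma_s)} = 4(s+1)C_0e^{-(\sigma_0-\sigma_s)}$ and denominator $(1-e^{-\sigma_0})(1-e^{-(\sigma_0-\sigma_s)})$. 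A further application of Lemma \ref{l.2.3} gives $K\phi\in\Dscr\bigl(\tilde C/(2\Omega),\sigma_s\bigr)$. Using the monotonicity bound $e^{-(\sigma_0-\sigma_s)}\le e^{-(\sigma_0-\sigma_1)}$ (valid for $s\ge 1$ since $\sigma_s\le\sigma_1$) together with the convexity inequality $1-e^{-(\sigma_0-\sigma_s)} \ge (\sigma_0-\sigma_s)(1-e^{-(\sigma_0-\sigma_*)})/(\sigma_0-\sigma_*)$ recorded at the opening of this section, the $s$--dependence of the denominator is absorbed; coupled with $s+1\le r$, this produces the $s$--uniform estimate $\|K\phi\|\le rC_K\,\|\phi\|$, with $C_K$ exactly as in \eqref{CK}.

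Step three is the iteration and summation. By induction one gets $\|K^l\phi^{(0)}\|\le (rC_K)^l\, C_g/(2\Omega)$. Since $rC_K\le 1/2<1$ by assumption, the norm condition \eqref{e.norm.K} of Proposition \ref{p.2.1} holds on $\range$, and the series $\sum_{l\ge 0}(-1)^l K^l\phi^{(0)}$ converges geometrically inside the finite--dimensional, hence closed, subspace $\range$. Proposition \ref{p.2.1} then certifies that its sum $\chi$ lies in $\range$ and satisfies $(\Id+K)\chi=\phi^{(0)}$, i.e.\ $\lie{H_0}\Xscr=G$. The geometric series then yields $\|\chi\|\le C_g/[2\Omega(1-rC_K)]=C_g/\gamma$, completing the proof.

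The main technical obstacle is not the abstract inversion of $\lie{H_0}$, which is handled by Proposition \ref{p.2.1}, but rather the bookkeeping needed to collapse the three $s$--dependent quantities $(s+1)$, $e^{-(\sigma_0-\sigma_s)}$ and $(1-e^{-(\sigma_0-\sigma_s)})^{-1}$ into the single $s$--independent constant $C_K$ defined in \eqref{CK}. This is precisely what forces the factor $r$ in $\gamma=2\Omega(1-rC_K)$ in place of the naive $2\Omega(1-C_K)$ one would obtain from a simple geometric series, and is therefore responsible for the quantitative bound $2r\mu<\mu_*$ in Proposition \ref{prop.gen} that ultimately limits the number of admissible perturbative steps.
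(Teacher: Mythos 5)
Your proposal is correct and follows essentially the same route as the paper's proof: you bound one application of $K=\lie{\Omega}^{-1}\lie{Z_0}$ on $\range$ via Corollary~\ref{cor.pois.Z0.corr} (using $\zeta_0^{(0)}=0$) and Lemma~\ref{l.2.3}, absorb the $s$--dependence exactly as in~\eqref{eq.denpois} to get $\norm{K}_{\rm op}\le rC_K$, and then invert $\Id+K$ on the range through the Neumann series of Proposition~\ref{p.2.1}, summing the geometric series to obtain the class $\Dscr(C_g/\gamma,\sigma_s)$ with $\gamma=2\Omega(1-rC_K)$. The only difference is presentational (you apply $\lie{\Omega}^{-1}$ to $G$ first and then iterate $K$, which is just the factorization~\eqref{e.inv.lieH0} made explicit), so no further comment is needed.
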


\begin{remark}
\label{r.picc.div}
In Proposition~\ref{p.2.1} we ask $\norm{K}_{\rm op}<1$ to simply
perform the inversion. In the above Lemma~\ref{l.yal},
condition~\eqref{CK} reads as $\norm{K}_{\rm op}<1/2$, and this stronger
requirement is to control the small divisors~\eqref{e.omol.1}.
\end{remark}

\noindent
We emphasize that in view of the first of~\eqref{const.prop.gen} we
have $C_K=\mu/\mu_*\,$.  Therefore, condition~\eqref{CK} reads
$2r\mu<\mu_*$, which is the smallness condition for $\mu$ of
proposition~\ref{prop.gen}.  Furthermore this gives the value of
$\gamma$ in~\eqref{const.prop.gen}.

We also emphasize that the constant $\gamma$ is evaluated as
independent of $s$, but seems to depend on the degree $r$ of
truncation of the first integral.  However, in view of the condition
on $\mu$ we have $\Omega\leq\gamma\leq2\Omega$.

\begin{proof}
Recall that $\zeta_0$ is of class $\Dscr(C_0,\sigma_0)$, as stated in
lemma~\ref{l.exp.Z0}.  By corollary~\ref{cor.pois.Z0.corr}, with
$Z_0,\,\sigma_0$ and $\sigma_s$ in place of $f,\,\sigma'$ and
$\sigma''$, respectively, we see that $\lie{Z_0}g$ is of class
$\Dscr(C'C_g,\sigma_s)$ with
{
\begin{displaymath}
C' \le \frac{4 (s+1) C_0
  e^{-(\sigma_0-\sigma_s)}}{(1-e^{-\sigma_0})(1-e^{-(\sigma_0-\sigma_s)})}
\le \frac{8r C_0
  e^{-(\sigma_0-\sigma_1)}}{(1-e^{-\sigma_0})(1-e^{-(\sigma_0-\sigma_*)})}\ ,
\end{displaymath}}
where the second of \eqref{eq.denpois} has been used.  By using
lemma~\ref{l.2.3} we get that $Kg$ is of class $\Dscr(rC_KC_g,\sigma_s)$
with $C_K$ given by~\eqref{CK}.  This also implies that $\norm{K}_{\rm
op}\le rC_K$.  In view of condition~\eqref{CK} we may apply
proposition~\ref{p.2.1}, thus concluding that the inverse of $\lie{H_0}$
is well defined.  With an explicit calculation we also calculate
$\norm{K^m}_{\rm op}\le (rC_K)^m$, thus concluding that $\lie{H_0}^{-1}
g$ is of class $\Dscr(C_{g}^{\phantom{s}}/\gamma,\sigma_s)$ with
$\gamma$ as in~\eqref{e.omol.1}, as claimed.
\end{proof}

Having thus proved that the homological equation can be solved, the
statement~(i) of proposition~\ref{prop.gen} follows.

\paragraph{Iterative estimates on the generating sequence.}
We recall that the generating sequence is found by recursively solving
the homological equations $\lie{H_0}\chiph_{s} =Z_{s}+\Psi_{s}$ for
$s=1,\ldots,r$ with
\begin{equation}
\label{est.gs.1}
\vcenter{\openup1\jot\halign{
\hfil$\displaystyle{#}$
&$\displaystyle{#}$\hfil
&$\displaystyle{#}$\hfil
\cr
\Psi_1 &= H_1\ ,
\cr
\Psi_{s} &=
 \frac{s-1}{s}\lie{\Chi_{s-1}} H_1 
  + \sum_{l=1}^{s-1} \frac{l}{s} E_{s-l} Z_{l}\ ,
\cr
E_s Z_{l} &= \sum_{j=1}^{s} \frac{j}{s}\lie{\Chi_{j}}E_{s-j}Z_{l}
&\quad{\rm for}\ s\ge 1\ .
\cr
}}
\end{equation}
Our aim is to find positive constants $C_{\psi,1},\ldots,C_{\psi,r}$
so that $\Psi_{s}$ is of class $\Dscr(C_{\psi,s},\sigma_s)$.  In
view of lemma~\ref{l.2.3} this implies that $Z_{s}$ of class
$\Dscr(C_{\zeta,s},\sigma_s)$ with $C_{\zeta,s}=C_{\psi,s}$ and
$\chiph_{s}$ of class $\Dscr(C_{\chi,s},\sigma_s)$ with
$C_{\chi,s}=C_{\psi,s}/\gamma\,$.  Meanwhile we also find constants
$C_{\zeta,s,l}$ such that $E_{s} Z_{2l}$ is of class
$\Dscr(C_{\zeta,s,l},\sigma_{s+l})$ whenever $s+l\le r$.

We look for a constant $B_r$ and two sequences 
$\{\eta_s\}_{1\le s\le r}$ and $\{\theta_{s}\}_{1\le s\le r}$ such
that 
\begin{equation}
\label{est.gs.5}
\vcenter{\openup1\jot\halign{
\hfil$\displaystyle{#}$
&$\displaystyle{#}$\hfil
&$\displaystyle{#}$\hfil
\cr
C_{\psi,1} 
 &\le \eta_1C_1\ ,
\quad
C_{\zeta,0,1} \le \eta_1\theta_0 C_1\ ,
\cr
C_{\psi,s} &\le \frac{\eta_s}{s} {B_r^{s-1}C_1}
&\quad {\rm for}\ s\gt 1\ ,
\cr
C_{\zeta,s,l} &\le \theta_{s} \eta_l {B_r^{s+l-1}C_1}
&\quad {\rm for}\ s\ge 1\,,\> l\ge 1\ .
\cr
}}
\end{equation}
In view of $\Psi_1=H_1$ and of $E_0Z_{1}=Z_{1}$ we can choose
$\eta_1=\theta_0=1$.  By~(\ref{est.gs.1}) and using lemmas~\ref{l.2.3}
and~\ref{lem.poisson} together with corollary~\ref{cor.poisson} we get
the recursive relations
\begin{equation}
\label{est.gs.2}
\vcenter{\openup1\jot\halign{
\hfil$\displaystyle{#}$
&$\displaystyle{#}$\hfil
\cr
C_{\zeta,s,l} 
&\le \frac{4}{s} \sum_{j=1}^{s-1} 
 \frac{j(s+l-j)\eta_j\eta_l\theta_{s-j}}
      {(1-e^{-\max(\sigma_j,\sigma_{s+l-j})+\sigma_{s+l}})
       (1-e^{-\max(\sigma_j,\sigma_{s+l-j})})}
        \frac{B_r^{s+l-2} C_1^2}{\gamma}\ .
\cr
C_{\psi,s} 
&\le \biggl(
  \frac{8(s-1) \eta_{s-1} C_1}{s
        (1-e^{-(\sigma_0-\sigma_{s})})(1-e^{-\sigma_0})} 
  + \sum_{l=1}^{s-1} \frac{lB_r}{s} \eta_l\theta_{s-l}
  \biggr) 
 \frac{B_r^{s-2}C_1}{\gamma}\ ,
\cr
}}
\end{equation}
We observe that, from the first of \eqref{eq.denpois}, we have
\begin{align*}
1-e^{-\max(\sigma_j,\sigma_{s+l-j})}
&\geq \frac{1-e^{-\sigma_0}}{\sigma_0}\max\{\sigma_j,\sigma_{s+l-j}\}
=\\ &= \frac{1-e^{-\sigma_0}}{\sigma_0}\tond{\sigma_0
- \frac{\sigma_0-\sigma_*}{r}\min\{j,s+l-j\}}
>\\&> \frac{1-e^{-\sigma_0}}{\sigma_0} \tond{\frac{\sigma_0+\sigma_*}2}\
,
\end{align*}
since $\min\{j,s-j\}\leq r/2$, thus it gives
\begin{displaymath}
1-e^{-\max(\sigma_j,\sigma_{s+l-j})} > \frac{1-e^{-\sigma_0}}{2}\ .
\end{displaymath}
Using the second of~\eqref{eq.denpois} in a similar way to deal with
\begin{displaymath}
1-e^{-[\max(\sigma_j,\sigma_{s+l-j})-\sigma_{s+l}]} \geq \frac{s+l-\min\{j,s+l-j\}}{r}\tond{1-e^{-(\sigma_0-\sigma_*)}}\
,
\end{displaymath}
and setting 
\begin{equation*}
B_r = \frac{16 C_1 r}{\gamma
  (1-e^{-(\sigma_0-\sigma_*)})(1-e^{-\sigma_0})}\ .
\end{equation*}
we get
\begin{equation*}
\vcenter{\openup1\jot\halign{
\hfil$\displaystyle{#}$
&$\displaystyle{#}$\hfil
\cr
C_{\zeta,l,s} 
&\le \frac{1}{s} \sum_{j=1}^{s} 
      {j\eta_j\eta_l\theta_{s-j}}\,
       {B_r^{s+l-1} C_1}\ ,
\cr
C_{\psi,s} 
&\le \biggl(
      \frac{1}{s}\eta_{s-1} 
      +\sum_{l=1}^{s-1} 
        \frac{l}{s} \eta_l\theta_{s-l}
     \biggr)
  {B_r^{s-1} C_1}\ ,
\cr
}}
\end{equation*}
Therefore the required inequalities~(\ref{est.gs.5}) are satisfied by
the sequences recursively defined as
$$
\vcenter{\openup1\jot\halign{
\hfil$\displaystyle{#}$
&$\displaystyle{#}$\hfil
&$\displaystyle{#}$\hfil
\cr
\theta_{s} &=
\sum_{j=1}^{s} \frac{j}{s} \eta_j\theta_{s-j} 
&\quad{\rm for}\ s\ge 1\ ,
\cr
\eta_s &=
 \eta_{s-1} +\sum_{j=1}^{s-1} j \eta_j\theta_{s-j} &\quad{\rm for}\
      s\ge 2\ .
\cr
}}
$$
starting with $\eta_{1} =\theta_{0} = 1$.  Actually, in order to find
an estimate for the generating function it is enough to investigate
the sequence $\eta_1,\ldots,\eta_r\,$.  To this end, after
multiplication by a factor $1/s$, we subtract the second relation from
the first one, thus getting
$$
\theta_1 = 1\ , \quad \theta_s = \tond{\frac{s+1}{s}}\eta_s
- \frac{1}{s}\eta_{s-1} < 2\eta_s - \frac{1}{s}\eta_{s-1}\ .
$$
Then we substitute the latter expression for $\theta_{s-j}$ in the
second of the relations above, and get
$$
\eta_s \lt \eta_{s-1} + \sum_{j=1}^{s-1}{2j} \eta_j\eta_{s-j} < 3\sum_{j=1}^{s-1}{j} \eta_j\eta_{s-j}\ .
$$
Hence the wanted inequality~(\ref{est.gs.5}) for $C_{\psi,s}$ is
satisfied by the sequence
$$
\eta_1 = 1\ ,\quad
\eta_s = 3\sum_{j=1}^{s-1} j\eta_j\eta_{s-j} = 3s\sum_{j=1}^{\lfloor
s/2\rfloor}\eta_j\eta_{s-j}\ ,\qquad s\geq 2\ .
$$
By induction it is possible to prove that $\eta_s \leq 9^{s-1} s!$
for all $s=1,\ldots,r$: indeed it holds
\begin{displaymath}
 x_s \leq 9^{s-1}\frac{s}3\sum_{j=1}^{\lfloor s/2\rfloor}
          j! (s-j)! \leq 9^{s-1} s! \ ,
\end{displaymath}
provided
\begin{equation*}
 \sum_{j=1}^{\lfloor s/2\rfloor} j! (s-j)! =
 \sum_{j=2}^{\lfloor s/2\rfloor} j! (s-j)!\leq 2(s-1)! \ ;
\end{equation*}
the latter being true since for $4\leq s\leq r$ and $2\leq j\leq \lfloor
s/2\rfloor$
\begin{displaymath}
j! \frac{(s-j)!}{(s-1)!}
= \prod_{i=0}^{j-2}\tond{\frac{j-i}{s-j-i}} \leq \tond{\frac23}^{j-1}\ .
\end{displaymath}
Then, by $s!\leq (\sqrt e)^{-(s-1)} s^s$ we obtain for $1\leq s\leq r$
\begin{displaymath}
\eta_s \leq \tond{\frac{9}{\sqrt e}}^{s-1} s^s < 4^{s-1} r^{s-1}
\end{displaymath}
Replacing this and~\eqref{est.gs.2} in the inequality~(\ref{est.gs.5}) for
$C_{\psi,s}$ and recalling that $C_{\chi,s}\le C_{\psi,s}/\gamma$ we have
\begin{equation*}
 C_{\chi,s} \le (64r^2C_*)^{s-1}\frac{C_1}{\gamma s} \ ,
  \qquad
  C_* = \frac{C_1}{\gamma(1-e^{-\sigma_0})(1-e^{-(\sigma_0-\sigma_*})} \ .
\end{equation*}
The proves the statement~(ii) of proposition~\ref{prop.gen} with the estimated
value of $C_*$ in~\eqref{const.prop.gen}.  The statement~(iii) also follows in
view of $C_{\zeta,s}\le C_{\psi,s}\,$.

\paragraph{Estimate of the truncated first integral.}
We give an estimate for the first $r$ terms of $T_{\Chi}\Phi$ where
$\Phi$ is a homogeneous polynomial, as specified in the statement~(iv)
of proposition~\eqref{prop.gen}.  We look for a sequence $C_{\ph,s}$
of constants such that $E_s\Phi$ is of class
$\Dscr(C_{\ph,s},\sigma_s)$ for $s=0,\ldots,r$.  Of course we have
$C_{\ph,0}=C_{\ph}$, so we look for a recursive estimate for $s\gt
0$ using lemma~\ref{lem.poisson} and the definition~\eqref{e.TelChi}
of $T_{\Chi}$.  Recalling~\eqref{eq.denpois} we have that $E_s\Phi$ is
of class $\Dscr(A,\sigma_s)$ with a constant $A$ satisfying
$$
\vcenter{\openup1\jot\halign{
\hfil$\displaystyle{#}$
&$\displaystyle{#}$\hfil
\cr
A &\le
 \sum_{j=1}^{s} \frac{j}{s}\cdot
   \frac{4(j+1)(s-j+m+1)}{(1-e^{-\max(\sigma_j,\sigma_{s-j})+\sigma_s})
     (1-e^{-\max(\sigma_j,\sigma_{s-j})})} 
  \cdot \frac{C_1}{j\gamma} (C_r)^{j-1} C_{\ph,s-j}
\cr
&\le 
 \sum_{j=1}^{s} \frac{(s-j+m+1)}{\max(j,s-j)} \cdot
  \frac{16C_1 r}{\gamma(1-e^{-\sigma_0 })(1-e^{-(\sigma_0-\sigma_* )})}
   \, (C_r)^{j-1} C_{\ph,s-j}
\cr
}}
$$
Thus, recalling the definition~\eqref{const.prop.gen} of $C_*$, we may
set
\begin{equation*}
 C_{\ph,s} = \frac{1}{4} \sum_{j=1}^{s} \frac{s-j+m+1}{\max(j,s-j)} 
    (C_r)^{j} C_{\ph,s-j}\ .
\end{equation*}
For $s=1$ this gives 
\begin{equation}
\label{chi.phi.1}
 C_{\ph,1} = (m+1) r^2 C_* C_{\ph} \ ,
\end{equation}
so that the claim is true with $F_r$ as given in~\eqref{const.prop.gen}.
For $s\gt 1$ we extract from the sum the term $j=1$ and replace the
index $j$ with $j+1$ in the rest of the sum, thus getting
$$
\vcenter{\openup1\jot\halign{
\hfil$\displaystyle{#}$
&$\displaystyle{#}$\hfil
\cr
C_{\ph,s} &\le
 \frac{(s+m)}{4(s-1)} C_r C_{\ph,s-1} +
  \frac{C_r}{4} 
   \sum_{j=1}^{s-1} \frac{s-j+m}{\max(j+1,s-1-j)} 
    (C_r)^{j} C_{\ph,s-1-j}\ ,
\cr
&\le \frac{s+m}{4(s-1)} C_r C_{\ph,s-1}  
  +\frac{1}{4} C_r C_{\ph,s-1}
\cr
&\le \frac{m+2}{4} C_r C_{\ph,s-1}\ .
\cr
}}
$$
This proves the statement~(iv) of proposition~\ref{prop.gen}.  

Concerning the statement~(v), a remark that $\lie{\Chi_1}H_{\omega} =
-\lie{\Omega}\Chi_1=-Z_1-H_1$ in view of the homological equation at order 1.
Therefore we may replace~\eqref{chi.phi.1} with $C_{\ph,1} = C_1$.  For $s\gt
1$ the argument above for a generic function $\Phi$ requires only a minor
modification and one obtains the same recursive relation for $C_{\ph,s}\,$,
where we just replace a different value for $C_{\ph,1}\,$.  This proves the
claim.

\paragraph{Estimate of the time derivative of the approximate first integral.}

We come to the statement~(vi) of proposition~\ref{prop.gen}.  Recall
that by construction we have $T_{\Chi}Z -H=\Oscr(r+2)$, meaning that
its expansion starts with terms of degree at least $2(r+2)$.  Since
$\lie{\Omega}Z_s=0$ for $s=0,\ldots,r$ and recalling the general
property $T_{\Chi}\Poi{f}{g}=\Poi{T_{\Chi}f}{T_{\Chi}g}$ we
immediately have
$$
\Poi{H}{T_{\Chi}\Phi_0} = T_{\Chi}\Poi{Z}{\Phi_0} = \Oscr(r+2)
$$
On the other hand, since $T_{\Chi}\Phi_0-\Phi^{(r)}=\Oscr(r+2)$, we
also have $\Poi{H}{\Phi^{(r)}} = \Oscr(r+2)$.  Substituting the
expansions $H=H_0+H_1$ and $\Phi^{(r)}=\Phi_0+\ldots+\Phi_r$ we get
$\dot\Phi^{(r)} = \Poi{H}{\Phi^{(r)}}=\Poi{H_1}{\Phi_r}$, which is an
extensive homogeneous polynomial of degree $2r+4$, as claimed.
Recalling that $H_1$ is of class $\Dscr(C_1,\sigma_0)$ and $\Phi_r$ is
of class $\Dscr(F_r^{r-1}C_{1},\sigma_r)$, in view of the
statement~(v), a straightforward application of
lemma~\ref{lem.poisson} gives
$$
C_{\rho} \le \frac{8(r+2)F_r^{r-1}
C_1^2}{(1-e^{-\sigma_0})(1-e^{-(\sigma_0-\sigma_*)})}\ ,
$$
and the result follows by just replacing the estimated value of $F_r$
from statement~(iv), with $m=0$.

This concludes the proof of proposition~\ref{prop.gen}.


\section{Long time estimates and statistical control of
  fluctuations}
\label{s:5}

In this Section we actually present, and prove, the main result of the paper
in its complete and detailed form; the results given in the introduction,
i.e. Theorems~\ref{t.main.power} and~\ref{t.main.exp}, are simplified
statements with some particular choices of the parameters involved.

We first stress that, although the whole perturbative construction of
our conserved quantity $\Phi\equiv\Phi^{(r)}$ is based upon an initial
normal form transformation of the quadratic part of the Hamiltonian,
i.e. there is a change of coordinates at the very beginning of our
procedure, we will state our result and the corresponding proof in the
original\footnote{We will take care of this via the application of
Lemma~\ref{l.inv.g} throughout the proof.} variables $z=(x,y)$.

As explained in the introduction, our aim is to show that $\Phi$ is a good
adiabatic invariant over a long time scale: to this purpose we introduce its
variation over a time interval
\begin{equation*}
 \Delta_t\Phi(z):= \Phi\bigl(\phi^t(z)\bigr) - \Phi((z)) \ ,
\end{equation*}
where $\phi^t(z)$ is the Hamiltonian flow. We will show that $\Delta_t\Phi$
remains small, compared with the phase variance of $\Phi$, over a long time
scale, for a set of initial data $z$ of large Gibbs measure. This kind of
control is quite weak for all the times between $0$ and $t$, since the set of
large measure is in principle allowed to change if we change the final $t$ in
order control the intermediate times. We thus give two stronger estimates: the
first deal with $\overline{\Delta_t\Phi}$. Its smallness imply that for every
large deviation of a given sign at intermediate times must correspond a
similar deviation with the opposite sign. An even stronger control is obtained
with the smallness of $\sigma^2_t[\Delta_t\Phi]$: in this case we have that
$\Delta_s\Phi$ is small also for all $s\in(0,t)$.

\subsection{Main result}
\label{ss:adiab}

Concerning the time scale over which we are able to control the
evolution of our adiabatic invariant, we have actually two types of
estimates, as a power law and a stretched exponential, each with its own
set of hypothesis and constants, but with a similar formulation; we thus
present the two results together.  In order to simplify the statement,
we find it convenient to formulate in advance the hypothesis and
definitions under which the result holds in those two cases. In
particular we define the time scale $\bar t$ and the corresponding
bounds on $\beta$.

Given the constants\footnote{See Propositions~\ref{prop.gen},
\ref{p.rho2.up}, \ref{p.lower.sig} and Lemmas~\ref{l.part.N} and
\ref{l.un-nono}} $a_0$, $\mu_*$, $\mu_2$, $\beta^*$, $\beta_0$,
$\beta_1$, $\beta_3$, $\beta_4$, $K_1$, defining
\begin{equation*}
 \mu_0 := \frac{a_0}{1+2a_0} \ ,
\quad
 \mu_1 := \frac{\left(1-\frac3{4(\max\{\beta_0,1\})^2}\right)^8}
               {64 K_1^8(1+4a_0)^4} \ ,
\end{equation*}
we introduce

\begin{description}
\item[{\bf HD1 (power law estimate)}] there exist $\beta_2>0$ and $r^*(\mu) =
	   \frac{\mu_*}{2\mu}$ such that for any integer $r\in[1,r^*)$ and for
	   any $\nu\in (0,1]$, defining 
\begin{align*}
 \beta_* &:= \max\left\{\beta_0,\beta_1,\beta_2, \beta_3,\beta_4,
                        (\beta^* r^3)^{1/\nu},\sqrt{3/2} \right\} \ ,
\cr
 \mu^* &:= \min\left\{\mu_0,\mu_1,\mu_2,\frac18 \right\} \ ,
\end{align*}
then
\begin{displaymath}
{\rm assume}\qquad
\beta_* \leq \beta \ ,
\qquad{\rm and\ define}\qquad
\begin{aligned}
 \lambda&:=r(1-\nu)+1-\nu/2 \ ,
\cr
 \overline t&:={\beta^\lambda} \ .
\end{aligned}
\end{displaymath}
\item[{\bf HD2 (exponential estimate)}] there exists $\mu_3>0$ such that
	   defining 
\begin{align*}
 \beta_* &:= \max\left\{\beta_0,\beta_1,\beta_3,\beta_4,64e\beta^*,
                        \sqrt{3/2}\right\}
\cr
 \mu^* &:= \min\left\{\mu_0,\mu_1,\mu_2,\mu_3,
                      \mu_*\sqrt[3]{\frac{e\beta^*}{\beta_*}},
                      \frac18 \right\} \ ,
\end{align*}
then
\begin{displaymath}
{\rm assume}\qquad
\beta_* \leq \beta < e\beta^*\;\tond{\frac{\mu_*}{\mu}}^3
\qquad{\rm and\ define}\qquad
\begin{aligned}
 \kappa&:=\frac32\sqrt[3]{ \frac{\beta}{e\beta^*} } \ ,
\cr
 \overline t&:=\frac{\kappa^{9/2}e^{\kappa/2}}\beta \ .
\end{aligned}
\end{displaymath}
\end{description}
We are now ready to state the result:
\begin{theorem}
\label{t.Adiab0}
For either the hypothesis and definitions of case {\bf HD1} or those of
case {\bf HD2}, there exist constants $K>1$ such that, for all
$\mu<\mu^*$, and for any positive $\delta$ one has
\begin{equation*}
\begin{aligned}
 m\Bigl( z\in\RR^{2N}\ \colon\  \left|\Delta_t\Phi(z)\right| \geq
  \delta\sigma[\Phi] \Bigr)
&\leq  \frac{12K}{\delta^2}\tond{\frac{t}{\overline t}}^{2} \ ,
\cr
 m\Bigl( z\in\RR^{2N}\ \colon\  \left|\overline{\Delta_t\Phi}(z)\right| \geq
  \delta\sigma[\Phi] \Bigr)
&\leq  \frac{3K}{\delta^2}\tond{\frac{t}{\overline t}}^{2} \ ,
\cr
 m\Bigl( z\in\RR^{2N}\ \colon\  \sigma^2_t[\Delta_t\Phi(z)] \geq
  \delta\sigma^2[\Phi] \Bigr)
&\leq  \frac{4K}{\delta}\tond{\frac{t}{\overline t}}^{2} \ .
\end{aligned}
\end{equation*}
\end{theorem}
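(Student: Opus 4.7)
The plan is to reduce all three inequalities to a single Chebyshev argument applied to $\Delta_t\Phi(z)=\int_0^t\dot\Phi(\phi^s z)\,ds$, where by statement~(vi) of Proposition~\ref{prop.gen} the time derivative $\dot\Phi=\{H_1,\Phi_r\}$ is an extensive polynomial of degree $2r+4$ with seed of class $\Dscr(C_\rho,\sigma_*)$. Combining Cauchy--Schwarz on the time integral with invariance of the Gibbs measure under $\phi^s$ yields
\begin{equation*}
\bigl\langle(\Delta_t\Phi)^2\bigr\rangle
\le t\int_0^t\bigl\langle(\dot\Phi\circ\phi^s)^2\bigr\rangle\,ds
= t^{2}\bigl\langle\dot\Phi^{\,2}\bigr\rangle,
\end{equation*}
so Chebyshev gives $m\bigl(|\Delta_t\Phi|\ge\delta\sigma[\Phi]\bigr)\le t^{2}\langle\dot\Phi^{\,2}\rangle/(\delta^{2}\sigma^{2}[\Phi])$. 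The content of the theorem is thus the claim that $\langle\dot\Phi^{\,2}\rangle/\sigma^{2}[\Phi]\le K/\overline t^{\,2}$ with the $\overline t$ prescribed in \textbf{HD1}/\textbf{HD2}. Since the perturbative construction was carried out after the linear transformation of Proposition~\ref{p.1}, I would first invoke Lemma~\ref{l.inv.g} to transfer the decay-class estimates back to the original $(x,y)$ coordinates in which the Gibbs measure is defined.

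The two technical ingredients are (i) an upper bound on $\langle\dot\Phi^{\,2}\rangle$ and (ii) a lower bound on $\sigma^{2}[\Phi]$, which are precisely what the referenced Proposition~\ref{p.rho2.up} and Proposition~\ref{p.lower.sig} must provide. For~(i), writing $\dot\Phi=\sum_{j=0}^{N-1}\tau^{j}\dot\varphi$ and expanding $\langle\dot\Phi^{\,2}\rangle=\sum_{j,k}\langle(\tau^{j}\dot\varphi)(\tau^{k}\dot\varphi)\rangle$, the key point is that the exponential decay of the seed of class $\Dscr(C_\rho,\sigma_*)$ combined with the spatial decay of Gibbs correlations makes only $O(1)$ shift differences $j-k$ effectively contribute, so the sum scales like $N$ rather than $N^{2}$. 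Moment estimates for the Gibbs measure, of order $\beta^{-|k|/2}$ at large $\beta$ for monomials of multi-degree $k$, then convert the norm $C_\rho$ into a bound of order $N\,C_\rho^{2}\beta^{-(2r+4)}$; carefully tracking $r$ in $C_\rho\sim(16r^{2}C_{*})^{r-1}C_{1}^{2}/(1-e^{-\sigma_{0}})(1-e^{-(\sigma_{0}-\sigma_{*})})$ produces the two regimes, $\overline t=\beta^{\lambda}$ when $r$ is chosen as $r^{*}(\mu)$ and the specific-energy factor $\beta^{-\nu}$ absorbs $r^{3}$, and the stretched exponential $\overline t=\kappa^{9/2}e^{\kappa/2}/\beta$ when $r$ is optimized as $\sim \kappa$ against $\beta$. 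The lower bound~(ii) on $\sigma^{2}[\Phi]$ scales like $N\beta^{-2}$ (up to constants uniform in $N$), since the leading piece of $\Phi$ is the extensive quadratic $H_\Omega$ whose Gibbs variance is explicitly computable; the $N$'s cancel against those in~(i), yielding an $N$-independent ratio.

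With the ratio bound in hand the first inequality follows immediately. For the second, since $\overline{\Delta_t\Phi}(z)=\frac1t\int_0^t\Delta_s\Phi(z)\,ds$, a Jensen/Fubini computation gives $\langle(\overline{\Delta_t\Phi})^{2}\rangle\le\frac1t\int_0^t\langle(\Delta_s\Phi)^{2}\rangle\,ds\le\frac{t^{2}}{3}\langle\dot\Phi^{\,2}\rangle$, accounting for the smaller constant $3K$ in place of $12K$. For the third, one observes
\begin{equation*}
\sigma_t^{2}[\Delta_t\Phi]=\sigma_t^{2}[\Phi]
\le\overline{\bigl(\Phi-\overline\Phi\bigr)^{2}}\le\overline{(\Delta_t\Phi)^{2}}+\textrm{(constant shift)},
\end{equation*}
so that $\langle\sigma_t^{2}[\Delta_t\Phi]\rangle\le\langle(\Delta_t\Phi)^{2}\rangle$ by invariance, and one applies Markov rather than Chebyshev, which explains the linear dependence on $1/\delta$ instead of $1/\delta^{2}$.

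The main obstacle I anticipate is the clean execution of step~(i): organizing the double cyclic sum so that the short-range/exponential-decay property of the seed $\dot\varphi$ produces enough cancellations in the Gibbs average to make the bound uniform in $N$, while simultaneously keeping the two small parameters $a$ and $1/\beta$ disentangled. This is exactly where the improvement over~\cite{CarM12} lies, and it requires carrying through statistical estimates of polynomial moments against $dm(\beta,a)$ that respect both the extensive structure of $\Phi$ and the constraint $2r\mu<\mu_{*}$ from Proposition~\ref{prop.gen}.
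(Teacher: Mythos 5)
Your proposal follows essentially the same route as the paper: Tchebychev/Markov estimates combined with Schwarz and the invariance of the Gibbs measure reduce all three claims to bounding $\inter{R^2}/\sigma^2[\Phi]$, which is then supplied by exactly the two ingredients the paper uses (the upper bound of Proposition~\ref{p.rho2.up} and the lower bound of Proposition~\ref{p.lower.sig}, with Lemma~\ref{l.inv.g} handling the return to the original variables) under the parameter choices of \textbf{HD1}/\textbf{HD2}. The only small deviation is bookkeeping: for the second estimate the paper's iterated Schwarz on the double time integral yields the factor $t^2/4$ (hence the constant $3K$), whereas your Jensen bound gives $t^2/3$, which would change only the numerical constant.
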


\begin{remark}
The estimates contained in the above theorem can be seen as the
generalizations of Propositions 2, 3 and 4 of paper~\cite{GioPP12}.  The
result of paper~\cite{CarM12} can be compared with the first estimate,
with the hypothesis and definition set {\bf HD2} in the case of
vanishing coupling constant, the only difference being a slightly
improved exponent for the argument of the exponential ($1/3$ in our
case, $1/4$ in their result).
\end{remark}

\begin{proof}
The first two estimates of the theorem are actually Tchebychev estimates,
while the third is a Markov estimate. We recall that, in our notations, given
any measurable function $f$ and any real $\eta>0$, for $p=1$ and 2
respectively, Markov and Tchebychev estimates are
\begin{equation*}
 m\Bigl(z\in\RR^{2N}\ \colon\ |f(z)| \geq \eta \Bigr) \leq
  \frac{\inter{|f|^p}}{\eta^p} \ .
\end{equation*}
Choosing $\eta=\delta\sigma[\Phi]$, and using Tchebychev for the first two
estimates, and with $\eta=\delta\sigma^2[\Phi]$ and using Markov for the third
one, one has to control respectively the following three quantities
\begin{equation}
\label{e.3ratios}
 \frac{\inter{\left(\Delta_t\Phi\right)^2}}{\delta^2\sigma^2[\Phi]} \ ,
\qquad\qquad
\frac{\inter{\left(\overline{\Delta_t\Phi}\right)^2}}{\delta^2\sigma^2[\Phi]}\ ,
\qquad\qquad
  \frac{\inter{\sigma^2_t[\Delta_t\Phi]}}{\delta\sigma^2[\Phi]} \ .
\end{equation}

By the rough inequality $\sigma^2_t[\Delta_t\Phi(z)] \leq
\overline{(\Delta_t\Phi)^2}(z)$ it is clear that to estimate the three
quantities above, we need to control the phase average of
$\left(\Delta_t\Phi\right)^2$, $\left(\overline{\Delta_t\Phi}\right)^2$ and
$\overline{(\Delta_t\Phi)^2}$. By defining
\begin{equation}
\label{e.R.1}
R := \Poi{\Phi^{(r)}}{H} = \Poi{\Phi_r}{H_1},
\end{equation}
we have $\Delta_t\Phi(z) = -\int_0^t R\circ\phi^s(z)ds$, so that we
may write
\begin{align*}
 \inter{\left(\Delta_t\Phi\right)^2} &= 
  \inter{\int_{[0,t]^2} \left(R\circ\phi^{s_1}\right) 
         \left(R\circ\phi^{s_2}\right) ds_1ds_2} =
\cr
  &=\int_{[0,t]^2} \inter{\left(R\circ\phi^{s_1}\right) 
         \left(R\circ\phi^{s_2}\right) } ds_1ds_2 \leq
\cr
  &\leq \int_{[0,t]^2} \norm{\left(R\circ\phi^{s_1}\right)}_{L^2}
         \norm{\left(R\circ\phi^{s_2}\right)}_{L^2} ds_1ds_2 =
\cr
  &= \int_{[0,t]^2} \norm{R}^2_{L^2} ds_1ds_2 =
        t^2 \inter{R^2} \ .
\end{align*}
where we used Fubini's theorem, Schwartz inequality and the invariance
of the measure under the Hamiltonian flow. For the second quantity we
need to add a further double integration over time to perform the time
average, but the scheme is the same:
\begin{align*}
 \inter{\left(\overline{\Delta_t\Phi}\right)^2} &= 
  \inter{\frac1{t^2}\int_{[0,t]^2} \Delta_{s_1}\Phi\Delta_{s_2}\Phi ds_1ds_2} =
\cr
  &= \frac1{t^2}\int_{[0,t]^2} \inter{\int_{[0,s_1]\times[0,s_2]}
            \left(R\circ\phi^{\tau_1}\right)
            \left(R\circ\phi^{\tau_2}\right) d\tau_1d\tau_2} \, ds_1ds_2 \leq
\cr
  &\leq      \frac{t^2}4 \inter{R^2} \ .
\end{align*}
In the third case we instead have a single time integration from the
time average:
\begin{align*}
 \inter{\overline{(\Delta_t\Phi)^2}} 
  = \inter{\frac1{t}\int_{[0,t]} (\Delta_{s}\Phi)^2 ds}
 &= \frac1{t}\int_{[0,t]} \inter{(\Delta_{s}\Phi)^2} ds \leq
\cr
 &\leq \frac1{t}\int_{[0,t]} s^2\inter{R^2}ds = 
       \frac{t^2}3 \inter{R^2} \ .
\end{align*}

For all the three quantities~\eqref{e.3ratios}, everything we thus
need to control the quotient of $\inter{R^2}$ over $\sigma^2[\Phi]$,
with an upper bound for the numerator and a lower bound for the
denominator: for the former we apply Proposition~\ref{p.rho2.up}, and
for the latter we use Proposition~\ref{p.lower.sig}. In particular,
the hypothesis and definition sets {\bf HD1} and {\bf HD2} imply the
hypothesis of Proposition~\ref{p.rho2.up} part {\sl 1} and,
respectively, part {\sl 2}.

Indeed if {\bf HD1} holds, from $\mu<\mu^*$ we have that
$a<\min\{a_0,1/6\}$ ($\mu^*<\min\{\mu_0,1/8\}$) and\footnote{The
  constant $D$ is defined in Appendix~\ref{app.gibbs} and recalled in
  Section 5.2, while $\mu_\flat$ in~\eqref{e.decomp.rho}.}
$D^2\mu_\flat<1/2$ ($\mu<\mu_1$). The condition $\mu<\mu_2$ is
required in Lemma \ref{l.un-nono}, for the lower bound of
$\sigma^2[\Phi]$. With respect to the hypothesis of
Proposition~\ref{p.lower.sig} we observe that if {\bf HD1} holds, the
bounds on $\beta$ are satisfied (for $\beta$ large enough, setting
$\beta_2$ as the threshold) since we need it to be scaling like $r^3$
and according to {\bf HD1} we have it scaling as $r^{(3/\nu)}$ with
$\nu\leq1$.

If {\bf HD2} holds, since in that case from
Proposition~\ref{p.rho2.up} we set the optimal integer $r$ as
$\lfloor\kappa/3\rfloor$, the condition on $\beta$ translate in the
following inequality
\begin{equation*}
 1 > \frac34 e^{\tilde C -1} \tond{1+2\sqrt[3]{\frac{e\beta^*}\beta}}
\end{equation*}
which is true since $\tilde C$ vanishes with $\mu$ (set here $\mu_3$ as the
threshold).

Using also the constants $\Omega$ (Proposition~\ref{p.1}), $C_1$
(Proposition~\ref{prop.gen}) and $K_2$ (Proposition~\ref{p.aver.2}), setting
\begin{equation*}
K:=\frac{3^2 5 e^6 }{2} \frac{K_1^2 K_2 \Omega^4}{C_1^2}\cdot
\left\{\begin{aligned}
&2{\beta^*}^3  \qquad &{\rm case\ {\bf HD1}}
\cr
&\frac{3^8}{e^4}  \qquad &{\rm case\ {\bf HD2}}
\end{aligned}
\right.
\end{equation*}
we have the thesis.
\end{proof}

\paragraph{Proof of Theorem~\ref{t.main.power}}
After observing that $\sigma^2_t \left[\Delta_t\Phi\right] = \sigma^2_t
\left[\Phi\right]$, use the third estimate of Theorem~\ref{t.Adiab0},
hypothesis and definitions set {\bf HD1}, with $r=\lfloor r^*\rfloor$,
$\nu=\frac12$, $\delta = \beta^{-1/2}$ and letting only $\beta^{r/2}$ in the
time scale $\bar t$. \qed

\paragraph{Proof of Theorem~\ref{t.main.exp}}
Apply Theorem~\ref{t.Adiab0}, hypothesis and definitions set {\bf
  HD2}, third estimate, with $\delta = \beta^{-1/2}$ and letting only
$e^{c\kappa}$ in the time scale $\bar t$ with a constant $c$ slightly
smaller than $1/2$ in order to get the correct power of $\beta$
outside the exponential factor. The upper bound on
$\sqrt{a}\sqrt[3]\beta$ represents last condition in {\bf HD2} using
the definition \eqref{e.mu.def} of $\mu$. \qed

The rest of the Section is devoted to the proofs of the upper bound of
$\inter{R^2}$, in subsection~\ref{sss:av.R2}, and of the lower bound of
$\sigma^2[\Phi]$ in subsection~\ref{sss:lower.sig}. Due to its relevance and
to the slightly different techniques involved, we anticipate in
subsection~\ref{ss.gibbs} the result on the control of the decay of
correlations.

%
%

\subsection{Decay of correlations}
\label{ss.gibbs}

The main result of this Section is an estimate of the correlation
between two polynomials with disjoint supports: we show such a
correlation to be (at least) small as $a^d$ where $d$ is the distance
between the two supports.

Let $\beta_0$, $a_0$ and $K_1$ be the constants of Lemma~\ref{l.part.N}.
Consider also these other constants\footnote{$A_j$ are actually functions of
$a$, $B$ is a function of $\beta$ and $D$ is a function of both the
parameters, but all these quantities are asymptotically constants as
$a\to0$ and $\beta\to\infty$.}  defined in Appendix~\ref{app.gibbs}:
\begin{equation*}
A_1=\sqrt{1+4a} \ ,
\qquad
A_2=\sqrt{1-2a} \ ,
\qquad
B=1-\frac3{4\beta^2} \ ,
\qquad
D=\frac{K_1A_1(a)}{B(\beta)}  \ ;
\end{equation*}
Introduce the following:
\begin{equation}
\label{e.C.et.alter}
 \mu_\sharp:=a(2+a) \ ,
\qquad
 K_2:=\frac{4(2K_1)^{2+8a_0}}{(1-2a_0)^4(1-a_0)^6} \ .
\end{equation}

\begin{proposition}
\label{p.aver.2}
Let $N$ be the length of the periodic chain.  Let $\phi$ and $\psi$ be two
homogeneous polynomial of degree $2r$ and $2s$ respectively, and interaction
length $m$ and $m'$ respectively; suppose their supports are disjoint, and
denote by $d$ their distance\footnote{If $p=\min S(\phi)$, $q=\max S(\phi)$,
$t=\min S(\psi)$ and $u=\max S(\psi)$, with $q<t$, then
$d=\min(t-q-1,N-u+p-1)$.}, then for any $\beta>\beta_0$ and $a<a_0$ it holds
\begin{equation*}
 \left|\inter{\phi\psi} - \inter{\phi}\inter{\psi}\right| \leq
 K_2 \;
 \left[D^{m+m'+2d+4}\right] \;
 \mu_\sharp^d \;
 \left[\frac{2^{r+s}r!s!}{(A_2^2\beta)^{r+s}}\right] \;
 \norm{\phi}\norm{\psi} \ .
\end{equation*}
\end{proposition}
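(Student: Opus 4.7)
The strategy is to exploit the product structure of the Gibbs measure at $a=0$: writing $H=H_{\mathrm{loc}}+V$ with $H_{\mathrm{loc}}=\sum_j\bigl[\tfrac12 y_j^2+\tfrac{1+2a}{2}x_j^2+\tfrac14 x_j^4\bigr]$ a site-wise independent Hamiltonian and $V=-a\sum_j x_j x_{j-1}$ the whole off-diagonal coupling, one has $\inter{\phi\psi}_{a=0}=\inter{\phi}_{a=0}\inter{\psi}_{a=0}$ simply because $S(\phi)\cap S(\psi)=\emptyset$. I would therefore expand in powers of the coupling and track how many factors of $a$ are required to generate correlation across the gap of length $d$.

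The main step is a bond expansion:
\begin{equation*}
 e^{-\beta V}=\prod_j\sum_{n_j\ge 0}\frac{(\beta a)^{n_j}}{n_j!}(x_j x_{j-1})^{n_j},
\end{equation*}
so that each multi-index $\{n_j\}$ produces, after factorisation of the local measure, a product of one-site integrals. In the numerator of $\inter{\phi\psi}-\inter{\phi}\inter{\psi}$ a cancellation mechanism would kill every configuration whose active bonds $\{j:n_j\ge 1\}$ fail to form a chain connecting $S(\phi)$ to $S(\psi)$, the point being that for such a configuration the integral factorises into a $\phi$-piece times a $\psi$-piece which, after appropriate rearrangement of partition functions, pair with the corresponding expansion terms of $\inter{\phi}\inter{\psi}$. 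The surviving configurations contain at least $d$ bridging bonds, producing at least $d$ factors of $\beta a$. The crucial $\beta$-bookkeeping is then: each active bond has degree $2$ in the $x$'s, and after the one-site integration it supplies a Gaussian moment of order $\beta^{-1}$, cancelling the $\beta$ it came with; a careful summation over orientations and higher multiplicities promotes the resulting $a$ per bond to the sharper $\mu_\sharp=a(2+a)$.

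To control the remaining one-site moments uniformly in $a$ I would use the operator bound $A\ge (1-2a)\Id=A_2^2\Id$, which follows from $4\Id-\tau-\tau^{\top}\ge 0$, and discard the non-negative quartic: the on-site marginals are then dominated by Gaussians with covariance at most $1/(A_2^2\beta)$ per coordinate, and Wick's rule yields the factor $2^{r+s}r!s!/(A_2^2\beta)^{r+s}$ against $\|\phi\|\,\|\psi\|$. The partition-function ratios needed to pass from the bond expansion to the truncated correlator are absorbed by Lemma~\ref{l.part.N}, supplying the $K_1/B$ combinations that build up $D$; the exponent $m+m'+2d+4$ in $D^{m+m'+2d+4}$ accounts for the sites in the two supports together with the two end regions of the bridging chain where extra bonds can attach. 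The delicate step, in my view, is the cancellation argument that reduces the bond sum to chains actually crossing the gap, since it must be carried out at the level of the \emph{truncated} correlator, where partition functions appear on both sides and site-wise factorisation does not commute \emph{a priori} with the subtraction $\inter{\phi}\inter{\psi}$.
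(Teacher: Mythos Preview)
Your overall plan---bond expansion, cancellation of disconnected configurations, Gaussian moment bounds---is in the right spirit, but the execution diverges from the paper in two places, and the gap you yourself flag is in fact the heart of the matter.

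First, the paper does \emph{not} Taylor-expand each bond as $\sum_{n\ge 0}(\beta a)^n(x_jx_{j-1})^n/n!$; it rescales by $\sqrt\beta$ and uses the binary splitting $e^{ax_jx_{j+1}}=1+(e^{ax_jx_{j+1}}-1)$, labelling bonds by strings of $\tt 0$'s and $\tt 1$'s. This keeps the ``on'' factor exact and later bounds it by $|e^\alpha-1|\le|\alpha|e^{|\alpha|}$, which avoids the infinite sum over multiplicities you would face. Second, and more importantly, the cancellation is \emph{not} the statement ``configurations that fail to connect $S(\phi)$ to $S(\psi)$ drop out''. The paper works with the numerator $\dinter{\phi\psi}\,Z-\dinter{\phi}\dinter{\psi}$ and expands \emph{all four} integrals independently over the two gaps, producing four strings ${\tt j},{\tt j}',{\tt k},{\tt k}'$. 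The terms that cancel are precisely those with ${\tt j}\wedge{\tt j}'\ne 0$ \emph{and} ${\tt k}\wedge{\tt k}'\ne 0$: a common $\tt 1$ in each gap lets one cut both integrals at the same site, and an explicit pairing within equivalence classes (swapping the half-strings across the cut) makes the two pieces of the numerator match. What survives satisfies ${\tt j}\wedge{\tt j}'=0$ or ${\tt k}\wedge{\tt k}'=0$, which forces at least $d$ ``on'' bonds \emph{jointly} in the pair $({\tt j},{\tt j}')$; the combinatorial sum $\sum_i\binom{d}{i}2^{d-i}a^i=(2+a)^d$ over such pairs is exactly where $\mu_\sharp^d=[a(2+a)]^d$ comes from. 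Your single-expansion picture of a ``bridging chain'' does not see this paired structure, and without it you cannot justify either the cancellation against the partition functions or the precise constant $\mu_\sharp$. The $D^{m+m'+2d+4}$ then arises from cutting the chain into four pieces via Lemma~\ref{l.cutting} and applying Lemmas~\ref{c.aver.2}--\ref{c.aver.4}, not from a direct bond count.
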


Before entering into the details of the proof it is necessary to
introduce another notation for the measure that will be useful also in
the sequel of this Section. To this purpose we split the original
Hamiltonian in a different way. We recall that $H$ is naturally split in
two different terms $H_0$ and $H_1$ (see \eqref{e.H.dec}) according to
the degree, but here we want to put into evidence the coupling terms of
$H$. There are two possible choices, the first being to separate all the
terms depending on the coupling constant $a$, i.e. $\frac{a}2
\sum_j(x_{j+1}-x_j)^2$.  We instead separate the diagonal and the
off-diagonal part of $A$ (see \eqref{e.def-A} and \eqref{e.dec.H0a})
like in Proposition \ref{p.1}, but maintaining the original variables;
in this way we put into evidence the real coupling terms. Accordingly we
define, on a subset of variables, the uncoupled component of the Gibbs
measure by
\begin{equation*}
 dV_s^{(m)}:=\prod_{j=s}^{s+m-1}
             e^{-\beta \left[(1+2a)\frac{x_j^2}{2} + \frac{x_j^4}{4}\right]}
	     dx_j \ ,
\end{equation*}
which depend only on $m$ variables, and the coupling part by
\begin{equation*}
 [p,q]:=e^{\beta ax_px_{p+1}} \cdots e^{\beta ax_{q-1}x_q}    \ ,
\end{equation*}
for\footnote{We remark that in this notation, given the generic dimension $l$
of the space, the indexes must be considered modulo $l$.} $p<q\leq p+N$.  We
observe that, for any $m<l$, it is possible to factorize both the component of
the measure: $dV_s^{(l)}=dV_s^{(m)}dV_{s+m}^{(l-m)}$ and
$[s,s+l]=[s,s+m][s+m,s+l]$.  Whenever $S(\phi)\subset\{p,\ldots,q\}$, we will
write $[p,\phi,q]:=\phi[q,p]$, to stress the bound on the support. The full
Gibbs measure, again ignoring the $y$ variables, is then given by
$[0,l]dV_0^{(l)}$ for a system with periodic boundary conditions\footnote{for
a system with free boundary conditions the measure is $[0,l-1]dV_0^{(l)}$, so
the partition function will be
\begin{equation}
\label{e.calZH}
{\mathcal Z}_l := \int_{\RR^l}[0,l-1]dV_0^{(l)}
    =  \int_{\RR^l}e^{-\beta {\mathcal H}(x)}dx \ .
\end{equation}
}, so the
partition function will be
\begin{equation*}
Z_l := \int_{\RR^l}[0,l]dV_0^{(l)}
    =  \int_{\RR^l}e^{-\beta H(x)}dx \ .
\end{equation*}

\begin{proof}
The proof consists of two main steps: in the first one we show the presence of
several cancellations, while in the second one we actually estimate the
remaining terms.

Without any loss of generality we may assume $\phi$ to be left aligned, so
that its support is contained in $\{0,\ldots,m-1\}$; denote by $t$ the minimal
index in the support of $\psi$, which will be therefore contained in
$\{t,\ldots,t+m'-1\}$.

As a first step we rescale all the variables by a factor $\sqrt\beta$. We need
to introduce a corresponding notation for the relevant objects with the
rescaled variables, and to this purpose we will systematically add a $\scal$
as a superscript when needed. We remark that the powers of $\beta$ appearing
as a multiplying factor will not be included in the $\scal$-objects. For
example we have:
\begin{equation}
\label{e.mis.scal}
 [p,q]^\scal:=e^{ax_px_{p+1}} \cdots e^{ax_{q-1}x_q} \ ,
 \qquad
 dV_s^{(m)\scal}:=\prod_{j=s}^{s+m-1}
      e^{-\left[(1+2a)\frac{x_j^2}{2} + \frac{x_j^4}{4\beta^2}\right]}
      dx_j   \ .
\end{equation}
If we perform such a scaling on the correlation we get
\begin{equation*}
 \inter{\phi\psi} - \inter{\phi}\inter{\psi} =
  \frac1{\beta^{r+s}}
\Bigl( \inter{\phi\psi}^\scal - \inter{\phi}^\scal\inter{\psi}^\scal \Bigr)
\end{equation*}

We need to introduce a further notation related to the coupling terms
$[p,q]^\scal$ of the measure. These terms are products of factors of the form
$e^\alpha$, each being the coupling term between two consecutive sites of the
chain; in order to possibly decouple the chain in several positions we use the
trivial identity $e^\alpha = 1 + \left(e^\alpha-1\right)$, so that for example
$[0,m]^\scal$ turns out to be the sum of $2^m$ terms each of which is the
product of $m$ factors: for every $j=0,\ldots,m-1$ the factor can be either
$1$ or $e^{a x_jx_{j+1}}-1$. We will identify each term in the sum with a
string of $m$ symbols in $\{0,1\}$: $0$ for the factor $e^{a x_jx_{j+1}}-1$,
and $1$ for the factor $1$. For example, for $m=4$, a possible factor is
\begin{equation*}
 {\tt k} = {\tt 0010} \rightarrow [0,m]_{\tt k}^\scal = 
  \left(e^{a x_0x_1}-1\right) \cdot 
  \left(e^{a x_1x_2}-1\right) \cdot
  1 \cdot
  \left(e^{a x_3x_4}-1\right) \ .
\end{equation*}
Thus we may write $[0,m]^\scal=\sum_{{\tt k}}[0,m]_{\tt k}^\scal$: we will use
this kind of expansion for the coupling terms involving sites outside the
support of the polynomials.  \emph{We remark that a factor of the type
$e^\alpha-1$ is roughly of order $\alpha$, i.e. in our case of order $a$, so
the number of ``zeros'' in the sequence {\tt k} can be used to quantify the
smallness of the corresponding term.}

Let us now rewrite the correlation collecting the partition function in
the denominators
\begin{equation}
\label{e.corr.s1}
 \inter{\phi\psi}^\scal - \inter{\phi}^\scal\inter{\psi}^\scal = 
  \frac{\dinter{\phi\psi}^\scal Z^\scal - 
  \dinter{\phi}^\scal\dinter{\psi}^\scal}{Z^{\scal 2}} \ ,
\end{equation}
where we used the notation $\dinter{\phi} := Z\cdot\inter{\phi}=\int \phi(x)
e^{-\beta H(x)} dx$, and let us concentrate our attention on the numerator.

According to the supports of $\phi$ and $\psi$, we split the coupling
part of the measure in the following way
\begin{equation*}
[0,N]^\scal = [0,m-1]^\scal \cdot [m-1,t]^\scal \cdot
              [t,t+m'-1]^\scal \cdot [t+m'-1,N]^\scal \ ;
\end{equation*}
moreover, in every integral, we will expand the ``holes'' between the supports:
\begin{equation*}
[m-1,t]^\scal = \sum_{{\tt j}}[m-1,t]^\scal_{\tt j}\ ,
\qquad
[t+m'-1,N]^\scal = \sum_{{\tt k}}[t,t+m'-1]^\scal_{\tt k}\ .
\end{equation*}
We rewrite the two addenda of the numerator of~\eqref{e.corr.s1} as
\begin{equation}
\label{e.canc.int}
 \begin{aligned}
  \dinter{\phi\psi}^\scal Z^\scal_N =
  \sum_{{\tt j}} \sum_{{\tt k}} &\sum_{{\tt j}'} \sum_{{\tt k}'} 
\cr
  \int_{\reali^N} [0, \phi, m\!-\!1]^\scal &[m\!-\!1,t]^\scal_{\tt j} 
     [t, \psi, t+m'\!-\!1]^\scal [t+m'\!-\!1,N]^\scal_{\tt k}
     dV^{(N)\scal}_0 \times
\cr
  \int_{\reali^N} [0,m\!-\!1]^\scal &[m\!-\!1,t]^\scal_{{\tt j}'} 
     [t,t+m'\!-\!1]^\scal [t+m'\!-\!1,N]^\scal_{{\tt k}'}
     dV^{(N)\scal}_0\ ,
\cr
  \dinter{\phi}^\scal \dinter{\psi}^\scal = 
  \sum_{{\tt j}} \sum_{{\tt k}} &\sum_{{\tt j}'} \sum_{{\tt k}'} 
\cr
  \int_{\reali^N} [0, \phi, m\!-\!1]^\scal &[m\!-\!1,t]^\scal_{\tt j} 
     [t,t+m'\!-\!1]^\scal [t+m'\!-\!1,N]^\scal_{\tt k}
     dV^{(N)\scal}_0 \times
\cr
  \int_{\reali^N} [0,m\!-\!1]^\scal &[m\!-\!1,t]^\scal_{{\tt j}'} 
     [t, \psi, t+m'\!-\!1]^\scal [t+m'\!-\!1,N]^\scal_{{\tt k}'}
     dV^{(N)\scal}_0
 \end{aligned}
\end{equation}

Given such a decomposition of the correlation
$\inter{\phi\psi}^\scal-\inter{\phi}^\scal\inter{\psi}^\scal$, and using
the bitwise ``and'' operator $\wedge$ (see~\eqref{e.andb} in
Appendix~\ref{app.canc} for a formal definition), we give the main claim
of the proof:
\begin{enumerate}
\item all the terms such that ${\tt j} \wedge {\tt j}'\neq0$ AND ${\tt
      k} \wedge {\tt k}'\neq0$ cancel;
\item all the terms such that ${\tt j} \wedge {\tt j}'=0$ OR ${\tt
      k} \wedge {\tt k}'=0$ are (at least) of order $a^d$.
\end{enumerate}
The idea behind the cancellations is that ${\tt j} \wedge {\tt j}'\neq0$
ensure the presence (see Remark~\ref{r.canc}) of at least a ``1'' in the same
position in both {\tt j} and {\tt j}': this correspond to the absence of the
coupling term so that both the integrals in each of the expressions
in~\eqref{e.canc.int} can be splitted in the same position; the same happens
with {\tt k} and {\tt k}'. This opportunity to cut the integrals in the holes
between the supports of $\phi$ and $\psi$ allows us to rearrange the terms in
order to show that actually all these terms cancel. The formal proof is
deferred to Appendix~\ref{app.canc}.

Concerning the second part instead, the idea is that ${\tt j} \wedge {\tt
j}'=0$ ensure the presence of enough ``zeros'', each contributing with an
order in $a$.  In order to make the argument more precise, let us first assume
that the strings {\tt j} and {\tt j}' are not longer than {\tt k} and {\tt
k}'; thus, according to the statement of the proposition, the length $l$ of
both {\tt j} and {\tt j}' is equal to $d+2$.

Let us consider first the case in which ${\tt j} \wedge {\tt j}'=0$.  Since we
must consider all the cases for ${\tt k}$ and ${\tt k}'$ we actually don't
expand the second hole between the supports of $\phi$ and $\psi$. Moreover,
 instead of expanding the whole term $[m-1,t]$, we will write
\begin{equation*}
 [m-1,t]^\scal = [m-1,m]^\scal\;
                 \sum_{\tt j}[m,t-1]^\scal_{\tt j}\;[t-1,t]^\scal \ ,
\end{equation*}
and similarly for the same hole in the other integral. Please note that,
despite the use of the same letter, now the string {\tt j} has length exactly
$d$: expanding only on the ``interior'' of the hole simply means that we will
include in our estimates some terms that actually could be avoided because
they cancel.

The strategy is thus to apply Lemma~\ref{l.cutting} to
$\inter{\phi\psi}^\scal_N$ and $\inter{1}^\scal_N$, cutting the chain in
four parts for the first average and into two parts for the second one:
\begin{equation}
\label{e.corr.est}
\begin{aligned}
 \inter{\phi\psi}^\scal_N \inter{1}^\scal_N \leq  \,\,&
\cr
 \leq \; K_1^{m+m'+d}
 &\inter{\phi e^{a \left(x_0^2+x_{m-1}^2\right)}}^\scal_m     \,
 \frac1{Z^\scal_d} \sum_{{\tt j}} \int_{\RR^d}
 e^{\frac{a}2 \left(x_m^2+x_{t-1}^2\right)} [m,t-1]^\scal_{{\tt j}}
 dV_m^{(d)\scal} \,
\cr
 &\inter{\psi e^{a \left(x_t^2+x_{t+m'-1}^2\right)}}^\scal_{m'}
 \inter{e^{a \left(x_{t+m'}^2+x_{N-1}^2\right)}}^\scal_{N-m-m'-d}
\cr
 K_1^{d}
 &\inter{e^{a \left(x_t^2+x_{m-1}^2\right)}}^\scal_{N-d}     \,
 \frac1{Z^\scal_d} \sum_{{\tt j}'} \int_{\RR^d}
 e^{\frac{a}2 \left(x_m^2+x_{t-1}^2\right)} [m,t-1]^\scal_{{\tt j}'}
 dV_m^{(d)\scal}
 \, .
\end{aligned}
\end{equation}

We first deal with the sum given by the expansion of the smaller holes;
for the other terms we will apply some Lemmas proven in the
Appendix. Introducing the notation $|{\tt j}|$ to count the number of
``zeros'' in the string {\tt j}, according to Remark~\ref{r.canc} we
have $d\leq |{\tt j}| + |{\tt j}'|$; clearly it also holds $|{\tt j}| +
|{\tt j}'|\leq 2d$.

We need to estimate the generic term $[m,t-1]^\scal_{{\tt j}}$; we will
use the inequality $e^\alpha-1\leq\alpha e^\alpha$ on each of the
``zero'' factors, and then apply the estimate which decouple the
measure:
\begin{equation*}
 [m,t-1]^\scal_{{\tt j}} \leq a^{|{\tt j}|}
  \prod_{{\tt j}_n=0}x_nx_{n+1}  e^{a x_nx_{n+1}}
 \leq
  a^{|{\tt j}|} \prod_{{\tt j}_n=0}x_nx_{n+1}
  e^{a \frac12\left(x_n^2+x_{n+1}^2\right)} \ .
\end{equation*}
With the use of the previous estimate, the measure within $(m,t-1)$ can
be factorized so that we have, for $b\in\{0,1,2\}$, the following
integrals:
\begin{equation*}
 \int_\RR |x|^b e^{-\left(\alpha_b\frac{x^2}2+\frac{x^4}{4\beta^2}\right)}
  \leq
  \left(\frac2{\alpha_b}\right)^{\frac{b+1}2}
  \!\Gamma\left(\frac{b+1}2\right)
  \qquad\quad  \alpha_b=1+(2-b)a \ ,
\end{equation*}
with the exception of the boundary sites $m$ and $t-1$ where
$\alpha_b=1+(1-b)a$.  Since it could be not completely trivial to
control, for all the possible strings {\tt j}, how many factors have
$b=0$ or $b=1$ or $b=2$, the idea is to evaluate at the same time the
terms coming from $[m,t-1]^\scal_{{\tt j}}$ with those coming from
$[m,t-1]^\scal_{{\tt j}'}$.  Let us consider a site $n$ with $m< n<
t-1$, and suppose that from $[m,t-1]^\scal_{{\tt j}}$ we have a term
with $b=0$: this is possible if and only if ${\tt j}_n={\tt 1}$ and
${\tt j}_{n-1}={\tt 1}$. But then, from ${\tt j}\wedge{\tt j}'=0$, one
has ${\tt j}'_n={\tt 0}$ and ${\tt j}'_{n-1}={\tt 0}$ which implies that
the corresponding term from $[m,t-1]^\scal_{{\tt j}'}$ will be an
integral with $b=2$. Thus, the product of the two terms will be bounded
by $2\pi$. If we start $b=1$, this is compatible with both {\tt 01} and
{\tt 10} as substrings of {\tt j}, which imply respectively {\tt 10} or
{\tt 00}, and {\tt 01} or {\tt 00} for {\tt j}': thus, from
$[m,t-1]^\scal_{{\tt j}'}$, the contribution will be with $b=1$ or
$b=2$, and the product can be bounded in the same way. As a last case,
starting from $b=2$, this requires {\tt 00} for {\tt j} which is
compatible with all the cases in {\tt j}': again, the product of the two
terms will bounded by $2\pi$. For the two boundary sites one has the
same kind of control\footnote{the fact that we need to control also the
position $n-1$ is not a problem because, even if we expand only over
$(m,t-1)$, the condition ${\tt j}\wedge{\tt j}'=0$ actually holds for
the strings over $(m-1,t)$.} simply with a factor $(1-a)^{\frac32}$ on
each site for the worst case.

We thus have the following estimate
\begin{align*}
 \int_{\RR^d}
  e^{\frac{a}2 \left(x_m^2+x_{t-1}^2\right)}
  [m,t-1]^\scal_{{\tt j}} dV_{m}^{(d)\scal}
 &\int_{\RR^d}
  e^{\frac{a}2 \left(x_m^2+x_{t-1}^2\right)}
  [m,t-1]^\scal_{{\tt j}'} dV_{m}^{(d)\scal} \leq
\cr
 \leq
  &\frac{a^{|{\tt j}|+|{\tt j}'|}}{(1-a)^6}  \left(2\pi\right)^d \ .
\end{align*}
We need now to control the sum over all the possible strings such that ${\tt
j}\wedge{\tt j}'=0$. We exploit the fact that $d\leq|{\tt j}|+|{\tt j}'|\leq
2d$ and we count the number of configurations for the couple of strings {\tt
j} and {\tt j}' with a given value of $|{\tt j}|+|{\tt j}'|$. It is easy to
verify that
\begin{equation*}
 \# \Bigl\{ \left({\tt j},{\tt j}'\right)\ \colon\ 
     |{\tt j}|+|{\tt j}'|=d+i\Bigr\} = \binom{d}{i}2^{d-i} \ ,
\end{equation*}
so we end up with
\begin{equation}
\begin{aligned}
\label{e.stimajj}
 \sum_{{\tt j},{\tt j}'}
  \int_{\RR^d}
   e^{\frac{a}2 \left(x_m^2+x_{t-1}^2\right)}
   [m,t-1]^\scal_{{\tt j}} &dV_{m}^{(d)\scal}
 \int_{\RR^d}
   e^{\frac{a}2 \left(x_m^2+x_{t-1}^2\right)}
   [m,t-1]^\scal_{{\tt j}'} dV_{m}^{(d)\scal}
 \leq
\cr
 \leq &\frac{(2\pi a)^d}{(1-a)^6} \sum_{i=0}^d \binom{d}{i} 2^{d-i} a^i =
 \frac{\left[2\pi a (2+a)\right]^d}{(1-a)^6} \ .
\end{aligned}
\end{equation}

We are now ready to go on with the estimate~\eqref{e.corr.est}; using
Lemma~\ref{c.aver.2} for the averages involving $\phi$ and $\psi$,
Lemma~\ref{c.aver.4} for the averages without polynomials,
Lemma~\ref{l.m.1bis} and \ref{l.stimaZ} for the remaining partition functions
and \eqref{e.stimajj} for the remaining terms, one has
\begin{equation*}
\begin{aligned}
 \inter{\phi\psi}^\scal_N \leq
 K_1^{m+m'+2d}                                                \,
 &\left(\frac{A_1}{B}\right)^m \frac{2^r r!}{A_2^{2+2r}}\norm{\phi}     \,\,
 \left(\frac{A_1}{B}\right)^{m'} \frac{2^s s!}{A_2^{2+2s}}\norm{\psi} \,\times
\cr
 &(2K_1)^{2+8a_0}\left(\frac{D}{A_2}\right)^4
 \left(\frac{A_1}{\sqrt{2\pi}B}\right)^{2d} 
 \frac{\left[2\pi a (2+a)\right]^d}{(1-a)^6}  \ .
\end{aligned}
\end{equation*}
The same strategy can be applied to
$\inter{\phi}^\scal\inter{\psi}^\scal$, cutting, for each average, the
term containing the polynomial, the term containing the hole to be
expanded, and the rest of the chain: it is easy to realize that the same
factors present in~\eqref{e.corr.est} arise, with the same estimates, so
that we will simply add a factor 2.

We are now left only with the case ${\tt j}\wedge{\tt j}'\neq 0$, which
implies ${\tt k}\wedge{\tt k}'=0$. By our assumption, the strings {\tt k} are
longer, so we these remaining terms are even smaller as powers of $a$, but we
may simply repeat the same procedure working on {\tt k}, but expanding a
substring of length $d$: we get the same results so we will close our proof
adding another factor 2 to the estimate.
\end{proof}

%
%

\subsection{Upper bound of $\inter{R^2}$}
\label{sss:av.R2}

In this part we prove that $\inter{R^2}$ is of order $\mathcal{O}
\left(N / \beta^{\lambda_1} \right)$, with ${\lambda_1}\in[4,2r+4)$, if
we impose a suitable lower bound for $\beta$, or of order $\mathcal{O}
\left(N e^{-\sqrt[3]\beta} / \beta^3\right)$ if we have both a lower and
an upper bound for $\beta$. One remarkable point is the proportionality
to $N$ instead of $N^2$, and the other relevant aspect is the dependence
on the specific energy via the parameter $\beta$. Both these points are
a joint consequence of the control of the decay of correlations, as
given by Proposition \ref{p.aver.2}, and the decay of the interaction
range preserved throughout the whole perturbative
construction. Concerning the dependence on $\beta$, the fact that the
exponential estimate does not hold for vanishing specific energies is
due to condition~\eqref{e.muperr} which gives an upper bound to the
(optimal) perturbative order we can reach at fixed coupling $\mu$.

In~\eqref{e.R.1} we defined the remainder as
\begin{equation*}
R = \Poi{\Phi^{(r)}}{H} = \Poi{\Phi_r}{H_1},
\end{equation*}
which represents the rate of time variation of the almost conserved
integral $\Phi^{(r)}$. By recalling that in our perturbative
construction we maintain the cyclical symmetry, we have that
$\Phi_r=\varphi_r^\oplus$ and $H_1=h_1^\oplus$, thus we can write $R =
\rho^\oplus$ where $\rho := \Poi{\varphi_r}{h_1^\oplus}$.  In the
proof we need to exploit the decay properties of $\rho$ (see again
Proposition~\ref{prop.gen}, point (vi)); with the choice {
\begin{equation}
\label{e.sigma*}
\sigma_* := \sigma_0/4<\sigma_1\ ,
\end{equation}}
the decomposition~\eqref{e.decomp} is written in this case as:
\begin{equation}
\label{e.decomp.rho}
 \rho = \sum_{l=1}^N \rho^{(l)} \ , \qquad \norm{\rho^{(l)}} \leq
 C_\rho \mu_\flat^l \ , \qquad \mu_\flat:=e^{-\sigma_*}
 \ .
\end{equation}

Introduce the following constant\footnote{More precisely $\beta^*$ is
asymptotically constant with $a\to0$: we have $2^6e^{-1}C_1/\Omega \leq
\beta^* < 2^9e^2C_1/\Omega$ for $0\leq a<1/4$.} quantities
\begin{equation}
\label{e.betalambda}
 K_4:=K_3\frac{3^3e^6\Omega^6}{2^{9}C_1^2} \ ,
  \quad
 \beta^*:=\frac{2^6 e^{\tilde C-1}}{(1-\mu_\flat^2)(1-\mu_\flat)A_2^2}
  \;\frac{C_1}{\Omega}\ ,
  \quad
 {\lambda_1}:=2r(1-\nu) + 4-\nu \ .
\end{equation}


\begin{proposition}
\label{p.rho2.up}
The following different estimates hold:
\begin{enumerate}
\item for any $\nu\in(0,1]$, $a<\min\{a_0,1/4\}$ and $\beta >
      \max\{\beta_0,(\beta^*r^3)^{1/\nu}, \sqrt{3/2}\}$ such that
      $D^2\mu_\flat<1$, for any integer $r<\mu_*/(2\mu)$ one has
\begin{equation*}
 \inter{R^2} \leq \frac{N}{\beta^{\lambda_1}}
 \quadr{\frac{K_4{\beta^*}^3}{(1-D^2\mu_\flat)^2}} \ ;
\end{equation*}
\item for any $a<\min\{a_0,1/4\}$ and $\beta \geq \max\{\beta_0, 64e\beta^*,
     \sqrt{3/2}\}$ such that $D^2\mu_\flat<1$, there exists $\kappa$ such that
     taking $r=\lfloor \kappa/3\rfloor$, then
\begin{equation*}
 \inter{R^2} \leq N \frac{e^{-\kappa}}{\kappa^9}  
  \quadr{\frac{3^8 K_4 }{2e^4(1-D^2\mu_\flat)^2}}  \ ,
\qquad \left\{
\begin{aligned}
\kappa&:=\frac32\sqrt[3]{ \frac{\beta}{e\beta^*} } \ ,
\   &\beta < e\beta^*\;\tond{\frac{\mu_*}{\mu}}^3 \, ,
\cr 
\kappa&:=\frac32 \frac{\mu_*}{\mu} \ ,
\   &\beta \geq e\beta^*\;\tond{\frac{\mu_*}{\mu}}^3 \, .
\end{aligned} \right.
\end{equation*}
\end{enumerate}
\end{proposition}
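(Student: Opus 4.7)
The plan is to reduce $\inter{R^2}$ to a sum of correlations of shifts of the seed $\rho$, apply Proposition~\ref{p.aver.2} to each term, and then optimize the perturbative order $r$ against the resulting polynomial factors in $\beta$. I start from the representation $R=\sum_{k=0}^{N-1}\tau^{k}\rho$; translation invariance of the Gibbs measure under cyclic shifts yields
\begin{equation*}
\inter{R^2}=N\sum_{k=0}^{N-1}\inter{\rho\cdot\tau^{k}\rho}.
\end{equation*}
Since $R=\Poi{\Phi^{(r)}}{H}$, invariance of $dm$ under the Hamiltonian flow gives $\inter{R}=0$, whence $\inter{\rho}=0$ and each term on the right above equals the covariance of $\rho$ and $\tau^{k}\rho$.

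I then expand $\rho=\sum_{l\ge 0}\rho^{(l)}$ according to~\eqref{e.decomp.rho} and, for each triple $(l,l',k)$, distinguish a disjoint-support regime ($k\ge l$ and $k+l'\le N$), in which the periodic distance is $d=\min(k-l,N-k-l')$, from an overlapping regime containing at most $l+l'$ values of $k$. For the disjoint case, Proposition~\ref{p.aver.2} with $m=l$, $m'=l'$ and polynomial degree $2(r+2)$ on both sides gives
\begin{equation*}
|\inter{\rho^{(l)}\tau^{k}\rho^{(l')}}-\inter{\rho^{(l)}}\inter{\rho^{(l')}}|\le K_2\,D^{l+l'+4}(D^{2}\mu_\sharp)^{d}\,\frac{2^{2(r+2)}[(r+2)!]^{2}}{(A_2^{2}\beta)^{2(r+2)}}\,C_\rho^{2}\,\mu_\flat^{l+l'},
\end{equation*}
where I used $\norm{\rho^{(l)}}\le C_\rho\mu_\flat^{l}$. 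In the overlapping case I apply Cauchy-Schwarz together with the rescaling/splitting strategy behind Proposition~\ref{p.aver.2} (as in the appendix lemmas) to produce a bound of the same shape without the factor $(D^{2}\mu_\sharp)^{d}$, at the price of a polynomial weight in $l+l'$ absorbed by the $\mu_\flat^{l+l'}$ decay. The geometric series in $d$ and in $(l,l')$ converge under the hypothesis $D^{2}\mu_\flat<1$ (which, since $D\ge 1$ and $\mu_\sharp\lesssim\mu_\flat^{4}$, also forces $D^{2}\mu_\sharp<1$), producing the intermediate bound
\begin{equation*}
\inter{R^2}\le N\cdot\mathrm{const}(a)\cdot\frac{C_\rho^{2}\,[(r+2)!]^{2}}{\beta^{2(r+2)}}.
\end{equation*}

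To finish, I substitute $C_\rho\lesssim(16r^{2}C_{\ast})^{r-1}$ from Proposition~\ref{prop.gen}(v) together with Stirling's formula, schematically recasting the right-hand side as $N\,\beta^{-4}\,r^{\mathcal{O}(1)}\bigl(\mathrm{const}\cdot r^{3}\beta^{\ast}/\beta\bigr)^{2r}$. For part~(1), the assumption $\beta\ge(\beta^{\ast}r^{3})^{1/\nu}$ forces the bracketed factor to be at most $\mathrm{const}\cdot\beta^{-(1-\nu)}$, which on raising to the $2r$-th power yields the stated $1/\beta^{\lambda_{1}}$ with $\lambda_{1}=2r(1-\nu)+4-\nu$ (absorbing subleading $\nu$-dependent corrections into the constants). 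For part~(2), I instead optimize freely over the integer $r$ in the admissible range $r<\mu_{\ast}/(2\mu)$: the continuous minimum of $(r^{3}/\beta)^{2r}$ sits at $r\sim(\beta/(e^{2}\cdot\mathrm{const}))^{1/3}$, which aligns with the choice $\kappa=(3/2)\sqrt[3]{\beta/(e\beta^{\ast})}$ and, when below the cap, gives the stretched-exponential bound $e^{-\kappa}/\kappa^{9}$; outside that regime one saturates at $r=\lfloor\mu_{\ast}/(2\mu)\rfloor$, yielding the second branch of the statement. The main obstacle will be the precise bookkeeping of all the $r$-dependent constants (so as to match $\beta^{\ast}$ and the prefactor of $\kappa$ exactly), and ensuring that the direct bound used in the overlapping regime retains the exponential weight $D^{l+l'}$ from the rescaling strategy of Proposition~\ref{p.aver.2}, rather than only a polynomial growth in $l+l'$, without which the sum over $l$ and $l'$ would diverge.
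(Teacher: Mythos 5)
Your proposal is correct and follows essentially the same route as the paper: you reduce $\inter{R^2}$ by cyclic invariance to $N$ times a sum of shifted seed correlations, split each $(l,l')$ contribution into a disjoint-support part controlled by the decay of correlations (Proposition~\ref{p.aver.2}) and an overlapping part controlled by Cauchy--Schwarz together with the Gibbs moment bounds and the $\Dscr(C_\rho,\sigma_*)$ decay, sum the geometric series under $D^2\mu_\flat<1$, and then insert the estimate of $C_\rho$ from Proposition~\ref{prop.gen} before either using $\beta\geq(\beta^*r^3)^{1/\nu}$ or optimizing $r\sim\beta^{1/3}$ subject to the cap $r<\mu_*/(2\mu)$ --- precisely the content of Lemmas~\ref{l.corr.1}, \ref{l.rho.up} and \ref{l.rho2.up}. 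The remaining work you flag (tracking the $r$-dependent constants such as $4^{2r+4}$ and $e^{(r+2)\tilde C}$ into $\beta^*$, and keeping the $D^{l+l'}$ weights compatible with $D^2\mu_\flat<1$) is exactly the bookkeeping the paper carries out.
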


The proof will be carried out in several steps, actually working first
on the dependence on $N$ and then on the scaling in $\beta$; in
particular, for the first point, it is useful to rewrite $\inter{R^2}$ as
the sum of two term to be dealt with separately. To this purpose we
exploit the cyclic symmetry of $R$, i.e. using the
decomposition~\eqref{e.cycl-fun}:
\begin{equation*}
R = \rho^\oplus \qquad\Longrightarrow\qquad
R = \sum_{j=0}^{N-1} \rho_j,\quad \rho_j:= \rho\circ\tau^{j}\ .
\end{equation*}

It is thus possible to write $R^2 = \sum_{j=0}^{N-1} \rho_j^2 + 2\sum_{0\leq
i<j\leq N-1}\rho_i\rho_j$, which implies, by translational
invariance\footnote{All the terms $\inter{\rho_j^2}$ are equivalent because of
the same symmetry of the measure and of $\rho$ itself.}, the following
expression
\begin{equation}
\label{e.R2.aver}
\inter{R^2} = N\inter{\rho^2} + 2\sum_{0\leq i<j\leq
  N-1}\inter{\rho_i\rho_j}\ .
\end{equation}
While the first addendum is clearly proportional to $N$, the second
appears to be proportional to $N^2$: we will now show that this is
actually not the case.  As a first step we recall that
$\inter{\rho\rho_j}=\inter{\rho\rho_{N-j}}$ thus we may write
\begin{equation}
\label{e.riordino}
 \sum_{0\leq i<j\leq N-1}\inter{\rho_i\rho_j}
= \sum_{j=1}^{N-1} (N-j)\inter{\rho\rho_j}
= N \sum_{j=1}^{[N/2]}\inter{\rho\rho_j} \ ,
\end{equation}
where the last equality, in the case $N$ odd, comes from
Lemma~\ref{l.sum.corr.1}. We remark that, although the expression for the even
case (see Lemma~\ref{l.sum.corr.1}) is slightly different, in the subsequent
estimates it will be bounded from above by the odd one.

The strategy is first to give an estimate of the generic term
$\inter{\rho\rho_j}$ for all $j\leq [N/2]$, and then perform the sum.
Defining the following constant
\begin{equation}
\label{e.K3}
K_3:=2^8K_1^4K_2 >4D^4K_2 \ ,
\end{equation}
the inequality holding since $a<1/4$ and $\beta>\sqrt{3/2}$. We have
\begin{lemma}
\label{l.corr.1}
For any $a<\min\{a_0,1/4\}$ and $\beta>\max\{\beta_0,\sqrt{3/2}\}$ such that
$D^2\mu_\flat<1$, one has
\begin{equation}
\label{e.est.rhrhj}
\inter{\rho\rho_j} < \frac34 K_3 C_\rho^2 g_2(r)
\quadr{\frac{\tond{D^2\mu_\flat}^j+\tond{D^2\mu_\flat}^{N-j}}{1-D^2\mu_\flat}}\qquad\qquad
j\leq N/2\ ,
\end{equation}
with
\begin{equation}
\label{e.g1}
g(r):=\frac{4^{2r+4}(r+2)!^2}{(A_2^2\beta)^{2r+4}}\ .
\end{equation}
\end{lemma}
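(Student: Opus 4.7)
The plan is to exploit the short-range decomposition $\rho = \sum_{l \ge 1}\rho^{(l)}$ from Proposition~\ref{prop.gen}(v) and~\eqref{e.decomp.rho}, with $\|\rho^{(l)}\| \le C_\rho \mu_\flat^l$ and $S(\rho^{(l)}) \subseteq \{0,\ldots,l\}$ (after left alignment), together with its translated counterpart $\rho_j = \sum_{l'} \rho^{(l')}_j$, to write
\[
 \inter{\rho\rho_j} = \sum_{l,l' \ge 1} \inter{\rho^{(l)}\rho^{(l')}_j},
\]
and to bound each summand according to whether the two supports are disjoint. Disjointness holds iff $l < j$ and $l' < N-j$, in which case the cyclic distance is $d = \min(j-l-1,\,N-j-l'-1)$; otherwise the supports overlap. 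Note that $\inter{R}=0$ by invariance of the Gibbs measure under the Hamiltonian flow, so translation invariance forces $\inter{\rho}=0$ and the connected and full correlations coincide. The hypothesis $D^2\mu_\flat<1$, combined with $D\ge 1$ in the regime $a<a_0$, $\beta>\beta_0$, also guarantees $D\mu_\flat<1$ and the convergence of all subsequent geometric sums.

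For the disjoint regime Proposition~\ref{p.aver.2}, applied with degree parameter $r+2$ (since $\rho^{(l)}$, $\rho^{(l')}$ are homogeneous of degree $2r+4$), yields
\[
 \bigl|\inter{\rho^{(l)}\rho^{(l')}_j}\bigr| \le K_2 \cdot \frac{2^{2r+4}(r+2)!^2}{(A_2^2\beta)^{2r+4}} \cdot D^{l+l'+2d+4}\mu_\sharp^d \cdot C_\rho^2\mu_\flat^{l+l'}.
\]
The critical step is to rearrange the right-hand side so that its $j$-dependence reduces to $(D^2\mu_\flat)^j$ (resp.\ $(D^2\mu_\flat)^{N-j}$). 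I would first bound $\mu_\sharp\le\mu_\flat$, an elementary inequality valid in our $a$-regime, so that $D^{2d}\mu_\sharp^d\le(D^2\mu_\flat)^d$, and then split the sum according to which argument realises the minimum in $d$. On the branch $d=j-l-1$, the algebraic identity
\[
 D^{l+l'+2d+4}\mu_\flat^{l+l'+d} = \frac{D^2}{\mu_\flat}\,(D^2\mu_\flat)^j\,D^{-l}\,(D\mu_\flat)^{l'}
\]
isolates the desired factor, after which the geometric sums $\sum_{l\ge 1}D^{-l}$ and $\sum_{l'\ge 1}(D\mu_\flat)^{l'}$ yield a bound proportional to $(D^2\mu_\flat)^j/(1-D\mu_\flat)\le(D^2\mu_\flat)^j/(1-D^2\mu_\flat)$, the last step using $1-D\mu_\flat\ge 1-D^2\mu_\flat$. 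The symmetric branch $d=N-j-l'-1$ is treated identically and contributes the companion $(D^2\mu_\flat)^{N-j}/(1-D^2\mu_\flat)$ term.

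The main obstacle is the overlapping regime ($l\ge j$ or $l'\ge N-j$), where Proposition~\ref{p.aver.2} is unavailable. I would handle it by a direct Gibbs-average estimate on the product polynomial $\rho^{(l)}\rho^{(l')}_j$ --- of interaction length at most $l+l'$ and polynomial norm at most $C_\rho^2\mu_\flat^{l+l'}$ --- in the same spirit as the Gibbs lemmas used in the proof of Proposition~\ref{p.aver.2}. Such a bound produces a factor $D^{l+l'}$ together with the same degree/$\beta$ prefactor, i.e.\ $(D\mu_\flat)^{l+l'}$ per summand; summation over $l\ge j,\ l'\ge 1$ then contributes $(D\mu_\flat)^{j+1}/(1-D\mu_\flat)^2$, which is dominated by a constant multiple of $(D^2\mu_\flat)^j$ because $(D\mu_\flat)^j\le(D^2\mu_\flat)^j$ when $D\ge 1$, and analogously for $l'\ge N-j$. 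Collecting the four contributions and absorbing the numerical constants into $K_3=2^8K_1^4K_2$ and $g(r)$ gives the stated inequality, with the common denominator $1-D^2\mu_\flat$ inherited from the disjoint pieces and the factor $3/4$ emerging from the careful packing of the four terms.
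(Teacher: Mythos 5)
Your strategy is essentially the paper's: expand $\rho=\sum_l\rho^{(l)}$, split the double sum over $(l,l')$ into a ``far'' part treated with the decay of correlations (Proposition~\ref{p.aver.2}) and a ``near'' part treated with the decay of $\rho$ itself via the Gibbs estimates. The paper splits according to $l+l'<j$ versus $l+l'\geq j$ and handles the near part by Schwartz plus Proposition~\ref{p.aver.1} applied to each $(\rho^{(l)})^2$, while you split by support disjointness and bound the product $\rho^{(l)}\rho^{(l')}_j$ directly; these are minor variants of the same argument, and your near-part route works because the deliberate overestimate $4^{2r+4}(r+2)!^2$ in $g(r)$ absorbs the $(2r+4)!$ coming from the degree $4r+8$ of the product.

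There is, however, one concrete problem in your bookkeeping for the disjoint regime. Taking $S(\rho^{(l)})\subseteq\{0,\ldots,l\}$ and $d=j-l-1$ produces, as your own identity shows, a leftover factor $D^2/\mu_\flat$ in front of $\tond{D^2\mu_\flat}^j$; since $\mu_\flat=e^{-\sigma_*}\to0$ as $a\to0$, this is not a numerical constant and cannot be absorbed into $\frac34 K_3$, so as written you obtain a bound weaker than~\eqref{e.est.rhrhj} by a factor that is unbounded in $a$ (concretely, the worst term $l=j-1$, $d=0$ is only of size $\mu_\flat^{\,j-1}$ in your accounting). The slip is an off-by-one in the distance: with the paper's left-alignment convention $S(\rho^{(l)})\subseteq\{0,\ldots,l-1\}$ one may take $d=\min(j-l,\,N-j-l')$, and then on the first branch $D^{l+l'+2d+4}\mu_\flat^{l+l'+d}=D^4\tond{D^2\mu_\flat}^{j}D^{-l}\tond{D\mu_\flat}^{l'}$, with no spurious $\mu_\flat^{-1}$, which is exactly the paper's sub-case A1 and makes the stated constant come out. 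A second, minor point: to replace the connected correlation bounded in Proposition~\ref{p.aver.2} by $\inter{\rho^{(l)}\rho^{(l')}_j}$ you need $\inter{\rho^{(l)}}=0$ for each single $l$; your argument via $\inter{R}=0$ and translation invariance only gives $\inter{\rho}=0$, i.e. the vanishing of the sum over $l$. The componentwise vanishing follows instead from the parity of the seed (Lemma~\ref{l.remainder}: $\rho$ is odd in $y$, and each $\rho^{(l)}$ inherits this), together with the evenness of the Gibbs measure.
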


\begin{proof}
Using~\eqref{e.decomp.rho} we may write $\inter{\rho\rho_j} =
\sum_{i=0}^{2N}\sum_{l+l'=i}\inter{\rho^{(l)}\rho_j^{(l')}}$. The idea
is to split such a sum into two parts: in the first one, choosing $l$
and $l'$ small enough, we will include (part of the) terms for which
$\rho^{(l)}$ and $\rho_j^{(l')}$ have disjoint supports and we will
exploit the decay of correlations as given by
Proposition~\ref{p.aver.2}; in the second term, with $l$ and $l'$
bounded from below, we will instead exploit the
decay~\eqref{e.decomp.rho} of $\rho$ itself. We thus define
\begin{displaymath}
A:= \sum_{l+l'< j}\inter{\rho^{(l)}\rho_j^{(l')}}\ ,\qquad B:=
\sum_{l+l'\geq j}\inter{\rho^{(l)}\rho_j^{(l')}}\ .
\end{displaymath}

\paragraph{A term:} 
We clearly have $l<j$; moreover, since we restrict to $j\leq N/2$ due to
Lemma \ref{l.sum.corr.1}, we also have $j+l'\leq N-1$. Thus $\rho^{(l)}$
and $\rho_j^{(l')}$ have disjoint supports.  By using Proposition
\ref{p.aver.2}, with $d=\min\{j-l,N-(j+l')\}$, we obtain
\begin{equation*}
 \left|\inter{\rho^{(l)}\rho_j^{(l')}}\right|
 \leq K_2 \mu_\sharp^{d} g(r) D^{2d+l+l'+4}
 \norm{\rho^{(l)}}\norm{\rho^{(l')}}\ ;
\end{equation*}
we remark that, in the dependence on $r$ (recall, from~\eqref{e.R.1}, that $R$
has degree $2r+4$) we could expect a factor $2^{2r+4}$ in $g(r)$, but we
overestimate with $4^{2r+4}$, because the B term needs exactly such a
dependence.

Since\footnote{although this does not ``sounds'' good $\ldots$}
$\mu_\sharp<\mu_\flat$ for (at least) $a<1/4$, using
also~\eqref{e.decomp.rho} one has
\begin{equation*}
 \left|\inter{\rho^{(l)}\rho_j^{(l')}}\right|
 \leq K_2 C_\rho^2 g(r) {\mu_\flat^{d+l+l'}} D^{2d+l+l'+4}\ .
\end{equation*}
It is useful to distinguish the cases $d=j-l$ and $d=N-(j+l')$, that we
call A1 and A2 respectively.
In the sub-case A1, one has $l+l'+d=j+l'$ and $l+l'+2d=j+l'+d$, thus we get
\begin{displaymath}
 \sum_{l+l'<j,A1} \left|\inter{\rho^{(l)}\rho_j^{(l')}}\right|
 \leq K_2D^4 C_\rho^2 g(r)\tond{D\mu_\flat}^j
 \sum_{l+l'< j} D^{l'+d}\mu_\flat^{l'}\ .
\end{displaymath}
We deal with the remaining sum in this way
\begin{align*}
 \sum_{l+l'< j} D^{l'+d}\mu_\flat^{l'} &=
 \sum_{i=0}^{j-1}\sum_{l'=0}^i\tond{D\mu_\flat}^{l'}D^{j+l'-i}=
 D^j\sum_{i=0}^{j-1} D^{-i}\sum_{l'=0}^i\tond{D^2\mu_\flat}^{l'} =
\cr
 &< \frac{D^j}{1-D^2\mu_\flat}\sum_{i=0}^{j-1}D^{-i}<
 \frac{2 D^j}{1-D^2\mu_\flat}\ ,
\end{align*}
where we exploit also that $C>2$ (see \eqref{e.C.et.alter}). Then
\begin{equation}
\label{e.caseA1}
 \sum_{l+l'<j,A1} \left|\inter{\rho^{(l)}\rho_j^{(l')}}\right| <
 {2K_2D^4 C_\rho^2 g(r)}\frac{\tond{D^2\mu_\flat}^j}{1-D^2\mu_\flat}\ .
\end{equation}

In the other sub-case A2, just replacing $l$ with $l'$ and $j$ with
$N-j$, we get
\begin{displaymath}
 \left|\inter{\rho^{(l)}\rho_j^{(l')}}\right|
 \leq K_2D^4 C_\rho^2 g(r) {\mu_\flat^{N-j+l}} D^{N-j+l+d}\ .
\end{displaymath}
With the same approach used above one obtains
\begin{displaymath}
 \sum_{l+l'\leq j-1} D^{l+d}\mu_\flat^{l} = D^{N-j}\sum_{i=0}^{j-1}
 \sum_{l=0}^iD^{-i}\tond{D^2\mu_\flat}^{l} < \frac{2
 D^{N-j}}{1-D^2\mu_\flat} \ ,
\end{displaymath}
hence the contribution coming from A2 is
\begin{equation}
\label{e.caseA2}
 \sum_{l+l'<j,A2} \left|\inter{\rho^{(l)}\rho_j^{(l')}}\right|
 < {2K_2D^4 C_\rho^2
 g(r)}\frac{\tond{D^2\mu_\flat}^{N-j}}{1-D^2\mu_\flat}\ ,
\end{equation}
and combining A1 with A2 we get
\begin{equation}
\label{e.est.A}
 \sum_{l+l'<j} \left|\inter{\rho^{(l)}\rho_j^{(l')}}\right|
 <\frac{K_3}2 C_\rho^2 g(r) \quadr{\frac{\tond{D^2 \mu_\flat}^j+
 \tond{D^2 \mu_\flat}^{N-j}}{1-D^2 \mu_\flat}}\ .
\end{equation}

\paragraph{B term:} since we aim at exploiting the decay of $\rho$, by
Schwartz inequality we rewrite the correlation as
\begin{displaymath}
 \sum_{l+l'\geq j} \left|\inter{\rho^{(l)}\rho_j^{(l')}}\right|
 \leq \sum_{l+l'\geq j}\sqrt{\inter{(\rho^{(l)})^2}\inter{(\rho^{(l')})^2}}
 \ ;
\end{displaymath}
using Proposition \ref{p.aver.1} it holds
\begin{displaymath}
\inter{(\rho^{(l)})^2}<
 \frac{(2K_1)^{1+4a_0}}{A_2^{4r+12}}D^{2+l}\tond{\frac{2}{\beta}}^{2r+4}
 (2r+4)!\norm{\rho^{(l)}}^2\ ;
\end{displaymath}
the analogous estimate holds for $\inter{(\rho^{(l')})^2}$. Then by
observing that
\begin{displaymath}
\frac{(2K_1)^{1+4a_0}}{A_2^{4}}<K_2\ ,\qquad\qquad (2r+4)!\leq
2^{2r+4}(r+2)!^2\ ,
\end{displaymath}
one obtains
\begin{displaymath}
\sqrt{\inter{(\rho^{(l)})^2}\inter{(\rho^{(l')})^2}} < K_2 D^2
D^{(l+l')/2} \tond{\frac{4}{A_2^2 \beta}}^{2r+4}
(r+2)!^2\norm{\rho^{(l)}}\norm{\rho^{(l')}}\ .
\end{displaymath}
If we notice that $K_2 D^2 < K_3/4$, and using~\eqref{e.g1}, then
\begin{equation}
\label{e.riciclami}
\sqrt{\inter{(\rho^{(l)})^2}\inter{(\rho^{(l')})^2}} < \frac{K_3}4
  C_\rho^2 g(r) D^{(l+l')/2} \mu_\flat^{l+l'}\ ;
\end{equation}
the sum over $l$ and $l'$ gives
\begin{equation}
\label{e.est.B}
\sum_{l+l'\geq j} \left|\inter{\rho^{(l)}\rho_j^{(l')}}\right| <  
\frac{K_3}4 C_\rho^2 g(r) \frac{\tond{D \mu_\flat}^j}{1 - D \mu_\flat}\ .
\end{equation}
To get \eqref{e.est.rhrhj}, simply add \eqref{e.est.B} to \eqref{e.est.A}.
\end{proof}

We are now ready to state and prove an estimate showing how
$\inter{R^2}$ is proportional to $N$.

\begin{lemma}
\label{l.rho.up}
For $a<\min\{a_0,1/4\}$ and $\beta>\max\{\beta_0,\sqrt{3/2}\}$ such that
$D^2\mu_\flat<1$, one has
\begin{equation}
\label{e.rho.up}
\inter{R^2} \leq N \frac{K_3 C_\rho^2 g(r)}{(1-D^2\mu_\flat)^2}\ .
\end{equation}
\end{lemma}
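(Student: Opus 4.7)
The plan is to start from the decomposition
$$\inter{R^2} = N\inter{\rho^2} + 2\sum_{0\leq i<j\leq N-1}\inter{\rho_i\rho_j}$$
of~\eqref{e.R2.aver} and bound each piece, extracting the common factor $N$ along the way. The main point is that the double sum, which naively scales like $N^2$, is actually $\mathcal{O}(N)$ thanks both to translation invariance and to the decay of correlations already established in Lemma~\ref{l.corr.1}.

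For the off-diagonal piece I would first use translational invariance of the Gibbs measure together with Lemma~\ref{l.sum.corr.1} (which handles the rearrangement~\eqref{e.riordino}) to collapse the double sum into a single sum with one overall $N$ in front: $2\sum_{i<j}\inter{\rho_i\rho_j} = 2N\sum_{j=1}^{[N/2]}\inter{\rho\rho_j}$, the even-$N$ case being dominated by the odd one. The single-shift estimate~\eqref{e.est.rhrhj} of Lemma~\ref{l.corr.1} then gives
$$2N\sum_{j=1}^{[N/2]}\inter{\rho\rho_j} \leq \tfrac{3}{2}\,N\, K_3 C_\rho^2 g(r) \sum_{j=1}^{[N/2]} \frac{(D^2\mu_\flat)^j + (D^2\mu_\flat)^{N-j}}{1-D^2\mu_\flat}.$$
Because $D^2\mu_\flat<1$, each of the two sums on the right is a truncated geometric series dominated by $1/(1-D^2\mu_\flat)$, producing a contribution of the form $\mathcal{O}\bigl(N K_3 C_\rho^2 g(r)/(1-D^2\mu_\flat)^2\bigr)$, which is already the target shape.

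For the diagonal piece $N\inter{\rho^2}$ one cannot invoke Proposition~\ref{p.aver.2}, since the two factors share the same support; I would instead recycle the B-term argument from the proof of Lemma~\ref{l.corr.1}. Expanding $\rho=\sum_l \rho^{(l)}$ and applying Cauchy--Schwarz together with the bound~\eqref{e.riciclami} (which itself uses Proposition~\ref{p.aver.1} and the exponential decay~\eqref{e.decomp.rho}) yields
$$\inter{\rho^2} \leq \frac{K_3}{4} C_\rho^2 g(r) \Biggl(\sum_{l\geq 0} (\sqrt{D}\mu_\flat)^l\Biggr)^{\!2} \leq \frac{K_3 C_\rho^2 g(r)}{4(1-D^2\mu_\flat)^2},$$
the last inequality using $\sqrt{D}\mu_\flat \leq D^2\mu_\flat$, valid since $D\geq 1$ under our hypotheses on $a,\beta$.

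Summing the two contributions produces a bound of the claimed form $N K_3 C_\rho^2 g(r)/(1-D^2\mu_\flat)^2$; the unavoidable multiplicative numerical factor arising from the sum of the diagonal and off-diagonal estimates is absorbed into the generous constant $K_3 = 2^8 K_1^4 K_2$ fixed in~\eqref{e.K3}. The genuinely substantive step is the off-diagonal reduction, which rests squarely on Lemma~\ref{l.corr.1}; everything else amounts to careful geometric-series bookkeeping and the observation that translation invariance saves a full power of $N$.
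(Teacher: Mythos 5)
Your overall strategy matches the paper's: decompose $\inter{R^2}$ via~\eqref{e.R2.aver} into $N\inter{\rho^2}$ plus the off-diagonal sum, use translation invariance and Lemma~\ref{l.sum.corr.1} to collapse the double sum to $N\sum_{j=1}^{[N/2]}\inter{\rho\rho_j}$, apply the single-shift estimate~\eqref{e.est.rhrhj} of Lemma~\ref{l.corr.1} there, and recycle the B-term argument through~\eqref{e.riciclami} for the diagonal piece. All of this is exactly the paper's proof.

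The genuine gap is the last sentence, where you claim the leftover numerical prefactor ``is absorbed into the generous constant $K_3$.'' That is not a legitimate move: $K_3=2^8K_1^4K_2$ is fixed once and for all in~\eqref{e.K3}, and Lemma~\ref{l.corr.1} already yields its bound with precisely that $K_3$ in front — there is no remaining slack in which to hide an extra multiplicative factor. The inequality~\eqref{e.rho.up} is stated with the constant $K_3$ on the nose, so a proof must show the combined prefactor stays $\le K_3$, not argue it away. The paper does this by keeping the geometric sum tight: instead of bounding each of the two truncated series crudely by $1/(1-D^2\mu_\flat)$ (thus giving something of order $1/(1-D^2\mu_\flat)^2$ outright), it evaluates $\sum_{j=1}^{[N/2]}\bigl[(D^2\mu_\flat)^j+(D^2\mu_\flat)^{N-j}\bigr] < D^2\mu_\flat/(1-D^2\mu_\flat)$ as in~\eqref{e.stimella}, preserving the small factor $D^2\mu_\flat$. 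The off-diagonal contribution is then $\tfrac34 K_3 C_\rho^2 g(r)\,D^2\mu_\flat/(1-D^2\mu_\flat)^2$, and together with the diagonal bound $\tfrac14 K_3 C_\rho^2 g(r)/(1-D^2\mu_\flat)$ the combined prefactor, after extracting $K_3 C_\rho^2 g(r)/(1-D^2\mu_\flat)^2$, is $\tfrac14(1-D^2\mu_\flat)+\tfrac34 D^2\mu_\flat = \tfrac14+\tfrac12 D^2\mu_\flat$, which is below $1$ whenever $D^2\mu_\flat<1$. Your looser ``dominated by $1/(1-D^2\mu_\flat)$'' step throws away exactly the factor $D^2\mu_\flat$ that makes the arithmetic close, which is why you then need the invalid absorption. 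Replace the $\mathcal{O}(\cdot)$ bound with the explicit computation from~\eqref{e.stimella} and track the prefactors as the paper does.
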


\begin{proof}

As we showed before (see \eqref{e.R2.aver} and the subsequent
paragraphs), we have
\begin{equation}
\label{e.resto.quad}
\inter{R^2} \leq N\inter{\rho^2} +
N\sum_{j=1}^{[N/2]}|\inter{\rho\rho_j}|\ .
\end{equation}

Concerning the first term we may proceed like in the proof
Lemma~\ref{l.corr.1}, in the case of B term, the only difference being
the index of the sum; we thus follow such a proof up to
formula~\eqref{e.riciclami}, and then
\begin{equation}
\label{e.stima.rho.quad}
\begin{aligned}
\inter{\rho^2}
 < \sum_{i=0}^{N-1}\sum_{l+l'=i}\left|\inter{\rho^{(l)}\rho^{(l')}}\right|
&< \sum_{i=0}^{N-1}\sum_{l+l'=i} \frac14 K_3 C_\rho^2 g(r)
   \tond{D^{\frac12}\mu_\flat}^{l+l'}
\cr
&< \frac14 K_3 \frac{C_\rho^2g(r)}{(1-D^2\mu_\flat)}\ .
\end{aligned}
\end{equation}

In order to control the second term of \eqref{e.resto.quad}, from
Lemma~\ref{l.corr.1} we have to estimate the following sum
\begin{equation}
\label{e.stimella}
\sum_{j=1}^{[N/2]}\quadr{\tond{D^2\mu_\flat}^j+\tond{D^2\mu_\flat}^{N-j}} =
\frac{\tond{D^2\mu_\flat} - \tond{D^2\mu_\flat}^{N}}{1-D^2\mu_\flat} <
\frac{\tond{D^2\mu_\flat}}{1-D^2\mu_\flat}\ ,
\end{equation}
which leads us to
\begin{displaymath}
\sum_{j=1}^{[N/2]} |\inter{\rho\rho_j}| \leq \frac34 K_3
C_\rho^2g(r)\quadr{\frac{\tond{D^2\mu_\flat}}{(1-D^2\mu_\flat)^2}}\ .
\end{displaymath}
Adding \eqref{e.stima.rho.quad}, and then multiplying by $N$, one has
\eqref{e.rho.up}.
\end{proof}

The final step in the proof of Proposition~\ref{p.rho2.up} consists in
showing the correct dependence on $\beta$: to this purpose we estimate
the numerator of~\eqref{e.rho.up}.
\begin{lemma}
\label{l.rho2.up} 
The following two estimates hold:
\begin{enumerate}
\item for any $\nu\in(0,1]$, $a<\min\{a_0,1/4\}$ and
      $\beta>\max\{\beta_0,(\beta^*r^3)^{1/\nu},\sqrt{3/2}\}$ one has
\begin{equation}
\label{e.upper.R2}
 K_3 C_\rho^2 g(r) \leq
 K_4 \frac{{\beta^*}^3}{\beta^{\lambda_1}} \ .
\end{equation}
\item for any $a<\min\{a_0,1/4\}$ and $\beta \geq \max\{\beta_0,
      64e\beta^*,\sqrt{3/2}\}$, there exists
      $\kappa$ such that taking $r=\lfloor \kappa/3\rfloor$, then
\begin{equation}
\label{e.upper.R2.exp}
 K_3 C_\rho^2 g(r) \leq 
 \frac{3^8 K_4 }{2e^4}\frac{e^{-\kappa}}{\kappa^9}   \ ,
\qquad \left\{
\begin{aligned}
\kappa&:=\frac32\sqrt[3]{ \frac{\beta}{e\beta^*} } \ ,
\qquad  &\beta < e\beta^*\;\tond{\frac{\mu_*}{\mu}}^3 \ ,
\cr 
\kappa&:=\frac32 \frac{\mu_*}{\mu} \ ,
\qquad  &\beta \geq e\beta^*\;\tond{\frac{\mu_*}{\mu}}^3\ .
\end{aligned} \right.
\end{equation}
\end{enumerate}
\end{lemma}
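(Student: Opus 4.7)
The plan is to funnel both parts of the lemma through a single \emph{master bound} of the form
\begin{equation*}
  K_3\, C_\rho^2\, g(r) \;\leq\; K_4\,\frac{r}{\beta^4}\,\left(\frac{\beta^* r^3}{\beta}\right)^{2r},
\end{equation*}
which I then specialize to the two cases. To reach the master bound I substitute the explicit expression for $C_\rho$ from Proposition~\ref{prop.gen}(v) (with $m=0$) and for $g(r)$ from~\eqref{e.g1}. The smallness condition $2r\mu<\mu_*$ forces $\gamma=2\Omega(1-r\mu/\mu_*)\geq\Omega$, and therefore $C_*\leq C_1/\bigl[\Omega(1-e^{-\sigma_0})(1-e^{-(\sigma_0-\sigma_*)})\bigr]$. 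I then apply an elementary Stirling-type estimate $(r+2)!^2\leq e^{-2r-2}(r+2)^{2r+5}$ and the bound $(r+2)^{2r+5}\leq e^4\,3^5\,r^{2r+5}$ coming from $(1+2/r)^{2r}\leq e^4$. The clinching observation is that the leftover factors $2^{12r}$, $C_1^{2r}/\Omega^{2r}$, $1/A_2^{4r}$ and $1/[(1-\mu_\flat^2)(1-\mu_\flat)]^{2r}$ regroup, by the very definition of $\beta^*$ in~\eqref{e.betalambda}, into $e^{-2r(\tilde C-1)}(\beta^*)^{2r}$; since $\tilde C\geq 0$ the factor $e^{-2r\tilde C}\leq 1$ is discarded, yielding the master bound.

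For Part~1 the hypothesis $\beta^\nu\geq \beta^* r^3$ gives $(\beta^* r^3/\beta)^{2r}\leq \beta^{-2r(1-\nu)}$. Since $\lambda_1=2r(1-\nu)+4-\nu$, the master bound reduces to $K_4\,r\,\beta^{-\lambda_1-\nu}$. One more application of $r^3\leq \beta^\nu/\beta^*$ shows $r/\beta^\nu\leq 1/(\beta^*r^2)\leq 1/\beta^*$, which — combined with the uniform lower bound on $\beta^*$ inherited from the definition~\eqref{e.betalambda} and the assumption $\beta\geq\beta_0$ — lets me absorb the remaining polynomial factor into the target prefactor $K_4(\beta^*)^3$.

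For Part~2 I choose $r=\lfloor \kappa/3\rfloor$, which is the value that optimizes the master bound in $r$ with respect to $\beta$. In the regime $\beta<e\beta^*(\mu_*/\mu)^3$ the definition $\kappa=(3/2)(\beta/(e\beta^*))^{1/3}$ yields $r^3\leq \kappa^3/27=\beta/(8e\beta^*)$, so $(\beta^*r^3/\beta)^{2r}\leq (8e)^{-2r}=e^{-2r\ln(8e)}$. Because $r\sim \kappa/3$ and $\beta\sim (8e\beta^*/27)\,\kappa^3$, the prefactor $r/\beta^4$ is of order $\kappa^{-11}$, and $2\ln(8e)/3>1$ ensures $e^{-2r\ln(8e)}\leq C\,e^{-\kappa}$; the excess exponential $e^{-(2\ln(8e)/3-1)\kappa}$ and the spare two powers of $\kappa$ (namely $\kappa^{-11}$ versus the required $\kappa^{-9}$) are controlled by the lower bound $\kappa\geq 6$ coming from $\beta\geq 64e\beta^*$. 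In the complementary regime $\beta\geq e\beta^*(\mu_*/\mu)^3$, where the optimal $r$ would violate $2r\mu<\mu_*$, I cap $r=\lfloor \mu_*/(2\mu)\rfloor$ and set $\kappa=(3/2)(\mu_*/\mu)$; the same chain of estimates then carries through verbatim.

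The main technical hurdle will be the bookkeeping needed to recover exactly the constants $K_4$ and $3^8/(2e^4)$ stated in the lemma — in particular the clean extraction of $(\beta^*)^{2r}$ from the product of $2^{12r}$, $C_1^{2r}/\Omega^{2r}$, and the exponential-decay denominators via~\eqref{e.betalambda}, and the verification that the Stirling losses $e^{-2r-2}$ and $A_2^{-8}$ do not spoil the matching. Conceptually, however, once the master bound is in place, each of the two parts reduces to a one-line computation: substitute the $\beta$-hypothesis (Part~1) or the optimal $r=\lfloor\kappa/3\rfloor$ (Part~2).
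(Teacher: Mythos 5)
Your overall route is the same as the paper's: substitute the expression for $C_\rho$ and $g(r)$, control $C_*$ via $\gamma\geq\Omega$ (i.e. $2r\mu<\mu_*$), apply Stirling, regroup the powers of $2^6C_1/(\Omega A_2^2(1-\mu_\flat^2)(1-\mu_\flat))$ into $\beta^*$ through \eqref{e.betalambda}, then use $\beta^\nu\geq\beta^* r^3$ for Part~1 and the (possibly capped) near-optimal order $r=\lfloor\kappa/3\rfloor$ with $(\beta^* r^3/\beta)^{2r}\leq(8e)^{-2r}$ and $\kappa\geq 6$ for Part~2. The scaling skeleton is right, but as written there are two concrete gaps. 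First, the $C_\rho$ of this lemma is the decay constant of the remainder \emph{in the original variables}, so it carries the extra factor $e^{(r+2)\tilde C}$ from Lemma~\ref{l.inv.g}; you substitute only the constant of Proposition~\ref{prop.gen}(v) and then, separately, discard the factor $e^{-2r\tilde C}$ produced when you extract $(\beta^*)^{2r}$. The two should be combined: keeping the $e^{-2r\tilde C}$ against the omitted $e^{2(r+2)\tilde C}$ leaves only a harmless $e^{4\tilde C}$, whereas your version simply drops an exponentially-in-$r$ large factor and the claimed bound on the actual $C_\rho$ is unsupported.

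Second, the ``master bound'' $K_3C_\rho^2g(r)\leq K_4\,r\beta^{-4}(\beta^* r^3/\beta)^{2r}$ with the constant $K_4$ of \eqref{e.betalambda} does not follow from the steps you list: grouping only $2r$ powers into $(\beta^*)^{2r}$ and Stirling-bounding $(r+2)!^2$ leaves a prefactor of the type $(r+2)^2C_1^2\Omega^2/A_2^8$ (times numerical factors), which is not comparable to $K_4\propto\Omega^6/C_1^2$ and in particular grows with $r$. The paper avoids this by raising the bracket $4F_re^{\tilde C}/(A_2^2\beta)$ to the full power $2r+4$ and dividing by $F_r^6$ using the \emph{lower} bound $F_r\geq 8r^2C_1/(\Omega(1-\mu_\flat^2)(1-\mu_\flat))$; that is precisely where the $\Omega^6/C_1^2$ in $K_4$, the clean form $a_r=(e\beta^* r^2/\beta)^{r+2}r!/r^3$, and the final factor $({\beta^*})^3$ in \eqref{e.upper.R2} come from. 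Your route can be repaired — the unused slack ($1/r^2$ from $r/\beta^\nu\leq 1/(\beta^* r^2)$ in Part~1, $\kappa^{-2}$ and the surplus exponential in Part~2, plus the $(\beta^*)^4$ you tacitly trade for $1/\beta^*$, which requires a justified lower bound $\beta^*\geq1$ rather than an appeal to $\beta\geq\beta_0$) — but you explicitly defer exactly this constant matching, and since the lemma's content is the inequality with these specific constants, that deferred bookkeeping is the substance of the proof, not an afterthought.
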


\begin{proof}
As a first step, we have to work on $C_\rho$: from Proposition \ref{prop.gen}
(vi)
\begin{displaymath}
C_\rho \leq \frac{8(r+2)F_r^{r-1}C_1^2 e^{(r+2)\tilde
    C}}{(1-\mu_\flat^2)(1-\mu_\flat)}\ ,
\end{displaymath}
where the correction $e^{(r+2)\tilde C}$ comes from the transformation back to
the original variables (see Lemma \ref{l.inv.g}), and for $F_r=16r^2C_*$,
using~\eqref{const.prop.gen} and~\eqref{e.muperr}, it holds
\begin{displaymath}
 \frac{8 r^2 C_1}{\Omega(1-\mu_\flat^2)(1-\mu_\flat)} \leq
 F_r\leq \frac{16 r^2 C_1}{\Omega(1-\mu_\flat^2)(1-\mu_\flat)} \ .
\end{displaymath}

Using the above estimates and rearranging some terms we have
\begin{equation*}
 C_\rho^2 g(r) \leq 
  \frac{\Omega^6(1-\mu_\flat^2)^4(1-\mu_\flat)^4}{2^{12}C_1^2}
  \frac{(r+2)^2}{r^{12}} \tond{\frac{4F_re^{\tilde C}}{A_2^2\beta}}^{2r+4}
  \quadr{(r+2)!}^2 \ ;
\end{equation*}
this can be further simplified as
\begin{equation}
\label{e.biforc}
 C_\rho^2 g(r) \leq 
  \frac{3^3\Omega^6}{2^{9}C_1^2} a_r^2 \, 
\qquad
  a_r := \tond{e\beta^* \frac{r^2}{\beta}}^{r+2} \frac{r!}{r^3}
  \ .
\end{equation}
We now proceed in two different ways to get the two different
estimates~\eqref{e.upper.R2} and~\eqref{e.upper.R2.exp}. For the first
one, with a power law dependence on $\beta$, using the standard upper
bound $r!  \leq e \sqrt{r} \tond{\frac{r}{e}}^r$, we have
\begin{equation*}
 a_r \leq e^3 {\beta^*}^2 \frac{r^{3/2}}{\beta^2}
 \tond{\beta^*\frac{r^3}{\beta}}^r =
 e^3 {\beta^*}^{3/2} \tond{\beta^*\frac{r^3}{\beta^\nu}}^{r+1/2}
 \beta^{-[r(1-\nu) + 2 - \frac\nu2]} \ ,
\end{equation*}
so that
\begin{equation*}
 K_3 C_\rho^2 g(r) \leq 
  K_3 \frac{3^3e^6\Omega^6}{2^{9}C_1^2} \quadr{
  {\beta^*}^{3/2} \tond{\beta^*\frac{r^3}{\beta^\nu}}^{r+1/2}
 \beta^{-\lambda_1/2} }^2 
 \leq  K_4 \frac{{\beta^*}^3}{\beta^{\lambda_1}} \ ,
\end{equation*}
where we used $\beta>(\beta^*r^3)^{1/\nu}$ to estimate from above
$\tond{\beta^*\frac{r^3}{\beta^\nu}}^{r+1/2}$.

For the exponential estimate~\eqref{e.upper.R2.exp}, we work
on~\eqref{e.biforc} in a different way, trying to optimize the order $r$. As
usual, the sequence $a_r$ will be initially decreasing, and then definitively
increasing: we are interested in the larger $r\geq2$ such that $a_r<a_{r-1}$,
which translates in
\begin{equation*}
 \frac{e\beta^*}{\beta}r^3\tond{\frac{r}{r-1}}^{2r-1} < 1
 \quad\Leftarrow\quad
 r < r_{{\rm opt}} := 
   \left\lfloor \sqrt[3]{\frac\beta{8e\beta^*}} \right\rfloor \ .
\end{equation*}
The lower bound on $\beta$ in the hypothesis implies that $r_{{\rm opt}} \geq
2$; moreover, the corresponding upper bound is enough to conclude that
$2r_{{\rm opt}}\mu \leq \mu_*$ as required in condition~\eqref{e.muperr} of
Proposition~\ref{prop.gen}.  Using again $r!  \leq e \sqrt{r}
\tond{\frac{r}{e}}^r$, and using $r_{{\rm opt}}^3 \leq \frac\beta{8e\beta^*}$,
we have
\begin{equation*}
 a_r \leq \tond{\frac{r^2}{8r_{{\rm opt}}^3}}^{r+2} 
  \frac{e \sqrt{r} r^r e^{-r}}{r^3}
 = \frac{e r\sqrt{r}}{64r_{{\rm opt}}^6}
   \tond{\frac{r^3}{8er_{{\rm opt}}^3}}^r \ ,
\end{equation*}
so that
\begin{equation*}
K_3 C_\rho^2 g(r_{{\rm opt}}) \leq 
 \frac{K_4}{2^{12} e^4} \frac1{r_{{\rm opt}}^9}
 \tond{8 e}^{-2r_{{\rm opt}}} \ ,
\end{equation*}
and the thesis follow by using the lower bound $(2+2\ln8)r_{{\rm opt}} \geq
F(\beta)$.

If instead $\beta > e\beta^*\;(\mu_*/\mu)^3$, then $r_{{\rm opt}} > r_{{\rm
max}} :=\left\lfloor\frac{\mu_*}{2\mu}\right\rfloor$ allowed by
Proposition~\ref{prop.gen}; in this case it holds $r_{{\rm max}}^3 \leq
\frac\beta{8e\beta^*}$, so that the previous estimates on $a_r$ holds true
 with $r_{{\rm max}}$ instead of $r_{{\rm opt}}$
and the thesis follow by using the lower bound $(2+2\ln8) r_{{\rm max}} \geq
3/2 (\mu_*/\mu)$.
\end{proof}

%
%

\subsection{Lower bound of $\sigma^2[\Phi]$}
\label{sss:lower.sig}

In this Section we give the required lower bound for $\sigma^2[\Phi]$:
besides the fact that an estimate from below can be less trivial than one from
above, the main point here is that $\Phi$ is non homogeneous.

As for the estimate of the previous subsection, one relevant aspect is the
proportionality with $N$, and the other one is the scaling with $\beta$.

Given the constants $\mu_2$ and $\beta_4$ of Lemma~\ref{l.un-nono}, we
have the following
\begin{proposition}
\label{p.lower.sig}
There exists a positive constant $\beta_3$, such that if $\mu<\mu_2$,
$D^2\mu_\flat \leq 1/2$ and $\beta>\max \{\beta_0,\beta_3,\beta_4,
6e^{\tilde C} \beta^* r^2(r+1), \sqrt{3/2}\}$, then
\begin{equation*}
\sigma^2[\Phi] \geq \frac{N}{\beta^2}\frac{\Omega^2}{10} \ .
\end{equation*}
\end{proposition}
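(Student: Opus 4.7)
The strategy is to split $\Phi = \Phi_0 + \Psi$ with $\Psi := \sum_{s=1}^r \Phi_s$, and use the triangle inequality for standard deviations, $\sigma[\Phi] \geq \sigma[\Phi_0] - \sigma[\Psi]$. The quadratic principal part $\Phi_0 = H_\Omega$ should provide the dominant lower bound of order $N\Omega^2/\beta^2$, while the higher-order corrections $\Phi_s$ for $s\geq 1$ can be shown to contribute negligibly in the smallness parameters $\mu$ and $1/\beta$. This is plausible because $\Phi_s$ has degree $2s+2$, so its Gibbs variance scales as $\beta^{-(2s+2)}$, while it carries small prefactors $F_r^{s}$ coming from the perturbative construction of Proposition~\ref{prop.gen}.

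The core step is establishing $\sigma^2[\Phi_0] \gtrsim N\Omega^2/\beta^2$ with an explicit constant. Using translation invariance of the Gibbs measure and the cyclic structure $H_\Omega = h_\Omega^{\oplus}$, I would write $\sigma^2[H_\Omega] = N \sum_{j=0}^{N-1} \bigl(\inter{h_\Omega \cdot \tau^j h_\Omega} - \inter{h_\Omega}\inter{\tau^j h_\Omega}\bigr)$. The diagonal term $j=0$ reduces (after undoing the linear transformation of Proposition~\ref{p.1}) to explicit Gaussian/chi-squared integrals in $(x_0,y_0)$ whose leading behaviour is $\Omega^2/\beta^2$; the hypothesis $\mu < \mu_2$ together with Lemma~\ref{l.un-nono} ensures that the transformation $A^{1/4}$ is close enough to a multiple of the identity that this constant is preserved up to the advertised $1/10$. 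The off-diagonal covariances $j \geq 1$ are controlled by the decay of correlations in Proposition~\ref{p.aver.2}: the assumption $D^2\mu_\flat \leq 1/2$ makes their total contribution a geometrically convergent, small, correction (of the same sign or absolutely small).

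For the correction $\Psi$, I would use $\sigma[\Psi]^2 \leq r \sum_{s=1}^r \sigma^2[\Phi_s]$ (or a triangle inequality) and bound each $\sigma^2[\Phi_s] \leq \inter{\Phi_s^2}$ by the same Gibbs moment machinery used in subsection~\ref{sss:av.R2} (essentially Proposition~\ref{p.aver.1} applied to a function of class $\Dscr(F_r^{s}C_1,\sigma_s)$ of degree $2s+2$, promoted back to original variables via Lemma~\ref{l.inv.g}). The resulting estimate is of the form $\sigma^2[\Phi_s] \lesssim N \, (F_r^{s}C_1)^2 (2s+2)!\,\beta^{-(2s+2)}$ times geometric decay constants. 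Comparing with the lower bound for $\Phi_0$, the ratio $\sigma^2[\Psi]/\sigma^2[\Phi_0]$ picks up at least one factor $\beta^{-1}$ from the gap in degrees, and the lower bound $\beta > 6 e^{\tilde C} \beta^{*} r^2 (r+1)$ is precisely what is needed to make the partial sum $\sum_{s\geq 1} (F_r^2 r^2/\beta)^s$ small, so that $\sigma[\Psi] \leq \frac{1}{2}\sigma[\Phi_0]$, say, which when squared back yields the claimed lower bound with the constant $\Omega^2/10$.

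The main obstacle is extracting the clean explicit constant $\Omega^2/10$: the leading variance calculation is transparent in the transformed variables $(q,p)$ where the oscillators decouple, but the Gibbs measure is written in the original $(x,y)$ coordinates, so the linear transformation $A^{1/4}$ must be tracked carefully, and one must quantify exactly how much the off-diagonal Gibbs correlations in $\sigma^2[\Phi_0]$ and the subtractive contribution of $\sigma[\Psi]$ erode the naive diagonal constant. The choices $\mu < \mu_2$ and the explicit lower bounds on $\beta$ in the hypothesis are designed so that all these subtractions together consume at most nine-tenths of the leading $\Omega^2/\beta^2$ from the diagonal.
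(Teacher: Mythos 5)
Your architecture (split $\Phi=\Phi_0+\Psi$ by degree, triangle inequality for standard deviations, leading contribution from $H_\Omega$, corrections small in $\mu$ and $1/\beta$) is reasonable, and your treatment of $\sigma^2[\Phi_0]$ — diagonal term of order $\Omega^2/\beta^2$ plus off-diagonal translation covariances tamed by Proposition~\ref{p.aver.2} — parallels the paper. The genuine gap is in the control of $\Psi$: the step ``bound each $\sigma^2[\Phi_s]\leq\inter{\Phi_s^2}$ via Proposition~\ref{p.aver.1}, obtaining $\sigma^2[\Phi_s]\lesssim N\,(F_r^sC_1)^2(2s+2)!\,\beta^{-(2s+2)}$'' does not deliver the claimed $N$-scaling. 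For $s\geq1$ the seed $\varphi_s$ is a polynomial of even degree, even in the momenta (Lemma~\ref{l.remainder}), and there is no reason for its Gibbs average to vanish; hence $\inter{\Phi_s}=N\inter{\varphi_s}$ is generically nonzero and $\inter{\Phi_s^2}\geq\inter{\Phi_s}^2$ is of order $N^2$. Dropping the subtracted mean in $\sigma^2[\Phi_s]\leq\inter{\Phi_s^2}$ throws away exactly the cancellation that makes the variance extensive. The analogous shortcut was legitimate in Subsection~\ref{sss:av.R2} only because the remainder seed $\rho$ is odd in $x$ and in $y$ (Lemma~\ref{l.remainder}), so $\inter{\rho}=0$ and the raw products $\inter{\rho_i\rho_j}$ already are covariances; no such parity is available for $\varphi_s$. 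To get an $O(N)$ bound you must keep the covariance structure, writing $\sigma^2[\Phi_s]=N\sigma^2[\varphi_s]+2\sum_{h<k}\sigma\tond{\varphi_s\circ\tau^h,\varphi_s\circ\tau^k}$, and control the translation covariances with the decay of correlations (Proposition~\ref{p.aver.2}) combined with the exponential decay of the interaction range of the seeds; single-average moment bounds such as Proposition~\ref{p.aver.1} only suffice for the nearly-overlapping terms.

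This is exactly how the paper proceeds, but with the two expansions in the opposite order: it first splits over translations of the \emph{full} seed, $\sigma^2[\Phi]\geq N\sigma^2[\varphi]-2\sum_{h<k}\left|\sigma\tond{\varphi\circ\tau^h,\varphi\circ\tau^k}\right|$, then expands in degree only inside the seed variance (Lemma~\ref{l.un-nono}: $\sigma^2[\varphi]\geq\sigma^2[\varphi_0]-|R_\sigma|$, where the hypothesis $\beta>6e^{\tilde C}\beta^*r^2(r+1)$ makes the degree sum collapse onto its first term, of order $1/(r\beta^3)$, against the leading $\Omega^2/(8\beta^2)$ coming from the $y_0^2$ variance), and finally bounds the translation covariances of $\varphi$ by a term of order $N\,D^2\mu_\flat/(r^3\beta^2)$ via Proposition~\ref{p.aver.2}, absorbed by taking $\mu<\mu_2$. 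Your degree-first ordering can be repaired, but each $\sigma[\Phi_s]$, $s\geq1$, must itself go through this covariance/decorrelation machinery rather than through $\inter{\Phi_s^2}$; once that is done, the quantitative role of the hypotheses ($\mu<\mu_2$ against the off-diagonal covariances, the lower bound on $\beta$ against the factorials $(2s+2)!$) is the same as in the paper's proof.
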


\begin{proof}
We exploit once again the cyclic symmetry of $\Phi$, i.e. $\Phi =
\varphi^\oplus = \sum_j \varphi\circ \tau^{j}$. Recalling the standard
notation $\sigma(f,g)=\inter{fg}-\inter{f}\inter{g}$ for the covariance, we
thus write\footnote{We avoid the notation $\varphi_{j}=\varphi\circ \tau^{j}$
because we will need the index to indicate the homogeneity degree.}
\begin{align*}
 \sigma^2[\Phi]
 &= \sum_j\sigma^2\quadr{\varphi\circ\tau^{j}} + 2\sum_{h<k}
    \sigma\tond{\varphi\circ\tau^{h},\varphi\circ\tau^{k}}
\cr
 &\geq N\sigma^2[\varphi] - 2\sum_{h<k}
       \left|\sigma\tond{\varphi\circ\tau^{h},\varphi\circ\tau^{k}}\right| \ ,
\end{align*}
where we used the translation invariance to extract the factor $N$
from the first sum.  The proof will be carried out in the two
forthcoming Lemmas, whose application immediately gives the
estimate~\eqref{p.lower.sig}: the first dealing with the variance
$\sigma^2[\varphi]$ and the second for the estimate of the covariance
terms. We exploit the different scalings in $\beta$ and the
possibility to take $\mu$ arbitrary small: this ensures, provided
$\beta$ is large enough (setting $\beta_2$ as the threshold), the
asymptotic bound $\sigma^2[\Phi]\gtrsim N\sigma^2[\varphi_0]$, being
$\varphi_0$ the quadratic part of the seed $\varphi$.
\end{proof}

\begin{lemma}
\label{l.un-nono}
There exist positive constants $\mu_2$ and $\beta_4$, such that if
$\mu<\mu_2$, $D\mu< 1/2$, and $\beta>\max \{\beta_0,\beta_4,
6e^{\tilde C} \beta^* r^2(r+1),\sqrt{3/2}\}$, then
\begin{equation*}
\sigma^2[\varphi] \geq \frac{\Omega^2}{9\beta^2} \ .
\end{equation*}
\end{lemma}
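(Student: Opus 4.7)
The plan is to decompose $\varphi = \varphi_0 + \tilde\varphi$, where $\varphi_0$ is the quadratic seed $\frac{\Omega}{2}(q_1^2+p_1^2)$ inherited from $\Phi_0=H_\Omega$ (expressed in the original variables via $q=A^{1/4}x$, $p=A^{-1/4}y$), and $\tilde\varphi = \varphi_1 + \ldots + \varphi_r$ collects the higher-order perturbative corrections, with $\varphi_s$ homogeneous of degree $2s+2$. The reverse triangle inequality for standard deviations gives
\begin{equation*}
\sigma[\varphi]\ \ge\ \sigma[\varphi_0]\ -\ \sigma[\tilde\varphi]\ \ge\ \sigma[\varphi_0]\ -\ \sum_{s=1}^r\sigma[\varphi_s]\ .
\end{equation*}
The strategy is to show that the leading term is of order $\Omega/\beta$, while each higher-order term carries an extra power of $1/\beta$ (from its larger polynomial degree) and extra factors of the small parameters $\mu$ and $1/\beta$ coming from Proposition~\ref{prop.gen}. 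For $\mu<\mu_2$ and $\beta$ above the stated threshold, this forces $\sigma[\tilde\varphi]\le\tfrac{2}{3}\sigma[\varphi_0]$, whence $\sigma^2[\varphi]\ge\sigma^2[\varphi_0]/9\ge\Omega^2/(9\beta^2)$.

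For the lower bound on $\sigma^2[\varphi_0]$, I would exploit that in the original variables $\varphi_0$ is a (fixed) quadratic form: using the explicit row structure of $A^{1/4}$ and $A^{-1/4}$ from Proposition~\ref{p.1} and Proposition~\ref{p.chi0}, together with the Gaussian-like behaviour of the Gibbs measure controlled in Appendix~\ref{app.gibbs} (the hypothesis $\beta>\beta_0$ and $D\mu<1/2$ is precisely what permits such control), a direct computation of second and fourth moments gives
\begin{equation*}
 \sigma^2[\varphi_0]\ =\ \frac{\Omega^2}{\beta^2}\,(A^{-1/2})_{1,1}^2\,\bigl(1+O(1/\beta)+O(\mu)\bigr)\ .
\end{equation*}
Since $(A^{-1/2})_{1,1}=\Omega^{-1}(1+O(\mu))$ this reads $\sigma^2[\varphi_0]\ge \Omega^2/\beta^2\cdot(1-o_{\mu,\beta}(1))$, so that for $\mu<\mu_2$ small enough and $\beta>\beta_4$ large enough one obtains, say, $\sigma[\varphi_0]\ge\Omega/(\beta\sqrt{4/3})$ with ample room to spare.

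For the upper bound on each $\sigma[\varphi_s]$, by Proposition~\ref{prop.gen}(iv) with $\Phi_0=H_\Omega$ and $m=0$ the seed $\varphi_s$ is of class $\Dscr(F_r^{s-1}C_1,\sigma_s)$ with $F_r=16\,r^2C_*$, and the inverse linear transformation back to the original variables contributes an extra factor $e^{(s+1)\tilde C}$ via Lemma~\ref{l.inv.g}. Arguing exactly as for the \emph{B-term} in the proof of Lemma~\ref{l.corr.1} (Schwartz, Proposition~\ref{p.aver.1}, and the geometric sum over interaction ranges, which converges thanks to $D^2\mu_\flat\le 1/2$), one gets
\begin{equation*}
\sigma[\varphi_s]\ \le\ \frac{\mathrm{const}}{\beta^{s+1}}\,\bigl(F_r\,e^{\tilde C}\bigr)^{s-1}C_1\sqrt{(2s+2)!}\ ,
\end{equation*}
which, using the definitions of $F_r$ and $\beta^*$ in~\eqref{const.prop.gen} and~\eqref{e.betalambda} and $\sqrt{(2s+2)!}\lesssim (s+1)!$, can be recast as
\begin{equation*}
\frac{\sigma[\varphi_s]}{\sigma[\varphi_0]}\ \le\ \mathrm{const}\,\Bigl(\frac{6e^{\tilde C}\beta^*r^2(r+1)}{\beta}\Bigr)^{s-1}\cdot\frac{1}{\beta}\ .
\end{equation*}
The threshold $\beta>6 e^{\tilde C}\beta^*r^2(r+1)$ then makes the sum $\sum_{s\ge1}\sigma[\varphi_s]/\sigma[\varphi_0]$ a geometric series whose total is less than $1/3$ once $\beta_4$ is taken large enough.

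I expect the main obstacle to be the bookkeeping of constants: keeping $F_r$, $\tilde C$, $D$, $C_1$ and $\beta^*$ separated so that the explicit threshold $6e^{\tilde C}\beta^*r^2(r+1)$ on $\beta$ emerges cleanly from the geometric-series estimate, while simultaneously verifying that the $\mu$- and $\beta$-dependent corrections in the lower bound on $\sigma^2[\varphi_0]$ are small enough to leave a factor $1/9$ (and not just some smaller positive constant). A more delicate technical point is ruling out an accidental cancellation in $\sigma^2[\varphi_0]$ caused by the quartic interaction in the measure: this is exactly where the constraint $D\mu<1/2$ enters, ensuring that the quartic perturbation of the Gaussian reference measure does not distort the second/fourth moment computation used for the lower bound.
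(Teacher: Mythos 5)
Your overall architecture coincides with the paper's: split off the quadratic part $\varphi_0$, and show that each degree-$(2s+2)$ correction is suppressed by a factor of the form $\bigl(\mathrm{const}\,e^{\tilde C}\beta^* r^2(r+1)/\beta\bigr)^{s-1}\beta^{-1}$ once $\beta>6e^{\tilde C}\beta^* r^2(r+1)$ (the paper does this through the covariance expansion $\sigma^2[\varphi]=\sigma^2[\varphi_0]+R_\sigma$ and the monotonicity of $X^{i+2}(i+2)!$; your triangle inequality for standard deviations is an equivalent packaging). The genuine gap is in the lower bound for $\sigma^2[\varphi_0]$, which is the heart of the lemma. You assert a two-sided asymptotic $\sigma^2[\varphi_0]=\frac{\Omega^2}{\beta^2}(A^{-1/2})_{1,1}^2\bigl(1+O(1/\beta)+O(\mu)\bigr)$ ``by a direct computation of second and fourth moments'', but the Gibbs-average machinery of the Appendix provides only \emph{upper} bounds, and neither it nor your sketch gives two-sided control of the variance of the $x$-dependent part $q_0^2=\bigl((A^{1/4}x)_0\bigr)^2$ under the coupled anharmonic measure; the hypothesis $D\mu<1/2$ controls the geometric sums over interaction ranges, not the quartic distortion of the $x$-marginal, so it does not ``rule out the cancellation'' as you suggest. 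Moreover the displayed identity is internally inconsistent: with $(A^{-1/2})_{1,1}=\Omega^{-1}(1+O(\mu))$ it yields $\approx 1/\beta^2$, not $\Omega^2/\beta^2$, and the only exactly computable piece (the Gaussian momentum part) contributes $\frac{\Omega^2}{4}\sigma^2[p_0^2]=\frac{\Omega^2}{2\beta^2}(A^{-1/2})_{1,1}^2$, i.e.\ at most about half of what your formula claims without the $x$-part.

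The paper sidesteps all of this: since the Gibbs measure factorizes in $x$ and $y$ and is \emph{exactly} Gaussian in the momenta, it discards the $x$-contribution of the zero-range seed $\varphi_0^{(0)}=\frac{\Omega}{2}(x_0^2+y_0^2)$ and uses $\sigma^2[\varphi_0^{(0)}]\geq\frac{\Omega^2}{4}\sigma^2[y_0^2]=\frac{\Omega^2}{8\beta^2}$ with no approximation at all; the nonlocal tail of $\varphi_0$ then only adds covariances of size $O(D\mu)$ (this is where $\mu<\mu_2$ and $D\mu<1/2$ really enter), and $|R_\sigma|=O\bigl(1/(r\beta^3)\bigr)$, so the final constant $1/9<1/8$ survives for $\beta>\beta_4$. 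If you replace your moment computation by this observation, your argument closes; note also that your constant bookkeeping as written does not reach the stated bound (with $\sigma[\tilde\varphi]\leq\frac{2}{3}\sigma[\varphi_0]$ and $\sigma^2[\varphi_0]\geq\frac{3\Omega^2}{4\beta^2}$ you only get $\frac{\Omega^2}{12\beta^2}$); this is repairable because the correction series is in fact $O(1/\beta)$ relative to $\sigma[\varphi_0]$, but that puts the entire burden on the lower bound for $\sigma^2[\varphi_0]$, which is precisely the step left unproven.
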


\begin{proof}
We exploit the expansion in homogeneous parts of $\ph=\sum_{s=0}^r\ph_s$: the
strategy is to show that, under the assumed hypothesis, the leading term of
the variance of $\ph$ is given by the variance of $\varphi_0$. We thus first
expand
\begin{equation}
\label{e.R.sigma}
 \sigma^2[\varphi] = \sum_{s,s'=0}^r \sigma(\varphi_s,\varphi_{s'}) 
 = \sigma^2[\varphi_0] + R_{\sigma} \ ,
 \qquad
 R_{\sigma}:= \sum_{i=1}^{2r}\sum_{s+s'=i}\sigma(\varphi_s,\varphi_{s'})\ ,
\end{equation}
so that we will get the thesis via $\sigma^2[\varphi] \geq
\sigma^2[\varphi_0] - \left|R_{\sigma}\right|$, by giving a lower
bound for $\sigma^2[\varphi_0]$ and an upper bound for
$\left|R_{\sigma}\right|$.

\paragraph{Step 1 (lower bound for $\sigma^2[\varphi_0]$).}
From Proposition~\ref{p.1} and Proposition~\ref{prop.gen} we have
$\varphi_0 = h_\Omega=\Omega(q_0^2+p_0^2)/2$, which has interaction
length equal to one in the variables $(q,p)$; once we transform it
back to $(x,y)$ variables, from a direct calculation using the decay
properties of $A^{1/4}$ (see formula~\eqref{e.row1.A14} of
Proposition~\ref{p.1}) we have that
$\varphi_0\in\Dscr(C_{\ph_0},\sigma_0)$ with $C_{\ph_0}\leq
\frac{\Omega}{1-2\mu}$, hence the decomposition $\varphi_0 =
\sum_{l\geq 0}\varphi_0^{(l)}$ holds, where
\begin{displaymath}
\varphi_0^{(0)}(x,y) = \frac\Omega2 \tond{x_0^2+y_0^2} \ ;
\end{displaymath}
thus
\begin{displaymath}
 \sigma^2[\varphi_0] = \sigma^2\quadr{\varphi_0^{(0)}} +
 2 \!\sum_{l+l'\geq 1}\sigma\tond{\varphi_0^{(l)},\varphi_0^{(l')}} \geq
 \sigma^2\quadr{\varphi_0^{(0)}} - 2 \!\sum_{l+l'\geq 1}
  \left|\sigma \tond{\varphi_0^{(l)},\varphi_0^{(l')}} \right| \, .
\end{displaymath}
We will now show that the first term is different from zero, while the
second is of order $\mathcal{O}(a)$ and asymptotically
vanishing. Indeed, from the factorization of the Gibbs measure with
respect to the $x$ and $y$ variables, one has
$\sigma^2\quadr{x_0^2+y_0^2} = \sigma^2\quadr{x_0^2} +
\sigma^2\quadr{y_0^2}$; it thus follows
\begin{displaymath}
\sigma^2\quadr{\ph_0^{(0)}}
= \frac{\Omega^2}4\tond{\sigma^2 \quadr{x_0^2}
+ \sigma^2 \quadr{y_0^2}} > \frac{\Omega^2}4 \sigma^2\quadr{y_0^2} 
= \frac{\Omega^2}{8\beta^2}\ .
\end{displaymath}
The covariance part is dealt with exploiting the exponential decay of the
homogeonous polynomials $\varphi_0^{(l)}$; in particular, each covariance is
estimated via Schwartz inequality as $|\sigma(\varphi,\psi)|\leq
|\inter{\varphi\psi}|+|\inter{\varphi}\inter{\psi}| \leq
\sqrt{\inter{\varphi^2}\inter{\psi^2}} + |\inter{\varphi}\inter{\psi}|$, and
then applying Lemmas \ref{l.phi.aver} and \ref{l.phi2.decay} we have
\begin{equation*}
\begin{aligned}
 \left|\sigma\tond{\ph_0^{(l)},\ph_0^{(l')}}\right| &\leq
 \sqrt{\inter{\tond{\ph_0^{(l)}}^2} \inter{\tond{\ph_0^{(l')}}^2}} +
 \left| \inter{\ph_0^{(l)}} \inter{\ph_0^{(l')}} \right| \leq
\cr
 &\leq \frac1{\beta^2}\Biggl\{ \frac{2(2K_1)^{1+4a_0}}{A_2^4} \;
       D^{2+\frac{l+l'}2} \; \; \sqrt{\norm{\tond{\ph_0^{(l)}}^2} \norm{\tond{\ph_0^{(l')}}^2}}
       +
\cr
 &\phantom{\leq\Biggl\{} + \frac{(2K_1)^{2+8a_0}}{A_2^8} \;
       D^{4+l+l'} \; \norm{\ph_0^{(l)}} \norm{\ph_0^{(l')}} \;\Biggr\} \tond{\frac2{A_2^2}}^{2} \leq
\cr
 &\leq \frac{K_3}{\beta^2} C_{\ph_0}^2 (D e^{-\sigma_0})^{l+l'} \ ;
\end{aligned}
\end{equation*}
the sum over $l$ and $l'$ gives
\begin{displaymath}
\sum_{l+l'\geq 1}(D e^{-\sigma_0})^{l+l'} \leq \sum_{i\geq 1} i(D
e^{-\sigma_0})^i < \frac{D e^{-\sigma_0}}{1-D e^{-\sigma_0}} =
\frac{2\mu D}{1-2\mu D}\ .
\end{displaymath}
If we collect the two estimates we have
\begin{equation}
\label{e.sigma2.phi0}
\sigma^2[\varphi_0] \geq \frac{\Omega^2}{\beta^2}\tond{\frac18 - 
\frac{2 K_3 D \mu}{(1-2\mu)^2(1-2\mu D)}}  \ .
\end{equation}

\paragraph{Step 2 (upper bound for $\left|R_{\sigma}\right|$).}
For the remainder, we will estimate each term in the sum: we first
control the covariance terms again via Schwartz inequality
\begin{displaymath}
|\sigma(\varphi_s,\varphi_{s'})|\leq
|\inter{\varphi_s\varphi_{s'}}|+|\inter{\varphi_s}\inter{\varphi_{s'}}|
\leq \sqrt{\inter{\varphi_s^2}\inter{\varphi_{s'}^2}} +
|\inter{\varphi_s}\inter{\varphi_{s'}}|\ .
\end{displaymath}

We apply Lemma~\ref{l.phi.aver} and Corollary~\ref{l.phi2.aver} to both
$\ph_s$ and $\ph_{s'}$; in particular, using
estimate~\eqref{e.inter.ph2} and using $\sqrt{(2s+2)!(2s'+2)!} <
2^{s+s'+2}(s+s'+2)!$, we have
\begin{equation*}
\sqrt{\inter{\varphi_s^2}\inter{\varphi_{s'}^2}} \leq
\quadr{\frac{4D^2(2K_1)^{1+4a_0}}{A_2^4}}
\frac{C_{\ph_s}C_{\ph_{s'}}}{1-\mu_\flat}
\quadr{\tond{\frac4{A_2^2\beta}}^{s+s'+2}(s+s'+2)!}
\ ;
\end{equation*}
where we used $\sigma_s>\sigma_*$, and the
definition~\eqref{e.decomp.rho} of $\mu_\flat$; in a similar way, using
estimate~\eqref{e.inter.ph}
\begin{equation*}
|\inter{\varphi_s}\inter{\varphi_{s'}}| \leq
\quadr{\frac{4D^4(2K_1)^{2+8a_0}}{A_2^8}}
C_{\ph_s}C_{\ph_{s'}}
\quadr{\tond{\frac2{A_2^2\beta}}^{s+s'+2}(s+s'+2)!}
\ .
\end{equation*}
Summing up the previous formulas, and using the
definitions~\eqref{e.C.et.alter} and~\eqref{e.K3} of $K_2$ and $K_3$,
\begin{equation}
\label{e.covs.3}
\sqrt{\inter{\varphi_s^2}\inter{\varphi_{s'}^2}} +
|\inter{\varphi_s}\inter{\varphi_{s'}}| \leq
K_3 \frac{C_{\ph_s}C_{\ph_{s'}}}{1-\mu_\flat}
\quadr{\tond{\frac4{A_2^2\beta}}^{s+s'+2}(s+s'+2)!}
\ .
\end{equation}

As we did in the proof of Lemma~\ref{l.rho2.up}, we now make explicit
the dependence of $C_{\ph_s}$ from the index $s$: from points (iv), (v)
and (vi) of Proposition~\ref{prop.gen}
\begin{displaymath}
 C_{\ph_s} = e^{(s+1)\tilde C} F_r^{s-1} C_1 \ ,
 \qquad
 \frac{24r^2C_1}{\Omega\tond{1-\mu_\flat^2}(1-\mu_\flat)} \leq
 F_r \leq \frac{48r^2C_1}{\Omega\tond{1-\mu_\flat^2}(1-\mu_\flat)} \ ,
\end{displaymath}
where the factor $e^{(s+1)\tilde C}$ comes from the transformation back
to the original variables (see
Lemma~\ref{l.inv.g}). Using~\eqref{e.covs.3} in~\eqref{e.R.sigma} we have
\begin{equation}
\label{e.covs.4}
\begin{aligned}
 \left|R_{\sigma}\right| &\leq 
  \frac{K_3C_1^2}{1-\mu_\flat} \sum_{i=1}^{2r}\sum_{s+s'=i} 
  e^{(s+s'+2)\tilde C}  F_r^{s+s'-2} 
  \quadr{\tond{\frac4{A_2^2\beta}}^{s+s'+2}(s+s'+2)!} \leq
\cr
 &\leq \frac{K_3C_1^2}{F_r^4(1-\mu_\flat)} \sum_{i=1}^{2r}
  \quadr{\tond{\frac{4F_re^{\tilde C}}{A_2^2\beta}}^{i+2}(i+2)!} \leq
\cr
 &\leq \frac{K_3}{2^{12}3^4}\tond{\frac{\Omega}{C_1}}^4
   \frac1{r^8} \sum_{i=1}^{2r}
  \quadr{\tond{\frac{2^63C_1e^{\tilde C}r^2}
              {\Omega\tond{1-\mu_\flat^2}(1-\mu_\flat)A_2^2\beta}}^{i+2}
         (i+2)!}
\ .
\end{aligned}
\end{equation}

The sum in the previous formula is of the form $\sum_{i=1}^{2r}a_i$ with $a_i=
X^{i+2}(i+2)!$; we have that $a_i$ is monotone decreasing, for
$i=1,\ldots,2r$, if $X(2r+2)<1$. But for hypothesis we have $\beta>6e^{\tilde
C} \beta^* r^2(r+1)$, so (recalling the definition~\eqref{e.betalambda} of
$\beta^*$) it holds $a_i\leq a_1$ and thus $\sum_{i=1}^{2r}a_i \leq 2r
a_1=12X^3 r$. Inserting such an estimate in~\eqref{e.covs.4} we have
\begin{equation}
\label{e.covs.4.1}
 \left|R_{\sigma}\right| \leq
  \frac{2^8K_3e^{3\tilde C}}{\tond{1-\mu_\flat^2}^3(1-\mu_\flat)^3A_2^6}
  \tond{\frac{\Omega}{C_1}}\frac1{r\beta^3}
\end{equation}
\paragraph{Step 3.}
With such an upper bound for $\left|R_{\sigma}\right|$ proportional to
$\beta^{-3}$ and with the lower bound~\eqref{e.sigma2.phi0}
proportional to $\beta^{-2}$, there exist $\beta_4$ and $\mu_2$ such
that if $\mu<\mu_2$ and $\beta>\beta_4$ the thesis follows.
\end{proof}

\begin{lemma}
Given $\beta> \max\{\beta_0,6e^{\tilde C} \beta^*
r^2(r+1),\sqrt{3/2}\}$ and $\mu$ such that $D^2\mu_\flat<1$ one has
\begin{equation*}
\Biggl|\sum_{h<k}\sigma\left(\varphi\circ\tau^{h},
   \varphi\circ\tau^{k}\right)\Biggr| \leq
N \quadr{\frac{\tond{D^2\mu_\flat}}{(1-D^2\mu_\flat)^2}}
\frac{K_3e^{2\tilde C}}{\tond{1-\mu_\flat^2}^2(1-\mu_\flat)^2A_2^4}
  \tond{\frac{\Omega}{C_1}}^2\frac1{r^3\beta^2} \ .
\end{equation*}
\end{lemma}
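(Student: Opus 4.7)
The plan is to combine the double-sum reorganization and decay-of-correlations machinery developed for $\inter{R^2}$ in Lemmas~\ref{l.corr.1} and~\ref{l.rho.up} with the homogeneous-degree summation performed in Step~2 of Lemma~\ref{l.un-nono}. First I would exploit translation invariance of the Gibbs measure together with Lemma~\ref{l.sum.corr.1} to reduce the double sum to a linear-in-$N$ expression,
\begin{equation*}
 \sum_{h<k}\sigma(\varphi\circ\tau^{h},\varphi\circ\tau^{k}) = N\sum_{j=1}^{\lfloor N/2\rfloor}\sigma(\varphi,\varphi\circ\tau^{j})\ ,
\end{equation*}
exactly as in \eqref{e.riordino}, with the even-$N$ case dominated by the odd-$N$ expression.

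Next I would expand $\varphi=\sum_{s=0}^r\varphi_s$, and each $\varphi_s=\sum_l\varphi_s^{(l)}$ in increasing interaction range. For every fixed $(s,s')$ and $j$, the bound on $\sigma(\varphi_s,\varphi_{s'}\circ\tau^{j})$ follows the A-term/B-term split of Lemma~\ref{l.corr.1} verbatim: for $l+l'<j$ the supports of $\varphi_s^{(l)}$ and $\varphi_{s'}^{(l')}\circ\tau^{j}$ are disjoint and Proposition~\ref{p.aver.2} yields the factor $\mu_\sharp^d$; for $l+l'\geq j$ Schwartz inequality combined with Proposition~\ref{p.aver.1} exploits the decay $\mu_\flat^{l+l'}$ of $\varphi_s$. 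The only modification with respect to the homogeneous case is that $g(r)$ of \eqref{e.g1} must be replaced by the bilinear-in-degree factor $(4/(A_2^2\beta))^{s+s'+2}(s+s'+2)!$ already used in \eqref{e.covs.3}. Collecting the two contributions gives
\begin{equation*}
 \bigl|\sigma(\varphi_s,\varphi_{s'}\circ\tau^j)\bigr| \leq \text{const}\cdot\frac{C_{\ph_s}C_{\ph_{s'}}}{1-\mu_\flat}\tond{\frac{4}{A_2^2\beta}}^{s+s'+2}(s+s'+2)!\cdot\frac{(D^2\mu_\flat)^{j}+(D^2\mu_\flat)^{N-j}}{1-D^2\mu_\flat}\ .
\end{equation*}

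Then I would sum over $j=1,\ldots,\lfloor N/2\rfloor$ using the geometric-series estimate \eqref{e.stimella}, which turns the last bracket into $\frac{D^2\mu_\flat}{(1-D^2\mu_\flat)^2}$. The sum over $(s,s')$ is treated exactly as in \eqref{e.covs.4}-\eqref{e.covs.4.1}: substitute $C_{\ph_s}\leq e^{(s+1)\tilde C}F_r^{s-1}C_1$ from Proposition~\ref{prop.gen} (iv)--(v), replace $F_r$ by its extremal bounds $F_r\asymp r^2C_1/\Omega$, and invoke the hypothesis $\beta>6e^{\tilde C}\beta^*r^2(r+1)$ to render the double series $\sum_{s+s'\geq 0}$ geometrically summable and dominated by $O(r)$ times its leading nontrivial term. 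After straightforward algebra the explicit constants collapse into the quoted $(\Omega/C_1)^2 r^{-3}\beta^{-2}$ scaling.

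The main obstacle, relative to the homogeneous remainder $\rho$ of Lemmas~\ref{l.corr.1}-\ref{l.rho.up}, is the careful treatment of the low-degree terms $(s,s')$ near $(0,0)$, for which the recursive formula $C_{\ph_s}\sim F_r^{s-1}C_1$ does not directly apply (since $\varphi_0=H_\Omega$ has seed of size $\sim\Omega$ rather than $C_1/F_r$). These low-degree contributions must be incorporated into the constant prefactor without destroying the $r^{-3}\beta^{-2}$ decay; this is achieved by the same monotonicity argument on the sequence $a_i=X^{i+2}(i+2)!$ with $X=F_r e^{\tilde C}/(A_2^2\beta)$ used in Lemma~\ref{l.un-nono}, whose decreasing character for $i\leq 2r$ is guaranteed precisely by the threshold $\beta>6e^{\tilde C}\beta^*r^2(r+1)$.
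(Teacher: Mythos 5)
Your proposal is correct and follows essentially the same route as the paper: translation invariance plus Lemma~\ref{l.sum.corr.1}, the A/B split treated with Propositions~\ref{p.aver.2} and~\ref{p.aver.1}, the geometric sum over $j$ as in \eqref{e.stimella}, and the degree sum handled exactly as in \eqref{e.covs.4} with the sequence $a_i=X^{i+2}(i+2)!$ made decreasing by the threshold $\beta>6e^{\tilde C}\beta^* r^2(r+1)$. The only small divergence is the point you flag about $(s,s')$ near $(0,0)$: the paper applies $C_{\ph_s}=e^{(s+1)\tilde C}F_r^{s-1}C_1$ uniformly down to $s=0$ and absorbs the low-degree contribution (which is $\mathcal{O}(1/\beta^2)$ but carries the $D^2\mu_\flat$ prefactor) through the closing remark that a possibly smaller $\mu_2$ suffices, rather than folding it into the constant prefactor as you propose.
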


\begin{proof}
We first proceed like in formula~\eqref{e.riordino} exploiting the
translation invariance
\begin{equation*}
 \sum_{h<k} \sigma\tond{\varphi\circ\tau^{h},\varphi\circ\tau^{k}} =
  N\sum_{j=1}^{[N/2]} \sigma\tond{\varphi,\varphi\circ\tau^{j}} \ ,
\end{equation*}
where we used Lemma~\ref{l.sum.corr.1} for the part
$\inter{(\varphi\circ\tau^{h}) (\varphi\circ\tau^{k})}$ and an easy
direct calculation for the terms $\inter{\varphi\circ\tau^{h}}
\inter{\varphi\circ\tau^{k}}$. We will follow the strategy of the proof
of Lemma~\ref{l.corr.1}: at variance with that situation, here $\ph$ is
not homogeneous so, besides the expansion~\eqref{e.decomp} on the the
interaction lengths, we also need to expand over the different degrees
$\ph=\sum_{s=0}^r \ph_s$. We will use the notation $\ph_{s,j}:=
\ph_s\circ\tau^{j}$ to indicate at the same time the degree and the
translation.
\begin{equation*}
 \sigma\tond{\varphi,\varphi\circ\tau^{j}} =
  \sum_{s,s'=0}^r \sigma\tond{\ph_s,\ph_{s',j}} =
  \sum_{s,s'=0}^r \sum_{i=0}^{2N} \sum_{l+l'=i}
     \sigma\tond{\ph_s^{(l)},\ph_{s',j}^{(l')}} \ .
\end{equation*}
We split again the sum over $i$ into two parts:
\begin{equation*}
 A:= \sum_{l+l'<j} \sigma\tond{\ph_s^{(l)},\ph_{s',j}^{(l')}} \ ,
\qquad
 B:= \sum_{l+l'\geq j} \sigma\tond{\ph_s^{(l)},\ph_{s',j}^{(l')}} \ .
\end{equation*}
In the $A$ term, $\ph_s^{(l)}$ and $\ph_{s',j}^{(l')}$ have disjoint
supports, so we can apply Proposition~\ref{p.aver.2}
\begin{equation*}
\begin{aligned}
 |A| &\leq \!\!\sum_{l+l'<j}\!\! K_2 D^{l+l'+2d+4} \; \mu_\sharp^d \;
          \tond{\frac2{A_2^2\beta}}^{s+s'+2}\!\!\!(s+1)!(s'+1)! \;
	  \norm{\ph_s^{(l)}} \norm{\ph_{s',j}^{(l')}} \leq
\cr
 &\leq 2 K_2 D^4 C_{\ph_s} C_{\ph_{s'}}
 \tond{\frac2{A_2^2\beta}}^{s+s'+2}(s+s'+2)! 
 \sum_{l+l'<j} D^d \tond{D\mu_\flat}^{l+l'+d}  \leq
\cr
 &\leq K_3 C_{\ph_s} C_{\ph_{s'}} \tond{\frac2{A_2^2\beta}}^{s+s'+2}(s+s'+2)! 
       \quadr{\frac{\tond{D^2 \mu_\flat}^j + \tond{D^2 \mu_\flat}^{N-j}}
                   {1-D^2 \mu_\flat}} \ ,
\end{aligned}
\end{equation*}
where we used also the definitions~\eqref{e.C.et.alter} and~\eqref{e.K3}
of $K_2$ and $K_3$, the fact that $\mu_\sharp<\mu_\flat$ for $a<1/4$,
and $e^{-\sigma_s}<\mu_\flat$, and we worked out the sum like
in~\eqref{e.caseA1}, \eqref{e.caseA2} and \eqref{e.est.A}.

In the $B$ term, where $l+l'$ is relatively large, we exploit the exponential
decay; after the usual application of Schwartz inequality, by
Proposition~\ref{p.aver.1} we have
\begin{equation*}
\begin{aligned}
 \left|\sigma\tond{\ph_s^{(l)},\ph_{s',j}^{(l')}}\right| &\leq
 \sqrt{\inter{\tond{\ph_s^{(l)}}^2} \inter{\tond{\ph_{s',j}^{(l')}}^2}} +
 \left| \inter{\ph_s^{(l)}} \inter{\ph_{s',j}^{(l')}} \right| \leq
\cr
 &\leq \Biggl\{ \frac{(2K_1)^{1+4a_0}}{A_2^4} \; D^{2+\frac{l+l'}2} \;
       \sqrt{(2s+2)!(2s'+2)!} \;\times
\cr
 &\phantom{\leq\Biggl\{}
  \sqrt{\norm{\tond{\ph_s^{(l)}}^2} \norm{\tond{\ph_{s',j}^{(l')}}^2}} \;+\; 
  \norm{\ph_s^{(l)}} \norm{\ph_{s',j}^{(l')}} \;\times
\cr
 &\phantom{\leq\Biggl\{}  + \frac{(2K_1)^{2+8a_0}}{A_2^8}  D^{4+l+l'}
       (s+1)!(s'+1)! \Biggr\}  \tond{\frac2{A_2^2\beta}}^{\! s+s'+2}\!\!\! \leq
\cr
 &\leq K_3 (s+s'+2)! \tond{\frac4{A_2^2\beta}}^{s+s'+2}
            C_{\ph_s} C_{\ph_{s'}} (D\mu_\flat)^{l+l'} 
\end{aligned}
\end{equation*}
where we used again $\sqrt{(2s+2)!(2s'+2)!} < 2^{s+s'+2}(s+s'+2)!$,
$e^{-\sigma_s}<\mu_\flat$ and the definitions~\eqref{e.C.et.alter}
and~\eqref{e.K3} of $K_2$ and $K_3$. We thus have
\begin{equation*}
|B| \leq K_3 (s+s'+2)! \tond{\frac4{A_2^2\beta}}^{s+s'+2} 
 C_{\ph_s} C_{\ph_{s'}} \frac{(D\mu_\flat)^j}{1-D\mu_\flat} \ .
\end{equation*}

In order to sum over the various degrees, we observe that the terms
depending on $s$ and $s'$ are the same as in formula~\eqref{e.covs.3},
the only difference being here the sum starts from 0; at the same time,
since the dependence on $j$ is factorized, we deal with the sum over the
translation like in~\eqref{e.stimella}:
{\small
\begin{equation*}
\begin{aligned}
 \Biggl|&\sum_{h<k}\sigma\left(\varphi\circ\tau^{h},
   \varphi\circ\tau^{k}\right)\Biggr|
  \leq N \sum_{j=1}^{[N/2]} |\sigma\tond{\varphi,\varphi\circ\tau^{j}} |
  \leq N \sum_{j=1}^{[N/2]} \sum_{i=0}^{2r}\sum_{s+s'=i} |A|+|B| \leq
\cr
 &\leq
  N \sum_{j=1}^{[N/2]}
               \frac{(D^2\mu_\flat)^j+(D^2\mu_\flat)^{N-j}}{1-D^2\mu_\flat}
  \; 2K_3C_1^2  \sum_{i=0}^{2r} e^{(i+2)\tilde C}  F_r^{i-2} 
  \quadr{\tond{\frac4{A_2^2\beta}}^{\! i+2}\!\!\!(i+2)!} \leq
\cr
 &\leq 
 N \quadr{\frac{\tond{D^2\mu_\flat}}{(1-D^2\mu_\flat)^2}}
 \frac{2K_3C_1^2}{F_r^4} \sum_{i=0}^{2r}
  \quadr{\tond{\frac{4F_re^{\tilde C}}{A_2^2\beta}}^{i+2}(i+2)!} \leq
\cr
 &\leq 
 N \quadr{\frac{\tond{D^2\mu_\flat}}{(1-D^2\mu_\flat)^2}}
 \frac{K_3}{2^{12}3^4}\tond{\frac{\Omega}{C_1}\!}^{\!4}
   \!\frac1{r^8} \sum_{i=0}^{2r}
  \quadr{\tond{\frac{2^63C_1e^{\tilde C}r^2}
        {\Omega\tond{1-\mu_\flat^2}(1-\mu_\flat)A_2^2\beta}}^{\! i+2}
        \!\!\!(i+2)!}
.
\end{aligned}
\end{equation*}
} Also in this case we have a sum of the form $\sum_{i=0}^{2r}a_i$
just like in~\eqref{e.covs.4}, with $i$ starting from 0 instead of 1;
hence $\sum_{i=0}^{2r}a_i<(2r+1)a_0 < 8rX^2$. Since the leading terms
is of order $\mathcal{O}(1/\beta^2)$, a similar negative contribution
as in \eqref{e.sigma2.phi0} holds; thus with a possibly different
$\mu_2$ we have the thesis.
\end{proof}


\section{Appendix}
\label{s:6}

\subsection{Proofs of Section \ref{s:quadratic}}

We provide here the proofs of all the statements claimed in Section
\ref{s:quadratic} concerning the linear normalizing transformation
$A^{1/4}$.

\subsubsection{Proof of Proposition \ref{p.1}}
\label{app:quad}

If we apply the canonical linear change of coordinates $q = A^{1/4}x$ and $y =
A^{1/4}p$ then $H_0$ reads
\begin{displaymath}
H_0(q,p) = \frac12 p \cdot A^{1/2}p + \frac12 q\cdot A^{1/2} q\ .
\end{displaymath}
Since $A$ is circulant and symmetric, the same holds also for both
$A^{1/2}$ and $A^{1/4}$ (see property 4. stated after
Definition~\ref{d.circulant}); thus we can isolate the
diagonal\footnote{The coefficient of the diagonal is also the first
element of the first row, and thus, from property 3. of circulant
matrices stated after Definition~\ref{d.circulant}, it is the average of
the eigenvalues of $A^{1/2}$.} part of $A^{1/2}$
\begin{displaymath}
A^{1/2} = \Omega\Id + B\ ,
\end{displaymath}
and write
\begin{equation}
\label{e.dec.H0a}
 H_0 = H_\Omega + Z_0 \ ,
\qquad{\rm where}\quad
H_\Omega = \quadr{\frac\Omega2(q_1^2+p_1^2)}^\oplus \ ,
\quad
Z_0=\frac12p \cdot B p + \frac12 q\cdot B q \ .
\end{equation}
Hence it follows \eqref{e.Z0}, with $b_j({\mu})$ being the first half-row of the
circulant and symmetric matrix $B$. A direct computation gives\footnote{The
same conclusion follows also from the theory of the linear centralizer
unfolding (see \cite{Arn84}), or from the normal form construction performed
in Section 2 of \cite{GioPP12}} $\Poi{H_\Omega}{Z_0}=0$. The exponential decay
of the elements $b_j\sim {\mu}^j$ follows from the same argument we are going to
use to show the exponential decay of the elements of the matrix $A^{1/4}$.

Introducing $T=\tau+\tau^\top$, recalling~\eqref{e.def-A}, and by
expanding with respect to ${\mu}$ we get
\begin{displaymath}
A^{1/4} = \sqrt\omega\quadr{ \Id + \sum_{k\geq
    1}\binom{1/4}{k}(-1)^k({\mu} T)^k} \ .
\end{displaymath}
The exponential decay follows from the analysis of the first $[N/2]-1$
powers of the symmetric matrix $T$; indeed, for $k\leq [N/2]-1$ one has
\begin{displaymath}
T^k_{j,j+i} = 0,\qquad |i-[N/2]-1|\leq [N/2]-k-1\ ,
\end{displaymath}
which allows to claim immediately
\begin{displaymath}
\bigl(A^{1/4}\bigr)_{1,j} = c_j({\mu}){\mu}^{j-1},\qquad\qquad
j=1,\ldots,[N/2]+1\ .
\end{displaymath}

We want to prove that $|c_j({\mu})| = \mathcal{O}(2^j)$; we first
observe that for any $1\leq j\leq [N/2]+1$ it holds
\begin{displaymath}
\bigl(A^{1/4}\bigr)_{1,j} = \sqrt\omega \sum_{h\geq j-1}(-1)^h {\mu}^h
T^h_{1,j}\binom{1/4}{h}\ ,
\end{displaymath}
so that by inserting $\binom{1/4}{j} = \frac{(-1)^{j}(4j-5)!!}{4^j
  j!}$ it reads
\begin{align*}
\bigl(A^{1/4}\bigr)_{1,1} &= \sqrt\omega\quadr{1 + \sum_{h\geq
    1}\frac{{\mu}^h T_{1,1}^h}{h!}\frac{(4h-5)!!}{4^h}}\ ,\cr
\bigl(A^{1/4}\bigr)_{1,j} &= \sqrt\omega \sum_{h\geq j-1}\frac{{\mu}^h
  T_{1,j}^h}{h!}\frac{(4h-5)!!}{4^h}<0\ , \qquad j\geq 2\ .
\end{align*}
We stress that for $j\geq 2$ the element $\bigl(A^{1/4}\bigr)_{1,j}$
is negative. From the decomposition
\begin{displaymath}
T^k = \sum_{l=0}^k \binom{k}{l}\tau^{k-l}\tau^{-l} = \sum_{l=0}^k
\binom{k}{l}\tau^{k-2l},
\end{displaymath}
it is possible to give an estimate uniform in $i$ and $j$
\begin{displaymath}
 0<T^k_{ij} = \sum_{l=0}^k \binom{k}{l} \tau^{k-2l}_{ij} 
 < \sum_{l=0}^k \binom{k}{l} = 2^k \ ,
\end{displaymath}
since the elements of $\tau^{k-2l}$ are only $1$ and $0$. We can thus
estimate
\begin{align*}
\big|\bigl(A^{1/4}\bigr)_{1,j}\big| &\leq
-\sqrt\omega(2{\mu})^{j-1}\sum_{k\geq
  0}\frac{(2{\mu})^k}{(k+j-1)!}\frac{(4k+4j-9)!!}{4^{k+j-1}}<\\ &<
\sqrt\omega(2{\mu})^{j-1}\quadr{1-\sum_{k\geq
    1}\frac{(2{\mu})^k}{k!}\frac{(4k-5)!!}{4^{k}}} <
2\sqrt\omega(2{\mu})^{j-1}\ .
\end{align*}
The estimate holds also for $j=1$. We can furthermore apply the same
argument to $A^{1/2}$
\begin{equation*}
A^{1/2} = \Omega\Id + B,\qquad\qquad |b_j|\leq
2\omega(2{\mu})^{j-1},\qquad j\geq 2
\end{equation*}
thus showing that $B$ is a ${\mu}$ perturbation of $\Omega\Id$. The
behaviour of the coefficients $b_j$ also justify \eqref{e.Z0}. {The
  shape of the seed $\zeta_0$ is a direct consequence of
  \eqref{e.dec.H0a}: the vector $Bq$ is determined by its first
  element, being $(Bq)_j = \tau^{j-1}(Bq)_1$, and since we can write
  $q_j=\tau^{j-1}q_1$ for any $j$, then it follows that $q_0(Bq)_1$
  can be chosen as a seed for $Z_0$. The same holds for the conjugated
  variables $p$.} \qed

\subsubsection{Proof of Proposition \ref{p.chi0}}
\label{app:quad.chi0}

The flow of the Hamiltonian $\chi$ of~\eqref{e.chi0} is given by the
exponentials $x(t) = e^{Bt} x_0$ and $y(t)= e^{-Bt}y_0$, so the time
$t=1$ flow gives $A^{1/4}=e^B$ hence the definition $B:= \frac14
\ln\tond{A}$ which, due to \eqref{e.def-A}, reads
\begin{equation*}
B := \frac14\ln(\omega^2)\Id + \frac14\ln\tond{\Id-\mu
  T}\ ,\qquad\qquad \ln\tond{\Id-\mu T} := -\sum_{k\geq 1}\frac{1}k
\mu^k T^k\ .
\end{equation*}
From the series definition, we immediately get that $B$ is circulant
and symmetric. The exponential decay of its elements and the
subsequent upper bound can be obtained with the same estimates used in
the proof of Proposition \ref{p.1}. {The final decomposition and the
  claim $\chi_0\in\Dscr\bigl(C_0(a),\sigma_0\bigr)$ follow from the
  same argument used for \eqref{e.Z0.seed.1}: the first choice for the
  seed is $\chi_0=x_0(By)_1$, which gives $\chi_0^{(m)} =
  B_{1,m}(x_0y_m + y_0x_m)$. Then one uses translations to get
  \eqref{e.chi0}.} \qed

\subsubsection{Proof of Lemma \ref{l.inv.g}}
We apply Corollary \ref{cor.poisson} with $f=\Chi_0$, thus obtaining
$\lie{\Chi_0}\rho\in\Dscr\bigl(C C_\rho,\sigma_*\bigr)$ with
\begin{displaymath}
C\leq \frac{(r+1)2
  C_0}{(1-e^{-\sigma'})(1-e^{-(\sigma'-\sigma_*)})}\ .
\end{displaymath}
Then we recall that $T_{\Chi_0}\rho = \sum_{l\geq
  0}\frac1{l!}\lie{\Chi_0}^l\rho$, hence for any homogeneous term in the
series it holds
\begin{displaymath}
\big\|{\bigtond{\lie{\Chi_0}^l\rho}^{(m)}}\big\| \leq C^l C_\rho
e^{-m\sigma_*}\ .
\end{displaymath}
\qed

\subsubsection{Proof of lemma~\ref{l.exp.Z0}}
\label{app.exp.Z0}
The claim involving the seed of $Z_0$ is based on the choice of
$\zeta_0$ in Proposition~\ref{p.1}, namely formula \eqref{e.Z0}. One
has to observe that any term of the form $b_j(q_0q_{N-1-j} +
p_0p_{N-1-j})$ in the seed $\zeta_0$ can be replaced with the
corresponding $b_j(q_0q_{j} + p_0p_{j})$, obtained with the
translation $\tau^j$. Hence another seed for $Z_0$ (in the odd case,
for example) reads
\begin{equation}
\label{e.Z0.seed.1}
\zeta_0 = \sum_{m=1}^{[N/2]}\zeta_0^{(m)}\
,\qquad \zeta_0^{(m)}=2\sum_{m=1}^{[N/2]}b_m(q_0q_{m} + p_0p_{m})\ .
\end{equation}
The estimate for $\zeta_0$ then follows from
\begin{displaymath}
\norm{\zeta_0^{(m)}} = 4|b_m|\leq 8\omega\,\tond{2\mu}^m\ .
\end{displaymath}

{ In order to prove the claim involving $H_1 = h_1^{\oplus}$, we adapt
  the proof of Lemma \ref{l.inv.g}, working on the series expansion of
  the seed $T_{\Chi_0}h_1$. The proof exploits some peculiarities of
  the Lie derivatives $\lie{\Chi_0}^k h_1$, which are derived by
  $\chi_0^{(0)}=0$ and $h_1^{(m)}=0$ for all $m\geq 1$.  We have:
\begin{description}
\item[$\lie{\Chi_0}h_1$] we can write explicitely
\begin{displaymath}
\lie{\Chi_0}h_1^{(m)} = \sum_{l=1}^{m}B_{1,l}x_0^3 x_l\ ,\qquad
m=1,\ldots,[N/2]\ ,
\end{displaymath}
where $B_{1,l}$ are the coefficients of the matrix $B$ defining
$\Chi_0$ in Proposition \ref{p.chi0}. Then for any $m\geq 1$ it holds
\begin{displaymath}
\norm{\lie{\Chi_0}h_1^{(m)}} = \norm{\chi_0^{(m)}}\leq C_0(a)
e^{-\sigma_0m} = C_0(a) e^{-(\sigma_0-\sigma'')} e^{-\sigma'' m}\ ,
\end{displaymath}\
thus
$\lie{\Chi_0}h_1\in\Dscr(C_{1,1}:=C_0e^{-(\sigma_0-\sigma'')},\sigma'')$
for any $\sigma''<\sigma_0$. In particular
$\lie{\Chi_0}h_1\in\Dscr(C_{1,1},\sigma_1)$.
\item[$\lie{\Chi_0}^2h_1$]the second Lie derivative represents the
  Poisson bracket between $\Chi_0$ with $\chi_0\in\Dscr(C_0,\sigma_0)$
  and the previous Lie derivative seed $\lie{\Chi_0}h_1$, hence
  $\lie{\Chi_0}^2h_1\in\Dscr(C_{1,2},\sigma_1)$. Remarkably, the
  expansion of both the functions start with $m=1$, being
  $\chi_0^{(0)} = \lie{\Chi_0}h_1^{(0)}=0$, thus it is possible to
  apply Corollary \ref{cor.pois.Z0.corr} in order to obtain
\begin{displaymath}
C_{1,2} = \frac{16 e^{-(\sigma_0-\sigma_1)}
  C_{1,1}C_0}{(1-e^{-\sigma_0})(1-e^{-(\sigma_0-\sigma_1)})}\ .
\end{displaymath}
\item[$\lie{\Chi_0}^3h_1$]the third derivative represents the Poisson
  bracket between $\lie{\Chi_0}^2h_1\in\Dscr(C_{1,2},\sigma_1)$ and
  $\Chi_0$. Thus surely we have
  $\lie{\Chi_0}^3h_1\in\Dscr(C_{1,3},\sigma_1)$. Corollary
  \ref{cor.pois.Z0.corr} provides
\begin{displaymath}
C_{1,3} = \frac{16 e^{-(\sigma_0-\sigma_1)}
  C_0}{(1-e^{-\sigma_0})(1-e^{-(\sigma_0-\sigma_1)})}C_{1,2}C_0\ ;
\end{displaymath}
\item[$\lie{\Chi_0}^kh_1$]the iterated Lie derivative is still the
  Poisson bracket between an homogeneous polinomial
  $\lie{\Chi_0}^{k-1}h_1\in\Dscr(C_{1,k-1},\sigma_1)$ of degree $4$
  and $\Chi_0$. Corollary \ref{cor.pois.Z0.corr} provides
\begin{displaymath}
C_{1,k} = \frac{16 e^{-(\sigma_0-\sigma_1)}
    C_0}{(1-e^{-\sigma_0})(1-e^{-(\sigma_0-\sigma_1)})} C_{1,k-1}C_0\ ,
\end{displaymath}
which iteratively yields the estimate
\begin{equation}
\label{e.iter.C1k}
C_{1,k}\leq \quadr{\frac{16 e^{-(\sigma_0-\sigma_1)}
    C_0}{(1-e^{-\sigma_0})(1-e^{-(\sigma_0-\sigma_1)})}}^kC_0\ .
\end{equation}
\end{description}
By collecting all the Lie derivatives in the definition of the Lie
transform we have
\begin{align*}
\norm{(T_{\Chi_0}h_1)^{(m)}} &\leq \sum_{k\geq
  0}\frac1{k!}\norm{(\lie{\Chi_0}^k h_1)^{(m)}} \leq e^{-\sigma_1
  m}\sum_{k\geq 0}\frac1{k!} C_{1,k}\leq \\ &\leq
C_0e^{-\sigma_1 m}\sum_{k\geq 0}\frac1{k!}\quadr{\frac{16 e^{-(\sigma_0-\sigma_1)}
    C_0}{(1-e^{-\sigma_0})(1-e^{-(\sigma_0-\sigma_1)})}}^k =\\
&= C_1 e^{-\sigma_1 m}\ ,
\end{align*}
where, using $\sigma_0=2\sigma_1$, we have defined
\begin{displaymath}
C_1 := C_0 \exp{\tond{\frac{16 e^{-\sigma_0/2}
      C_0}{(1-e^{-\sigma_0})(1-e^{-\sigma_0/2})}}}\ .
\end{displaymath}
The claim $C_1(a)=\mathcal{O}(1)$ comes from $(a\to 0)\Rightarrow
(\sigma_0\to +\infty)$.
}
\qed


\subsection{Technical lemmas on Gibbs averages}
\label{app.gibbs}

\subsubsection{Useful integrals}
\label{sss.integrali}

We recall a couple of useful formulas\footnote{We also use the
  notation $k!!:=k[(k-2)!!]$ if $k>1$, and $k!!:=1$ if $k=1$ or
  $k=0$.} for integrals.  The first is
\begin{equation}
\label{e.Ik}
 I_k:=\int_{\RR} x^k e^{-x^2} dx
\qquad\Longrightarrow\qquad
 I_{2k} = \frac{2k-1}2 I_{2(k-1)} = \frac{(2k-1)!!}{2^k}\sqrt{\pi} \ .
\end{equation}
The second formula gives an estimate for
\begin{equation*}
 G_k(\gamma):=\int_{\RR} x^k e^{-(x^2+\gamma^2 x^4)} dx\ .
\end{equation*}
We have the following
\begin{lemma}
\label{l.stimaGk}
Let $\gamma>0$ satisfy $\gamma^2(2k+1)(2k+3) < 4$.  Then one has
\begin{equation*}
 1-\gamma^2\frac{(2k+1)(2k+3)}4
 \leq \frac{G_{2k}(\gamma)}{G_{2k}(0)}
 \leq 1 \ .
\end{equation*}
\end{lemma}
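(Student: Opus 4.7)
The plan is straightforward: separate the two inequalities, use the elementary pointwise bound $e^{-t}\le 1$ for the upper one and the tangent-line bound $e^{-t}\ge 1-t$ (valid for all $t\ge 0$) for the lower one, and then reduce both sides to ratios of the moments $I_{2k}$ defined in~\eqref{e.Ik}.

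For the upper bound I would simply observe that $e^{-\gamma^2 x^4}\le 1$ pointwise on $\RR$, so integrating against the positive weight $x^{2k} e^{-x^2}$ gives $G_{2k}(\gamma)\le G_{2k}(0)=I_{2k}$.

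For the lower bound the key step is the inequality $e^{-\gamma^2 x^4}\ge 1-\gamma^2 x^4$, which yields
\begin{displaymath}
G_{2k}(\gamma)\;\ge\;\int_{\RR} x^{2k}e^{-x^2}\bigl(1-\gamma^2 x^4\bigr)\,dx
           \;=\;I_{2k}-\gamma^2\, I_{2k+4}.
\end{displaymath}
At this point I would apply the recursion $I_{2j}=\tfrac{2j-1}{2}I_{2j-2}$ from~\eqref{e.Ik} twice to get $I_{2k+4}=\tfrac{(2k+1)(2k+3)}{4}I_{2k}$, divide through by $G_{2k}(0)=I_{2k}$, and read off the claimed inequality. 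The hypothesis $\gamma^2(2k+1)(2k+3)<4$ is precisely what ensures that the lower bound is strictly positive, so no separate smallness condition on $\gamma$ needs to be invoked.

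There is no real obstacle here: both bounds are one-line pointwise estimates on the exponential, and the moment computation is already recorded in~\eqref{e.Ik}. The only thing to be a bit careful about is making explicit that $1-\gamma^2 x^4$ need not be nonnegative on all of $\RR$ — this is harmless because we are only using $e^{-\gamma^2 x^4}\ge 1-\gamma^2 x^4$, which holds for every real $x$ regardless of sign.
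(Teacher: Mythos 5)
Your proof is correct, and it takes a genuinely different (and more elementary) route than the paper. The paper proves the lower bound via the mean value theorem applied to $\gamma\mapsto G_k(\gamma)$: it writes $G_k(\gamma)-G_k(0)=\gamma\,\partial_\gamma G_k(\xi)=-2\gamma\xi\,G_{k+4}(\xi)$ for some $\xi\in[0,\gamma]$, bounds $\xi G_{k+4}(\xi)\le\gamma G_{k+4}(0)$ by monotonicity of $G_{k+4}$, and then invokes the recursion~\eqref{e.Ik} twice, exactly as you do, to express $I_{2k+4}$ in terms of $I_{2k}$. You instead use the pointwise bounds $e^{-\gamma^2x^4}\le 1$ and $e^{-\gamma^2x^4}\ge 1-\gamma^2x^4$ under the integral sign, which collapses the whole argument to one line per inequality and requires no differentiation in the parameter and no monotonicity discussion; it also delivers the stated constant $\tfrac{(2k+1)(2k+3)}{4}$ directly (with the MVT taken literally in the variable $\gamma$, as the paper writes it, one would pick up an extra factor $2$ unless one differentiates with respect to $\gamma^2$ instead). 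Your remark that the hypothesis $\gamma^2(2k+1)(2k+3)<4$ is only needed to make the lower bound positive, not to run the argument, is accurate, and your caveat about $1-\gamma^2x^4$ possibly being negative is handled correctly since $e^{-t}\ge 1-t$ holds for all real $t$. In short: same moment computation, different and slightly cleaner mechanism for the lower bound.
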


\begin{proof}
There exists $\xi\in[0,\gamma]$, such that
\begin{equation*}
 G_k(\gamma)-G_k(0) =
  \gamma \frac{\partial G_k}{\partial \gamma}(\xi) =
  -2\gamma\xi G_{k+4}(\xi) \ .
\end{equation*}
Since $G_k$ is a monotonic function of $\gamma$ we can
estimate $\xi G_{k+4}(\xi)$ both from above and from below as
$0 \leq \xi G_{k+4}(\xi) \leq \gamma G_{k+4}(0)$ for all
$\xi\in[0,\gamma]$.  In view of $G_k(0)=I_k$ and using twice the recursive
relation \eqref{e.Ik} the claim follows.
\end{proof}

\subsubsection{Partition function}
\label{sss.Z}

Throughout the present Section we consider a periodic chain of
length $l$ and drop the $y$ depending part of $H$ because the
corresponding integrals are trivially factorized.  We also split $H_0$
as
\begin{equation}
\label{e.Hd+Ha}
  H_0 = H_d+H_a\ ,\quad
  H_d = \frac{1}{2} \sum_{j} x_j^2\ ,\quad
  H_a =  \frac{a}{2} \sum_{j} (x_{j+1}-x_j)^2
\end{equation}
where $a$ plays the role of the small coupling parameter.  We also
recall that $H_1=\frac{1}{4}\sum_{j}x_j^4$.  Finally we emphasize the
role of the parameter $a$ by denoting the partition function as
$Z(\beta,a)$.

We present three Lemmas investigating the role of boundary conditions and that
of the parameters $N$, $a$ and $\beta$ on the partition function; we start
recalling the following
\begin{lemma}[Lemma 5 and 6 of \cite{CarM12}]
\label{l.part.N}
There exist constants $\beta_0>0,\,a_0>0,\,K_1>2$ such that for any
$\beta>\beta_0$ and $0<a<a_0$ one has
\begin{equation*}
 Z_l(\beta) \geq
 Z_{l-1}(\beta)\frac1{K_1}\sqrt{\frac{2\pi}{\beta}} \ .
\end{equation*}
Denoting by ${\mathcal Z}$ the partition function for the system with free boundary
 conditions\footnote{see~\eqref{e.calZH}}, one has
\begin{equation*}
 {\mathcal Z}_l(\beta) \leq
 Z_l(\beta) \left(2K_1\right)^{1+4a_0} \ .
\end{equation*}
\end{lemma}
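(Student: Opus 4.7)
Plan: The strategy is to reduce everything to single-variable Gaussian integrals, exploiting the rescaling $x_j = \xi_j/\sqrt\beta$, under which $Z_l = \beta^{-l/2}\tilde Z_l$ (and similarly ${\mathcal Z}_l = \beta^{-l/2}\tilde{\mathcal Z}_l$), where the rescaled integrands have $O(1)$ quadratic part, $O(1)$ coupling controlled by $a$, and quartic correction of size $O(1/\beta)$ (see the notation in~\eqref{e.mis.scal}). The factor $\sqrt{2\pi/\beta}$ in the first inequality is then just the trivial scaling contribution, and the first inequality reduces to $\tilde Z_l \geq \tilde Z_{l-1}\sqrt{2\pi}/K_1$, while the second reduces to a uniform-in-$l$ comparison of $\tilde{\mathcal Z}_l$ and $\tilde Z_l$.

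For the first inequality, I would isolate $\xi_{l-1}$ in $\tilde Z_l$ by writing $[0,l]^\scal = [0,l-2]^\scal \cdot e^{a\xi_{l-2}\xi_{l-1}} \cdot e^{a\xi_{l-1}\xi_0}$ and performing the one-dimensional integral $\int d\xi_{l-1}\,e^{-(1+2a)\xi_{l-1}^2/2 + a\xi_{l-1}(\xi_{l-2}+\xi_0) - \xi_{l-1}^4/(4\beta)}$. Completing the square gives a Gaussian factor $\sqrt{2\pi/(1+2a)}$ times $\exp\bigl(a^2(\xi_{l-2}+\xi_0)^2/(2(1+2a))\bigr)$, while Lemma~\ref{l.stimaGk} controls the quartic correction by a factor $1-O(1/\beta)$ provided $\beta>\beta_0$. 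The remaining $(l-1)$-variable integrand differs from that of $\tilde Z_{l-1}$ only by a bounded modification of the quadratic form at the two newly adjacent sites $\xi_{l-2}$ and $\xi_0$, yielding for $a<a_0$ small an $l$-independent constant $K_1>2$.

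For the second inequality, the key identity is $Z_l = {\mathcal Z}_l\cdot\langle e^{\beta a x_{l-1} x_0}\rangle_{{\mathcal Z}_l}$, so that ${\mathcal Z}_l \leq Z_l(2K_1)^{1+4a_0}$ is equivalent to the lower bound $\langle e^{\beta a x_{l-1} x_0}\rangle_{{\mathcal Z}_l}\geq (2K_1)^{-(1+4a_0)}$. The cleanest route uses the pointwise inequality $e^{\beta a x_{l-1} x_0}\geq e^{-\beta a(x_{l-1}^2+x_0^2)/2}$, giving $Z_l \geq \int [0,l-1]\,e^{-\beta a(x_{l-1}^2+x_0^2)/2}\,dV_0^{(l)}$; the ratio of ${\mathcal Z}_l$ to this lower bound amounts to comparing two modified single-site Gaussian integrals at the boundary sites $j=0$ and $j=l-1$. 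Reusing the one-variable argument from the first inequality on each, one gets a product of four $O(1)$ factors parameterised by $a<a_0$, which reorganize naturally into the form $(2K_1)^{1+4a_0}$ (the exponent $4a_0$ reflecting the size of the boundary perturbation at two sites).

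The hard part will be extracting constants completely independent of $l$: this uniformity is essential for the thermodynamic limit. The underlying mechanism is rapid decay of spatial correlations (as powers of $a$), so that inserting or removing the single periodic closing bond $e^{\beta a x_{l-1} x_0}$ changes the partition function only by an $l$-independent multiplicative constant, despite the product $\beta a$ itself being $O(1)$ and therefore not amenable to naive Taylor expansion.
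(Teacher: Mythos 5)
The paper does not actually prove this lemma: it is imported verbatim from Lemmas 5 and 6 of \cite{CarM12}, and the proof given in the text consists solely of the remark that the variables here correspond to those of \cite{CarM12} after the rescaling $q_j=\sqrt\omega\,x_j$. Your proposal must therefore stand as a self-contained argument, and as such it has a genuine gap precisely at the point where the cited lemmas do the real work: the uniformity of the constants in $l$.

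Your reductions are fine as far as they go (the rescaling by $\sqrt\beta$, completing the square after integrating out $\xi_{l-1}$, controlling the quartic correction via Lemma~\ref{l.stimaGk}, the identity $Z_l={\mathcal Z}_l\,\inter{e^{\beta a x_{l-1}x_0}}_{{\mathcal Z}_l}$ and the pointwise bound $e^{\beta a x_{l-1}x_0}\ge e^{-\beta a(x_{l-1}^2+x_0^2)/2}$), but the concluding step of each half is asserted rather than proved. After integrating out $\xi_{l-1}$, the remaining $(l-1)$-site integrand carries the factor $\exp\bigl(a^2(\xi_{l-2}+\xi_0)^2/(2(1+2a))\bigr)$ and no bond between $\xi_{l-2}$ and $\xi_0$, whereas $\tilde Z_{l-1}$ carries the bond weight $e^{a\xi_{l-2}\xi_0}$; the needed pointwise domination fails (along $\xi_{l-2}=\xi_0=R\to\infty$ the ratio of the two weights tends to zero, since $2a^2/(1+2a)<a$), and the two boundary sites remain coupled to the rest of the chain, so this is not ``a bounded modification of the quadratic form at two sites'' reducible to one-dimensional integrals. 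Similarly, the second inequality requires $\int[0,l-1]\,dV_0^{(l)}\le K\int[0,l-1]\,e^{-\beta a(x_0^2+x_{l-1}^2)/2}\,dV_0^{(l)}$ with $K$ independent of $l$; the integrand ratio $e^{\beta a(x_0^2+x_{l-1}^2)/2}$ is unbounded and the integrals do not factorize, so ``comparing two modified single-site Gaussian integrals'' does not apply. Bounds of exactly this kind are what Lemmas 5--6 of \cite{CarM12} establish through an iterative decoupling argument, and you cannot instead invoke the paper's own tools (Lemma~\ref{l.cutting}, Lemma~\ref{c.aver.4}, Corollary~\ref{l.part.Nl}): they are all consequences of the present lemma, so that route is circular. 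You flag the $l$-uniformity as ``the hard part'' but never supply the argument, which leaves the proposal a plausible plan rather than a proof. (A minor point: in the regime of interest $a$ is fixed and $\beta$ large, so $\beta a$ is not $O(1)$; in the scaled variables the bond is $O(a)$, and the true obstruction is the $l$-uniformity of the constants, not the size of the bond.)
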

\begin{proof}
For the proof we refer to \cite{CarM12}; please note that our variables
$x_j$ correspond to their variables $q_j$ after a rescaling: $q_j=\sqrt\omega
x_j$.
\end{proof}

\begin{corollary}
\label{c.part.N}
Under the same hypothesis of Lemma~\ref{l.part.N}, for the
 scaled\footnote{see the paragraph before definitions~\eqref{e.mis.scal}
 in the proof of Proposition~\ref{p.aver.2}} quantities one has
\begin{equation*}
 Z_l^\scal(\beta) \geq Z_{l-1}^\scal(\beta)\frac{\sqrt{2\pi}}{K_1}
\qquad\qquad
 {\mathcal Z}_l^\scal(\beta) \leq Z_l^\scal(\beta) \left(2K_1\right)^{1+4a_0} \ .
\end{equation*}
\end{corollary}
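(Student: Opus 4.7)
My plan is to reduce both inequalities to those already established in Lemma~\ref{l.part.N} via a single change of variables. The key observation is that the scaled measure $dV_s^{(m)\scal}$ and the scaled coupling $[p,q]^\scal$ introduced in~\eqref{e.mis.scal} are obtained from $dV_s^{(m)}$ and $[p,q]$ by the substitution $x_j\mapsto y_j/\sqrt\beta$: this absorbs the factor $\beta$ that multiplies the quadratic term and the bilinear coupling $a x_j x_{j+1}$ in $e^{-\beta H}$, while changing the volume element by a factor $\beta^{-1/2}$ per integration variable. This substitution is precisely the rescaling used at the start of the proof of Proposition~\ref{p.aver.2}.

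Applying this change of variables to the integrals defining $Z_l(\beta)$ and ${\mathcal Z}_l(\beta)$, the Jacobian produces an overall factor $\beta^{-l/2}$ and the transformed integrand is exactly that of $Z_l^\scal(\beta)$ (respectively ${\mathcal Z}_l^\scal(\beta)$). Hence I expect the identity
\begin{equation*}
Z_l^\scal(\beta) = \beta^{l/2}\,Z_l(\beta)\ ,\qquad
{\mathcal Z}_l^\scal(\beta) = \beta^{l/2}\,{\mathcal Z}_l(\beta)\ .
\end{equation*}

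Once this identity is in hand, both claims follow immediately. For the first, multiplying $Z_l(\beta)\geq Z_{l-1}(\beta)\frac{1}{K_1}\sqrt{2\pi/\beta}$ by $\beta^{l/2}$ combines the factor $1/\sqrt\beta$ on the right with $\beta^{l/2}$ to yield $\beta^{(l-1)/2}$, so that $\beta^{(l-1)/2} Z_{l-1}(\beta) = Z_{l-1}^\scal(\beta)$ and the $\sqrt{2\pi}/K_1$ factor is what remains. For the second, multiplying ${\mathcal Z}_l(\beta)\leq Z_l(\beta)(2K_1)^{1+4a_0}$ by $\beta^{l/2}$ gives the claim directly, since $Z_l$ and ${\mathcal Z}_l$ rescale by the same power of $\beta$ and the multiplicative constant is inert. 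There is no real obstacle: the only nontrivial point is to verify that the substitution $x_j\mapsto y_j/\sqrt\beta$ reproduces exactly the integrand of the scaled partition function as defined in~\eqref{e.mis.scal}, which is a routine check.
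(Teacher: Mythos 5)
Your argument is correct and is essentially the derivation the paper leaves implicit: the corollary is stated without proof, and the later remark following Corollary~\ref{l.part.Nl} makes clear that the scaled statements are meant to follow from the unscaled ones by exactly this kind of trivial rescaling. Your route --- the identities $Z_l^\scal(\beta)=\beta^{l/2}Z_l(\beta)$ and ${\mathcal Z}_l^\scal(\beta)=\beta^{l/2}{\mathcal Z}_l(\beta)$ obtained from the substitution $x_j\mapsto x_j/\sqrt\beta$, followed by multiplying the two inequalities of Lemma~\ref{l.part.N} by $\beta^{l/2}$ --- is the natural one, and the bookkeeping ($\beta^{l/2}\cdot\beta^{-1/2}=\beta^{(l-1)/2}$ on the right-hand side of the first inequality, and an inert constant in the second) is right. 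One caveat about the ``routine check'' you defer: the substitution actually produces the quartic term $x_j^4/(4\beta)$, whereas \eqref{e.mis.scal} as printed has $x_j^4/(4\beta^2)$; so the exact identity holds for the pushforward of the Gibbs measure under the rescaling, which is what the paper really uses (it is forced by the rescaling identity for correlations at the start of the proof of Proposition~\ref{p.aver.2}), and the printed power of $\beta$ should be read as a typo. This discrepancy does not affect the validity of your argument, but it is worth stating explicitly which scaled measure you take as the definition, since with the formula as literally printed the clean identity $Z_l^\scal=\beta^{l/2}Z_l$ would fail.
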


In the following Lemma we control the complete partition function
$Z(\beta,a)$ with the totally uncoupled one $Z(\beta,0)$, exploiting
the decomposition \eqref{e.Hd+Ha}. We remark that it is independent of
the boundary conditions.
\begin{lemma}
\label{l.m.1bis}
Let $A_1:=\sqrt{1+4a}$. If $a>0$ then
\begin{displaymath}
\frac1{A_1^l} Z_l(\beta,0)\leq Z_l(\beta,a) \leq Z_l(\beta,0) \ .
\end{displaymath}
\end{lemma}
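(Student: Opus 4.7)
The plan is to handle the two inequalities separately using only the sign of the coupling term $H_a$ and a crude bound on it, combined with a rescaling of variables to absorb the resulting factor.

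For the upper bound, I would note that $H_a=\tfrac{a}{2}\sum_j(x_{j+1}-x_j)^2\ge 0$ since $a>0$. Therefore $e^{-\beta H_a}\le 1$, and integrating the factorized Boltzmann weight $e^{-\beta(H_d+H_a+H_1)}=e^{-\beta H_a}\,e^{-\beta(H_d+H_1)}$ against $dx$ immediately yields $Z_l(\beta,a)\le Z_l(\beta,0)$.

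For the lower bound, the idea is to overestimate $H_a$ by a multiple of $H_d$. Using $(x_{j+1}-x_j)^2\le 2(x_j^2+x_{j+1}^2)$ and periodicity of the chain (so that $\sum_j x_{j+1}^2=\sum_j x_j^2$), one gets
\begin{equation*}
H_a\le a\sum_j(x_j^2+x_{j+1}^2)=2a\sum_j x_j^2=4aH_d,
\end{equation*}
and therefore $H_d+H_a\le(1+4a)H_d=A_1^2 H_d$. Consequently $e^{-\beta(H_d+H_a)}\ge e^{-\beta A_1^2 H_d}$, which gives
\begin{equation*}
Z_l(\beta,a)\ge\int_{\RR^l}e^{-\beta A_1^2 H_d(x)-\beta H_1(x)}\,dx.
\end{equation*}
Now I would perform the change of variables $x_j=y_j/A_1$, with Jacobian $A_1^{-l}$: this turns the quadratic part into the target $\tfrac{\beta}{2}\sum_j y_j^2$, while the quartic part becomes $\tfrac{\beta}{4A_1^4}\sum_j y_j^4$. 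Since $A_1\ge 1$, we have $1/A_1^4\le 1$, so $e^{-\frac{\beta}{4A_1^4}\sum y_j^4}\ge e^{-\frac{\beta}{4}\sum y_j^4}$, and putting everything together
\begin{equation*}
Z_l(\beta,a)\ge\frac{1}{A_1^l}\int_{\RR^l}e^{-\frac{\beta}{2}\sum_j y_j^2-\frac{\beta}{4}\sum_j y_j^4}\,dy=\frac{1}{A_1^l}Z_l(\beta,0).
\end{equation*}

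No step is really an obstacle; the only mildly delicate point is making sure the rescaling preserves (rather than degrades) the quartic contribution, which works here precisely because $A_1\ge 1$ pushes the quartic coefficient down. The periodic boundary condition is also crucial, as it is what allows us to convert $\sum_j(x_j^2+x_{j+1}^2)$ into $2\sum_j x_j^2$ without boundary corrections.
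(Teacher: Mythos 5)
Your proof is correct and follows essentially the same route as the paper: the upper bound from $H_a\ge 0$, and the lower bound from $(v-w)^2\le 2(v^2+w^2)$ giving $H_a\le 4aH_d$, followed by the rescaling $x\mapsto y/A_1$. The only (immaterial) difference is that the paper inflates the quartic coefficient to $(1+4a)^2$ before rescaling so it is restored exactly, while you rescale first and then discard the harmless factor $1/A_1^4\le 1$.
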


\begin{proof}
Using the trivial inequality $(v-w)^2\leq 2(v^2+w^2)$, when $a>0$ we use
the estimate $H_a\leq 4aH_d$ to get the following inequalities
\begin{equation*}
\label{e.stimeZ}
 H_d+H_1 \leq H \leq (1+4a) H_d + (1+4a)^2 H_1\ . 
\end{equation*}
Inserting the previous formula in the explicit expressions for
$Z(\beta,a)$ and $Z(\beta,0)$ we have
$$
\displaylines{\qquad
 \int_{\RR^l}e^{-\beta (H_d+H_1)} dx 
\geq \int_{\RR^l}e^{-\beta H} dx \geq
\hfill\cr\hfill
\geq \left[\int_{\RR}e^{-\beta \left((1+4a) \frac{x^2}{2} + (1+4a)^2
    \frac{x^4}{4}\right)} dx  \right]^l
    = \frac1{A_1^l} \int_{\RR^l}e^{-\beta (H_d+H_1)} dx
    \ ,\qquad\cr
    }
$$
using, in the last equality, the change of variable $y=\sqrt{1+4a} x$.
\end{proof}

As a corollary of Lemma~\ref{l.stimaGk}, in the uncoupled case it is
possible to control the partition function $Z(\beta,0)$ with the
corresponding harmonic\footnote{The corresponding scaled version is
clearly $Z_l^\scal(\infty,0):= \left(2\pi\right)^{l/2}$.} one, which can
be denoted by
$Z_l(\infty,0):=\int_{\RR^l}e^{-\beta H_d} dx =
\left(\frac{2\pi}{\beta}\right)^{l/2}$:
\begin{lemma}
\label{l.stimaZ}
For $\beta>\sqrt{3/4}$, denoting $B:=1-\frac3{4\beta^2}<1$, one has
\begin{equation*}
\label{e.stimaZ}
 B^l \leq \frac{Z_l(\beta,0)}{Z_l(\infty,0)} \leq 1 \ .
\end{equation*}
\end{lemma}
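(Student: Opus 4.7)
The plan is straightforward: since the hypothesis effectively removes the coupling, the partition function factorises across sites, and the claim reduces to a one-dimensional estimate that is immediately supplied by Lemma~\ref{l.stimaGk}.

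First, I would observe that at $a=0$ the quartic Hamiltonian $H_d + H_1$ is a sum of identical single-site contributions, so that
\begin{equation*}
Z_l(\beta,0) = \left[\int_{\RR} e^{-\beta(x^2/2 + x^4/4)}\,dx\right]^{l},
\qquad
Z_l(\infty,0) = \left[\int_{\RR} e^{-\beta x^2/2}\,dx\right]^{l}.
\end{equation*}
In particular the ratio in the statement is the $l$-th power of a ratio of two scalar integrals, so it suffices to bound this one-site ratio by $B$ from below and by $1$ from above, and then raise to the $l$-th power.

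Next, I would perform a linear change of variable that normalises the coefficient of the quadratic term in the exponent to $1$, thereby casting each factor in the form of the auxiliary integral $G_k(\gamma)$ introduced in Section A.2.1 with $k=0$. Because the same rescaling is applied in the numerator and in the denominator, the Jacobians cancel in the ratio, and one is left precisely with $G_0(\gamma)/G_0(0)$ for a value of $\gamma$ determined by $\beta$ (so that $\gamma^2$ is proportional to an inverse power of $\beta$, matching the definition of $B$).

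At this point the heart of the proof is a single invocation of Lemma~\ref{l.stimaGk} with $k=0$. The hypothesis $\beta > \sqrt{3/4}$ translates exactly into the smallness condition $\gamma^2(2k+1)(2k+3)<4$ required there, and the conclusion of that lemma reads
\begin{equation*}
1 - \frac{3}{4\beta^2} \leq \frac{G_0(\gamma)}{G_0(0)} \leq 1,
\end{equation*}
which is the claimed per-site bound. Raising to the $l$-th power yields the thesis. The upper bound is in fact trivial, since adding the nonnegative quartic term only makes the integrand pointwise smaller than the Gaussian. There is no substantial obstacle; the only care required is in bookkeeping the rescaling so that the constants produced by Lemma~\ref{l.stimaGk} line up with the definition of $B$ and with the hypothesis on $\beta$.
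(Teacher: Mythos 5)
Your proposal is correct and follows essentially the same route as the paper, whose proof is exactly the one-line recipe you spell out: rescale the integration variables by $\sqrt\beta$ and apply Lemma~\ref{l.stimaGk} with $k=0$, the upper bound being trivial since the quartic term only decreases the integrand. The preliminary factorisation over sites that you make explicit is implicit in the paper's argument (at $a=0$ the measure is a product), so the two proofs coincide in substance.
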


\begin{proof}Rescale $Z$ by $\sqrt\beta$, then apply Lemma~\ref{l.stimaGk}.
\end{proof}

As a corollary of the previous Lemmas, one has
\begin{corollary}
\label{l.part.Nl}
Let $\beta_0$, $a_0$ and $K_1$ be the constants of Lemma~\ref{l.part.N}, then
for any $\beta>\beta_0$, $0<a<a_0$ and $l'<l$ one has
\begin{equation*}
\label{e.part.Nl}
  \frac{Z_{l-l'}(\beta,a)Z_{l'}(\beta,a)}{Z_l(\beta,a)} \leq K_1^{l'} 
 \ .
\end{equation*}
\end{corollary}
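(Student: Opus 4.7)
The plan is to combine the lower bound on $Z_l(\beta,a)$ coming from iterating Lemma~\ref{l.part.N} with the uniform upper bound on $Z_{l'}(\beta,a)$ provided by Lemmas~\ref{l.m.1bis} and~\ref{l.stimaZ}, so that the factors $(2\pi/\beta)^{l'/2}$ cancel exactly. The estimate is in fact a one-line consequence of those three ingredients; the content is only in matching powers.

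First, I would iterate Lemma~\ref{l.part.N} $l'$ times, applied repeatedly to the chains of length $l,\,l-1,\,\ldots,\,l-l'+1$ (each of which has length at least $l-l'+1\ge 1$, so no boundary issue arises), obtaining
\begin{equation*}
 Z_l(\beta,a) \geq Z_{l-l'}(\beta,a)\,\frac{1}{K_1^{l'}}\left(\frac{2\pi}{\beta}\right)^{l'/2}.
\end{equation*}
This takes care of the denominator and of the combinatorial factor $K_1^{l'}$ that eventually appears on the right-hand side of the claim.

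Next, I would give the complementary upper bound on the free factor $Z_{l'}(\beta,a)$. By Lemma~\ref{l.m.1bis} one has $Z_{l'}(\beta,a)\le Z_{l'}(\beta,0)$, and by Lemma~\ref{l.stimaZ} (the simple upper bound, which holds for every $\beta>0$) one has $Z_{l'}(\beta,0)\le Z_{l'}(\infty,0)=(2\pi/\beta)^{l'/2}$. Hence
\begin{equation*}
 Z_{l'}(\beta,a) \leq \left(\frac{2\pi}{\beta}\right)^{l'/2}.
\end{equation*}

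Combining the two estimates gives
\begin{equation*}
 \frac{Z_{l-l'}(\beta,a)\,Z_{l'}(\beta,a)}{Z_l(\beta,a)}
 \leq K_1^{l'}\left(\frac{\beta}{2\pi}\right)^{l'/2}\!\!\left(\frac{2\pi}{\beta}\right)^{l'/2}
 = K_1^{l'},
\end{equation*}
which is exactly the claimed inequality. There is no real obstacle here: the only point worth remarking is that the powers of $(2\pi/\beta)$ are perfectly balanced by construction — the lower bound in Lemma~\ref{l.part.N} and the harmonic upper bound in Lemma~\ref{l.stimaZ} both produce the same Gaussian partition-function factor $(2\pi/\beta)^{1/2}$ per degree of freedom, and the anharmonic correction controlled by Lemma~\ref{l.m.1bis} goes in the right direction.
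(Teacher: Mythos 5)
Your proof is correct and follows exactly the route the paper indicates: iterate Lemma~\ref{l.part.N} $l'$ times to get the lower bound on $Z_l$ with the factor $K_1^{-l'}(2\pi/\beta)^{l'/2}$, then control $Z_{l'}(\beta,a)$ from above via Lemma~\ref{l.m.1bis} and the upper half of Lemma~\ref{l.stimaZ}, and observe that the Gaussian factors cancel. Your observation that only the upper bound of Lemma~\ref{l.stimaZ} is needed (so its hypothesis $\beta>\sqrt{3/4}$ is not required here) is a small but correct clarification that the paper's one-line proof leaves implicit.
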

\begin{proof}
Apply Lemma~\ref{l.part.N} $l'$ times, then apply Lemma~\ref{l.m.1bis}
 and  Lemma~\ref{l.stimaZ}.
\end{proof}

\begin{remark}
The previous statements, Lemma~\ref{l.m.1bis}, Lemma~\ref{l.stimaZ} and
Corollary~\ref{l.part.Nl}, holds also for the scaled quantities
$Z^\scal$, with identical proofs.
\end{remark}

\subsubsection{Gibbs averages of monomial}
\label{sss.aver}

We first describe the possibility of cutting a (periodic) chain into two
(periodic) subchains, showing how the respective averages are related. We
consider in general two functions $f$ and $g$ with disjoint supports; we have
the following
\begin{lemma} 
\label{l.cutting}
In a periodic chain of length $l$, let $f$ and $g$ be two functions with
disjoint supports, in particular $\supp(f)\subseteq \{0,\ldots,l'-1\}$ and
$\supp(g)\subseteq \{l',\ldots,l-1\}$. Let $\beta_0$, $a_0$ and $K_1$ be the
constants of Lemma~\ref{l.part.N}, then for any $\beta>\beta_0$ and $0<a<a_0$
one has
\begin{align*}
 \inter{fg}_l \leq K_1^{l'} 
  \inter{fe^{\beta a\left(x_{l'-1}^2+x_0^2\right)}}_{l'}
  \inter{ge^{\beta a\left(x_{l-1}^2+x_{l'}^2\right)}}_{l-l'} \ ,
\end{align*}
and an identical inequality, but without $\beta$ in the exponential factors,
for the scaled case.
\end{lemma}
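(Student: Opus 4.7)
The plan is to bound the two coupling bonds that connect the two subchains in the periodic chain of length $l$, which decouples the integral into a product over the two subchains, and then to convert each free-boundary integral back into a periodic-boundary Gibbs average. The $y$-integrals factor site-by-site and are harmless, so I would focus on the $x$-part of the Hamiltonian throughout.

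First, I would write the coupling part of the periodic Gibbs weight as $\prod_{j=0}^{l-1} e^{\beta a x_j x_{j+1 \bmod l}}$ and single out the two bonds $e^{\beta a x_{l'-1} x_{l'}}$ and $e^{\beta a x_{l-1} x_0}$ that cross between the subchains $\{0,\dots,l'-1\}$ and $\{l',\dots,l-1\}$. Applying Young's inequality $\beta a\, x_i x_j \le \frac{\beta a}{2}(x_i^2 + x_j^2)$ to both bonds gives an upper bound in which the integrand factors into two independent pieces — one supported on each subchain with free boundary conditions — times the boundary exponentials $e^{\frac{\beta a}{2}(x_{l'-1}^2 + x_0^2)}$ and $e^{\frac{\beta a}{2}(x_{l-1}^2 + x_{l'}^2)}$, attached to the appropriate subchain.

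Next, I would convert each free-boundary integral to a periodic-boundary Gibbs integral. For the first subchain, write $e^{-\beta H_{\mathrm{free}}} = e^{-\beta H_{\mathrm{per}}} \cdot e^{-\beta a x_{l'-1} x_0}$, where the periodic Hamiltonian on sites $\{0,\dots,l'-1\}$ carries the extra bond $e^{\beta a x_{l'-1} x_0}$. Applying Young once more, $e^{-\beta a x_{l'-1} x_0} \le e^{\frac{\beta a}{2}(x_{l'-1}^2 + x_0^2)}$, and combining this with the boundary exponential already present yields exactly $e^{\beta a (x_{l'-1}^2 + x_0^2)}$, which is the factor appearing in the statement. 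The same argument applies verbatim to the second subchain, producing the factor $e^{\beta a (x_{l-1}^2 + x_{l'}^2)}$.

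Finally, dividing by $Z_l$, the residual prefactor $Z_{l'}Z_{l-l'}/Z_l$ is controlled by $K_1^{l'}$ using Corollary~\ref{l.part.Nl}, giving the advertised inequality. The scaled version is obtained in the same way, since after the rescaling $x\mapsto x/\sqrt{\beta}$ the $\beta$-factors disappear from all coupling and boundary exponentials, while the inequalities used are purely algebraic; the scaled analogue of Corollary~\ref{l.part.Nl} (Corollary~\ref{c.part.N}) provides the prefactor bound. The one subtle point, essentially bookkeeping, is to verify that the two applications of Young's inequality (one at the bridge bond, one at the free-to-periodic conversion) combine to give precisely the coefficient $\beta a$ in the final boundary exponential rather than $\beta a/2$; I do not expect any further difficulty.
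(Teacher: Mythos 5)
Your proposal is correct and follows essentially the same route as the paper's proof: you bound the two bridge bonds and the (removed) closing bonds via $|bc|\le\tfrac12(b^2+c^2)$ — the paper does this in one multiply-and-divide step, you in two stages, but the four Young inequalities combine in both cases to give exactly the factors $e^{\beta a(x_{l'-1}^2+x_0^2)}$ and $e^{\beta a(x_{l-1}^2+x_{l'}^2)}$ — and then control $Z_{l'}Z_{l-l'}/Z_l$ by $K_1^{l'}$ via Corollary~\ref{l.part.Nl}. The bookkeeping point you flag does work out, since $\tfrac{\beta a}{2}+\tfrac{\beta a}{2}=\beta a$ at each boundary pair.
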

\begin{proof}
We start writing the averaged quantity splitting the measure in the following
way:
\begin{equation*}
 \inter{fg}_l = \frac1{Z_l} \int_{\RR^l}[0,f,l'-1][l'-1,l'][l',g,l-1][l-1,l]
  dV_0^{(l')} dV_{l'}^{(l-l')} \ .
\end{equation*}
If we could remove the terms $[l'-1,l']$ and $[l-1,l]$ we would end up in two
separated chain with free boundaries; to recover periodicity in both chains we
should multiply by $e^{\beta a x_{l'-1}x_0}$ and $e^{\beta a x_{l-1}x_{l'}}$;
now observe that, using $|bc|<\frac12(b^2+c^2)$, one has
\begin{align*}
[l'-1,l'] &\frac{e^{\beta a x_{l'-1}x_0}}{e^{\beta a x_{l'-1}x_0}}
[l-1,l] \frac{e^{\beta a x_{l-1}x_{l'}}}{e^{\beta a x_{l-1}x_{l'}}} \leq
\cr
\leq \;&e^{\beta a x_{l'-1}x_0} e^{\beta a\left(x_{l'-1}^2+x_0^2\right)}
e^{\beta a x_{l-1}x_{l'}} e^{\beta a\left(x_{l-1}^2+x_{l'}^2\right)} \ ,
\end{align*}
so we have
\begin{equation*}
 \inter{fg}_l \leq \frac{Z_{l'}Z_{l-l'}}{Z_l}
 \inter{f e^{\beta a\left(x_{l'-1}^2+x_0^2\right)}}_{l'}
 \inter{g e^{\beta a\left(x_{l-1}^2+x_{l'}^2\right)}}_{l-l'} \ ;
\end{equation*}
the thesis now follow from Corollary~\ref{l.part.Nl}. For scaled case, simply
add $\scal$ everywhere and remove $\beta$ as needed in the proof above.
\end{proof}

In the next Lemma we are interested in the phase average of a monomial: in
this case the dimension of the space is not a critical aspect of the estimate,
so we can simply perform our estimate passing to the uncoupled quantities
paying the price of the power of a constant to the dimension of the space.
\begin{lemma}
\label{c.aver.2}
\label{c.aver.3}
Let $x^k$ be a monomial of degree $2r$, $A_2:=\sqrt{1-2a}$, $a<1/2$ and
$\beta>\sqrt{3/4}$, then
\begin{align*}
 \inter{x^k}_l &\leq A_1^l \inter{x^k}_{l,0} \ ,
\cr
 \inter{x^ke^{\beta a\left(x_0^2+x_{l-1}^2\right)}}_l  &\leq
   \frac{A_1^l}{A_2^{2+k_0+k_{l-1}}} \inter{x^k}_{l,0}
\cr
  \inter{x^k}_{l,0} &\leq \frac1{B^l} \frac{2^r r!}{\beta^r} \ .
\end{align*}
\end{lemma}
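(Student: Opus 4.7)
My plan is to handle the three inequalities in parallel through a common comparison strategy. In each case the trick is: (a) drop the non-negative coupling $H_a$ from the exponent of the numerator, using the decomposition $H=H_d+H_a+H_1$ of \eqref{e.Hd+Ha} so that the integrand is dominated by the one associated with the fully decoupled Hamiltonian $H_d+H_1$; (b) bound the coupled partition function from below by the decoupled one via Lemma~\ref{l.m.1bis}, which will produce the $A_1^l$ factor; (c) for Part~3, exploit the factorization of the decoupled measure over sites together with Lemma~\ref{l.stimaGk} to reduce to one-dimensional Gaussian-type integrals.

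For Part~1 the execution is immediate: since $\inter{x^k}_l$ vanishes unless every $k_j$ is even (the Gibbs weight is even in each $x_j$), and in that case $x^k\geq 0$ pointwise, the bound $e^{-\beta H}\leq e^{-\beta(H_d+H_1)}$ gives $\int x^k e^{-\beta H}\,dx\leq Z_l(\beta,0)\inter{x^k}_{l,0}$, and dividing by $Z_l(\beta,a)\geq A_1^{-l}Z_l(\beta,0)$ yields the required $\inter{x^k}_l\leq A_1^l\inter{x^k}_{l,0}$. For Part~2 the same steps apply; the only new ingredient is that the extra factor $e^{\beta a(x_0^2+x_{l-1}^2)}$ renormalizes the quadratic coefficient at the boundary sites $j\in\{0,l-1\}$ from $1$ to $1-2a=A_2^2$. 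At the decoupled level the measure factorizes, so I would perform the change of variables $x_j\mapsto x_j/A_2$ at each of these two sites: this produces a factor $A_2^{-(k_j+1)}$ from $x_j^{k_j}\,dx_j$, while the quartic term becomes $x_j^4/(4A_2^4)$, whose coefficient (being larger than $1/4$) makes the exponential \emph{smaller} and therefore yields an upper bound by the unperturbed one-site integral. Collecting the boundary contributions gives exactly the factor $A_2^{-(k_0+k_{l-1}+2)}$.

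For Part~3, the factorization $\inter{x^k}_{l,0}=\prod_j\inter{x_j^{k_j}}_{1,0}$ reduces everything to single-site estimates. Each such integral, after rescaling by $\sqrt\beta$ and a further rescaling to normalize the quadratic coefficient (exactly as in the proof of Lemma~\ref{l.stimaZ}), takes the shape $\inter{x_j^{k_j}}_{1,0}=\beta^{-k_j/2}\cdot c_{k_j}\cdot G_{k_j}(\gamma)/G_0(\gamma)$ with $G_k$ as in Appendix~\ref{sss.integrali} and $c_{k_j}$ a harmless combinatorial constant. I would then bound $G_{k_j}(\gamma)\leq G_{k_j}(0)=I_{k_j}$ in the numerator and apply Lemma~\ref{l.stimaGk} with $k=0$ in the denominator, so that $G_0(\gamma)\geq B\,G_0(0)$ with $B$ as in the statement. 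Each nontrivial site then contributes at most $\beta^{-k_j/2}(k_j-1)!!/B$ and sites with $k_j=0$ contribute $1$. Writing $k_j=2m_j$ with $\sum m_j=r$, I would control the combinatorial product by $(2m-1)!!\leq 2^m m!$ together with the trivial bound $\prod m_j!\leq r!$ (attained when all mass sits on a single site), yielding $\prod(2m_j-1)!!\leq 2^r r!$ and therefore the desired $\beta^{-r}\,2^r r!/B^l$.

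No step is genuinely hard. The one place to be careful is the boundary rescaling in Part~2: one must verify that inflating the quartic coefficient by $1/A_2^4>1$ still gives an upper bound, which works precisely because the quartic enters with a minus sign in the exponent. The remaining work is bookkeeping: tracking the scaling conventions used in Lemmas~\ref{l.stimaZ} and~\ref{l.stimaGk} so that the correct constant $B$ emerges in Part~3, and performing the elementary manipulations of double factorials to land on the clean factor $2^r r!$.
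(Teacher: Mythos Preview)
Your argument is correct and follows essentially the same route as the paper: drop $H_a\ge 0$ in the numerator, use Lemma~\ref{l.m.1bis} on the denominator to extract $A_1^l$, rescale the two boundary variables by $A_2$ in Part~2 (the paper states this more cryptically), and for Part~3 bound the numerator by the Gaussian integral and the denominator via Lemma~\ref{l.stimaZ}/\ref{l.stimaGk}, finishing with $\prod m_j!\le r!$. One tiny bookkeeping point: as written you obtain $B^{-\#\{j:k_j>0\}}$ rather than $B^{-l}$, but since $B<1$ this is harmless.
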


\begin{proof}
For the first inequality one simply use $H_a>0$ and Lemma~\ref{l.m.1bis}
to get
\begin{equation*}
 \inter{x^k}_l = \frac1{Z_l(\beta,a)}
 \int_{\RR^l} \!\!\! x^k e^{-\beta\left(H_d+H_a+H_1\right)} \leq 
 \frac{A_1^l}{Z_l(\beta,0)} 
 \int_{\RR^l} \!\!\! x^k e^{-\beta\left(H_d+H_1\right)}
 \ ,
\end{equation*}
which is actually $A_1^l \inter{x^k}_{l,0}$.
For the second inequality we further use $e^{{z^4}/4}\leq
e^{(z^4/A_2)^4/4}$ in the previous estimate, and then we scale by
$A_2$ to the ``boundary'' variables appearing in the exponential factor.

For the last inequality, use $H_1>0$ and Lemma~\ref{l.stimaZ}, one first
exploit the control of the (decoupled) nonlinear Gibbs measure by the
(decoupled) harmonic one:
\begin{equation*}
 \inter{x^k}_{l,0} \leq
  \frac1{B^l Z_l(\infty,0)} \int_{\RR^l} x^k e^{-\beta H_d} \ .
\end{equation*}
Then, concerning the numerator we have
\begin{align*}
 \int_{\RR^l} x^k e^{-\beta H_d}
  = \prod_{j=1}^l \int_\RR x^{k_j} e^{-\beta \frac{x^2}2} dx
 &= \prod_{j=1}^l \left(\frac2{\beta}\right)^{\frac{k_j+1}2}
                \Gamma\left(\frac{k_j+1}2\right) \leq
\cr
 &\leq \left(\frac{2}{\beta}\right)^r \left(\sqrt\frac{2\pi}{\beta}\right)^l
       \prod_{j=1}^l \left(\frac{k_j}2\right)! \ ,
\end{align*}
where we used, when $k_j$ is even, the relation
$\Gamma\left(n+\frac12\right) = \frac{(2n)!}{4^n n!}\sqrt{\pi} \leq n!
\sqrt{\pi}$; if $k_j$ is odd, and we actually estimate
$\int|x^{k_j}|e^{-\beta H_d}$, then
$\Gamma\left(\frac{k_j+1}2\right)=\left(\frac{k_j-1}2\right)!$ which
bounded by the estimate of the other case.  As a last step we recall
that $\prod_j \left(m_j!\right) \leq \left(\sum_j m_j\right)!$ to get
the thesis.
%
%
\end{proof}

In the next Lemma, despite its similarity with the previous one, we need a
different approach, since it will be applied in a case with the dimension of
the space growing with $N$; thus it is not possible to pay the price of the
constant $A_1^l$.

\begin{lemma}
\label{c.aver.4}
Let $\beta_0$, $a_0$ and $K_1$ be the constants of Lemma~\ref{l.part.N}; for
every $0\leq i<l$ it holds
\begin{equation*}
\label{e.aver.3.l}
 \biginter{e^{\beta a\left(x_i^2+x_{i+1}^2\right)}}_l  \leq 
\left(2K_1\right)^{\left(1+4a_0\right)}
 \left(\frac{D}{A_2}\right)^2 \ ,
\qquad
D:=\frac{K_1A_1}{B} \ .
\end{equation*}
\end{lemma}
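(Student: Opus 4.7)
The plan is to rewrite the average as a ratio of partition functions and estimate it by directly decomposing the modified Hamiltonian, rather than invoking Lemma~\ref{l.cutting} recursively. By the translational invariance of the Gibbs measure we may assume $i=0$, so the quantity of interest becomes $\biginter{e^{\beta a(x_0^2+x_1^2)}}_l = \tilde Z_l/Z_l$, where $\tilde Z_l := \int e^{-\beta\tilde H}\,dx$ with $\tilde H := H - a(x_0^2+x_1^2)$.

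The key algebraic step is to collect the terms of $\tilde H$ by the variables they involve and to write $\tilde H = \tilde H_A(x_0,x_1) + H_B(x_2,\dots,x_{l-1}) + V$, where $\tilde H_A = \tfrac{A_2^2}{2}(x_0^2+x_1^2) + \tfrac{a}{2}(x_0-x_1)^2 + \tfrac{1}{4}(x_0^4+x_1^4)$ is the free-boundary Hamiltonian on the two-site subchain $\{0,1\}$ (the modification $-a(x_0^2+x_1^2)$ exactly cancels the quadratic contribution of the two bonds of the original chain incident at sites $0,1$, producing the coefficient $A_2^2=1-2a$), $H_B$ is the free-boundary Hamiltonian on the subchain $\{2,\dots,l-1\}$, and $V = \tfrac{a}{2}(x_0-x_{l-1})^2 + \tfrac{a}{2}(x_1-x_2)^2 \geq 0$ collects exactly the two ``cut'' bonds. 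Dropping $V\geq 0$ and factorizing the integral gives $\tilde Z_l \leq \tilde Z_A \cdot {\mathcal Z}_{l-2}(\beta,a)$, with $\tilde Z_A := \int e^{-\beta \tilde H_A}\,dx_0\,dx_1$ manifestly independent of $l$ and ${\mathcal Z}_{l-2}(\beta,a)$ the free-BC partition function on $l-2$ sites.

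Next I will estimate $\tilde Z_A$ by dropping $(x_0-x_1)^2\geq 0$, factorizing, and rescaling $y=A_2 x$ (since $1/A_2^4\geq 1$, the resulting quartic coefficient only makes the integrand smaller), obtaining $\tilde Z_A \leq Z_2(\beta,0)/A_2^2$. For the denominator I will invoke Corollary~\ref{l.part.Nl} with $l'=2$ to get $Z_l \geq Z_{l-2}Z_2/K_1^2$, Corollary~\ref{c.part.N} to bound ${\mathcal Z}_{l-2}\leq Z_{l-2}(2K_1)^{1+4a_0}$, and Lemma~\ref{l.m.1bis} to bound $Z_2 \geq Z_2(\beta,0)/A_1^2$. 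Combining these, the $Z_{l-2}$ and $Z_2(\beta,0)$ factors cancel and we obtain the uniform-in-$l$ bound $\tilde Z_l/Z_l \leq (K_1^2 A_1^2/A_2^2)(2K_1)^{1+4a_0}$, which implies the stated $(2K_1)^{1+4a_0}(D/A_2)^2$ because $B\leq 1$ gives $D^2 = K_1^2 A_1^2/B^2 \geq K_1^2 A_1^2$. The scaled variant follows verbatim after removing the $\beta$ factors from the exponential terms of the measure.

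The main obstacle I expect to navigate is resisting the temptation to apply Lemma~\ref{l.cutting} directly with $l'=2$: this would produce a residual average $\biginter{e^{\beta a(x_{l-1}^2+x_2^2)}}_{l-2}$ of the same form on a shorter chain which, estimated through Lemma~\ref{c.aver.3}, scales as $A_1^{l-2}/A_2^2$ and would therefore fail to close under iteration. Bypassing Lemma~\ref{l.cutting} and working directly with the partition-function decomposition, using Corollary~\ref{c.part.N}'s $l$-independent PBC-to-free-BC comparison on the long subchain, is precisely what yields the uniform bound.
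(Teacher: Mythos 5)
Your proof is correct and follows essentially the same route as the paper: cut the periodic chain into the two-site block and the complementary free-boundary chain, discard the nonnegative connecting springs, and combine Corollary~\ref{l.part.Nl} ($Z_{l-2}Z_2/Z_l\leq K_1^2$), the free-to-periodic comparison, and Lemma~\ref{l.m.1bis} for the two-site factor. The only differences are cosmetic: you keep the quartic term in the two-site integral via the rescaling $y=A_2x$ instead of invoking Lemma~\ref{l.stimaZ} (so you obtain a marginally sharper constant without the $1/B^2$, which you then weaken using $B<1$), and the bound ${\mathcal Z}_{l-2}\leq(2K_1)^{1+4a_0}Z_{l-2}$, which you attribute to Corollary~\ref{c.part.N}, is in the unscaled setting the second inequality of Lemma~\ref{l.part.N} itself.
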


\begin{proof}
Let us write first the Hamiltonian isolating the energy of two chains
with free boundaries\footnote{We recall the use of the calligraphic
letters ${\mathcal H}$ and ${\mathcal Z}$ for the free boundary cases
(see~\eqref{e.calZH}).}, the first defined on the two sites $i$ and
$i+1$ and the second on the remaining sites, plus the connecting
springs:
\begin{align*}
 H(x) =
  {\mathcal H}^{(2)}(x_i,x_{i+1})
 &+ {\mathcal H}^{(l-2)}(x_{i+2},x_{i+3},\ldots,x_{i-1})+
\cr 
 &+ \frac{a}2\left(x_{i-1}-x_i\right)^2
  + \frac{a}2\left(x_{i+1}-x_{i+2}\right)^2 \ ;
\end{align*}
then, estimating by above with 1 the contribution of the connecting springs
\begin{equation*}
\begin{aligned}
 \biginter{e^{\beta a\left({x_i^2+x_{i+1}^2}\right)}}_l &\leq
  \frac1{Z_l} \int_{\RR^{l-2}} e^{-\beta {\mathcal H}^{(l-2)}}
    \int_{\RR^2} e^{\beta a\left(x_i^2+x_{i+1}^2\right)}
      e^{-\beta {\mathcal H}^{(2)}} dx_1\cdots dx_l = 
\\
 &= \frac{Z_{l-2}Z_2}{Z_l} \; \frac{{\mathcal Z}_{l-2}}{Z_{l-2}} \;
    \frac{\int_{\RR^2} e^{\beta a\left(x_i^2+x_{i+1}^2\right)}
      e^{-\beta {\mathcal H}^{(2)}} dx_idx_{i+1}}{Z_2}  
\\
 &\leq K_1^2 \; \left(2K_1\right)^{\left(1+4a_0\right)} \; 
    \frac{\int_{\RR^2} e^{-\beta (1-2a)\left(x_i^2/2+x_{i+1}^2/2\right)}dx_idx_{i+1}}
         {A_1^{-2}Z_2(\beta,0)} \ ,
\end{aligned}
\end{equation*}
where we used Corollary~\ref{l.part.Nl}, Lemma~\ref{l.part.N} and
Lemma~\ref{l.m.1bis}; then using also Lemma~\ref{l.stimaZ} and a
rescaling like in Lemma~\ref{c.aver.2} one has the thesis.
\end{proof}

In the next Proposition we consider the average of monomial whose support is
not the entire space. It is actually a trivial consequence of the
previous results.

\begin{proposition}
\label{p.aver.1}
Let $\beta_0$, $a_0$ and $K_1$ be the constants of Lemma~\ref{l.part.N}, and
let $x^k$ be a left aligned even monomial of degree $2r$ and interaction range
$l'<l$. Then for any $\beta>\beta_0$ and $a<a_0$ one has
\begin{equation*}
\label{e.aver.1}
 \biginter{x^k}_l \leq
 \frac{\left(2K_1\right)^{\left(1+4a_0\right)}}{A_2^{4+k_0+k_{l'-1}}}
 D^{2+l'} \; \frac{2^rr!}{\beta^r} \ .
\end{equation*}
\end{proposition}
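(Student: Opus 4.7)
The plan is to obtain the bound by a single application of the cutting lemma, followed by three elementary estimates on the resulting factors. Since $x^k$ is left aligned with interaction range $l'$, its support lies in $\{0,\ldots,l'-1\}$. I will therefore take $f=x^k$ and $g=1$ in Lemma~\ref{l.cutting}, choosing the cut at position $l'$ so that $g$ is supported in $\{l',\ldots,l-1\}$. This yields
\begin{equation*}
 \inter{x^k}_l \leq K_1^{l'}\,
  \inter{x^k\,e^{\beta a(x_0^2+x_{l'-1}^2)}}_{l'}\;
  \inter{e^{\beta a(x_{l-1}^2+x_{l'}^2)}}_{l-l'}\ .
\end{equation*}

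Next, I would bound the first factor on the right using the second inequality in Lemma~\ref{c.aver.2}, which gives
\begin{equation*}
 \inter{x^k\,e^{\beta a(x_0^2+x_{l'-1}^2)}}_{l'}
  \leq \frac{A_1^{l'}}{A_2^{2+k_0+k_{l'-1}}}\,\inter{x^k}_{l',0}\ ,
\end{equation*}
and then apply the third inequality in the same Lemma to estimate the uncoupled average:
\begin{equation*}
 \inter{x^k}_{l',0}\leq \frac{1}{B^{l'}}\,\frac{2^r r!}{\beta^r}\ .
\end{equation*}
For the exponential factor involving only the two ``boundary'' variables $x_{l-1},x_{l'}$, Lemma~\ref{c.aver.4} applies directly and gives the bound $(2K_1)^{1+4a_0}(D/A_2)^2$.

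Multiplying the three bounds and collecting the constants, the $l'$--dependent pieces combine as $(K_1 A_1/B)^{l'}=D^{l'}$, while the fixed factor $(D/A_2)^2$ supplies the remaining $D^2$ and two extra powers of $1/A_2$ in the denominator, reproducing exactly $D^{l'+2}/A_2^{4+k_0+k_{l'-1}}$ times $(2K_1)^{1+4a_0}(2^r r!/\beta^r)$, which is the claimed estimate. There is no real obstacle here: every ingredient has already been proved, and the only point requiring a little care is matching the boundary exponents $k_0$ and $k_{l'-1}$ between the cutting lemma and Lemma~\ref{c.aver.2}, which is automatic because the cut is placed so that the exponential weights produced by Lemma~\ref{l.cutting} land precisely on the two endpoints of the support of $x^k$.
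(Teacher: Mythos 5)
Your proposal is correct and follows exactly the paper's own argument: the paper proves Proposition~\ref{p.aver.1} by cutting the chain at the boundaries of the support of $x^k$ via Lemma~\ref{l.cutting} and then applying Lemma~\ref{c.aver.2} and Lemma~\ref{c.aver.4} to the resulting factors, which is precisely your chain of estimates, including the bookkeeping $K_1^{l'}A_1^{l'}/B^{l'}=D^{l'}$ and the extra $D^2/A_2^2$ from the boundary factor.
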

\begin{proof}
Use Lemma~\ref{l.cutting} to cut the chain at the boundaries of the
support of $x^k$; then apply Lemma~\ref{c.aver.2} and \ref{c.aver.4} to
the resulting terms.
\end{proof}

\begin{remark}
\label{r.scal.2}

In all the previous results, i.e. Lemma~\ref{l.cutting},
\ref{c.aver.2} and \ref{c.aver.4} and Proposition~\ref{p.aver.1}, the
same estimates hold true if one substitutes every average
$\inter{\cdot}$ with the corresponding scaled version
$\inter{\cdot}^\scal$ and remove $\beta$. The proofs are almost, if
not completely, identical.
\end{remark}

\subsubsection{Gibbs averages of polynomials of class $\Dscr(C,\sigma)$}
\label{sss.l.DCs}

In this subsection we consider polynomials instead of monomials, but we
restrict to those with an exponential decay of the interaction range, as
described in Section~\ref{ss.int.range} (we recall in particular the
decomposition~\eqref{e.decomp} and the definition~\eqref{dcdm.5} of
class $\Dscr(C,\sigma)$). A first result is the following

\begin{lemma}
\label{l.phi.aver}
If $\ph\in\Dscr(C_{\ph},\sigma_s)$ is a polynomial of degree $2s+2$
with $s\leq r$, if $De^{-\sigma_s}\leq 1/2$ then
\begin{equation}
\label{e.inter.ph}
\inter{\ph} \leq
\quadr{\frac{2D^2(2K_1)^{1+4a_0}}{A_2^4}}
C_{\ph}
\quadr{\tond{\frac2{A_2^2\beta}}^{s+1}(s+1)!}
\end{equation}
\end{lemma}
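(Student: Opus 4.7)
The strategy is straightforward: decompose $\ph$ by interaction range, apply the monomial bound of Proposition~\ref{p.aver.1} to each component, and sum the resulting geometric series using the hypothesis $De^{-\sigma_s}\le 1/2$.

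I would write $\ph=\sum_{m\ge 0}\ph^{(m)}$ according to~\eqref{e.decomp}, with each $\ph^{(m)}=\sum_k c_k^{(m)}x^k$ a left-aligned homogeneous polynomial of degree $2s+2$ and interaction range $\le m$, and use the triangle inequality to reduce to monomial bounds:
\[
 |\inter{\ph}| \;\le\; \sum_{m\ge 0}\sum_k |c_k^{(m)}|\,\inter{|x^k|}\ ,
\]
the odd monomials dropping out by parity of the Gibbs measure in each variable separately. For each even $x^k$ of interaction range $l'\le m$, Proposition~\ref{p.aver.1} yields
\[
 \inter{|x^k|} \;\le\; \frac{(2K_1)^{1+4a_0}}{A_2^{4+k_0+k_{l'-1}}}\,D^{2+l'}\,\frac{2^{s+1}(s+1)!}{\beta^{s+1}}\ .
\]
Since $k_0+k_{l'-1}\le|k|=2s+2$ and $A_2<1$, I would absorb the $k$-dependent exponent into the uniform bound $A_2^{-(4+k_0+k_{l'-1})}\le A_2^{-4}A_2^{-(2s+2)}$; moreover $D>1$ (immediate from $K_1>2$, $A_1>1$, $B<1$) and $l'\le m$ give $D^{l'}\le D^{m}$.

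Using $\sum_k|c_k^{(m)}|=\norm{\ph^{(m)}}\le C_\ph e^{-m\sigma_s}$ and summing over $m$, everything factors as
\[
 |\inter{\ph}| \;\le\; \frac{D^2(2K_1)^{1+4a_0}}{A_2^4}\,C_\ph\,\frac{2^{s+1}(s+1)!}{A_2^{2s+2}\beta^{s+1}}\sum_{m\ge 0}(De^{-\sigma_s})^m\ .
\]
The hypothesis $De^{-\sigma_s}\le 1/2$ makes the geometric sum bounded by $2$, producing the extra factor of $2$ in the claim and giving exactly~\eqref{e.inter.ph}. There is no real obstacle; the only point requiring attention is funnelling the variable endpoint exponents $k_0+k_{l'-1}$ into the single uniform factor $A_2^{-(2s+2)}$ on the right-hand side of the lemma, which is what forces the separation of $A_2^{-4}$ (sitting with the overall prefactor) from $A_2^{-(2s+2)}$ (sitting with the $\beta$-power) in the statement.
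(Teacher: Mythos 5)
Your argument is correct and is essentially the paper's proof: decompose $\ph=\sum_l\ph^{(l)}$ by interaction range, bound each piece via Proposition~\ref{p.aver.1} (the paper applies it directly to $\ph^{(l)}$ with $\norm{\ph^{(l)}}$ on the right, which is precisely your monomial-level triangle inequality with the endpoint exponents absorbed as $A_2^{-(4+k_0+k_{l'-1})}\le A_2^{-4}A_2^{-(2s+2)}$), and sum the geometric series $\sum_l\bigl(De^{-\sigma_s}\bigr)^l\le 2$ using $De^{-\sigma_s}\le 1/2$. The only small imprecision is the aside that odd monomials vanish ``by parity in each variable separately'' --- the coupling $e^{\beta a x_jx_{j+1}}$ breaks sitewise parity in the $x$ variables --- but this remark is inessential, since your absolute-value bound together with the treatment of odd exponents inside Lemma~\ref{c.aver.2} covers all monomials anyway.
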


\begin{proof}
Using the decomposition~\eqref{e.decomp}, let us write $\inter{\ph} =
\sum_{l}\inter{(\ph)^{(l)}}$. By applying Proposition~\ref{p.aver.1} we can
estimate each addendum as
\begin{displaymath}
\inter{\ph^{(l)}} \leq \frac{(2K_1)^{1+4a_0}}{A_2^4}
D^{2+l} \tond{\frac2{A_2^2\beta}}^{s+1} (s+1)!
\norm{\ph^{(l)}} \ ;
\end{displaymath}
performing the sum over $l$, we have
\begin{displaymath}
\sum_{l}\inter{\ph^{(l)}} \leq \frac{(2K_1)^{1+4a_0}}{A_2^4}
\tond{\frac2{A_2^2\beta}}^{s+1} (s+1)! D^2 C_\ph
\sum_{l=0}^{N-1}\tond{D e^{-\sigma_s}}^l \ ,
\end{displaymath}
and from the estimate $\sum_{l=0}^{N-1}\tond{D e^{-\sigma_s}}^l \leq 2$
(the latter being true due to condition $D\mu_\flat\leq 1/2$) the thesis follow.
\end{proof}

We are interested in a similar estimate for the square of a given
polynomial; to this purpose we first give the following

\begin{lemma}
\label{l.phi2.decay}
If $\ph\in\Dscr(C_{\ph},\sigma_s)$ then
$\ph^2\in\Dscr(2C_{\ph}^2/(1-e^{-\sigma_s}),\sigma_s)$.
\end{lemma}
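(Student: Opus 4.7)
The plan is to unpack the definition of class $\mathcal{D}$ and then do a direct bookkeeping of how interaction ranges combine under a plain (non-Poisson) product, exploiting the fact that all the $\varphi^{(m)}$ are taken left-aligned.

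First I would write $\varphi = \sum_{m\ge 0} \varphi^{(m)}$ according to the decomposition~\eqref{e.decomp}, so that
\begin{displaymath}
 \varphi^2 = \sum_{m,m'\ge 0} \varphi^{(m)}\varphi^{(m')}\ .
\end{displaymath}
Since both $\varphi^{(m)}$ and $\varphi^{(m')}$ are left aligned with supports contained in $\{0,\ldots,\max(m,m')-1\}$, the product $\varphi^{(m)}\varphi^{(m')}$ is again left aligned with interaction range not exceeding $\max(m,m')$. This is the key observation that makes the product cheaper than the Poisson bracket case of Lemma~\ref{lem.poisson}, where the range could grow up to $m+m'$.

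Next I would regroup by this common upper bound, defining
\begin{displaymath}
 (\varphi^2)^{(\nu)} := \sum_{\max(m,m')=\nu} \varphi^{(m)}\varphi^{(m')}\ ,
\end{displaymath}
which gives the required decomposition $\varphi^2=\sum_{\nu\ge 0}(\varphi^2)^{(\nu)}$ with interaction range of $(\varphi^2)^{(\nu)}$ bounded by $\nu$. The polynomial norm is submultiplicative on products of monomials with non-negative coefficients in modulus, hence $\|\varphi^{(m)}\varphi^{(m')}\| \le \|\varphi^{(m)}\|\,\|\varphi^{(m')}\|$, and applying the decay hypothesis $\|\varphi^{(m)}\|\le C_\varphi e^{-m\sigma_s}$ yields
\begin{displaymath}
 \|(\varphi^2)^{(\nu)}\| \le C_\varphi^2 \sum_{\max(m,m')=\nu} e^{-(m+m')\sigma_s}\ .
\end{displaymath}

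The final step is the elementary combinatorial estimate: the pairs with $\max(m,m')=\nu$ split into those with $m=\nu$ and $m'\le\nu$, and those with $m'=\nu$ and $m\le\nu$, overcounting the single pair $(\nu,\nu)$, so
\begin{displaymath}
 \sum_{\max(m,m')=\nu} e^{-(m+m')\sigma_s} \le 2 e^{-\nu\sigma_s}\sum_{k=0}^{\infty} e^{-k\sigma_s} = \frac{2\,e^{-\nu\sigma_s}}{1-e^{-\sigma_s}}\ ,
\end{displaymath}
from which the claim $\varphi^2\in\mathcal{D}\bigl(2C_\varphi^2/(1-e^{-\sigma_s}),\sigma_s\bigr)$ follows. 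There is no real obstacle here; the only thing to be careful about is to use the left-aligned representation so that products stay inside $\{0,\ldots,\max(m,m')-1\}$ rather than drifting to diameter $m+m'$, which would spoil the geometric sum.
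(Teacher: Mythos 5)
Your proof is correct and is essentially the same argument as the paper's: you regroup the double sum $\sum_{m,m'}\ph^{(m)}\ph^{(m')}$ by $\max(m,m')=\nu$ (the paper writes this as $(\ph^2)^{(l)}=(\ph^{(l)})^2+2\ph^{(l)}\sum_{l'<l}\ph^{(l')}$), observe that left alignment keeps the support of the product inside $\{0,\ldots,\nu-1\}$, use submultiplicativity of the polynomial norm, and close with the same geometric-series bound giving $2C_\ph^2/(1-e^{-\sigma_s})$.
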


\begin{proof} We write $\ph^2 = \sum_{l=0}^{N-1}(\ph^2)^{(l)}$, with
\begin{displaymath}
(\ph^2)^{(l)} = \tond{\ph^{(l)}}^2 + 2\ph^{(l)}\sum_{l'=0}^{l-1}\ph^{(l')}\ ,
\end{displaymath}
hence
\begin{displaymath}
\norm{(\ph^2)^{(l)}}\leq
2\norm{\ph^{(l)}}\sum_{l'=0}^l\norm{\ph^{(l')}}\leq
2C_{\ph}^2e^{-\sigma_s l}\sum_{l'=0}^le^{-\sigma_s
  l'}<\frac{2C_{\ph}^2}{1-e^{-\sigma_s}}e^{-\sigma_s l}\ .
\end{displaymath}
\end{proof}

A trivial consequence of the two previous Lemmas is then the following
\begin{corollary}
\label{l.phi2.aver}
If $\ph\in\Dscr(C_{\ph},\sigma_s)$ is a polynomial of degree
$2s+2$, if $De^{-\sigma_s}\leq 1/2$ then
\begin{equation}
\label{e.inter.ph2}
\inter{\ph^2} \leq
\quadr{\frac{4D^2(2K_1)^{1+4a_0}}{A_2^4}}
\frac{C_{\ph}^2}{1-e^{-\sigma_s}}
\quadr{\tond{\frac2{A_2^2\beta}}^{2s+2}(2s+2)!}
\end{equation}
\end{corollary}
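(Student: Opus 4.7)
The plan is to observe that the corollary is essentially the composition of the two preceding lemmas: Lemma~\ref{l.phi2.decay} converts information about the decay class of $\ph$ into information about the decay class of $\ph^2$, while Lemma~\ref{l.phi.aver} converts information about the decay class of a polynomial into a bound on its Gibbs average. So the proof is a matter of feeding the output of the first into the input of the second, with bookkeeping of the degree.

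First I would apply Lemma~\ref{l.phi2.decay} to the polynomial $\ph$, obtaining that $\ph^2$ belongs to the class $\Dscr\bigl(2C_{\ph}^2/(1-e^{-\sigma_s}),\sigma_s\bigr)$. Note that since $\ph$ has degree $2s+2$, the polynomial $\ph^2$ has degree $4s+4 = 2(2s+1)+2$, so it fits the form ``polynomial of degree $2\tilde s+2$'' with the new index $\tilde s = 2s+1$. The hypothesis $De^{-\sigma_s}\le 1/2$ is unchanged, so Lemma~\ref{l.phi.aver} is applicable to $\ph^2$ in place of $\ph$.

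Next I would substitute into the statement of Lemma~\ref{l.phi.aver}: with $C_{\psi} = 2C_{\ph}^2/(1-e^{-\sigma_s})$ and $\tilde s = 2s+1$, one gets
\begin{displaymath}
\inter{\ph^2} \le \left[\frac{2D^2(2K_1)^{1+4a_0}}{A_2^4}\right] \frac{2C_{\ph}^2}{1-e^{-\sigma_s}}\left[\left(\frac{2}{A_2^2\beta}\right)^{2s+2}(2s+2)!\right],
\end{displaymath}
which, after absorbing the factor of $2$ from the decay constant into the leading bracket, is precisely~\eqref{e.inter.ph2}. There is no obstacle here; the only thing to watch is that the index shift $s\mapsto 2s+1$ is done consistently so that the exponent $(2s+2)$ on $2/(A_2^2\beta)$ and the factorial $(2s+2)!$ both come out right. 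The hypothesis $s\le r$ in Lemma~\ref{l.phi.aver} transfers to $2s+1\le r$ if needed, but since the present corollary does not impose such a constraint and the estimate of Lemma~\ref{l.phi.aver} does not actually use it in the final inequality (it only uses $De^{-\sigma_s}\le 1/2$ to sum the geometric series), the conclusion goes through as stated.
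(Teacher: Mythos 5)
Your proof is correct and follows exactly the paper's route: the paper presents the corollary as a "trivial consequence" of Lemma~\ref{l.phi.aver} and Lemma~\ref{l.phi2.decay}, i.e. apply the decay lemma to $\ph^2$ and then the averaging lemma with the degree index shifted to $2s+1$, which is precisely your computation, constants included.
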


\subsection{Cancellations}
\label{app.canc}

In this appendix we provide the formal proof of the cancellations that take
place in the correlation terms of Proposition~\ref{p.aver.2}.

We first recall a few basic facts about binary strings used in the
proof.  Define $\Sigma_m := \left\{{\tt k}={\tt k}_1\cdots{\tt k}_m
\quad {\rm s.t.} \quad {\tt k}_l\in\{{\tt 0},{\tt 1}\} \ \forall
l\right\}$ the space of binary strings of finite length.  We shall
denote by $\ell({\tt k})$ the length of the string {\tt k}.  Recall that
the usual bitwise ``and'' operator $\wedge\>
:\>\Sigma_m\times\Sigma_m\rightarrow\Sigma_m$ is defined as
\begin{equation}
\label{e.andb}
({\tt i},{\tt j}) \mapsto {\tt k} = {\tt i}\wedge {\tt j} \ ,\quad
{\tt k}_l=
             \begin{cases}
	      {\tt 1}\quad {\tt i}_l\!=\!1 \ {\rm and}\  {\tt j}_l\!=\!1
	      \cr
	      {\tt 0}\quad {\rm otherwise}
	     \end{cases}
\end{equation}
\begin{remark}
\label{r.canc}
An elementary property is the following: given two strings {\tt j} and {\tt
k} in $\Sigma_m$, if ${\tt j} \wedge {\tt k}=0$, then the total number of
zeros contained collectively in {\tt j} and {\tt k} is at least $m$.
\end{remark}

A relevant point is that the presence of a {\tt 1} allows us to
decouple the measure as follows.  Expand $[p,r]=\sum_{{\tt j}}
[p,r]_{{\tt j}}$ and select a string ${\tt j}={\tt v1w}$ with a {\tt
  1} in the position corresponding to the site $q$.  Then we have
$[p,r]_{{\tt j}}=[p,q]_{{\tt v}}[q+1,r]_{{\tt w}}$ in place of the
general decomposition $[p,r]=[p,q][q,r]$.  We shall use this property
in dealing with expressions~\eqref{e.canc.int} in cases one has ${\tt
  j} \wedge {\tt j}'\neq0$ and ${\tt k} \wedge {\tt k}'\neq0$.  For,
if ${\tt j}\wedge {\tt j}'\ne {\tt 0}$, then both {\tt j} and {\tt j}'
have a {\tt 1} in the same position, so that a decomposition as above
may take place in the corresponding site of the chain.

Consider the set
\begin{align*}
 \Sigma:=\bigl\{\{{\tt j}, {\tt k}, {\tt j}', {\tt k}'\} \;\colon\;
  &{\tt j} \wedge {\tt j}'\neq0 \,,\; {\tt k} \wedge {\tt k}'\neq0 \,,\;
\cr
  &\ell({\tt j})=\ell({\tt j}')=l \,,\;
   \ell({\tt k})=\ell({\tt k}')=l' \bigr\} \ .
\end{align*}
We introduce now an equivalence relation as follows.  Let ${\tt
  j}\wedge{\tt j}'\ne 0$ and look for the first digit ${\tt 1}$ in
${\tt j}\wedge {\tt j}'$.  Then split ${\tt j}={\tt v1w}$ and ${\tt
  j}'={\tt v}'{\tt 1w}'$ with $\ell({\tt v})=\ell({\tt v}')$ and with
${\tt v}\wedge{\tt v}'=0$.  Similarly, split ${\tt k}={\tt x1y}$ and
${\tt k}'={\tt x}'{\tt 1y}'$ with $\ell({\tt x})=\ell({\tt x}')$ and
${\tt x}\wedge{\tt x}'=0$.  We say that $\{{\tt a}, {\tt b}, {\tt a}',
{\tt b}'\}\in\Sigma$ is equivalent to  $\{{\tt j}, {\tt
k}, {\tt j}', {\tt k}'\}$ in case it can by obtained by an arbitrary
exchange of the pairs $({\tt v},{\tt v}')$, $({\tt w},{\tt w}')$,
$({\tt x},{\tt x}')$ and $({\tt y},{\tt y}')$.  More precisely,  $({\tt a}, {\tt b}, {\tt a}', {\tt
  b}')$ must be equal to one of
$$
\vcenter{\openup1\jot\halign{
$\displaystyle{#}\,$\hfil
&$\displaystyle{#}\,$\hfil
&$\displaystyle{#}\,$\hfil
&$\displaystyle{#}$\ ,\hfil\quad
&$\displaystyle{#}\,$\hfil
&$\displaystyle{#}\,$\hfil
&$\displaystyle{#}\,$\hfil
&$\displaystyle{#}$\ ,\hfil
\cr
  \{{\tt v} {\tt 1w}, &{\tt x} {\tt 1y}, &{\tt v}'{\tt 1w}',&{\tt x}'{\tt 1y}'\}
& \{{\tt v}'{\tt 1w}, &{\tt x} {\tt 1y}, &{\tt v} {\tt 1w}',&{\tt x}'{\tt 1y}'\}
\cr
  \{{\tt v} {\tt 1w}, &{\tt x} {\tt 1y}',&{\tt v}'{\tt 1w}',&{\tt x}'{\tt 1y} \}
& \{{\tt v}'{\tt 1w}, &{\tt x} {\tt 1y}',&{\tt v} {\tt 1w}',&{\tt x}'{\tt 1y} \}
\cr
  \{{\tt v} {\tt 1w}, &{\tt x}'{\tt 1y}, &{\tt v}'{\tt 1w}',&{\tt x} {\tt 1y}'\}
& \{{\tt v}'{\tt 1w}, &{\tt x}'{\tt 1y}, &{\tt v} {\tt 1w}',&{\tt x} {\tt 1y}'\}
\cr
  \{{\tt v} {\tt 1w}, &{\tt x}'{\tt 1y}',&{\tt v}'{\tt 1w}',&{\tt x} {\tt 1y} \}
& \{{\tt v}'{\tt 1w}, &{\tt x}'{\tt 1y}',&{\tt v} {\tt 1w}',&{\tt x} {\tt 1y} \}
\cr
  \{{\tt v} {\tt 1w}',&{\tt x} {\tt 1y}, &{\tt v}'{\tt 1w}, &{\tt x}'{\tt 1y}'\}
& \{{\tt v}'{\tt 1w}',&{\tt x} {\tt 1y}, &{\tt v} {\tt 1w}, &{\tt x}'{\tt 1y}'\}
\cr
  \{{\tt v} {\tt 1w}',&{\tt x} {\tt 1y}',&{\tt v}'{\tt 1w}, &{\tt x}'{\tt 1y} \}
& \{{\tt v}'{\tt 1w}',&{\tt x} {\tt 1y}',&{\tt v} {\tt 1w}, &{\tt x}'{\tt 1y} \}
\cr
  \{{\tt v} {\tt 1w}',&{\tt x}'{\tt 1y}, &{\tt v}'{\tt 1w}, &{\tt x} {\tt 1y}'\}
& \{{\tt v}'{\tt 1w}',&{\tt x}'{\tt 1y}, &{\tt v} {\tt 1w}, &{\tt x} {\tt 1y}'\}
\cr
  \{{\tt v} {\tt 1w}',&{\tt x}'{\tt 1y}',&{\tt v}'{\tt 1w}, &{\tt x} {\tt 1y} \}
& \{{\tt v}'{\tt 1w}',&{\tt x}'{\tt 1y}',&{\tt v} {\tt 1w}, &{\tt x} {\tt 1y} \}
\cr
}}
$$ where some of the above combinations may well coincide, e.g., if
${\tt v}={\tt v}'$.  The above list of sixteen combinations actually
describes the equivalence classes generated by our relation.
\begin{remark}
It is clearly sufficient to show that cancellations occurs within each
equivalence class, and this is what we are going to do.
\end{remark}

Let us now pick an equivalence class, i.e., chose one of its elements denoted
again by $\{{\tt j}, {\tt k}, {\tt j}', {\tt k}'\} = \{{\tt v} {\tt 1w}, {\tt x}
{\tt 1y}, {\tt v}'{\tt 1w}',{\tt x}'{\tt 1y}'\}$, and exploit the decomposition
corresponding to the {\tt 1} digits. For the term $\dinter{\phi\psi}Z$
in~\eqref{e.canc.int}, writing only the integrand function, we get
$$
\vcenter{\openup1\jot\halign{
$\displaystyle{#}$\hfil
\cr
\bigl([0, \phi, m\!-\!1]\cdot [m\!-\!1,t]_{\tt j}\cdot 
     [t, \psi, t+m'\!-\!1]\cdot [t+m'\!-\!1,N]_{\tt k}\bigr) \times
\cr\qquad\qquad\qquad
\bigl([0,m\!-\!1]\cdot [m\!-\!1,t]_{{\tt j}'}\cdot 
     [t,t+m'\!-\!1]\cdot [t+m'\!-\!1,N]_{{\tt k}'}\bigr) =
\cr\qquad
 \bigl([u'\!-\!1,N]_{{\tt y}}\cdot [0, \phi, m\!-\!1]\cdot
 [m\!-\!1,u-1]_{{\tt v}}\bigr)\times
\cr\qquad\qquad
     \bigl([u,t]_{{\tt w}}\cdot [t, \psi, t+m'\!-\!1]\cdot 
      [t+m'\!-\!1,u'-1]_{{\tt x}}\bigr)\times
\cr\qquad\qquad\qquad
 \bigl([u'\!-\!1,N]_{{\tt y}'}\cdot [0,m\!-\!1]\cdot
   [m\!-\!1,u-1]_{{\tt {\tt v}}'}\bigr)\times 
\cr\qquad\qquad\qquad\qquad
     \bigl([u,t]_{{\tt w}'}\cdot [t,t+m'\!-\!1]\cdot
      [t+m'\!-\!1,u'-1]_{{\tt x}'}\bigr)\ ,
\cr
}}
$$
where $u$ and $u'$ denote the sites corresponding to the {\tt 1},
i.e., $[m\!-\!1,t]_{\tt j} = [m\!-\!1,u-1]_{{\tt v}} [u,t]_{{\tt w}}$ and 
$[t+m'\!-\!1,N]_{\tt k} = [t+m'\!-\!1,u'-1]_{{\tt x}} [u'\!-\!1,N]_{{\tt y}}$,
and the same for the primed strings. We do not write explicitly the analogous
equality for the term in $\dinter{\phi}\dinter{\psi}$.

The argument that follows becomes more transparent by 
introducing a compact notation
for each of the lines in the above formula, namely
\begin{align*}
\phantom{\big\vert}
[{\tt y} : \phi : {\tt v}]
&:= [u'\!-\!1,N]_{{\tt y}}\cdot [0, \phi, m\!-\!1]\cdot
    [m\!-\!1,u-1]_{{\tt v}}\ ,
\cr
\phantom{\big\vert}
[{\tt w} : \psi : {\tt x}]
&:= [u,t]_{{\tt w}}\cdot [t, \psi, t+m'\!-\!1]\cdot 
     [t+m'\!-\!1,u'-1]_{{\tt x}}\ ,
\cr
\phantom{\big\vert}
[{\tt y}' :\,: {\tt v}']
&:= [u'\!-\!1,N]_{{\tt y}'}\cdot [0,m\!-\!1]\cdot [m\!-\!1,t]_{{\tt v}}\cdot
   [m\!-\!1,u-1]_{{\tt {\tt w}}'}\ ,
\cr
\phantom{\big\vert}
[{\tt w}' :\,: {\tt x}']
&:= [u,t]_{{\tt w}'}\cdot [t,t+m'\!-\!1]\cdot
      [t+m'\!-\!1,u'-1]_{{\tt x}'}\ .
\end{align*}
Using such a notation, and writing the corresponding terms for the whole
expression $\dinter{\phi\psi}Z-\dinter{\phi}\dinter{\psi}$ one has
\begin{equation*}
[{\tt y}:\phi:{\tt v}] \cdot
[{\tt w}:\psi:{\tt x}] \cdot
[{\tt y}':\,:{\tt v}'] \cdot
[{\tt w}':\,:{\tt x}'] 
\; - \;
[{\tt y}:\phi:{\tt v}] \cdot
[{\tt w}:\,:{\tt x}] \cdot
[{\tt y}':\,:{\tt v}'] \cdot
[{\tt w}':\psi:{\tt x}'] \ ;
\end{equation*}
we remark that each factor in the above expression is has a
support disjoint from the supports of the other factors, so that the integral
can be factorized.

Select now $\{{\tt v} {\tt 1w}, {\tt x} {\tt 1y}, {\tt v}'{\tt 1w}',{\tt
x}'{\tt 1y}'\}$ and $\{{\tt v} {\tt 1w}', {\tt x}'{\tt 1y}, {\tt v}'{\tt
1w},{\tt x}{\tt 1y}'\}$, i.e. two elements which belong to the same
equivalence class being obtained via a permutation of both pairs $({\tt
w},{\tt w}')$ and $({\tt x},{\tt x}')$, and add together the
corresponding contributions, namely
{
\begin{align*}
&[{\tt y}:\phi:{\tt v}] \cdot
[{\tt w}:\psi:{\tt x}] \cdot
[{\tt y}':\,:{\tt v}'] \cdot
[{\tt w}':\,:{\tt x}'] 
\; - \;
[{\tt y}:\phi:{\tt v}] \cdot
[{\tt w}:\,:{\tt x}] \cdot
[{\tt y}':\,:{\tt v}'] \cdot
[{\tt w}':\psi:{\tt x}']
\; +
\cr
+\; 
&[{\tt y}:\phi:{\tt v}] \cdot
[{\tt w}':\psi:{\tt x}'] \cdot
[{\tt y}':\,:{\tt v}'] \cdot
[{\tt w}:\,:{\tt x}] 
\; - \;
[{\tt y}:\phi:{\tt v}] \cdot
[{\tt w}':\,:{\tt x}'] \cdot
[{\tt y}':\,:{\tt v}'] \cdot
[{\tt w}:\psi:{\tt x}] \ ;
\end{align*}
}
they clearly compensate each other and add up to zero.
Performing the permutation only in the pair $({\tt w},{\tt w}')$ and then
only in the pair $({\tt x},{\tt x}')$, we have instead
{
\begin{align*}
&[{\tt y}:\phi:{\tt v}] \cdot
[{\tt w}':\psi:{\tt x}] \cdot
[{\tt y}':\,:{\tt v}'] \cdot
[{\tt w}:\,:{\tt x}'] 
\; - \;
[{\tt y}:\phi:{\tt v}] \cdot
[{\tt w}':\,:{\tt x}] \cdot
[{\tt y}':\,:{\tt v}'] \cdot
[{\tt w}:\psi:{\tt x}']
\; +
\cr
+\; 
&[{\tt y}:\phi:{\tt v}] \cdot
[{\tt w}:\psi:{\tt x}'] \cdot
[{\tt y}':\,:{\tt v}'] \cdot
[{\tt w}':\,:{\tt x}] 
\; - \;
[{\tt y}:\phi:{\tt v}] \cdot
[{\tt w}:\,:{\tt x}'] \cdot
[{\tt y}':\,:{\tt v}'] \cdot
[{\tt w}':\psi:{\tt x}] \ ,
\end{align*}
}
which again add up to zero zero. The previous expressions show the
cancellations among 4 of 16 terms in the equivalence class; it is not
difficult to verify that the other cancellations take place working
also, jointly or separately, with the permutations of the other pairs
$({\tt v},{\tt v}')$ and $({\tt y},{\tt y}')$.  This completes the
proof that all contributions to
$\dinter{\phi\psi}Z-\dinter{\phi}\dinter{\psi}$ coming from elements
of every equivalence class in $\Sigma$ cancel out in pairs.

\subsection{Structure of the remainder}
\label{app.boh}

We present here a couple of Lemmas concerning the structure of the remainder
of our perturbative construction.  We recall that $\Phi=\varphi^\oplus$, and
$R = \rho^\oplus$, so that in particular $R = \sum_{j=0}^{N-1} \rho_j$ with
$\rho_j:= \rho\circ\tau^{j}$.

\begin{lemma}
\label{l.remainder}
In the original variables $x,\,y$ the seed $\varphi(x,y)$ is even in
the momenta $y$ and consequently the seed of the remainder $\rho(x,y)$
is odd both in the momenta $y$ and in the coordinates $x$.
\end{lemma}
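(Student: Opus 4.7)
The plan is to track two anticanonical involutions through the construction: $\sigma_y\colon (x,y)\mapsto(x,-y)$ and $\sigma_x\colon (x,y)\mapsto(-x,y)$. For either involution $\sigma$ a direct computation gives $\{f\circ\sigma, g\circ\sigma\}=-\{f,g\}\circ\sigma$; hence under $\sigma$ the Poisson bracket inherits parity $-\epsilon_f\epsilon_g$, so that both even--even and odd--odd pairs produce odd brackets, while mixed pairs produce even ones.

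Both $H_0$ and $H_1$ are even in $x$ and in $y$, and the linear canonical change $q=A^{1/4}x$, $p=A^{-1/4}y$ respects the $(x,y)$--splitting; in the new coordinates $H_\Omega$, $Z_0$ and $H_1$ remain even in both $q$ and $p$, and the involutions $\sigma_x,\sigma_y$ become the sign reversals of $q$ and $p$ respectively. The technical crux is that the kernel projector $\Pi_{\Nscr}$ of $\lie{\Omega}$ commutes with both of them: this follows from the representation $\Pi_{\Nscr}f = \lim_{T\to\infty}\frac{1}{T}\int_0^T f\circ\phi^t_{H_\Omega}\,dt$ together with the identity $\phi^t_{H_\Omega}\circ\sigma=\sigma\circ\phi^{-t}_{H_\Omega}$, valid because each involution reverses the planar rotations generated by $H_\Omega$ in the $(q_j,p_j)$ planes.

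I would then prove by induction on $s\geq 1$ that each $Z_s$ is even in both $q$ and $p$ while each $\chi_s$ is odd in both. The base case uses $\Psi_1=H_1$, even in both. For the inductive step, formula~\eqref{e.Psi.2s} expresses $\Psi_s$ as a sum of $\{\chi_{s-1},H_1\}$ (odd--even, hence even) and terms $E_{s-j}Z_j$, each of which is a sum of iterated Poisson brackets of $Z_j$ (even by induction) with the $\chi_l$'s (odd by induction, so preserving parity in each variable by the rule); thus $\Psi_s$ is even in both. Then $Z_s=-\Pi_{\Nscr}\Psi_s$ remains even in both by the commutation of $\Pi_\Nscr$ with the involutions, and solving $\lie{H_0}\chi_s=Z_s+\Psi_s$ with $\chi_s\in\Rscr$ yields $\chi_s$ odd in both, since $\lie{H_0}$ (bracket with a function even in both) flips both parities. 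The same parity rule applied to $\Phi^{(r)}=\sum_{s=0}^r E_s\Phi_0$, starting from the even--even $\Phi_0=H_\Omega$, shows that every $\Phi_s$ is even in both $q$ and $p$; pulling back by the linear change, $\varphi(x,y)$ is even in both $x$ and $y$, in particular even in $y$ as claimed.

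The second assertion follows immediately: since $h_1^\oplus=\frac14\sum_j x_j^4$ is even in both $x$ and $y$, and $\varphi_r$ shares these parities, the parity rule gives $\rho=\{\varphi_r,h_1^\oplus\}$ odd in both $x$ and $y$. The only delicate point is the commutation of $\Pi_\Nscr$ with the anticanonical involutions; once this is checked via the ergodic-average representation, the rest is routine parity bookkeeping.
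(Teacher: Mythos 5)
Your proof is correct and is essentially a fleshed-out version of the paper's own argument, which is terse to the point of being a sketch. The paper merely asserts that ``the iterative scheme gives polynomials $\Phi_r(x,y)$ which are of even degree in the momenta'' and then reads off the parity of $\rho$ from the Poisson bracket together with the even total degree; you supply the missing induction, which is where the actual content lies. Your bookkeeping via the anticanonical involutions $\sigma_x,\sigma_y$ is equivalent to the paper's degree-in-$y$ counting, organized a bit more symmetrically, and has the small advantage of delivering oddness in $x$ directly from the bracket rule rather than deducing it from the even total degree. The genuinely non-routine point, which the paper leaves implicit, is that the kernel projector $\Pi_{\Nscr}$ of $\lie{\Omega}$ respects these parities; your argument via the time-average representation and the time-reversal identity $\phi^t_{H_\Omega}\circ\sigma=\sigma\circ\phi^{-t}_{H_\Omega}$ is a clean way to obtain it (one could equivalently note that in the diagonalizing coordinates of~\eqref{pertur.51} the involutions exchange the multi-indices $j\leftrightarrow k$ up to unimodular phases, hence preserve the resonance condition $|j|=|k|$). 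The only step you compress slightly is that $\lie{H_0}^{-1}$ on the range flips parity: this relies on the uniqueness of the Neumann-series solution from Proposition~\ref{p.2.1} to exclude an even component of $\chi_s$, but that is immediate and the overall argument stands.
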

\begin{proof}
We recall that the original Hamiltonian $H(x,y) = T(y) + V(x)$ is
obviously of even degree in the momenta and that also the iterative
scheme gives polynomials $\Phi_r(x,y)$ which are of even degree in the
momenta. Hence the Poisson bracket defining $\rho$ produces a polynomial
of odd degree both in the momenta and, being the whole degree even, in
the configuration variables.
\end{proof}

\begin{lemma}
\label{l.sum.corr.1}
It holds
\begin{equation*}
 \sum_{j=1}^{N-1}(N-j)\inter{\rho\rho_j} = \left\{
 \begin{aligned}
  &\frac{N}2\inter{\rho\rho_{N/2}} + N\sum_{j=1}^{N/2-1}\inter{\rho\rho_j} \ ,
  &\quad N {\rm \ even}
\cr
  &N\sum_{j=1}^{[N/2]}\inter{\rho\rho_j}\ ,
  &\quad N {\rm \ odd}
\end{aligned}
\right.
\end{equation*}
\end{lemma}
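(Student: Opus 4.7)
The plan is to exploit the translation invariance of the Gibbs measure in order to establish the symmetry $\inter{\rho\rho_j}=\inter{\rho\rho_{N-j}}$ for every $j\in\{1,\dots,N-1\}$, and then reorganize the weighted sum by pairing each index $j$ with its mirror $N-j$.

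First I would justify the symmetry. Since the Hamiltonian $H$ is cyclically symmetric, the Gibbs measure $dm$ is invariant under $\tau$, so $\inter{f\circ\tau^k}=\inter{f}$ for every measurable $f$ and every integer $k$. Applying this to $f=\rho\cdot\rho_j$ with $k=N-j$ (and using $\tau^N=\mathrm{Id}$), one gets
\begin{equation*}
\inter{\rho\,\rho_j}
 = \inter{(\rho\circ\tau^{N-j})\cdot(\rho\circ\tau^{N-j}\circ\tau^j)}
 = \inter{\rho_{N-j}\,\rho}
 = \inter{\rho\,\rho_{N-j}}\ .
\end{equation*}

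Next I would pair the terms $j$ and $N-j$ in the original sum. For any $j$ with $1\le j<N-j\le N-1$ the joint contribution is
\begin{equation*}
 (N-j)\inter{\rho\rho_j} + (N-(N-j))\inter{\rho\rho_{N-j}}
 = \bigl[(N-j)+j\bigr]\inter{\rho\rho_j} = N\inter{\rho\rho_j}\ ,
\end{equation*}
thanks to the symmetry just established.

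Finally, the two cases in the statement simply reflect whether the involution $j\mapsto N-j$ on $\{1,\dots,N-1\}$ has a fixed point. If $N$ is odd, the pairs $(j,N-j)$ with $1\le j\le(N-1)/2=[N/2]$ partition $\{1,\dots,N-1\}$, yielding $N\sum_{j=1}^{[N/2]}\inter{\rho\rho_j}$. If $N$ is even, the index $j=N/2$ is fixed and contributes the single unpaired term $(N-N/2)\inter{\rho\rho_{N/2}}=(N/2)\inter{\rho\rho_{N/2}}$, while the indices $j\in\{1,\dots,N/2-1\}\cup\{N/2+1,\dots,N-1\}$ pair up and give $N\sum_{j=1}^{N/2-1}\inter{\rho\rho_j}$. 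The proof is thus essentially a bookkeeping argument, and I do not foresee any genuine obstacle beyond the (immediate) verification of the $\tau$-invariance of $\inter{\cdot}$.
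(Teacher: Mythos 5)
Your proof is correct and rests on exactly the same ingredients as the paper's: the symmetry $\inter{\rho\rho_j}=\inter{\rho\rho_{N-j}}$ (which the paper states just before the lemma and you re-derive from $\tau$-invariance of the Gibbs measure) plus pairing $j$ with $N-j$. The paper organizes the bookkeeping slightly differently (it writes $\sum(N-j)\inter{\rho\rho_j}=N\sum\inter{\rho\rho_j}-\sum j\inter{\rho\rho_j}$ and evaluates each piece separately), but your direct pairing of the weighted terms is the same idea in a marginally more streamlined form.
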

\begin{proof}
We provide the proof in the odd case, the other one being very similar.  We
have the following equalities:
\begin{displaymath}
\sum_{j=1}^{N-1}\inter{\rho\rho_j} =
\sum_{j=1}^{[N/2]}\inter{\rho\rho_j} +
\sum_{j=[N/2]+1}^{N-1}\inter{\rho\rho_{N-j}} =
2\sum_{j=1}^{[N/2]}\inter{\rho\rho_j}\ ;
\end{displaymath}
\begin{displaymath}
\sum_{j=1}^{N-1}j\inter{\rho\rho_j} =
\sum_{j=1}^{[N/2]}[j +  (N-j)]\inter{\rho\rho_{j}} =
N\sum_{j=1}^{[N/2]}\inter{\rho\rho_j}\ .
\end{displaymath}
Thus, subtracting the second line to $N$ times the first one, gives the thesis.
\end{proof}

\noindent
{\bf Acknowledgement}

We thank Dario Bambusi, Giancarlo Benettin, Andrea Carati, Luigi
Galgani, Vieri Matropiertro and Antonio Ponno for useful discussions
and comments. This research is partially supported by MIUR-PRIN
program under project 2007B3RBEY (``Teoria delle pertur\-bazioni ed
appli\-cazioni alla Mec\-canica Sta\-tistica ed
all'Elet\-trodina\-mica'') and project 2010JJ4KPA (``Te\-orie
geome\-triche e anali\-tiche dei sistemi Hamilto\-niani in dimensioni
finite e infi\-nite'').


\def\cprime{$'$} \def\i{\ii}\def\cprime{$'$} \def\cprime{$'$}

\end{document}